\numberwithin{equation}{section}
\def\PP{\mathbb{P}}
\def\QQ{\mathbb{Q}}
\def\RR{\mathbb{R}}
\def\NN{\mathbb{N}}
\def\ZZ{\mathbb{Z}}
\def\EE{\mathbb{E}}
\def\11{\mathbbm{1}}
\def\E{\mathbb{E}}
\def\P{\mathbb{P}}
\def\R{\mathbb{R}}
\def\Q{\mathbb{Q}}
\def\N{\mathbb{N}}
\def\d{\partial}
\def\Z{\mathbb{Z}}
\def\cL{{\cal L}}
\newtheorem{thm}{Theorem}[section]
\newtheorem{lem}[thm]{Lemma}
\newtheorem{cor}[thm]{Corollary}
\newtheorem{prop}[thm]{Proposition}
\theoremstyle{remark}
\newtheorem{rem}{Remark}
\newtheorem{exa}{Example}
\newcommand{\vertiii}[1]{{\left\vert\kern-0.25ex\left\vert\kern-0.25ex\left\vert #1 
    \right\vert\kern-0.25ex\right\vert\kern-0.25ex\right\vert}}
    \def\restriction#1#2{\mathchoice
                  {\setbox1\hbox{${\displaystyle #1}_{\scriptstyle #2}$}
                  \restrictionaux{#1}{#2}}
                  {\setbox1\hbox{${\textstyle #1}_{\scriptstyle #2}$}
                  \restrictionaux{#1}{#2}}
                  {\setbox1\hbox{${\scriptstyle #1}_{\scriptscriptstyle #2}$}
                  \restrictionaux{#1}{#2}}
                  {\setbox1\hbox{${\scriptscriptstyle #1}_{\scriptscriptstyle #2}$}
                  \restrictionaux{#1}{#2}}}
    \def\restrictionaux#1#2{{#1\,\smash{\vrule height .8\ht1 depth .85\dp1}}_{\,#2}}
\begin{document}

\title{General criteria for the study of quasi-stationarity}

\author{Nicolas Champagnat$^{1}$, Denis Villemonais$^{1}$}

\footnotetext[1]{Universit\'e de Lorraine, CNRS, Inria, IECL, F-54000 Nancy, France \\
  E-mail: Nicolas.Champagnat@inria.fr, Denis.Villemonais@univ-lorraine.fr}

\maketitle

\begin{abstract}
  For Markov processes with absorption, we provide general criteria ensuring the existence and the exponential non-uniform
  convergence in wei\-ghted total variation norm to a quasi-stationary distribution. We also characterize a subset
  of its domain of attraction by an integrability condition, prove the existence of a right eigenvector for the semigroup of the
  process and the existence and exponential ergodicity of the $Q$-process. These results are applied to one-dimensional and
  multi-dimensional diffusion pro\-ces\-ses, to pure jump continuous time processes, to reducible processes with several
  communication classes, to perturbed dynamical systems and discrete time processes evolving in discrete state spaces.
\end{abstract}

\noindent\textit{Keywords:} Markov processes with absorption; quasi-stationary distribution; $Q$-process; mixing property; diffusion
processes; birth and death processes; reducible processes; perturbed dynamical systems; Galton-Watson processes.

\medskip\noindent\textit{2010 Mathematics Subject Classification.} Primary: 37A25, 60B10, 60F99, 60J05, 60J10, 60J25, 60J27; 
Secondary: 60J60, 60J75, 60J80, 93E03.

\tableofcontents

\section{Introduction}
\label{sec:intro}

Let $(X_t,t\in I)$ be a Markov process in $E\cup\{\d\}$ where $E$ is a measurable space and $\d\not\in E$, with set of time indices
$I$ which might be $\RR_+$ or $\frac{1}{k}\ZZ_+$ for some $k\in\NN:=\{1,2,\ldots\}$, where $\ZZ_+:=\{0,1,\ldots\}$. For all $x\in
E\cup\{\d\}$, we denote as usual by $\PP_x$ the law of $X$ given $X_0=x$ and for any probability measure $\mu$ on $E\cup\{\d\}$, we
define $\PP_\mu=\int_{E\cup\{\d\}}\PP_x\,\mu(dx)$. We also denote by $\E_x$ and $\E_\mu$ the associated expectations. We assume that
$\d$ is absorbing, which means that $X_t=\d$ for all $t\geq \tau_\d$, $\PP_x$-almost surely, where
\[
\tau_\d=\inf\{t\in I,\,X_t=\d\}.
\]
Our goal is to study the existence of \emph{quasi-limiting distributions} on $E$ for the process $X$, i.e.\  probability
measures $\nu$ such that
\[
\lim_{t\in I,\ t\rightarrow+\infty}\PP_\mu(X_t\in A\mid t<\tau_\partial)=\nu(A)
\]
for some probability measure $\mu$ on $E$ and for all $A\subset E$ measurable. Such a measure $\nu$ is a \emph{quasi-stationary
  distribution} for $X$, i.e.\ a probability measure such that $\PP_\nu(X_t\in\cdot\mid t<\tau_\partial)=\nu(\cdot)$ for all
$t\in I$. We refer the reader to~\cite{ColletMartinezEtAl2013,MeleardVillemonais2012,DoornPollett2013} for general introductions to
quasi-stationary distributions. In particular, it is well-known that there exists a constant $\lambda_0\geq 0$, called the decay
parameter of the quasi-stationary distribution $\nu$, such that $\PP_{\nu}(t<\tau_\d)=e^{-\lambda_0 t}$ for all $t\in I$
(for discrete time processes, i.e. $I=\mathbb Z_+$, the term refers to $\theta_0=e^{-\lambda_0}$).

More precisely, our first goal is to give general criteria involving Lyapunov-type functions $\varphi_1\geq 1$ and $\varphi_2\leq 1$
ensuring the existence of a quasi-stationary distribution $\nu_{QSD}$ such that
\begin{equation}
  \label{eq:conv-intro}
  \left\|\P_\mu(X_t\in\cdot\mid t<\tau_\d)-\nu_{QSD}\right\|_{TV(\varphi_1)}\leq C\alpha^t\frac{\mu(\varphi_1)}{\mu(\varphi_2)},\quad\forall t\in  
  I,
\end{equation}
for some constants $C\in (0,+\infty)$ and $\alpha\in (0,1)$ and for all probability measure $\mu$ on $E$ such that
$\mu(\varphi_1)<+\infty$ and $\mu(\varphi_2)>0$, where $\mu(\varphi):=\int_E \varphi(x)\,\mu(dx)$
and, for all probability measures $\mu_1$ and $\mu_2$,
\[
\|\mu_1-\mu_2\|_{TV(\varphi_1)}=\sup_{f:E\rightarrow\R\text{ measurable s.t.\ }|f|\leq\varphi_1}|\mu_1(f)-\mu_2(f)|.
\]
When $\varphi_1$ is bounded, we recover convergence for the usual total variation distance $\|\cdot\|_{TV(1)}$ since the norms
$\|\cdot\|_{TV(1)}$ and $\|\cdot\|_{TV(\varphi_1)}$ are equivalent.
The measure $\nu_{QSD}$ in~\eqref{eq:conv-intro} is the only quasi-stationary distribution $\nu$ such that $\nu(\varphi_1)<+\infty$
and $\nu(\varphi_2)>0$. 

Our second goal is to show how our criteria can be applied to a wide range of Markov processes, including several classes of
processes for which even the existence of a quasi-stationary distribution was not known, such as diffusions in irregular domains or
perturbed dynamical systems in unbounded domains.

General criteria ensuring that the convergence in~\eqref{eq:conv-intro} holds uniformly with respect to the initial distribution
$\mu$ have been studied in~\cite{birkhoff-57,ChampagnatVillemonais2016b}. In this case, $\nu_{QSD}$ is the quasi-limiting
distribution of any initial distributions. However, these results do not apply to processes admitting several quasi-stationary
distributions, which is known to happen in a variety of specific cases, even for processes irreducible in $E$ (including branching
processes~\cite{SenetaVere-Jones1966,athreya-ney-72,Lambert2007,maillard-18}, one-dimensional birth and death
processes~\cite{DoornErik1991,FerrariMartinezEtAl1991,FerrariKestenEtAl1995,Villemonais2015} and one-dimensional diffusion
processes~\cite{lladser-sanmartin-00,martinez-sanmartin-04}). In addition, as for non-absorbed processes, uniform convergence with
respect to the initial distribution only happens for processes that come back quickly in compact
sets~\cite{MeynTweedie2009,ChampagnatVillemonais2016b} or are killed fast~\cite{Velleret2018}. The present paper provides general
criteria generalizing those of~\cite{ChampagnatVillemonais2016b} to cases of non-uniform convergence. 

Given a quasi-stationary distribution $\nu$, its domain of attraction is defined as the set of probability measures $\mu$ on $E$ such
that $\PP_\mu(X_t\in \cdot\mid t<\tau_\partial)$ converges in total variation norm to $\nu$. In the case where the domain of
attraction of $\nu$ contains all Dirac masses, $\nu$ is called the \emph{Yaglom limit}, or the \emph{minimal quasi-stationary
  distribution}. In all the models admitting several quasi-stationary distributions cited above, it has been proved that the minimal
quasi-stationary distribution exists. The convergence~\eqref{eq:conv-intro} implies in addition that the domain of attraction of the
Yaglom limit $\nu_{QSD}$ actually contains all measures $\mu$ such that $\mu(\varphi_1)<\infty$ and $\mu(\varphi_2)>0$.

\bigskip

We provide in Section~\ref{sec:main-discrete-time} criteria ensuring~\eqref{eq:conv-intro} for all $t\in\ZZ_+$. We also obtain
several consequences, including a large subset of the domain of attraction of $\nu_{QSD}$ and the geometric uniform convergence of
$x\mapsto e^{\lambda_0 n}\PP_x(n<\tau_\d)/\varphi_1(x)$ as $n\rightarrow+\infty$ to $\eta/\varphi_1$, where $\eta$ is a function
which satisfies $\EE_x(\eta(X_n)\11_{n<\tau_\d})=e^{-\lambda_0 n}\eta(x)$ for all $n\in\ZZ_+$ and $x\in E$. We also obtain the
existence of the process $(X_n,n\in\ZZ_+)$ conditioned to never be absorbed (the so-called $Q$-process) and its geometric ergodicity.
Links between ergodicity of the $Q$-processes and quasi-limiting properties were already studied in various context (see for
instance~\cite{ArjasNummelinEtAl1980,GongQianEtAl1988,Miura2014,Takeda2019,FerreRoussetStoltz2018,Ocafrain2021}). All these results
are proved in Sections~\ref{sec:proof} and~\ref{sec:pf-other-results}.

The criterion developed in Section~\ref{sec:main-discrete-time} assumes that $(X_n,n\in\ZZ_+)$ is aperiodic but of course applies to
$1$-periodic processes $(X_t,t\in I)$. Under additional aperiodicity assumptions, we show in
Section~\ref{sec:equivalent-formulations} how the previous results extend to general time indices $t\in I$ and provide practical
versions of our criteria for continuous-time processes. We also provide alternative conditions allowing to check our criteria, that
are easier to check in some cases. We also show that the known criteria for uniform convergence in~\eqref{eq:conv-intro}
obtained in~\cite{ChampagnatVillemonais2016b} can be recovered using this new approach. These results are proved in
Section~\ref{sec:proof-equiv-form}.

These results allow us to put in a unified framework a large body of works on quasi-stationary distributions as illustrated by the
rest of the paper, which is devoted to the application of our abstract criteria. We start in Section~\ref{sec:diffusion} with
diffusion processes in $\RR^d$, $d\geq 1$, absorbed at the boundary of a domain $D$. Our analysis provides for example the following
general result.

\begin{thm}
  \label{thm:intro-diffusion}
  Assume that $E=D$ is a bounded connected open subset of $\RR^d$ and that $(X_t,t\in\RR_+)$ is solution to
  \[
  \mathrm dX_t=b(X_t)\mathrm dt+\sigma(X_t)\mathrm dB_t
  \]
  until its first exit time $\tau_\d$ from $D$, where $B$ is a $r$-dimensional Brownian motion and $b:\R^d\rightarrow\R^d$ and
  $\sigma:\R^d\rightarrow\R^{d\times r}$ are H\"older functions, such that $\sigma$ is uniformly elliptic. Then, the process $X$ has
  a unique quasi-stationary distribution $\nu_{QSD}$ which satisfies
  \begin{align*}
    \left\|\P_\mu(X_{t}\in\cdot\mid t<\tau_\d) -\nu_{QSD}\right\|_{TV}
    &\leq \frac{C}{\mu(\eta)}\,\alpha^t,\ \forall t\in [0,+\infty),
  \end{align*}
  for some constants $C<+\infty$ and $\alpha\in(0,1)$, where the function $\eta$ is
  $\mathcal{C}^2(D)$ and satisfies
  \[
  \sum_{i=1}^d b_i(x)\frac{\partial \eta}{\partial x_i}(x)+\frac{1}{2}\sum_{i,j=1}^d\sum_{k=1}^r
  \sigma_{ik}(x)\sigma_{jk}(x)\frac{\partial^2 \eta}{\partial x_i\partial x_j}(x)=-\lambda_0 \eta(x),\quad\forall x\in D
  \]
  and
  \[
  \eta(x) =\lim_{t\rightarrow+\infty}e^{\lambda_0 t}\PP_x(t<\tau_\d),\quad\forall x\in D,
  \]
  where the convergence is uniform in $D$.
\end{thm}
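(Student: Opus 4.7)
The plan is to apply the abstract criteria of Sections~\ref{sec:main-discrete-time} and~\ref{sec:equivalent-formulations} to $X$, with the choice $\varphi_1\equiv 1$. With this choice, the general bound~\eqref{eq:conv-intro} reduces exactly to the estimate $\alpha^t/\mu(\varphi_2)$ claimed in the theorem (up to harmless relabelling of $\alpha$ and $\varphi_2$), and the convergence of $e^{\lambda_0 t}\P_x(t<\tau_\d)$ to $\eta$ in $L^\infty(\varphi_1)$ becomes uniform convergence on $D$. Everything therefore reduces to verifying the two ingredients of the abstract criteria on $D$: a local Doeblin-type mixing condition on some compact subset, and a Lyapunov-type control of absorption providing the function $\varphi_2$.

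\medskip

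For the mixing condition, the relevant PDE input is the parabolic Harnack inequality for uniformly elliptic operators with H\"older coefficients, or equivalently the Aronson-type Gaussian bounds for the killed transition density $p^D_t(x,y)$. On any compact set $K\subset D$ and for any fixed $t_0>0$, these give the existence of a constant $c_K>0$ and a probability measure $\nu_K$ supported in $K$ such that
\[
\P_x(X_{t_0}\in A,\ t_0<\tau_\d)\geq c_K\,\nu_K(A),\quad\forall x\in K,\ A\subset K\text{ measurable},
\]
which is precisely the standard compact-set Doeblin bound required by the abstract hypotheses.

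\medskip

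The delicate step is the construction of the survival function $\varphi_2$. A natural choice is $\varphi_2(x):=\P_x(X_{s_0}\in K,\ s_0<\tau_\d)$ for a well-chosen $s_0>0$ and the same compact $K$: by the Harnack inequality and the strong maximum principle, $\varphi_2$ is continuous and strictly positive on $D$, and bounded below on $K$, while the Markov property combined with the Doeblin bound of the previous paragraph yields the lower bound on $\E_x(\varphi_2(X_t)\11_{t<\tau_\d})$ in terms of $\varphi_2(x)$ that the abstract criteria require. The point where irregularity of $\d D$ could be feared to cause trouble is precisely the behaviour of $\varphi_2$ near $\d D$; but the whole virtue of the non-uniform framework is that the decay of $\varphi_2$ at $\d D$ is exactly what absorbs the loss of uniformity in~\eqref{eq:conv-intro}, so no boundary regularity needs to be imposed.

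\medskip

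Once the abstract criteria apply, they deliver $\nu_{QSD}$, its uniqueness on $D$, the claimed total-variation bound, and a bounded nonnegative function $\eta$ on $D$ satisfying $\EE_x(\eta(X_t)\11_{t<\tau_\d})=e^{-\lambda_0 t}\eta(x)$ together with the uniform convergence $e^{\lambda_0 t}\P_x(t<\tau_\d)\to\eta(x)$. To upgrade $\eta$ to a positive $\mathcal{C}^2(D)$ function solving $\cL\eta=-\lambda_0\eta$ classically on $D$, I would invoke interior Schauder estimates for H\"older coefficients applied to the bounded solution $u(t,x):=\EE_x(\eta(X_t)\11_{t<\tau_\d})=e^{-\lambda_0 t}\eta(x)$ of $\d_t u=\cL u$ on $(0,\infty)\times D$; the separated time-exponential structure then forces $\eta\in\mathcal{C}^2(D)$ with $\cL\eta=-\lambda_0\eta$, and strict positivity of $\eta$ on $D$ follows from the strong maximum principle together with the Doeblin bound, which prevents $\eta$ from vanishing at any interior point.
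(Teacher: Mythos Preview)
Your overall strategy---take $\psi_1\equiv 1$ in Assumption~(F), check the Doeblin condition on a compact $L=K_k\subset D$ via interior Harnack inequalities, and invoke the abstract machinery---is exactly what the paper does: see Example~\ref{exa:D-bounded}, which feeds into Corollary~\ref{cor:diff-unbounded-domain} and Theorem~\ref{thm:main-diffusion}. Your regularity argument for $\eta$ via interior Schauder estimates on the parabolic equation satisfied by $u(t,x)=e^{-\lambda_0 t}\eta(x)$ is a legitimate alternative to the paper's route in Section~\ref{sec:pf-cor-diff}, which instead solves an auxiliary initial--boundary value problem on a small ball and identifies the solution with $\eta$ via It\^o's formula and the martingale property of $e^{\lambda_0 t}\eta(X_t)$.

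There is, however, a genuine gap. You list two ingredients to check (Doeblin mixing and the construction of $\varphi_2$), but the abstract criteria require a third that you do not address: with $\psi_1\equiv 1$, the first line of~(F2) reads
\[
\P_x(t_2<\tau_L\wedge\tau_\d)\leq\gamma_1^{t_2}\quad\text{for all }x\in D,\quad\text{with }\gamma_1<\gamma_2.
\]
In words, you need a bound, \emph{uniform in $x\in D$}, saying the process either enters the compact $L$ or gets absorbed quickly; your $\varphi_2$ construction only delivers the lower bound $P_1\varphi_2\geq\theta_2\varphi_2$ and says nothing about why one can arrange $\theta_1<\theta_2$. This is precisely condition~\eqref{eq:conv-to-0} in Theorem~\ref{thm:main-diffusion}, and the paper establishes it in the proof of Corollary~\ref{cor:diff-unbounded-domain} by exploiting the hypothesis that $b,\sigma$ are globally H\"older and $\sigma$ uniformly elliptic on all of $\R^d$: the diffusion then extends to a process $Y$ on $\R^d$, and since $\{s_1<\tau_{K_k}\wedge\tau_\d\}\subset\{Y_{s_1}\in D\setminus K_k\}$, Harnack for $Y$ (or directly a bounded transition density) gives $\sup_{x\in D}\P_x(s_1<\tau_{K_k}\wedge\tau_\d)\to 0$ as $k\to\infty$. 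Without this step you cannot close the gap $\gamma_1<\gamma_2$. As a side remark, be cautious about invoking ``Aronson bounds for the killed density $p^D_t$'': lower bounds on the \emph{killed} density are exactly what fails without boundary regularity; the paper works only with interior Harnack for the absorbed semigroup and with the \emph{unabsorbed} extension $Y$ near the boundary.
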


We emphasize that one of the main contributions of this result with respect to the existing literature (see for
example~\cite{Pinsky1985,GongQianEtAl1988,CattiauxMeleard2010,KnoblochPartzsch2010,DelVillemonais2018,ChampagnatCoulibaly-PasquierEtAl2016,ChampagnatVillemonais2017})
is that it applies to any bounded domain $D$ without any regularity assumption, with possible applications to recent Monte-Carlo
methods (see~\cite{PollockFearnheadEtAl2016,WangKolbEtAl2019}). Theorem~\ref{thm:intro-diffusion} is in fact obtained in
Section~\ref{sec:diffusion} as a particular case of a criterion for unbounded domains and coefficients $b$ and $\sigma$ only locally
H\"older and locally uniformly elliptic in $D$. We also consider the case of diffusions with killing in
Section~\ref{sec:diff-with-killing}. All these results are proved in Section~\ref{sec:pf-general-diffusion}.

Absorbed one-dimensional diffusions with or without killing have received a lot of attention (see for
instance~\cite{mandl-61,ColletMartinezEtAl1995,lladser-sanmartin-00,martinez-sanmartin-04,SteinsaltzEvans2004,CattiauxColletEtAl2009,LittinC.2012,KolbSteinsaltz2012,HeningKolb2014,miura-14,ChampagnatVillemonais2015,ChampagnatVillemonais2017ALEA}).
We consider these models in Section~\ref{sec:diffusion-1d}. Our main contributions with respect to the literature are the
characterization of a larger subset of the domain of attraction of the minimal quasi-stationary distribution, weaker regularity of
the drift and diffusion coefficients and explicit general bounds on $\varphi_1$ and $\lambda_0$ allowing practical verification of
our assumptions. Our criteria also provide alternative approaches to other classes of processes in continuous time and space, as
those studied for example in~\cite{ColletMartinezEtAl2011,benaim2021degenerate} using a spectral approach based on Tychonov's fixed
point theorem,
in~\cite{HinrichKolbEtAl2018,FerreRoussetStoltz2018,GuillinNectouxEtAl2020,CastroLambEtAl2021,BenaiemChampagnatEtAl2022} based on
compactness or quasi-compactness properties, and in~\cite{Marguet2017} for branching Markov processes using Lyapunov conditions on
the conditioned semigroup.

The case of continuous-time Markov processes in discrete state spaces is considered in Section~\ref{sec:appl-discr-state} with
application to multitype birth and death processes absorbed at the exit of any connected $E\subset\ZZ_+^d$ (in the sense of the
nearest neighbors structure of $\ZZ_+^d$). Note that the quasi-stationary behavior of finite state space
processes~\cite{DarrochSeneta1967} and of one-dimensional birth and death
processes~\cite{KarlinMcGregor1957,Good1968,Cavender1978,KijimaSeneta1991,DoornErik1991,DoornErik2012} has been extensively studied
using spectral methods that do not generalize easily to the multi-dimensional countable state-space setting. The quasi-stationary
behavior of multi-dimensional birth and death processes was studied in the case of uniform convergence in~\eqref{eq:conv-intro}
in~\cite{ChampagnatVillemonais2016a,ChampagnatVillemonais2017,ChazottesColletEtAl2016,ChazottesColletEtAl2017}.

All the previous examples assumed irreducibility of $X$ in $E$. In Section~\ref{sec:reducible}, we show that our criteria also apply
to reducible cases, as those considered in~\cite{Ogura1975} (for Galton-Watson processes),~\cite{gosselin-01} (for discrete
processes),~\cite{ChampagnatRoelly2008} (for Feller diffusions) and~\cite{ChampagnatDiaconisEtAl2012,DoornPollett2013} (in the finite
case). We first give a general criterion in Subsection~\ref{sec:three-com-class} and we study in details an example with
a countable infinity of communication classes in Subsection~\ref{sec:count-com-class}.

In Section~\ref{sec:continuous-state-space}, we consider general models in discrete time and continuous space, first extending the
criteria of~\cite{birkhoff-57,ChampagnatCoulibaly-PasquierEtAl2016} in order to cover the case of Euler schemes for stochastic
differential equations absorbed at the boundary of a domain (as defined in~\cite{manella-99,gobet-00}) and penalized semigroups (as
in~\cite{DelMoral2004,DelMoral2013}; note that all our results naturally extend to penalized homogeneous semigroups, provided the
penalization rate is bounded from above, see~\cite{ChampagnatVillemonais2018,ChampagnatVillemonais2019}). We then study in details
the case of perturbed dynamical systems, as those considered for example
in~\cite{BerglundLandon2012,baudel-berglund-17,HinrichKolbEtAl2018}, where the quasi-stationary behavior was studied using the
criterion of~\cite{birkhoff-57}. As an illustration of our method, let us mention the following original result.

\begin{thm}
  \label{thm:perturbed-SD-intro}
  Let $E=D$ be a measurable set of $\RR^d$ with positive Lebesgue measure and let $\d\not\in D$. Assume that
  \begin{align*}
    X_{n+1}=
    \begin{cases}
      f(X_n)+\xi_n & \text{if }X_n\neq\d\text{ and }f(X_n)+\xi_n\in D, \\
      \d & \text{otherwise,}
    \end{cases}
  \end{align*}
  where $f:\R^d\rightarrow \R^d$ is a locally bounded measurable function such that 
  \[
  |x|-|f(x)|\xrightarrow[|x|\rightarrow+\infty]{} +\infty
  \]
  and $(\xi_n)_{n\in\N}$ is an i.i.d.\ non-degenerate Gaussian sequence in $\R^d$. Then~\eqref{eq:conv-intro} is satisfied for
  $\varphi_1(x)=e^{|x|}$ and a positive measurable function $\varphi_2$ on $D$.
\end{thm}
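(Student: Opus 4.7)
The plan is to apply the abstract discrete-time criterion stated in Section~\ref{sec:main-discrete-time} by verifying three ingredients: a Lyapunov-type drift condition for $\varphi_1(x)=e^{|x|}$, a Doeblin minorization on the sublevel sets of $\varphi_1$, and the existence of a suitable $\varphi_2$.

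\emph{Drift for $\varphi_1$.} Since $\xi_0$ is Gaussian, $M:=\EE[e^{|\xi_0|}]<+\infty$. By the triangle inequality,
\[
\EE_x[\varphi_1(X_1)\11_{1<\tau_\d}]\leq\EE[e^{|f(x)|+|\xi_0|}]=M\,e^{|f(x)|}.
\]
The assumption $|x|-|f(x)|\to+\infty$ then implies that for any fixed $\alpha\in(0,1)$ there is $R>0$ with $M\,e^{|f(x)|}\leq\alpha\,\varphi_1(x)$ whenever $|x|\geq R$. Setting $K_R:=\{x\in D:|x|\leq R\}$ and using the local boundedness of $f$, one obtains a finite constant $C>0$ such that
\[
\EE_x[\varphi_1(X_1)\11_{1<\tau_\d}]\leq\alpha\,\varphi_1(x)+C\,\11_{K_R}(x),\qquad\forall x\in D.
\]

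\emph{Minorization on $K_R$.} Since $D$ has positive Lebesgue measure, one can fix a compact set $L\subset D$ of positive Lebesgue measure. Denoting by $g$ the density of $\xi_0$, the quantity
\[
c_0:=\inf_{x\in K_R,\,y\in L}g(y-f(x))
\]
is strictly positive by continuity and positivity of $g$ combined with local boundedness of $f$ on $K_R$. Hence, for every $x\in K_R$ and every measurable $A\subset D$,
\[
\PP_x(X_1\in A)\geq c_0\int_{A\cap L}dy,
\]
which is the required Doeblin minorization on $K_R$ with minorizing measure proportional to the Lebesgue measure restricted to $L$.

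\emph{Choice of $\varphi_2$.} A natural candidate is $\varphi_2(x):=\PP_x(X_1\in L)$. Since $g>0$ everywhere on $\RR^d$ and $L$ has positive Lebesgue measure, $\varphi_2(x)=\int_L g(y-f(x))\,dy>0$ for every $x\in D$, so $\varphi_2$ is positive and measurable on $D$. The drift and minorization steps above then place the process within the scope of the general discrete-time criterion, yielding the convergence~\eqref{eq:conv-intro} with these $\varphi_1$ and $\varphi_2$.

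\emph{Main obstacle.} The verifications themselves are short; the delicate point is the bookkeeping required to match them with the precise technical form of the abstract criterion in Section~\ref{sec:main-discrete-time}, in particular the interplay between the drift constant $\alpha$, the absorption rate $e^{-\lambda_0}$, and the quantitative role of $\varphi_2$ in controlling the domain of attraction. Once this matching is carried out, Theorem~\ref{thm:perturbed-SD-intro} follows directly; aperiodicity is not an issue here since the Gaussian transition density makes the chain naturally aperiodic on any reasonable set.
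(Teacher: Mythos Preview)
Your drift estimate for $\varphi_1$ and the one-sided minorization on $K_R$ are both correct and match the paper's Example~\ref{exa:expo-perturbed-SD} and Step~1 of the proof of Proposition~\ref{prop:unbounded-perturbation}. However, the proposal has two genuine gaps that prevent it from reaching Condition~(E).

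First, your choice $\varphi_2(x)=\PP_x(X_1\in L)$ does \emph{not} satisfy the super-eigenfunction inequality $P_1\varphi_2\geq\theta_2\varphi_2$ required in~(E2). Indeed, $P_1\varphi_2(x)=\PP_x(X_2\in L,\,1<\tau_\d)$, and there is no reason why this should dominate $\theta_2\,\PP_x(X_1\in L)$ uniformly in $x\in D$ for some $\theta_2>\theta_1$. The paper instead uses Lemma~\ref{lem:varphi2}: once one has shown $\theta_2^{-n}\inf_{x\in K}\PP_x(X_n\in K)\to+\infty$ (which follows from iterating the minorization), the function $\varphi_2$ is built as a weighted sum $\sum_{k=0}^{\ell-1}\theta_2^{-k}\PP_x(X_k\in K)$. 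Relatedly, your order of choices is backwards: you fix $\alpha=\theta_1$ first, then $R$, then compute the minorization constant on $K_R$; but that constant depends on $R$ and may well be smaller than $\alpha$, so you never secure $\theta_2>\theta_1$. The paper fixes a small set $K_1$ first, extracts $\theta_2$ from it, \emph{then} chooses $\theta_1<\theta_2$ and enlarges to $K\supset K_1$ so that the drift holds with this $\theta_1$.

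Second, and more seriously, you never address Condition~(E3), the local Harnack inequality $\sup_{n}\sup_{y\in K}\PP_y(n<\tau_\d)/\inf_{y\in K}\PP_y(n<\tau_\d)<\infty$. A lower bound on the transition kernel (your minorization) cannot give this; one needs a matching \emph{upper} bound. The paper obtains~(E3) via Proposition~\ref{lem:E2thenE}, using that the Gaussian density $g$ is bounded: for every $x\in D$ one has $\PP_x(X_1\in\cdot\cap K)\leq(\sup g)\,\lambda_d(\cdot\cap K)$, which combined with your lower bound yields the comparison $\PP_x(X_1\in\cdot\cap K)\leq C\,\PP_y(X_1\in\cdot)$ for all $x\in D$, $y\in K$. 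This two-sided estimate is the missing ingredient; without it the ``bookkeeping'' you allude to cannot be completed.
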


Finally, we study in Section~\ref{sec:discrete-state-space} the case of processes in discrete time and discrete space. This is the
most studied situation in the literature since it covers both the Galton-Watson
processes~\cite{Yaglom1947,harris-63,JoffeSpitzer1967,athreya-ney-72} and the general discrete
case~\cite{darroch-seneta-65,SenetaVere-Jones1966,FerrariMartinezEtAl1991,FerrariMartinezEtAl1992,FerrariKestenEtAl1995,FerrariKestenMartinez1996,gosselin-01,MartinezSanMartinEtAl2014}.
We first show in Subsection~\ref{sec:R-positive} that our results allow to recover the general criterion
of~\cite{FerrariKestenMartinez1996}, based on the theory of $R$-positive matrices. We then consider general population processes
dominated by population-dependent multi-type Galton-Watson processes in Subsection~\ref{sec:ex-GW}. The case of population-dependent
Galton-Watson processes with a single type was studied in~\cite{gosselin-01} using quasi-compactness methods. We also obtain as
a corollary several results on subcritical multi-type Galton-Watson processes. We do not recover the optimal $L\log L$ assumption on the
offspring distribution~\cite{JoffeSpitzer1967,heathcote-seneta-verejones-67} for the existence of a minimal quasi-stationary
distribution $\nu_{QSD}$ having finite first moment, but we obtain a stronger form of convergence in~\eqref{eq:conv-intro}, a larger
subset of its domain of attraction and stronger moments properties on $\nu_{QSD}$.

\section{Main Results}
\label{sec:main-discrete-time}

Let $(X_t,t\in I)$ be a Markov process in $E\cup\{\d\}$ where $E$ is a measurable space and $\d\not\in E$, with set of time indices
$I$ which might be $\ZZ_+=\{0,1,\ldots\}$, $\RR_+$ or $\frac{1}{k}\ZZ_+$ for some $k\in\NN=\{1,2,\ldots\}$. We define the absorption
time $\tau_\d$ as
\[
\tau_\d=\inf\{t\in I,\,X_t=\d\}.
\]
In this section, we study the sub-Markovian transition semigroup of $X$ considered at integer times, $(P_n)_{n\in\Z_+}$, defined as
\begin{align*}
P_n f(x)=\E_x\left(f(X_n)\11_{n<\tau_\d}\right),\ \forall n\in\Z_+,
\end{align*}
for all bounded or nonnegative measurable function $f$ on $E$ and all $x\in E$. We also define as usual the left-action of $P_n$ on
measures as
\[
\mu P_n f=\EE_\mu\left(f(X_n)\11_{n<\tau_\d}\right)=\int_E P_nf(x)\,\mu(\mathrm dx),
\]
for all probability measure $\mu$ on $E$. We make the following assumption.

\medskip\noindent\textbf{Assumption (E).} There exist a positive integer $n_1$, positive real constants
$\theta_1,\theta_2,c_1,c_2,c_3$, two functions $\varphi_1,\varphi_2:E\rightarrow \R_+$ and a probability measure $\nu$ on a
measurable subset $K\subset E$ such that
\begin{itemize}
\item[(E1)] \textit{(Local Dobrushin coefficient).} $\forall x\in K$, 
  \begin{align*}
    \P_x(X_{n_1}\in\cdot)\geq c_1 \nu(\cdot\cap K).
  \end{align*}
\item[(E2)] \textit{(Global Lyapunov criterion).} We have $\theta_1<\theta_2$ and
  \begin{align*}
    &\inf_{x\in E} \varphi_1(x)\geq 1,\ \sup_{x\in K}\varphi_1(x)<\infty \\
    &\inf_{x\in K} \varphi_2(x)>0,\ \sup_{x\in E}\varphi_2(x)\leq 1,\\
    &P_{1}\varphi_1(x)\leq \theta_1\varphi_1(x)+c_2\11_K(x),\ \forall x\in E\\
    &P_{1}\varphi_2(x)\geq \theta_2\varphi_2(x),\ \forall x\in E.
  \end{align*}
\item[(E3)] \textit{(Local Harnack inequality).} We have
  \begin{align*}
    \sup_{n\in \Z_+}\frac{\sup_{y\in K} \P_y(n<\tau_\d)}{\inf_{y\in K} \P_y(n<\tau_\d)}\leq c_3
  \end{align*}
\item[(E4)] \textit{(Aperiodicity).} For all $x\in K$, there exists $n_4(x)$ such that, for all $n\geq n_4(x)$,
  \begin{align*}
    \P_x(X_n\in K)>0.
  \end{align*}
\end{itemize}

Note that it follows from (E2) that $\theta_2\leq 1$ and thus $\theta_1<1$. We also emphasize that our assumptions neither require
that $\tau_\d<+\infty$ $\P_x$-a.s., nor that $\P_x(n<\tau_\d)>0$ for all $t\geq 0$ and $x\in E$. Several examples of Markov processes
satisfying Assumption~(E) are provided in Sections~\ref{sec:diffusion} to~\ref{sec:discrete-state-space}.

Assumption~(E) is an extension of the ergodicity criteria developed in~\cite{MeynTweedie1993}. Indeed, if we assume that
$\tau_\d=\infty$ $\PP_x$-almost surely for all $x\in E$, then Condition (E3) becomes void and one can take $\varphi_2\equiv 1$
in~(E2), so that $\theta_2=\theta_0=1$. We recognize in~(E1) the standard ``small set'' assumption of~\cite{MeynTweedie1993}, in~(E2)
for $\varphi_1$ a standard Foster-Lyapunov criterion and in~(E4) an aperiodicity condition. As such, it is well-known that
alternative formulations of these conditions can be given. In the general case, we provide in Section~\ref{sec:general-comments} conditions ensuring the
existence of Lyapunov functions satisfying (E2) in terms of exponential moment of hitting times for $\varphi_1$ and exponential decay
of the probability to be in $K$ for $\varphi_2$, and conditions ensuring (E1) and (E3) based on comparisons between transition
probabilities. Similarly as for the ergodicity criteria developed in~\cite{MeynTweedie1993}, we extend our criterion to the
continuous-time setting in Section~\ref{sec:continuous-time}.

In the rest of this section, we state the main general consequences of Assumption~(E). We start with the exponential contraction in total variation of the
conditional marginal distributions of the process given non-absorption. Its proof is given in Section~\ref{sec:proof}. 

\begin{thm}
  \label{thm:main}
  Assume that Condition~(E) holds true. Then there exist a constant $C>0$, a constant
  $\alpha\in(0,1)$, a probability measure $\nu_{QSD}$ on $E$ such that $\nu_{QSD}(K)>0$ and such that
  \begin{align}
    \label{eq:thm-main-zxzx}
    \left\|\frac{\mu P_{n}}{\mu P_{n}\11_E} -\nu_{QSD}\right\|_{TV(\varphi_1)}
    &\leq C\,\alpha^n\, 
      \frac{\mu(\varphi_1)}{\mu(\varphi_2)},\quad\forall n\geq 0,
  \end{align}
  for all probability measure $\mu$ on $E$ such that $\mu(\varphi_1)<\infty$ and $\mu(\varphi_2)>0$. 
  In addition, $\nu_{QSD}$ is the unique quasi-stationary distribution satisfying $\nu_{QSD}(\varphi_2)>0$ and
  $\nu_{QSD}(\varphi_1)<\infty$. 
\end{thm}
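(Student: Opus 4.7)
My plan is to establish the weighted contraction \eqref{eq:thm-main-zxzx} by adapting a Harris-type scheme to the sub-Markovian setting, combining three ingredients: iterated Lyapunov bounds derived from (E2); a uniform Doeblin-type minorization on $K$ obtained by upgrading (E1) with the aperiodicity (E4); and the comparison of survival probabilities supplied by the Harnack inequality (E3). Once such a contraction is in place, $\nu_{QSD}$ will be defined as the common total-variation limit of $\mu P_n/\mu P_n(\11_E)$ over admissible $\mu$.

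The first step is to iterate (E2). From $P_1\varphi_2\geq\theta_2\varphi_2$ one gets $P_n\varphi_2\geq\theta_2^n\varphi_2$, which yields the crucial lower bound $\mu P_n(\11_E)\geq\theta_2^n\mu(\varphi_2)$ for the conditioning denominator. From $P_1\varphi_1\leq\theta_1\varphi_1+c_2\11_K$ one gets
\[
P_n\varphi_1(x)\leq\theta_1^n\varphi_1(x)+c_2\sum_{k=0}^{n-1}\theta_1^{n-1-k}P_k\11_K(x),
\]
which, together with the strict inequality $\theta_1<\theta_2$, will let me bound the contribution to $\mu P_n$ of trajectories that stay outside $K$ up to time $n$ by a constant times $(\theta_1/\theta_2)^n\,\mu(\varphi_1)/\mu(\varphi_2)$ in the normalized measure.

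The second step upgrades the local minorization (E1) to a uniform Doeblin-type inequality at a deterministic time: using (E4) together with repeated applications of (E1), I expect to produce an integer $n_0$ and a constant $c_0>0$ such that $P_{n_0}(x,\cdot)\geq c_0\nu_0(\cdot)$ for every $x\in K$, for some probability $\nu_0$ supported on $K$. Coupled with (E3), which makes $\P_y(n_0<\tau_\d)$ comparable across $y\in K$ up to the factor $c_3$, this minorization yields the core one-step contraction: for any two probability measures $\mu_1,\mu_2$ supported on $K$,
\[
\left\|\frac{\mu_1 P_{n_0}}{\mu_1 P_{n_0}(\11_E)}-\frac{\mu_2 P_{n_0}}{\mu_2 P_{n_0}(\11_E)}\right\|_{TV}\leq 1-c'
\]
for some $c'>0$ depending only on $c_0$ and $c_3$. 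Iterating this gives exponential contraction at a rate $\alpha_0:=(1-c')^{1/n_0}$ for initial laws concentrated on $K$.

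The third step combines the two mechanisms. I would decompose $\mu P_n$ according to the first visit to $K$: the contribution of trajectories that have entered $K$ by time $n-n_0$ reduces, after conditioning at the first hitting time, to the localized situation of Step 2 and therefore contracts at rate $\alpha_0$; the remaining piece, formed by trajectories that avoid $K$, is dominated via Step 1 by $C(\theta_1/\theta_2)^n\mu(\varphi_1)/\mu(\varphi_2)$. Taking $\alpha:=\max(\alpha_0,\theta_1/\theta_2)$ yields \eqref{eq:thm-main-zxzx}. Existence of $\nu_{QSD}$ follows from Cauchy-completeness, the QSD equation by passing to the limit in the semigroup flow $\mu\mapsto\mu P_1/\mu P_1(\11_E)$, uniqueness by applying the estimate to $\mu$ equal to any other admissible QSD, and $\nu_{QSD}(K)>0$ from the fact that after $n_0$ steps the minorization deposits mass at least $c_0\nu_0(K)$ on $K$, a property inherited by the limit. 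The principal obstacle lies in this third step: the denominator $\mu P_n(\11_E)$ may be much smaller than $\theta_2^n\mu(\varphi_2)$ in intermediate regimes, and the Doeblin mechanism only generates mass on $K$; the Harnack inequality (E3) is precisely what is needed to relate the survival probability from an arbitrary point of $K$ to that of the minorizing measure $\nu_0$ and to reconcile the two contributions into a single geometric rate with multiplicative factor $\mu(\varphi_1)/\mu(\varphi_2)$.
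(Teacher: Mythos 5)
Your proposal assembles the right ingredients (Lyapunov iteration of (E2), a Doeblin minorization on $K$ from (E1)+(E4), survival comparison from (E3)), and the one-step statement you isolate is correct: for $\mu_1,\mu_2$ carried by $K$, the minorization $\mu_i P_{n_0}\geq c_0\nu_0$ together with $\mu_i P_{n_0}\11_E\leq \sup_{y\in K}\P_y(n_0<\tau_\d)$ does give $\mu_i P_{n_0}/\mu_i P_{n_0}\11_E\geq c'\nu_0$, hence a TV bound $\leq 2(1-c')$. The gap is in the two sentences ``Iterating this gives exponential contraction'' and ``the contribution of trajectories that have entered $K$ by time $n-n_0$ reduces \ldots to the localized situation of Step 2''. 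The map $\Phi:\mu\mapsto \mu P_{n_0}/\mu P_{n_0}\11_E$ is \emph{nonlinear}, so the Dobrushin coefficient is not submultiplicative along iterates of $\Phi$: knowing $\Phi(\mu_1),\Phi(\mu_2)\geq c'\nu_0$ does not let you conclude $\|\Phi^2(\mu_1)-\Phi^2(\mu_2)\|_{TV}\leq 2(1-c')^2$, because $\Phi(\mu_1)-\Phi(\mu_2)$ is not the image of $\mu_1-\mu_2$ under any fixed linear kernel. Likewise, after the first visit to $K$ the conditioned trajectories leave $K$ again, so the ``localized situation of Step 2'' is not reproduced at later times; you need the conditioned process to return to $K$ with uniform exponential moments, and nothing in your scheme supplies this.

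The paper resolves exactly these two points by linearizing the problem at a fixed terminal horizon $T$: it introduces the time-inhomogeneous \emph{Markov} semigroup $S^T_{m,n}f(x)=\E_x(f(X_n)\mid T<\tau_\d)$, for which Dobrushin coefficients do compose. The price is that the minorization constant of $S^T_{0,k}$ involves the ratio $\P_y(T-k<\tau_\d)/\P_x(T<\tau_\d)$, which must be controlled uniformly in $T$ (this is Lemma~\ref{lem:prop-dobrushin}, where (E3) enters together with Lemmas~\ref{lem:borne-P_n-phi1/phi2} and~\ref{lem:to-be-in-K}), and that one needs a Lyapunov function \emph{for the conditioned dynamics}, namely $\psi_T(x)=\E_x(\theta^{-T_K\wedge T}\mid T<\tau_\d)$ with drift uniform in $T$ (Proposition~\ref{prop:lyap-conditioned}, resting on Lemma~\ref{lem:expo-moment-x-in-K}). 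These are combined by a Hairer--Mattingly weighted-norm contraction (Proposition~\ref{prop:expo-conv-conditioned}), and the factor $\mu(\varphi_1)/\mu(\varphi_2)$ comes from Lemma~\ref{lem:expo-moment-mu} bounding $\E_\mu(\theta^{-T_K\wedge T}\11_{T<\tau_\d})$. Your outline would need to be rebuilt around this fixed-horizon conditioning (or an equivalent linearization) before the iteration and the recombination of the two contributions can be justified; as written, the central contraction estimate is asserted rather than proved.
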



\begin{rem}
  \label{rem:dom-attraction}
  For all $p\geq 1$, H\"older's inequality entails
  \begin{align*}
    P_1(\varphi_1^{1/p})\leq (\theta_1\varphi_1+c_2\11_K)^{1/p}\leq \theta_1^{1/p}\varphi_1^{1/p}+c_2^{1/p}\11_K,
  \end{align*}
  so that $(\varphi^{1/p}_1,\varphi_2)$ satisfies Assumption~(E) for all $p<\log\theta_1/\log \theta_2$. Therefore, the exponential
  convergence~\eqref{eq:thm-main-zxzx} actually holds true for the norm $\|\cdot\|_{TV(\varphi_1^{1/p})}$ and
  measures $\mu$ such that $\mu(\varphi_1^{1/p})<+\infty$ for some $p<\log\theta_1/\log \theta_2$.
\end{rem}

 In the following result,
 we show the existence of an eigenfunction $\eta$ of $P_1$ for the
eigenvalue $\theta_0$, where $\theta_0\in(0,1]$ is such that
$$
\PP_{\nu_{QSD}}(n<\tau_\d)=\theta_0^n,\quad\forall n\in\NN.
$$
We recall that the existence of the decay parameter $\theta_0$ is a classical general result for quasi-stationary
distributions~\cite{MeleardVillemonais2012,ColletMartinezEtAl2013}.  
The proof of the following result is initiated in Section~\ref{sec:pf-thm-eta} and concluded in Section~\ref{sec:end-eta-main}. To
state this result, we define for all positive function $\psi$ on $E$ the space $L^\infty(\psi)$ as the set of measurable real
functions $f$ on $E$ such that $\|f\|_{L^\infty(\psi)}:=\sup_{x\in E}f(x)/\psi(x)<\infty$. Note that
$(L^\infty(\psi),\|\cdot\|_{L^\infty(\psi)})$ is a Banach space.

\begin{thm}
  \label{thm:eta}
  Assume that Condition~(E) holds true. Then there exists a function $\eta:E\rightarrow\RR_+$  such that
  \begin{equation}
    \label{eq:conv-eta-general}
    \eta(x)=\lim_{n\rightarrow+\infty}\frac{\PP_x(n<\tau_\d)}{\PP_{\nu_{QSD}}(n<\tau_\d)}
    =\lim_{n\rightarrow+\infty}\theta_0^{-n}\PP_x(n<\tau_\d),\quad\forall x\in E,
  \end{equation}
  where the convergence is geometric in $L^\infty(\varphi_1)$. In addition,
 we have $\inf_{y\in K}\eta(y)>0$, $\nu_{QSD}(\eta)=1$, $\eta\in
 L^\infty\left(\varphi_1^{\log{(1/\theta_0)}/\log{(1/\theta_1)}}\right)$, 
  \begin{align*}
    P_1\eta=\theta_0\eta\quad\text{and}\quad\theta_0\geq\theta_2>\theta_1.
  \end{align*}
\end{thm}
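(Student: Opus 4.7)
The plan is to define $\eta_n(x):=\theta_0^{-n}\,\P_x(n<\tau_\d)$, show $\eta_n\to\eta$ uniformly on $K$ and then in $L^\infty(\varphi_1^{1/p})$ on all of $E$, and finally verify the remaining properties.

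First I would observe the eigenvalue ordering $\theta_0\geq\theta_2>\theta_1$: iterating (E2) gives $P_n\varphi_2\geq\theta_2^n\varphi_2$, and integrating against the QSD relation $\nu_{QSD}P_n=\theta_0^n\nu_{QSD}$, using $\nu_{QSD}(\varphi_2)>0$, yields the inequality. Next, writing
\[
\eta_{n+1}(x)-\eta_n(x)=\theta_0^{-(n+1)}P_ng(x),\qquad g:=P_1\11_E-\theta_0\11_E,
\]
one has $\|g\|_\infty\leq 2$ and $\nu_{QSD}(g)=0$, so Theorem~\ref{thm:main} with $\mu=\delta_x$ and $f=g$ gives $|\eta_{n+1}(x)-\eta_n(x)|\leq(2C/\theta_0)\,\alpha^n\,\eta_n(x)\,\varphi_1(x)/\varphi_2(x)$. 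For $x\in K$ the ratio $\varphi_1/\varphi_2$ is bounded by $(\sup_K\varphi_1)/(\inf_K\varphi_2)$, so iterating this inequality multiplicatively bounds $\eta_n(x)$ on $K\times\NN$ by some finite constant $M$, and $(\eta_n)$ is uniformly Cauchy on $K$ at geometric rate, converging to some $\eta:K\to[0,M]$.

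To extend to $E$ and obtain the $L^\infty(\varphi_1^{1/p})$ bound, I would fix $p\in[1,\log\theta_1/\log\theta_0)$, so that $\theta_1^{1/p}<\theta_0$ and, by Remark~\ref{rem:dom-attraction}, $(\varphi_1^{1/p},\varphi_2)$ still satisfies Condition~(E). Decomposing $\P_x(n<\tau_\d)$ via the first hitting time $H_K$ of $K$, the ``no-hit'' contribution $\P_x(n<\tau_\d,H_K>n)$ is bounded by $\theta_1^{n/p}\varphi_1^{1/p}(x)$ through the $p$-Lyapunov inequality iterated on paths avoiding $K$, while the ``hit'' contribution is controlled by strong Markov at $H_K$, the uniform bound $\eta_j\leq M$ on $K$, and the tail estimate $\EE_x[\theta_0^{-H_K}\11_{H_K<\infty}]\leq C_p\varphi_1^{1/p}(x)$, whose defining series converges precisely because $\theta_1^{1/p}<\theta_0$. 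This simultaneously yields the uniform bound $\eta_n\leq C'_p\varphi_1^{1/p}$ on $E$ and, applied to $\eta_n-\eta$ after extending $\eta$ to $E$ via $\eta(x):=\EE_x[\theta_0^{-H_K}\eta(X_{H_K})\11_{H_K<\infty}]$, the geometric convergence in $L^\infty(\varphi_1^{1/p})$.

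The eigenfunction identity $P_1\eta=\theta_0\eta$ then follows by passing to the limit in the semigroup relation $\eta_n=\theta_0^{-1}P_1\eta_{n-1}$ via dominated convergence (domination by $C'_p\varphi_1^{1/p}$, integrable under $P_1$ by the Lyapunov bound $P_1\varphi_1^{1/p}\leq\theta_1^{1/p}\varphi_1^{1/p}+c_2(p)\11_K$). The identity $\nu_{QSD}(\eta)=1$ results from $\nu_{QSD}(\eta_n)=1$ (the QSD eigenrelation) plus dominated convergence, using $\nu_{QSD}(\varphi_1^{1/p})\leq\nu_{QSD}(\varphi_1)^{1/p}<\infty$ (Theorem~\ref{thm:main}). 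For positivity, $\nu_{QSD}(\eta)=1$ forces $\nu_{QSD}(\{\eta>0\})>0$, and TV convergence of $\delta_xP_n/\delta_xP_n\11_E$ to $\nu_{QSD}$ (Theorem~\ref{thm:main}) gives $\P_x(X_n\in\{\eta>0\},\,n<\tau_\d)>0$ for $x\in K$ and $n$ large, whence $\eta(x)=\theta_0^{-n}\EE_x[\eta(X_n)\11_{n<\tau_\d}]>0$ pointwise on $K$; consequently $\nu(\eta)>0$ and (E1) upgrades this to $\inf_K\eta\geq\theta_0^{-n_1}c_1\nu(\eta)>0$. Finally $E'=\{\eta>0\}$: if $x\notin E'$, iterating (E2) along paths avoiding $K$ gives $\eta_n(x)\leq(\theta_1/\theta_0)^n\varphi_1(x)\to 0$; if $x\in E'$, picking $k$ with $P_k\11_K(x)>0$ yields $\theta_0^k\eta(x)=P_k\eta(x)\geq\inf_K\eta\cdot P_k\11_K(x)>0$. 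The main obstacle is the $L^\infty(\varphi_1^{1/p})$ convergence itself: two distinct decay mechanisms must be woven together inside the strong-Markov decomposition at $H_K$---the $\alpha^n$ Cauchy rate inside $K$ and the Lyapunov $(\theta_1^{1/p}/\theta_0)^n$ rate outside $K$---and the restriction $p<\log\theta_1/\log\theta_0$ is precisely what makes $\sum_k(\theta_1^{1/p}/\theta_0)^k$ converge so that the hitting-time tail is summable.
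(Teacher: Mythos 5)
Your proposal is correct and follows essentially the same route as the paper's proof: define $\eta_n=\theta_0^{-n}P_n\11_E$, obtain a geometric Cauchy estimate on $K$ from Theorem~\ref{thm:main}, extend $\eta$ by $\eta(x)=\E_x\bigl[\theta_0^{-T_K\wedge\tau_\d}\eta(X_{T_K\wedge\tau_\d})\bigr]$, and control $\|\eta_n-\eta\|_{L^\infty(\varphi_1^{1/p})}$ through the strong Markov decomposition at $T_K$, the condition $\theta_1<\theta_0^p$ being exactly what makes the hitting-time tail summable. The only local deviations are harmless: you derive the uniform bound of $\eta_n$ on $K$ from the multiplicative Cauchy estimate started at $\eta_0=1$ rather than from (E3) as the paper does, and your appeal to Remark~\ref{rem:dom-attraction} for the full range $p<\log\theta_1/\log\theta_0$ overreaches (the remark only covers $p<\log\theta_1/\log\theta_2$), but the sole ingredient you actually use from it --- $P_1\varphi_1^{1/p}\leq\theta_1^{1/p}\varphi_1^{1/p}+c_2(p)\11_K$, valid for every $p\geq 1$ by H\"older --- does not require that claim.
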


\begin{rem}
  \label{rem:link-eta-phi_2}
  In general, there is no simple relation between $\varphi_2$ and $\eta$, in particular $\varphi_2$ is not necessarily an element of $L^\infty(\eta)$.
  However, it is true that, for all $x\in E$, $P_k\varphi_2(x)>0$ for some $k\geq 0$ if and only if $\eta(x)>0$ (see
    Corollary~\ref{cor:attraction-domain} below).
\end{rem}

\begin{rem}
  Note that, when $\eta$ is bounded, the last result implies that one can actually take $\varphi_2=\eta/\|\eta\|_\infty$ in
  Condition~(E2). Results with unbounded $\varphi_2$ or $1/\varphi_1$ can also be obtained by taking the $\varphi_1$-transform of
  $(P_n)_{n\in\Z_+}$ (see~\cite{BansayeCloezEtAl2019,ChampagnatVillemonais2019}).
\end{rem}

We consider now the $Q$-process and its ergodicity properties under Condition~(E). In the next
result, proved in Section~\ref{sec:pf-thm-Q-proc}, $\Omega=E^{\Z_+}$ is the canonical state space of Markov chains on
$E$ and $({\cal F}_n)_{n\in\Z_+}$ is the associated canonical filtration. We emphasize that the constant $\alpha$ may differ from the
one in Theorem~\ref{thm:main}. In the following result, we define
\[
  E':=\left\{x\in E,\,\eta(x)>0\right\}.
\]

\begin{thm}
  \label{thm:Q-proc}
  Condition~(E) implies the following properties. 
  \begin{description}
  \item[\textmd{(i) Existence of the $Q$-process.}] There exists a family $(\QQ_x)_{x\in E'}$ of
    probability measures on $\Omega$ defined by
    $$
    \lim_{n\rightarrow+\infty}\PP_x(A\mid n<\tau_\partial)=\QQ_x(A)
    $$
    for all $x\in E'$, for all ${\cal F}_m$-measurable set $A$ and for all $m\geq 0$. The process $(\Omega,({\cal F}_n)_{n\in\Z_+},(X_n)_{n\in\Z_+},(\QQ_x)_{x\in E'})$ is an $E'$-valued homogeneous Markov
    chain.
  \item[\textmd{(ii) Semigroup.}] The semigroup of the Markov process $X$ under $(\QQ_x)_{x\in E'}$ is given
    for all bounded measurable function $\varphi$ on $E'$ and $n\geq 0$ by
    \begin{align}
      \label{eq:semi-group-Q}
      \widetilde{P}_n\varphi(x)=\frac{\theta_0^{-n}}{\eta(x)}P_n(\eta\varphi)(x).
    \end{align}
  \item[\textmd{(iii) Exponential ergodicity.}] The probability measure $\beta$ on $E'$ defined by
    \begin{align*}
      \beta(\mathrm dx)=\eta(x)\nu_{QSD}(\mathrm dx).
    \end{align*}
    is the unique invariant distribution of the Markov process $X$ under $(\QQ_x)_{x\in E'}$. Moreover,  there exist constants $C>0$ and $\alpha\in(0,1)$ such that, for all initial
    distributions $\mu$ on $E'$ such that $\mu(\varphi_1/\eta)<\infty$ and 
    \begin{align}
      \label{eq:Q-proc-11}
      \left\|\mu \widetilde{P}_n-\beta(h)\right\|_{TV(\varphi_1/\eta)}\leq C\alpha^{n}\,\mu\left(\varphi_1/\eta\right),\quad\forall n\geq 0,
    \end{align}
    where $\Q_\mu=\int_{E'} \Q_x\,\mu(dx)$. In addition, for all initial distributions $\mu$ on $E'$,
    \begin{align}
      \label{eq:Q-proc-12}
      \left\|\mu\widetilde{P}_n-\beta\right\|_{TV}\xrightarrow[n\rightarrow\infty]{} 0.
    \end{align}
  \end{description}
\end{thm}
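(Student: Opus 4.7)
The plan is to construct the $Q$-process from the pointwise asymptotic in Theorem~\ref{thm:eta} and then transfer the conditional ergodicity of Theorem~\ref{cor:main} into unconditional ergodicity of the conditioned chain. For $x\in E'$ and $A\in\mathcal{F}_m$, the Markov property at time $m$ gives $\P_x(A\cap\{n<\tau_\d\})=\E_x[\mathbbm{1}_A\mathbbm{1}_{m<\tau_\d}\P_{X_m}(n-m<\tau_\d)]$; multiplying numerator and denominator by $\theta_0^{-n}$ and using that $\theta_0^{-n}\P_\cdot(n<\tau_\d)\to\eta$ in $L^\infty(\varphi_1^{1/p})$ (Theorem~\ref{thm:eta}), together with dominated convergence based on $\E_x[\varphi_1(X_m)]<\infty$ from (E2), yields
\[
\lim_{n\to\infty}\P_x(A\mid n<\tau_\d)=\frac{\theta_0^{-m}}{\eta(x)}\E_x\bigl[\mathbbm{1}_A\,\mathbbm{1}_{m<\tau_\d}\,\eta(X_m)\bigr]=:\Q_x(A).
\]
The identity $P_1\eta=\theta_0\eta$ makes these finite-dimensional marginals consistent, so $\Q_x$ extends to $\mathcal{F}_\infty$ by Kolmogorov; since $\eta$ vanishes outside $E'\cup\{\d\}$, the process stays in $E'$; and the explicit density on $\mathcal{F}_m$ makes both the Markov property and the semigroup formula~\eqref{eq:semi-group-Q} immediate. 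That $\widetilde{P}_n$ is a genuine Markov kernel follows from $P_n\eta=\theta_0^n\eta$, and the invariance of $\beta(dx)=\eta(x)\nu_{QSD}(dx)$ reduces to $\beta\widetilde{P}_n f=\theta_0^{-n}(\nu_{QSD}P_n)(\eta f)=\nu_{QSD}(\eta f)=\beta(f)$, thanks to $\nu_{QSD}P_n=\theta_0^n\nu_{QSD}$.

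For the quantitative bound~\eqref{eq:Q-proc-11}, I fix $h$ with $|h|\leq\varphi_1^{1/p}/\eta$, set $g=\eta h$ (so $|g|\leq\varphi_1^{1/p}$) and introduce the measure $\bar\mu(dx)=\mu(dx)/\eta(x)$, which satisfies $\bar\mu(\varphi_1^{1/p})=\mu(\varphi_1^{1/p}/\eta)<\infty$ and $\bar\mu(\eta)=1$. Then
\[
\E_{\Q_\mu}[h(X_n)]-\beta(h)=\theta_0^{-n}\bar\mu P_n g-\bar\mu(\eta)\nu_{QSD}(g),
\]
which I split into $[\theta_0^{-n}\bar\mu P_n g-\nu_{QSD}(g)\,\theta_0^{-n}\bar\mu P_n\mathbbm{1}_E]+\nu_{QSD}(g)[\theta_0^{-n}\bar\mu P_n\mathbbm{1}_E-\bar\mu(\eta)]$. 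The second bracket decays geometrically by integrating Theorem~\ref{thm:eta} against $\bar\mu$, at a rate bounded by the one provided there in terms of $p$. The first bracket is handled by normalising $\bar\mu$ to a probability measure and invoking Theorem~\ref{cor:main} together with the Theorem~\ref{thm:eta} control on $\theta_0^{-n}\bar\mu P_n\mathbbm{1}_E$; if $\bar\mu(\varphi_2)=0$, since $\bar\mu(E')>0$ the shift-by-$k$ argument of Corollary~\ref{cor:main-P_k-phi_2} reduces to this case at time $n\geq k$. Setting $\widetilde\alpha_p<1$ to be the maximum of $\alpha_p$ from Theorem~\ref{cor:main} and the geometric rate from Theorem~\ref{thm:eta} yields~\eqref{eq:Q-proc-11}.

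The unconditional TV convergence~\eqref{eq:Q-proc-12} for arbitrary $\mu$ on $E'$ follows by truncation: the sets $K_N=\{x\in E':\varphi_1(x)\leq N,\ \eta(x)\geq 1/N\}$ exhaust $E'$, so $\mu(K_N)\uparrow 1$ and $\mu|_{K_N}(\varphi_1/\eta)\leq N^2<\infty$. Decomposing $\mu$ as a convex combination of its renormalised restrictions to $K_N$ and $K_N^c$ and using $\|\,\cdot\,\|_{TV}\leq 2$ for the latter, I obtain
\[
\|\mu\widetilde{P}_n-\beta\|_{TV}\leq\mu(K_N)\,\|(\mu|_{K_N}/\mu(K_N))\widetilde{P}_n-\beta\|_{TV}+2\mu(K_N^c),
\]
and letting first $n\to\infty$ (by the previous step) and then $N\to\infty$ concludes. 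The main obstacle is the decomposition at the heart of the second paragraph: it requires combining two distinct geometric bounds (from Theorem~\ref{cor:main} for $\theta_0^{-n}P_n g$ and from Theorem~\ref{thm:eta} for $\theta_0^{-n}P_n\mathbbm{1}_E$) against a measure $\bar\mu$ that may fail to charge $\{\varphi_2>0\}$, which is precisely why the shift trick of Corollary~\ref{cor:main-P_k-phi_2} is needed to keep the rate $\widetilde\alpha_p$ uniform.
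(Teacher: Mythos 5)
Points (i) and (ii) of your proposal are correct: your direct dominated-convergence computation of $\lim_n\PP_x(A\mid n<\tau_\d)$ for $A\in{\cal F}_m$ (using the $L^\infty(\varphi_1^{1/p})$ convergence of Theorem~\ref{thm:eta} and $\EE_x[\varphi_1(X_m)\11_{m<\tau_\d}]<\infty$) is a legitimate, slightly more hands-on substitute for the paper's appeal to the Roynette--Vallois--Yor penalization theorem, and the consistency/semigroup/invariance-of-$\beta$ computations are exactly as in the paper. The truncation argument deducing~\eqref{eq:Q-proc-12} from~\eqref{eq:Q-proc-11} is also fine.

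The gap is in your derivation of~\eqref{eq:Q-proc-11}. Your decomposition reduces the first bracket to Theorem~\ref{cor:main} applied to the measure $\bar\mu(dx)=\mu(dx)/\eta(x)$, but the bound furnished by Theorem~\ref{cor:main} is $C_p\,\alpha_p^n\,\bar\mu(\varphi_1^{1/p})/\bar\mu(\varphi_2)$, and the factor $1/\bar\mu(\varphi_2)=1/\mu(\varphi_2/\eta)$ is not controlled by $\mu(\varphi_1^{1/p}/\eta)$: there is no inequality $\varphi_2\geq c\,\eta$ in general (e.g.\ in the reducible setting of Section~\ref{sec:reducible}, $\varphi_2$ vanishes on $D_1\subset E'$ while $\eta>0$ there), so $\bar\mu(\varphi_2)$ can vanish or be arbitrarily small within the class of admissible $\mu$. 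The shift of Corollary~\ref{cor:main-P_k-phi_2} does not repair this: the shift index $k$ and the ratio $\mu P_k\varphi_1/\mu P_k\varphi_2$ depend on $\mu$ (already for Dirac masses $\delta_x$ with $\varphi_2(x)=0$ they depend on $x$ with no uniform bound), so you obtain convergence for each fixed $\mu$ but not the uniform estimate $C_p\widetilde\alpha_p^n\,\mu(\varphi_1^{1/p}/\eta)$ claimed in~\eqref{eq:Q-proc-11} (even when $\bar\mu(\varphi_2)>0$, your bound is of order $(1+\bar\mu(\varphi_1^{1/p}))\bar\mu(\varphi_1^{1/p})/\bar\mu(\varphi_2)$, not linear in $\bar\mu(\varphi_1^{1/p})$). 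The paper avoids $\varphi_2$ altogether by running a Harris-type argument directly on the conservative semigroup $\widetilde P$: the function $\psi=\varphi_1/\eta$ satisfies the Foster--Lyapunov inequality $\widetilde P_1\psi\leq(\theta_1/\theta_0)\psi+C\11_K$ (from $P_1\varphi_1\leq\theta_1\varphi_1+c_2\11_K$ and $\inf_K\eta>0$), Lemma~\ref{lem:prop-dobrushin} yields a minorization $\QQ_x(X_n\in\cdot)\geq c_1'\nu$ on $K$, and Hairer's weighted contraction theorem then gives~\eqref{eq:Q-proc-11} with the correct $\mu$-dependence. Some argument of this type (a Lyapunov/minorization analysis for $\widetilde P$ itself, or a direct geometric estimate on $\theta_0^{-n}P_ng-\eta\,\nu_{QSD}(g)$ in $L^\infty(\varphi_1^{1/p})$) is needed; it cannot be read off from Theorems~\ref{thm:main} and~\ref{cor:main} alone.
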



We conclude this section with corollaries of the last theorem. The following result is proved in Section~\ref{sec:end-eta-main}.

\begin{cor}
    \label{cor:quasi-comp}
    Assume that Condition~(E) holds true. Then there exist constants $C>0$ and $\alpha\in(0,1)$ such that, for all probability measure $\mu$ on $E$ such that $\mu(\varphi_1)<+\infty$,
    \begin{align}
    \label{eq:quasi-comp}
    \left\|\theta_0^{-n}\mu P_n -\mu(\eta)\nu_{QSD}\right\|_{TV(\varphi_1)}\leq C\,\alpha^n\mu(\varphi_1). 
    \end{align}
\end{cor}

\begin{rem}
    The proof of Theorem~\ref{thm:Q-proc} makes use of~\cite{Hairer2010,HairerMattingly2011}, which allows to derive explicit expressions for the constants $C$ and $\alpha$ (we refer the interested reader to Remark~\ref{rem:explici-constants}). In particular, using these estimates in the proof of Corollary~\ref{cor:quasi-comp} would also provide explicit constants in~\eqref{eq:quasi-comp}.
  \end{rem}

    \begin{rem}
      The formulation~\eqref{eq:quasi-comp} for the convergence of the semigroup is natural in this setting, since a property of
      equivalence between~\eqref{eq:quasi-comp} and Condition~(E) is proved in~\cite{BansayeCloezEtAl2019,ChampagnatVillemonais2019}.
    \end{rem}

The last corollary has consequences on the attraction domain of $\nu_{QSD}$.
\begin{cor}
  \label{cor:attraction-domain}
  Assume that Condition~(E) holds true. Then
  \[
    E'=\left\{x\in E:\exists k\geq 0,\ P_k\varphi_2(x)>0\right\}
  \]
  and the domain of attraction of $\nu_{QSD}$ for the total variation norm contains all
  probability measures on $E$ such that $\mu(E')>0$ and $\mu(\varphi_1^{1/p})<+\infty$ for some $p<\log\theta_1/\log\theta_2$. If in
  addition $\varphi_1$ is bounded, then the domain of attraction of $\nu_{QSD}$ is the set of probability measures on $E$ such that
  $\mu(E')>0$ and $\nu_{QSD}$ is the unique quasi-stationnary distribution giving positive mass to $E'$.
\end{cor}

Convergence estimates can also be obtained for initial distributions on $E'$ satisfying $\mu(\eta)<+\infty$ but not necessarily
$\mu(\varphi_1)<+\infty$. The following result is proved in~\ref{sec:proof-cor-2}.
\begin{cor}
    \label{cor:ratios}
    Assume that Condition~(E) holds true. Then, for all probability measures $\mu$ on $E'$ such that $\mu(\eta)<+\infty$, 
    \begin{align}
    \label{eq:statementcor2}
    \left\|\theta_0^{-n}\mu P^n -\mu(\eta)\nu_{QSD}\right\|_{TV(\eta)}\xrightarrow[n\to+\infty]{} 0.
    \end{align}
    In particular, if $\eta$ is positive on $E$, then $\nu_{QSD}$ is the unique quasi-stationary distribution of $X$ such that $\nu_{QSD}(\eta)<+\infty$. If in addition $\eta$ is lower bounded away from $0$ on $E$, then for all probability measures $\mu$ on $E$ such that $\mu(\eta)<+\infty$, we have
    \begin{align}
    \label{eq:statementcor2bis}
    \left\|\P_\mu(X_n\in\cdot\mid n<\tau_\partial)-\nu_{QSD}\right\|_{TV(\eta)}\xrightarrow[n\to+\infty]{} 0.
    \end{align}
    In particular, the domain of attraction of $\nu_{QSD}$ contains all probability measures $\mu$ on $E$ such that $\mu(\eta)<+\infty$.
\end{cor}

\section{Other formulations and particular cases of Assumption~(E)}
\label{sec:equivalent-formulations}

In this section, we provide general comments on Assumption~(E). Alternative formulations of our assumptions and
  simple criteria are gathered in Subsection~\ref{sec:general-comments}. Subsection~\ref{sec:continuous-time} focuses on criteria
adapted to continuous time processes and we consider the case of uniform convergence in Theorem~\ref{thm:main} in
Subsection~\ref{sec:A1-A2}.

\subsection{General comments on the assumptions}
\label{sec:general-comments}

We propose here alternative formulations of Condition~(E2) and criteria ensuring~(E1) and~(E3) when~(E2) and~(E4) are satisfied, that
may be easier to check in some practical situations. In particular, we make strong use of these results in
Sections~\ref{sec:continuous-state-space} and~\ref{sec:discrete-state-space}.


\subsubsection{Construction of Lyapunov functions satisfying (E2)}
\label{sec:constr-lyap-funct}

In order to prove the existence of functions $\varphi_1$ and $\varphi_2$ in Condition~(E2), one may use probabilistic properties of
the Markov process $X$, as stated in the following lemmas, proved in Sections~\ref{sec:proof-varphi2} and~\ref{sec:proof-varphi1}.
The first lemma shows a way to construct $\varphi_2$.

\begin{lem}
  \label{lem:varphi2}
  Let $K$ be a measurable subset of $E$. If there exists $\theta_2>0$ such that
  \[
  \inf_{x\in K} \theta_2^{-n}\P_x(X_n\in K)\xrightarrow[n\rightarrow+\infty]{} +\infty,
  \]
  then the function $\varphi_2:E\rightarrow [0,1]$ defined by
  $\varphi_2(x)=\frac{\theta^{-1}_2-1}{\theta_2^{-\ell}-1}\sum_{k=0}^{\ell-1} \theta_2^{-k}\P_x(X_k\in K)$, for any $\ell$ is such that
  $\theta_2^{-\ell}\inf_{x\in K}\P_x(X_\ell\in K)\geq 1$, verifies $\inf_{K}\varphi_2>0$ and
  $P_1\varphi_2(x)\geq \theta_2\,\varphi_2(x)$. Moreover, (E4) is satisfied.
\end{lem}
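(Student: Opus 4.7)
The plan is to verify each conclusion of the lemma by direct computation, writing $\P_x(X_k\in K)=P_k\11_K(x)$ throughout. First observe that the hypothesis forces $\theta_2<1$, since $\theta_2^{-n}\P_x(X_n\in K)\le\theta_2^{-n}$ cannot tend to $+\infty$ otherwise; in particular the integer $\ell$ with $\theta_2^{-\ell}\inf_{x\in K}\P_x(X_\ell\in K)\ge 1$ exists, and the constant $C_\ell:=(\theta_2^{-1}-1)/(\theta_2^{-\ell}-1)$ is positive. The geometric identity $\sum_{k=0}^{\ell-1}\theta_2^{-k}=1/C_\ell$ immediately gives $\varphi_2\le 1$ on $E$ (bounding each $P_k\11_K\le 1$), while for $x\in K$ the $k=0$ term alone yields $\varphi_2(x)\ge C_\ell$, so $\inf_K\varphi_2\ge C_\ell>0$.

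The Lyapunov-type inequality is obtained by applying $P_1$ termwise and reindexing. Using the semigroup property $P_1 P_k = P_{k+1}$,
\[
P_1\varphi_2 \;=\; C_\ell\sum_{k=0}^{\ell-1}\theta_2^{-k}P_{k+1}\11_K \;=\; \theta_2\, C_\ell\sum_{k=1}^{\ell}\theta_2^{-k}P_k\11_K.
\]
Subtracting $\theta_2\varphi_2=\theta_2 C_\ell\sum_{k=0}^{\ell-1}\theta_2^{-k}P_k\11_K$ collapses the sum to the endpoints:
\[
P_1\varphi_2(x)-\theta_2\varphi_2(x) \;=\; \theta_2\, C_\ell\bigl(\theta_2^{-\ell}P_\ell\11_K(x)-\11_K(x)\bigr).
\]
On $E\setminus K$ this is $\theta_2 C_\ell\theta_2^{-\ell}P_\ell\11_K(x)\ge 0$, and on $K$ it is nonnegative precisely by the defining property of $\ell$. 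Hence $P_1\varphi_2\ge\theta_2\varphi_2$ on all of $E$.

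Finally, (E4) follows immediately from the hypothesis: choose $N$ such that $\theta_2^{-n}\inf_{x\in K}\P_x(X_n\in K)\ge 1$ for every $n\ge N$; then $\inf_{x\in K}\P_x(X_n\in K)\ge\theta_2^{n}>0$, and in particular $\P_x(X_n\in K)>0$ for every $x\in K$ and $n\ge N$, so one may take $n_4(x)=N$ uniformly in $x\in K$. There is no real obstacle here — the argument is a one-step telescoping, and the only mild care needed is in checking that the normalizing constant $C_\ell$ and the index $\ell$ are well-defined, which both follow from $\theta_2<1$.
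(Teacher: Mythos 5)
Your proof is correct and follows essentially the same route as the paper: the same telescoping identity $P_1\varphi_2-\theta_2\varphi_2=\theta_2 C_\ell\bigl(\theta_2^{-\ell}P_\ell\11_K-\11_K\bigr)$, the same use of the defining property of $\ell$ on $K$, and the same choice of a uniform $n_4$ for (E4). Your added checks that $\theta_2<1$ (hence $C_\ell>0$ and $\ell$ exists) and that $\varphi_2\le 1$ are details the paper leaves implicit, and they are right.
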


The second lemma shows how $\varphi_1$ can be constructed. This is a well-known result in the case without
absorption~\cite{MeynTweedie1993}, which can provide easier ways to check~(E2) in some situations. We define
\begin{equation}
  \label{eq:def-T_K-section-3}
  T_K=\inf\{n\in\Z_+,\ X_n\in K\}.
\end{equation}

\begin{lem}
  \label{lem:varphi1}
  Let $K$ be a measurable subset of $E$. 
If there exists a constant $\theta_1>0$ such that  
    \[
  \E_x\left(\theta_1^{- T_K\wedge\tau_\d}\right)<+\infty\,\ \forall x\in E\text{ and } C:=\sup_{y\in K}\,
  \E_y\left(\E_{X_1}\left(\theta_1^{- T_K\wedge\tau_\d}\right)\11_{1<\tau_\d}\right)<+\infty,
  \]
  then the function $\varphi_1:E\rightarrow[1,+\infty)$ defined by $\varphi_1(x)=\E_x\left(\theta_1^{- T_K\wedge\lceil\tau_\d\rceil}\right)$ satisfies 
  \[
  \sup_{K}\varphi_1<+\infty\quad\text{ and }\quad P_1\varphi_1\leq \theta_1\varphi_1+\frac{C}{\theta_1}\11_K.
  \]
  Conversely, if there exist two constants $C>0$, $\theta_1>0$ and a function $\varphi_1:E\to [1,+\infty)$ such that
  $\sup_{K}\varphi_1<+\infty$ and $ P_1\varphi_1\leq \theta_1\varphi_1+C\11_K$, then, for all $\theta>\theta_1$, there exists a
  constant $C_\theta$ such that
  \[
  \E_x\left(\theta^{- T_K\wedge\tau_\d}\right)\leq C_\theta\varphi_1(x)\,\ \forall x\in E\text{ and } \sup_{y\in K}\,
  \E_y\left(\E_{X_1}\left(\theta^{- T_K\wedge\tau_\d}\right)\11_{1<\tau_\d}\right)<+\infty.
  \]
\end{lem}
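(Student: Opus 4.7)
The plan is to prove the two implications separately: the forward direction by applying the Markov property at time~$1$, and the converse by iterating the Foster--Lyapunov inequality. Throughout, $\theta_1\in(0,1)$ (otherwise the bound $\varphi_1\ge 1$ forces $\varphi_1\equiv 1$ and the hypotheses are degenerate), so $\theta_1^{-k}\ge 1$ for all $k\ge 0$ and hence $\varphi_1\ge 1$ automatically.

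For the forward direction, note that for $y\in K$ we have $T_K(y)=0$, so $\varphi_1(y)=1$ and $\sup_K\varphi_1=1<\infty$. The Lyapunov inequality is obtained by splitting on the position of $X_0$. If $x\in E\setminus K$, then on $\{1<\tau_\d\}$ one has $T_K\ge 1$ and the decomposition $T_K\wedge\lceil\tau_\d\rceil=1+T_K'\wedge\lceil\tau_\d'\rceil$ (where the primes denote the same quantities computed for the trajectory started at $X_1$), combined with the Markov property at time~$1$, yields
\[
\varphi_1(x) = \theta_1^{-1}P_1\varphi_1(x) + \theta_1^{-1}\,\E_x\bigl[\theta_1^{-\lceil\tau_\d\rceil}\11_{\tau_\d\le 1}\bigr],
\]
from which $P_1\varphi_1(x)\le\theta_1\varphi_1(x)$. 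If $y\in K$, then $\varphi_1(y)=1$ and $P_1\varphi_1(y)=\E_y[\varphi_1(X_1)\11_{1<\tau_\d}]$; bounding this by the hypothesis constant $C$ relies on the elementary inequality $\theta_1^{-\lceil\tau_\d\rceil}\le\theta_1^{-1}\theta_1^{-\tau_\d}$, which lets a bounded factor be absorbed into the constant.

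For the converse, fix $\theta>\theta_1$ and proceed in three steps. Step~$1$ is to prove by induction on $n$ that
\[
\E_x\bigl[\varphi_1(X_n)\11_{n<T_K\wedge\tau_\d}\bigr]\le\theta_1^n\,\varphi_1(x),\quad\forall x\in E,
\]
the key point being that on $\{n<T_K\wedge\tau_\d\}$ we have $X_n\in E\setminus K$, so the $\11_K$-term in the Foster--Lyapunov hypothesis evaluated at $X_n$ vanishes in the conditional expectation produced by the Markov property at time~$n$. Step~$2$ is to deduce, using $\varphi_1\ge 1$, the tail bound $\PP_x(T_K\wedge\tau_\d>n)\le\theta_1^n\varphi_1(x)$. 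Step~$3$ is to use the telescoping identity
\[
\theta^{-T_K\wedge\tau_\d} = 1 + (\theta^{-1}-1)\sum_{n=0}^{\infty}\theta^{-n}\,\11_{n<T_K\wedge\tau_\d},
\]
take expectations, and sum the resulting geometric series of ratio $\theta_1/\theta<1$, obtaining $\E_x[\theta^{-T_K\wedge\tau_\d}]\le C_\theta\,\varphi_1(x)$ with an explicit $C_\theta$. The uniform bound on $K$ then follows by applying the first bound at $X_1$ and invoking the hypothesis $P_1\varphi_1\le\theta_1\varphi_1+C$ together with $\sup_K\varphi_1<\infty$.

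I do not anticipate any significant obstacle: both directions amount to careful bookkeeping with the Markov property. The only mildly delicate point is the reconciliation between $\lceil\tau_\d\rceil$ in the definition of $\varphi_1$ and the plain $\tau_\d$ in the hypothesis, relevant only in continuous time, and uniformly handled by $\lceil\tau_\d\rceil\le\tau_\d+1$.
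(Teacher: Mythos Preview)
Your proposal is correct and follows essentially the same route as the paper: Markov property at time~$1$ for the forward direction (splitting on $x\in E\setminus K$ versus $x\in K$), and iteration of the Lyapunov inequality to get the tail bound $\PP_x(n<T_K\wedge\tau_\d)\le\theta_1^n\varphi_1(x)$ followed by a geometric summation for the converse. The paper writes the converse summation slightly more tersely (arriving directly at $\E_x(\theta^{-T_K\wedge\tau_\d})\le\frac{1}{\theta-\theta_1}\varphi_1(x)$), but your telescoping identity is the same computation made explicit; note only that in continuous time your ``identity'' is actually the inequality $\theta^{-T_K\wedge\tau_\d}\le 1+(\theta^{-1}-1)\sum_{n\ge 0}\theta^{-n}\11_{n<T_K\wedge\tau_\d}$, which is the direction you need.
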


Note that the hitting time $T_K$ is defined from the process $(X_n)_{n\in\Z_+}$. When $I\neq\Z_+$, it might be easier to use criteria based
on the hitting time $\tau_K$ defined from the full process $(X_n)_{n\in I}$. We refer the reader to Lemma~\ref{prop:E-G} below for that.

\subsubsection{Checking (E1) and (E3) from comparisons between transition probabilities}
\label{sec:how-check-e1-e4}

Condition~(E3) is a form of Harnack inequality, and one can indeed use general versions of these inequalities to check~(E3) and~(E1)
(for example, our results on diffusions given in Section~\ref{sec:diffusion} use this idea, cf.\ Section~\ref{sec:harnack-diff}). We
propose below another criterion, based on comparison techniques on transition probabilities, to check that Conditions~(E1) and (E3)
hold true when Conditions~(E2) and~(E4) are satisfied. This result is proved in Subsection~\ref{sec:proof-E2thenE}.

\begin{prop}
  \label{lem:E2thenE}
  Assume that Conditions~(E2) and~(E4) are satisfied and that there exist two constants $C>0$ and $n_0\leq m_0\in \N$ such that
  \begin{equation}
    \label{eq:condition-E2thenE}
    \P_x(X_{n_0}\in \cdot\cap K)\leq C\,\P_y(X_{m_0}\in \cdot),\ \forall x\in E\text{ and }y\in K.
  \end{equation}
  Then Condition~(E) is satisfied.
\end{prop}

\subsubsection{Optimal value of $\theta_2$ in (E2)}
\label{sec:optimal-theta_2-in-(E2)}

As many results of Section~\ref{sec:main-discrete-time} make use of the function $\varphi_1^{1/p}$ with a parameter
$p\in[1,\log\theta_1/\log\theta_2)$, it is important to characterize the largest possible value of $\theta_2$. 
This result is proved in
Section~\ref{sec:proof-theta-2-et-0}.

\begin{lem}
  \label{lem:theta-2-et-0}
  If Condition~(E) is satisfied for some functions $\varphi_1 $ and $\varphi_2$ with constants $\theta_1$ and $\theta_2$, then, for
  all $\theta'_2\in(\theta_1,\theta_0)$ it is also satisfied for $\varphi_1$ and some function $\varphi'_2$ with constants $\theta_1$ and
  $\theta'_2$.
\end{lem}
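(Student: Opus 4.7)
The plan is to reduce the statement to an application of Lemma~\ref{lem:varphi2} with $\theta_2$ replaced by $\theta'_2$. That lemma constructs a suitable Lyapunov function $\varphi'_2$ from the growth condition
\begin{equation}
\label{eq:goal-star}
\inf_{x\in K}(\theta'_2)^{-n}\P_x(X_n\in K)\xrightarrow[n\to+\infty]{}+\infty,
\end{equation}
so the whole task boils down to proving \eqref{eq:goal-star}. The idea is that $\P_x(X_n\in K)$ should grow like $\theta_0^n$ uniformly in $x\in K$, which suffices since by hypothesis $\theta'_2<\theta_0$.

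To establish the $\theta_0^n$ lower bound, I will combine two uniform convergence statements on $K$ already proved in the paper. First, by Theorem~\ref{thm:eta}, $\theta_0^{-n}\P_\cdot(n<\tau_\d)\to\eta$ geometrically in $L^\infty(\varphi_1^{1/p})$ for some $p\ge 1$, and $\inf_K\eta>0$. Because (E2) gives $\sup_K\varphi_1<\infty$, the convergence is uniform on $K$, so there exists $n_0$ and $c_0>0$ with
\[
\inf_{x\in K}\theta_0^{-n}\P_x(n<\tau_\d)\geq c_0,\qquad\forall n\geq n_0.
\]
Second, Theorem~\ref{thm:main} applied to $\mu=\delta_x$ gives
\[
\left\|\P_x(X_n\in\cdot\mid n<\tau_\d)-\nu_{QSD}\right\|_{TV}\leq C\alpha^n\frac{\varphi_1(x)}{\varphi_2(x)}.
\]
For $x\in K$ the ratio $\varphi_1(x)/\varphi_2(x)$ is bounded by $\sup_K\varphi_1/\inf_K\varphi_2<\infty$ (by (E2)), so the total variation convergence is uniform on $K$. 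Since $\nu_{QSD}(K)>0$ (Theorem~\ref{thm:main}), enlarging $n_0$ if needed,
\[
\inf_{x\in K}\P_x(X_n\in K\mid n<\tau_\d)\geq\nu_{QSD}(K)/2,\qquad\forall n\geq n_0.
\]
Multiplying the two bounds yields $\inf_{x\in K}\P_x(X_n\in K)\geq \tfrac{c_0\nu_{QSD}(K)}{2}\,\theta_0^n$ for $n\geq n_0$, and since $\theta_0/\theta'_2>1$ this proves \eqref{eq:goal-star}.

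Once \eqref{eq:goal-star} is in hand, Lemma~\ref{lem:varphi2} produces a function $\varphi'_2:E\to[0,1]$ with $\inf_K\varphi'_2>0$ and $P_1\varphi'_2\geq\theta'_2\varphi'_2$. It remains to read off that Condition~(E) holds with the quadruple $(\varphi_1,\varphi'_2,\theta_1,\theta'_2)$: (E1), (E3) and (E4) do not involve $\varphi_2$ or $\theta_2$, so they carry over unchanged; the $\varphi_1$-part of (E2) is untouched; the $\varphi'_2$-part of (E2) is exactly the output of Lemma~\ref{lem:varphi2}; and the ordering $\theta_1<\theta'_2$ is part of the hypothesis $\theta'_2\in(\theta_1,\theta_0)$.

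The only real subtlety is ensuring that both limiting statements (from Theorems~\ref{thm:main} and~\ref{thm:eta}) are \emph{uniform} on $K$; this is where the pointwise bounds $\sup_K\varphi_1<\infty$ and $\inf_K\varphi_2>0$ from (E2) play a crucial role. Beyond this, the argument is a direct combination of already-proved results together with the explicit construction in Lemma~\ref{lem:varphi2}.
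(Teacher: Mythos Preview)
Your proof is correct and follows essentially the same route as the paper: establish $\inf_{x\in K}(\theta'_2)^{-n}\P_x(X_n\in K)\to+\infty$ by combining a uniform lower bound on $\theta_0^{-n}\P_x(n<\tau_\d)$ over $K$ with a uniform lower bound on $\P_x(X_n\in K\mid n<\tau_\d)$ over $K$, and then invoke Lemma~\ref{lem:varphi2}. The only cosmetic difference is that the paper cites the intermediate estimate~\eqref{eq:pf-eta-prelim-3} and Lemma~\ref{lem:to-be-in-K} directly, whereas you appeal to the packaged statements of Theorems~\ref{thm:eta} and~\ref{thm:main}; the content is the same.
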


\subsection{On continuous time}
\label{sec:continuous-time}

In Section~\ref{sec:main-discrete-time}, we only considered the conditional behavior of the process $X$ at integer times. In general,
the results of Section~\ref{sec:main-discrete-time} do not give information about the process at intermediate times. In this section,
we derive a sufficient condition which is well suited for practical verification in the case of continuous time Markov processes or
for aperiodic Markov processes, in particular because (F2) below is usually easier to check than (E2). We consider an
absorbed Markov process $(X_t)_{t\in I}$ with time parameter in $I=\Z_+$ or $[0,+\infty)$.

\medskip\noindent\textbf{Assumption (F).} There exist positive real constants $\gamma_1,\gamma_2,c_1,c_2$ and $c_3$, $t_1,t_2\in I$,
a measurable function $\psi_1:E\rightarrow [1,+\infty)$, and a probability measure $\nu$ on a measurable subset $L\subset E$ such
that
\begin{itemize}
\item[(F0)] \textit{(A strong Markov property).} 
  Defining
  \begin{align}
    \label{eq:def-T_L}
    \tau_L:=\inf\{t\in I:X_t\in L\},
  \end{align}
  assume that for all $x\in E$, $X_{\tau_L}\in L$, $\PP_x$-almost surely on the event $\{\tau_L<\infty\}$ and for all $t\in I$ and
  all measurable $f:E\cup\{\d\}\rightarrow\RR_+$,
  \begin{align*}
    \EE_x\left[f(X_t)\11_{\tau_L\leq t<\tau_\d}\right]=\EE_x\left[\11_{\tau_L\leq
        t\wedge\tau_\d}\restriction{\EE_{X_{\tau_L}}\left[f(X_{t-u})\11_{t-u<\tau_\d}\right]}{u=\tau_L}\right].   
  \end{align*}
\item[(F1)] \textit{(Local Dobrushin coefficient).} $\forall x\in L$, 
  \begin{align*}
    \P_x(X_{t_1}\in\cdot)\geq c_1 \nu(\cdot\cap L).
  \end{align*}
\item[(F2)] \textit{(Global Lyapunov criterion).} We have $\gamma_1<\gamma_2$ and
  \begin{align*}
    &\E_x(\psi_1(X_{t_2})\11_{t_2<\tau_L\wedge\tau_\d})\leq \gamma_1^{t_2}\psi_1(x),\ \forall x\in E\\
    &\E_x(\psi_1(X_t)\11_{t<\tau_\d})\leq c_2,\ \forall x\in L,\ \forall t\in[0,t_2]\cap I,\\
    &\gamma_2^{-t}\P_x(X_t\in L)\xrightarrow[t\rightarrow+\infty]{} +\infty,\ \forall x\in L.
  \end{align*}
\item[(F3)] \textit{(Local Harnack inequality).} We have
  \begin{align*}
    \sup_{t\geq 0}\frac{\sup_{y\in L} \P_y(t<\tau_\d)}{\inf_{y\in L} \P_y(t<\tau_\d)}\leq c_3
  \end{align*}
\end{itemize}

Be careful that the definition of $\tau_L$ in~\eqref{eq:def-T_L} is different from that of $T_L$ in~\eqref{eq:def-T_K-section-3}.
Note also that, in~(F2), the Lyapunov function $\varphi_2$ has been replaced by an alternative condition similar to
Lemma~\ref{lem:varphi2}. Both are actually equivalent thanks to~(F0) (see the beginning of Section~\ref{sec:proof-(E)}).

The following result is proved in Section~\ref{sec:proof-E-F}.

\begin{thm}
  \label{thm:E-F}
  Under Assumption (F), $(X_t)_{t\in I}$ admits a quasi-stationary distribution $\nu_{QSD}$, which is the unique one satisfying
  $\nu_{QSD}(\psi_1)<\infty$ and $\nu_{QSD}(L)>0$ for some $t\in I$. Moreover, there exist constants $\alpha\in(0,1)$
  and $C>0$ such that, for all probability measures $\mu$ on $E$ satisfying $\mu(\psi_1)<\infty$ and $\mu(\psi_2)>0$,
  \begin{align}
    \left\|\P_\mu(X_{t}\in\cdot\mid t<\tau_\d) -\nu_{QSD}\right\|_{TV(\psi_1)}
    &\leq C\,\alpha^t\,\frac{\mu (\psi_1)}{\mu(\psi_2)} 
    ,\ \forall t\in I, \label{eq:expo-cv-prop-E-F}
  \end{align}
  where $\psi_2(x)=\sum_{k=0}^{n_0}\gamma_2^{-kt_2}\P_x(X_{kt_2}\in L)$ for some $n_0\geq 1$ large enough. In addition, there exists a
  constant $\lambda_0\geq 0$ such that $\lambda_0\leq\log(1/\gamma_2)<\log(1/\gamma_1)$ and $\PP_{\nu_{QSD}}(t<\tau_\d)=e^{-\lambda_0 t}$
  for all $t\geq 0$, and there exists a function $\eta$ such that
  \begin{equation}
    \label{eq:eta-prop-E-F}
    \eta(x)=\lim_{t\rightarrow +\infty}e^{\lambda_0 t}\PP_x(t<\tau_\d),\quad\forall x\in E,    
  \end{equation}
  where the convergence is exponential in $L^\infty(\psi_1^{1/p})$ for all $p\in[1,\log(1/\gamma_1)/\lambda_0)$, and
  $P_t\eta(x)=e^{-\lambda_0 t}\eta(x)$ for all $x\in E$ and $t\in I$. 
\end{thm}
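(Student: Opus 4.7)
The plan is to reduce Theorem~\ref{thm:E-F} to Theorem~\ref{thm:main} and Theorem~\ref{thm:eta} applied to the skeleton chain $(X_{nt_2})_{n\in\ZZ_+}$ with $K=L$, and then to transfer the conclusions to all $t\in I$. First I would verify Condition~(E) for the skeleton. Iterating the first inequality in~(F2) along $t_2\NN$ via the Markov property gives $\PP_x(nt_2<\tau_L\wedge\tau_\d)\leq\gamma_1^{nt_2}\psi_1(x)$, and an integration-by-parts in the distribution of $\tau_L\wedge\tau_\d$ then yields $\EE_x[\theta^{-\tau_L\wedge\tau_\d}]\leq C_\theta\psi_1(x)$ for every $\theta\in(\gamma_1,1)$. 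Since the discrete hitting times satisfy $T_L^{\mathrm{disc}}\wedge\tau_\d^{\mathrm{disc}}\geq(\tau_L\wedge\tau_\d)/t_2$, this, together with the second inequality in~(F2), feeds into Lemma~\ref{lem:varphi1} and produces a function $\varphi_1$ comparable to $\psi_1$ with $P_{t_2}\varphi_1\leq\theta_1\varphi_1+c\,\11_L$ for some $\theta_1\in(\gamma_1^{t_2},\gamma_2^{t_2})$. For $\varphi_2$, I would combine the third part of~(F2) with (F1) via $\PP_x(X_{t_1+s}\in L)\geq c_1\int_L\PP_y(X_s\in L)\,\nu(dy)$ and Fatou, upgrading the pointwise divergence of $\gamma_2^{-t}\PP_\cdot(X_t\in L)$ to uniform divergence in $x\in L$ along $t_2\NN$ as required by Lemma~\ref{lem:varphi2}; this produces the $\psi_2$ of the statement for $n_0$ large, with $\theta_2=\gamma_2^{t_2}$, and also gives~(E4). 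Condition~(E3) is immediate from (F3), and (E1) for the skeleton follows by composing (F1) with an extra Markov step of length $nt_2-t_1$ for $n$ large.

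Theorems~\ref{thm:main} and~\ref{thm:eta} then produce the QSD $\nu_{QSD}$, constants $C>0$ and $\alpha_0\in(0,1)$ realising the TV bound at times $nt_2$, together with $\theta_0\in[\theta_2,1]$ and $\eta:E\to\RR_+$ satisfying $P_{t_2}\eta=\theta_0\eta$, $\PP_{\nu_{QSD}}(nt_2<\tau_\d)=\theta_0^n$ and $L^\infty(\varphi_1^{1/p})$-convergence of $\theta_0^{-n}\PP_\cdot(nt_2<\tau_\d)$ to $\eta$. Setting $\lambda_0:=-\log\theta_0/t_2\in[0,\log(1/\gamma_2)]$ gives the claimed range for $\lambda_0$. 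For the uniqueness assertion I would note that any continuous-time QSD $\nu$ with $\nu(\psi_1)<\infty$ and $\PP_\nu(X_{t_\star}\in L)>0$ for some $t_\star$ is in particular a QSD of the skeleton with $\nu(\varphi_1)<\infty$ and $\nu(\psi_2)>0$ (the latter follows from the QSD identity $\PP_\nu(X_{kt_2}\in L)=\nu(L)\PP_\nu(kt_2<\tau_\d)$ and the fact that both factors are positive), so Theorem~\ref{thm:main} forces $\nu=\nu_{QSD}$.

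Finally, I would transfer the estimates to arbitrary $t\in I$ by writing $t=nt_2+s$ with $s\in[0,t_2)$ and applying the skeleton estimate to the pre-shifted initial distribution $\mu P_s/\mu P_s\11_E$, whose $\psi_1$-moment and $\psi_2$-mass are uniformly controlled in $s\in[0,t_2]$ via (F0) and the second inequality in~(F2); this yields~\eqref{eq:expo-cv-prop-E-F} after absorbing the $s$-dependent prefactor into $C$. The same reasoning applied to $x\mapsto e^{\lambda_0 t}\PP_x(t<\tau_\d)$ gives the $L^\infty(\psi_1^{1/p})$ convergence in~\eqref{eq:eta-prop-E-F} for arbitrary $t$, and (F0) combined with the skeleton intertwining $P_{t_2}\eta=\theta_0\eta$ yields $P_t\eta=e^{-\lambda_0 t}\eta$ for every $t\geq 0$ together with the multiplicative identity for $\PP_{\nu_{QSD}}(t<\tau_\d)$, hence its exponential form $e^{-\lambda_0 t}$. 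The hard part is precisely this continuous-time extension: absorbing the $s$-dependence uniformly over $s\in[0,t_2)$ so that the rate $\alpha^t$ holds for \emph{every} $t$ and not only along $t_2\NN$, and checking that $\nu_{QSD}$ is a QSD of the continuous-time process, both points resting critically on (F0) and on the uniform $\psi_1$-moment control over $[0,t_2]$ from the second inequality in~(F2).
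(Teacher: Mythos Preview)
Your reduction to the skeleton $(X_{nt_2})_{n\geq 0}$ is sound, and taking $K=L$ with $\varphi_1$ built via Lemma~\ref{lem:varphi1} is a legitimate variant of the paper's route: the paper instead keeps $\varphi_1=\psi_1$ and enlarges $K$ to $\{y:\PP_y(\tau_L\leq t_2)/\psi_1(y)\geq(\theta_1-\gamma_1^{t_2})/c_2\}\supset L$, which makes~(E2) immediate but forces extra work for~(E3), while your choice makes~(E3) immediate from~(F3) at the cost of invoking Lemma~\ref{lem:varphi1}. Either way the skeleton verification of~(E) goes through.

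The gap is in the continuous-time extension. You assert that the $\psi_1$-moment of $\mu_s:=\mu P_s/\mu P_s\11_E$ is ``uniformly controlled in $s\in[0,t_2]$ via~(F0) and the second inequality in~(F2)''. But the second line of~(F2) bounds $\EE_x[\psi_1(X_s)\11_{s<\tau_\d}]$ only for $x\in L$; after the~(F0) decomposition you are left with $\EE_\mu[\psi_1(X_s)\11_{s<\tau_L\wedge\tau_\d}]$, and the \emph{first} line of~(F2) controls this only at the single time $s=t_2$, not for intermediate $s\in(0,t_2)$. Nothing in~(F) prevents $\EE_\mu[\psi_1(X_s)]$ from being large at such $s$, so neither the ratio $\mu_s(\varphi_1)/\mu_s(\varphi_2)$ nor, by the same obstruction applied to $\nu_t:=\PP_{\nu_{QSD}}(X_t\in\cdot\mid t<\tau_\d)$, the fact that $\nu_{QSD}$ is a QSD for the \emph{continuous-time} semigroup, follows from your argument.

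The paper's fix is to pass to the continuous-time exponential moment $\varphi_1'(x):=\EE_x[\rho_1^{-\tau_L\wedge\tau_\d}]$ for some $\rho_1\in(\theta_1^{1/t_2},\gamma_2)$ --- an object you essentially already construct --- and to re-verify~(E) for the skeleton with this $\varphi_1'$ in place of $\psi_1$. The crucial property of $\varphi_1'$ is the backward inequality
\[
\varphi_1'(x)\leq\rho_1^{-t}\bigl(\EE_x[\varphi_1'(X_t)\11_{t<\tau_\d}]+1\bigr)\qquad\text{for every }t\geq 0,
\]
which, unlike~(F2) for $\psi_1$, holds at \emph{all} times. This lets one bound $\nu_t(\varphi_1')$ (whence $\nu_t=\nu_{QSD}$ by skeleton uniqueness, so $\nu_{QSD}$ is indeed a continuous-time QSD) and then bound $\mu_t(\varphi_1')/\mu_t(\varphi_2')$ uniformly over an interval of length $t_2$, from which~\eqref{eq:expo-cv-prop-E-F} and the $\eta$-convergence at all $t\in I$ follow.
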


A key point that guided our formulation of Condition~(F) is that,
  for continuous-time Markov processes, usual practical conditions for the existence of $\psi_1$ are provided by Foster-Lyapunov
  inequalities (cf.~\cite{MeynTweedie1993}). They involve the extended
  infinitesimal generator $\bar\cL$ of the process $X$ (see e.g.~\cite{MeynTweedie1993,ChampagnatVillemonais2017}) and take the form
  \begin{align}
    \bar\cL\psi_1(x) & \leq -\lambda_1\psi_1(x)+C\11_K(x), \quad\forall x\in E.\label{eq:Foster-Lyap-not-working-1} 
  \end{align}
  This inequality does not imply, in general, that (E2) holds true for $\varphi_1=\psi_1$. However,
  Equation~\eqref{eq:Foster-Lyap-not-working-1} implies (formally, assuming one can apply Dynkin's formula) that $\E_x[\11_{1\leq
    \tau_L\wedge \tau_\d}\psi_1(X_1)]\leq e^{-\lambda_1}\psi_1(x)$ and $\E_x[\psi_1(X_t)\11_{t<\tau_\d}]\leq e^{Ct}\psi_1(x)$. Hence
  the first two lines of~(F2) can be deduced from classical Foster Lyapunov criteria. This will be used for diffusion processes in
  Section~\ref{sec:diffusion} or in discrete state space in Section~\ref{sec:appl-discr-state}.

  Alternatively, one can use controls on the exponential moments for the return times in $L$. The following result, similar to
  Lemma~\ref{lem:varphi1}, is proved in Section~\ref{sec:proof-E-G}.

\begin{lem}
  \label{prop:E-G}
  Assume that there exist positive constants $\gamma_1>0$ and $t_2\in I$ such that
  \[
    \E_x\left(\gamma_1^{-\tau_L\wedge\tau_\d}\right)<\infty,\ \forall x\in E\quad\text{ and }\quad\sup_{x\in L} \E_x\left(\E_{X_{t_2}}\left(\gamma_1^{-\tau_L\wedge\tau_\d}\right)\11_{t_2<\tau_\d}\right)<+\infty,
  \]
  then $\psi_1(x)=\E_x\left(\gamma_1^{-\tau_L\wedge\tau_\d}\right)$ satisfies
  \begin{align*}
    &\E_x(\psi_1(X_{t_2})\11_{t_2<\tau_L\wedge\tau_\d})\leq \gamma_1^{t_2}\psi_1(x),\ \forall x\in E\\
    &\E_x(\psi_1(X_t)\11_{t<\tau_\d})\leq c_2,\ \forall x\in L,\ \forall t\in[0,t_2]\cap I,
  \end{align*}
  for some constant $c_2>0$.
\end{lem}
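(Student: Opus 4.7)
The plan is to derive both bounds from two applications of the ordinary Markov property at deterministic times, applied to the explicit representation $\psi_1(x)=\E_x[\gamma_1^{-\tau_L\wedge\tau_\d}]$. No strong Markov property at the stopping time $\tau_L$ is needed; only Markov at the deterministic times $t_2$, $s:=t_2-t$ and $t$.

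For the first inequality, I would drop the (nonnegative) contribution of $\{\tau_L\wedge\tau_\d\leq t_2\}$ and write
\[
\psi_1(x)\geq \E_x\bigl[\gamma_1^{-\tau_L\wedge\tau_\d}\11_{t_2<\tau_L\wedge\tau_\d}\bigr]=\gamma_1^{-t_2}\,\E_x\bigl[\gamma_1^{-(\tau_L\wedge\tau_\d-t_2)}\11_{t_2<\tau_L\wedge\tau_\d}\bigr].
\]
On $\{t_2<\tau_L\wedge\tau_\d\}$ we have $X_{t_2}\in E\setminus L$, and the first hit of $L\cup\{\d\}$ by the $t_2$-shifted trajectory equals $\tau_L\wedge\tau_\d-t_2$; the Markov property at $t_2$ then identifies its $\gamma_1^{-\cdot}$-moment with $\psi_1(X_{t_2})$, giving the first bound after rearrangement.

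The second inequality relies on the auxiliary pointwise bound
\[
\psi_1(y)\leq \gamma_1^{-s}\bigl(1+\E_y[\psi_1(X_s)\11_{s<\tau_\d}]\bigr),\quad \forall y\in E,\ s\in I,
\]
obtained by splitting $\psi_1(y)$ into the contributions of $\{\tau_L\wedge\tau_\d\leq s\}$ (bounded by $\gamma_1^{-s}$) and $\{\tau_L\wedge\tau_\d>s\}$ (treated as in step~1, in the reverse direction, and then enlarging the indicator $\11_{s<\tau_L\wedge\tau_\d}$ to $\11_{s<\tau_\d}$). Now fix $x\in L$ and $t\in[0,t_2]\cap I$, apply this bound with $y=X_t$, $s=t_2-t$, multiply by $\11_{t<\tau_\d}$, and take $\E_x$: the Markov property at $t$ identifies
\[
\E_x\bigl[\11_{t<\tau_\d}\,\E_{X_t}[\psi_1(X_{t_2-t})\11_{t_2-t<\tau_\d}]\bigr]=\E_x\bigl[\psi_1(X_{t_2})\11_{t_2<\tau_\d}\bigr],
\]
which is uniformly bounded over $x\in L$ by the second hypothesis of the lemma. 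Together with $\gamma_1^{-(t_2-t)}\leq \gamma_1^{-t_2}$ (since $\gamma_1<1$ in the relevant range, as imposed in (F2)), this yields
\[
c_2:=\gamma_1^{-t_2}\Bigl(1+\sup_{y\in L}\E_y\bigl[\psi_1(X_{t_2})\11_{t_2<\tau_\d}\bigr]\Bigr).
\]
The only bookkeeping point requiring care is the identification $\E_{X_s}[\gamma_1^{-\tau_L\wedge\tau_\d}]=\psi_1(X_s)$ in the auxiliary bound, which is routine because on the indicator set $\{s<\tau_\d\}$ we have $X_s\in E$ and $\psi_1(X_s)$ is therefore well-defined.
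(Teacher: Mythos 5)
Your proof is correct and follows essentially the same route as the paper's: the first bound is the identity behind~\eqref{eq:fction-Lyap-tps-cont}, the auxiliary pointwise bound is exactly~\eqref{eq:pf-E-F-4}, and the composition via the Markov property at time $t$ with $s=t_2-t$ reproduces the paper's chain of inequalities and the same constant $c_2$. Nothing to add.
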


\begin{rem}
  In the proof of Theorem~\ref{thm:E-F}, we will show that Assumption~(F) implies that Assumption~(E) is satisfied for the sub-Markovian semigroup $(P_n)_{n\geq
    0}$ of the absorbed Markov process $(X_{nt_2})_{n\in\ZZ_+}$, with the functions $\varphi_1=\psi_1$ and
  $\varphi_2=\frac{\gamma_2^{-t_2}-1}{\gamma_2^{-(n_0+1)t_2}-1}\psi_2$, any $\theta_1\in (\gamma_1^{t_2},\gamma_2^{t_2})$,
  $\theta_2=\gamma_2^{t_2}$ and the set
  \[
    K=\left\{y\in E,\ \P_y(\tau_L \leq t_2)/\psi_1(y)\geq (\theta_1-\gamma_1^{t_2})/c_2\right\}\supset L.
  \]
  In particular, all the consequences of~(E) stated in Section~\ref{sec:main-discrete-time} hold true. Moreover, it is also possible
  to obtain a continuous-time version of Theorem~\ref{thm:Q-proc} about the $Q$-process by adapting the proof given in
  Section~\ref{sec:pf-thm-Q-proc}. 
\end{rem}

\begin{rem}
  If $I=\R_+$, it follows from the fact that $P_t\eta=e^{-\lambda_0 t}\eta$ that, setting $\eta(\d)=0$, the function $\eta$ defined
  on $E\cup\{\d\}$ belongs to the domain of the infinitesimal generator $\cL$ of the semigroup of the Markov process $X$ on
  $E\cup\{\d\}$, seen as acting on $L^\infty(\psi_1^{1/p})$ for $p\in[1,\log(1/\gamma_1)/\lambda_0)$, and $\cL\eta=-\lambda_0
  \eta$.
\end{rem}

\subsection{The case of uniform exponential convergence}
\label{sec:A1-A2}

We now want to characterize the case of exponential convergence in total variation of the conditional distributions of $(X_n)$ to
$\nu_{QSD}$, uniformly with respect to the initial distribution $\mu$. This question was already studied in~\cite{ChampagnatVillemonais2016b}. The next result, proved in Section~\ref{sec:proof-A1-A2}, gives a necessary and sufficient condition based on
Condition~(E).

\begin{prop}
  \label{prop:A1-A2}
  There exists constants $C$ and $\alpha<1$ such that, for all probability measure $\mu$ on $E$ and all integer $n$,
  \begin{align}
    \label{eq:unif-1}
    \left\|\P_\mu(X_n\in\cdot\mid n<\tau_\d)-\nu_{QSD}\right\|_{TV}\leq C\alpha^n,
  \end{align}
  if and only if Condition~(E) is satisfied with a bounded function $\varphi_1$ and there exists an integer $n'_4>0$ such that
  \begin{align}
    \label{eq:unif-2}
    \underline{c}:=\inf_{x\in E} \P_x(X_{n'_4}\in K\mid n'_4<\tau_\d)>0.
  \end{align}
\end{prop}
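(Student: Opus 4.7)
I will prove the two implications separately.

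\textbf{Sufficiency.} Assume Condition~(E) holds with $\varphi_1$ bounded and that \eqref{eq:unif-2} holds. For any probability measure $\mu$ on $E$ and $n\geq n'_4$, set $\mu_{n'_4}(\cdot):=\P_\mu(X_{n'_4}\in\cdot\mid n'_4<\tau_\d)$; the Markov property gives
\[
\P_\mu(X_n\in\cdot\mid n<\tau_\d)=\frac{\mu_{n'_4}P_{n-n'_4}}{\mu_{n'_4}P_{n-n'_4}\11_E}.
\]
By \eqref{eq:unif-2}, $\mu_{n'_4}(K)\geq\underline{c}>0$, hence $\mu_{n'_4}(\varphi_2)\geq\underline{c}\inf_K\varphi_2>0$ using (E2), while $\mu_{n'_4}(\varphi_1)\leq\|\varphi_1\|_\infty$ since $\varphi_1$ is bounded. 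Applying Theorem~\ref{thm:main} to the initial distribution $\mu_{n'_4}$ then yields \eqref{eq:unif-1} for all $n\geq n'_4$, with a constant depending only on $\|\varphi_1\|_\infty$, $\underline{c}$, $\inf_K\varphi_2$ and the constants of Theorem~\ref{thm:main}; the finitely many remaining indices $n<n'_4$ are handled by the trivial bound $\|\cdot\|_{TV}\leq 2$ at the price of enlarging the constant.

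\textbf{Necessity.} Assume \eqref{eq:unif-1}. I would invoke the equivalence established in~\cite{ChampagnatVillemonais2016b}: \eqref{eq:unif-1} is equivalent to the existence of an integer $n_1$, positive constants $c_1,c_2$ and a probability measure $\nu$ on $E$ such that $\P_x(X_{n_1}\in\cdot)\geq c_1\nu(\cdot)$ for all $x\in E$ and $\P_\nu(n<\tau_\d)\geq c_2\sup_{y\in E}\P_y(n<\tau_\d)$ for all $n\geq 0$. Combining these, $\P_x(n<\tau_\d)\geq c_1c_2\sup_{y\in E}\P_y(n-n_1<\tau_\d)$ for $n\geq n_1$, and since \eqref{eq:unif-1} forces $\theta_0^{-n}\P_x(n<\tau_\d)$ to converge uniformly in $x$ to a bounded nonnegative function $\eta$, passing to the limit in this inequality yields $\eta(x)\geq c_1c_2\theta_0^{n_1}\|\eta\|_\infty>0$ uniformly in $x\in E$. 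With this uniform lower bound in hand, I would verify (E) with $K:=E$, $\varphi_1\equiv 1$, $\varphi_2:=\eta/\|\eta\|_\infty$, $\theta_2:=\theta_0$ and any $\theta_1\in(0,\theta_0)$: (E1) is the Dobrushin bound above, (E2) holds because $P_1\varphi_1\leq 1\leq\theta_1\varphi_1+\11_K$ and $P_1\varphi_2=\theta_0\varphi_2$, (E3) reduces to the two-sided uniform bound on $\eta$, and (E4) is trivial since $K=E$. Finally \eqref{eq:unif-2} holds trivially with $n'_4=0$ and $\underline{c}=1$.

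\textbf{Main obstacle.} The sufficiency direction is essentially a one-step time shift followed by Theorem~\ref{thm:main}; the substance is in the necessity direction, where the crucial ingredient is the uniform positive lower bound on the eigenfunction $\eta$. This is precisely what chaining the Dobrushin (A1) and Harnack (A2) conditions from~\cite{ChampagnatVillemonais2016b} delivers, and without it one cannot take $K=E$ (and the rest of the verification of (E) would collapse).
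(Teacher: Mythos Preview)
Your sufficiency argument is correct and is essentially the paper's: shift by $n'_4$ steps, use \eqref{eq:unif-2} to bound $\mu_{n'_4}(\varphi_2)$ from below, and apply Theorem~\ref{thm:main} (the paper phrases this via Corollary~\ref{cor:main-P_k-phi_2}).

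The necessity argument, however, has a genuine gap. You have misstated Condition~(A1) from~\cite{ChampagnatVillemonais2016b}: it is the \emph{conditional} Dobrushin bound $\P_x(X_{k_0}\in\cdot\mid k_0<\tau_\d)\geq c_1\nu$, not the unconditional one. With the correct (A1), chaining with (A2) only gives
\[
\P_x(n<\tau_\d)\geq c_1c_2\,\P_x(k_0<\tau_\d)\sup_{y\in E}\P_y(n-k_0<\tau_\d),
\]
and passing to the limit yields $\eta(x)\geq C\,\P_x(k_0<\tau_\d)$. Since $\P_x(k_0<\tau_\d)$ is in general \emph{not} bounded away from zero (think of a diffusion started near the absorbing boundary, or any discrete-time model where the one-step survival probability can be arbitrarily small), the uniform lower bound $\inf_E\eta>0$ that you need simply fails. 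Consequently the choice $K=E$ collapses: (E2) requires $\inf_K\varphi_2>0$, (E3) requires $\inf_{y\in E}\P_y(n<\tau_\d)>0$, and (E1) requires the unconditional Dobrushin bound---none of which are available.

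The paper's proof avoids this by \emph{not} taking $K=E$. It sets $K=\{x:\eta(x)\geq\delta\}$ for a carefully chosen $\delta>0$ (so that $\nu_{QSD}(K)$ is close to $1$ and $\nu(K)>0$), takes $\varphi_2=\eta/\|\eta\|_\infty$, and then constructs a \emph{non-constant} bounded $\varphi_1$ via Lemma~\ref{lem:varphi1}, namely $\varphi_1(x)=\E_x(\theta_1^{-T_K\wedge\lceil\tau_\d\rceil})$ for a suitable $\theta_1<\theta_0$; boundedness of this $\varphi_1$ is obtained from the uniform conditional return estimate $\P_x(X_n\in E\setminus K\mid n<\tau_\d)\leq\varepsilon+C\alpha^n$, which follows from~\eqref{eq:unif-1}. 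Condition~\eqref{eq:unif-2} is then obtained from the same estimate. The essential extra idea you are missing is precisely this: $K$ must be a sublevel-type set of $\eta$, and $\varphi_1$ must encode the exponential return time to that set.
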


\section{Application to diffusion processes}
\label{sec:diffusion}

In this section, we apply the criteria~(E) and~(F) to diffusion processes absorbed at the boundary of a domain. We give a general
criterion in Subsection~\ref{sec:general-diffusion} and apply it to uniformly elliptic diffusions in
Subsection~\ref{sec:appl-diff-unif-ellipt} and to an example with vanishing diffusion coefficient at the boundary of the domain in
Subsection~\ref{sec:non-unif-ellipt}. Our criteria are extended to diffusions with killing in Subsection~\ref{sec:diff-with-killing}
and the particular case of one-dimensional diffusions is studied in Subsection~\ref{sec:diffusion-1d}.

\subsection{A general criterion in any dimension}
\label{sec:general-diffusion}

We consider a diffusion process $X$ on a connected, open domain $D\subset\R^d$ for some $d\geq 1$, solution to the SDE
\begin{align}
  \label{eq:SDE}
  \mathrm d X_t=b(X_t) \mathrm dt+\sigma(X_t) \mathrm dB_t,
\end{align}
where $B$ is a standard, $r$-dimensional Brownian motion and $b:D\rightarrow\R^d$ and $\sigma:D\rightarrow\R^{d\times r}$ are locally
H\"older functions, such that $\sigma$ is locally uniformly elliptic in $D$, i.e.\
$$
\forall K\subset D\text{ compact,}\quad\inf_{x\in K}\inf_{s\in\RR^d\setminus\{0\}}\frac{s^*\sigma(x)\sigma^*(x)s}{|s|^2}>0,
$$
where $|\cdot|$ is the standard Euclidean norm on $\RR^d$. We assume that the process is immediately absorbed at some cemetery point
$\d\not\in D$ at its first exit time of $D$, denoted $\tau_\d$. The existence and basic properties of this process need some care
since the coefficients $b$ and $\sigma$ are only defined in the open set $D$ without any assumption on the boundary
  of $D$, and so may not be possible to extend as continuous functions out of this set. Details are given in
Subsection~\ref{sec:construction-diff-multidim}. For the moment, let us only observe that, for all $k\geq 1$, defining the compact
set
\begin{align*}
  K_k=\left\{x\in D:|x|\leq k\text{ and }d(x,D^c)\geq 1/k\right\},
\end{align*}
a weak solution to~\eqref{eq:SDE} can be constructed up to the first exit time $\tau_{K_k^c}$ of $K_k$ as defined in~\eqref{eq:def-T_L}.
The proper definition of the absorption time $\tau_\d$ is
\begin{align}
  \label{eq:def-tau-d-diffusion-general}
  \tau_\d=\sup_{k\geq 1}\tau_{K_k^c}.
\end{align}

We introduce the differential operator associated to the SDE~\eqref{eq:SDE}, related to the infinitesimal generator of the process
$X$: for all $f\in\mathcal{C}^2(D)$, we define for all $x\in D$
\begin{align}
  \label{eq:def-gene-diff}
  \cL f(x):=\sum_{i=1}^d b_i(x)\frac{\partial f}{\partial x_i}(x)+\frac{1}{2}\sum_{i,j=1}^d\sum_{k=1}^r
  \sigma_{ik}(x)\sigma_{jk}(x)\frac{\partial^2 f}{\partial x_i\partial x_j}(x).
\end{align}
We also define the constant
\begin{align}
  \label{eq:def-lambda_0}
  \lambda_0:=\inf\left\{\lambda>0,\text{ s.t. }\liminf_{t\rightarrow+\infty}e^{\lambda t}\,\P_{x}\left(X_t\in B\right)>0\right\}
\end{align}
for some $x\in D$ and some open ball $B$ such that $\overline{B}\subset D$. It is standard to prove using Harnack inequalities (proved in
our case in Section~\ref{sec:harnack-diff}) that, under the previous assumptions, $\lambda_0<+\infty$ and its value is independent of
the choice of $x\in D$ and of the non-empty, open ball $B$ such that $\overline{B}\subset D$.

The following result is proved in Sections~\ref{sec:construction-diff-multidim} to~\ref{sec:proof-diff-multidim}. 

 \begin{thm}
    \label{thm:main-diffusion}
    Assume that there exist some constants $C>0$, $\lambda_1>\lambda_0$, a $\mathcal{C}^2(D)$ function $\varphi: D\rightarrow
    [1,+\infty)$ and a subset $D_0\subset D$ closed in $D$ such that $\sup_{x\in D_0}\varphi(x)<+\infty$ and
   \begin{align}
     \label{eq:lyapunov-diffusions}
     \cL \varphi(x) \leq -\lambda_1 \varphi(x)+C\11_{x\in D_0},\ \forall x\in D.
   \end{align}
   Assume also that there exists a time $s_1>0$ such that
  \begin{align}
  \label{eq:conv-to-0}
 \sup_{x\in D_0} \P_x(s_1<\tau_{K_k}\wedge \tau_\d)\xrightarrow[k\rightarrow\infty]{} 0.
  \end{align}
  Then $X$ admits a quasi-stationary distribution $\nu_{QSD}$ which satisfies $\nu_{QSD}(\varphi^{1/p})<+\infty$ for all $p>1$.
  Moreover, for all $p\in (1,\lambda_1/\lambda_0)$, there exist a constant $\alpha_p\in(0,1)$, a constant $C_p$ and a 
  function $\varphi_{2,p}:D\rightarrow (0,+\infty)$ uniformly bounded away from $0$ on compact subsets of $D$ such that, for all
  probability measures $\mu$ on $E$ satisfying $\mu(\varphi^{1/p})<\infty$,
  \begin{align*}
    \left\|\P_\mu(X_{t}\in\cdot\mid t<\tau_\d) -\nu_{QSD}\right\|_{TV(\varphi^{1/p})}
    &\leq C_p\alpha_p^t \frac{\mu(\varphi^{1/p})}{\mu(\varphi_{2,p})},\ \forall t\in [0,+\infty).
  \end{align*}
  In particular, $\nu_{QSD}$ is the only quasi-stationary distribution of $X$ which satisfies $\nu_{QSD}(\varphi^{1/p})<+\infty$ for at least one
  value of $p\in (1,\lambda_1/\lambda_0)$.
\end{thm}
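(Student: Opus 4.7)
The plan is to verify Assumption~(F) of Section~\ref{sec:equivalent-formulations} and then invoke Theorem~\ref{thm:E-F}. Fix $p \in (1, \lambda_1/\lambda_0)$ and set $\psi_1 := \varphi^{1/p}$, which is $\mathcal{C}^2$ on $D$, satisfies $\psi_1 \geq 1$ and $\sup_{D_0}\psi_1 < +\infty$. The compact set $L$ in~(F) will be taken of the form $L = K_{k_0}$ for some $k_0$ large enough to be chosen below. The concavity of $x\mapsto x^{1/p}$ makes the second-order correction in $\cL\psi_1$ non-positive, so a direct computation using the hypothesis $\cL\varphi\leq -\lambda_1\varphi+C\mathbbm{1}_{D_0}$ together with $\sup_{D_0}\varphi<+\infty$ gives
\[
\cL\psi_1 \;\leq\; \frac{1}{p}\,\varphi^{1/p-1}\,\cL\varphi \;\leq\; -\frac{\lambda_1}{p}\,\psi_1 + C'\,\mathbbm{1}_{D_0},
\]
for some constant $C'>0$.

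Conditions~(F0), (F1) and (F3) reduce to standard facts on the compact set $L$: (F0) is the strong Markov property of $X$ at $\tau_L$; (F1) is an Aronson-type Gaussian lower bound on the transition density of $X$ before absorption, coming from local uniform ellipticity and local H\"older continuity of $(b,\sigma)$ (to be established in Section~\ref{sec:harnack-diff}); and (F3) is the parabolic Harnack inequality applied to $y\mapsto \P_y(t<\tau_\d)$, uniform in $t\geq 0$. The first two lines of (F2) are obtained by applying Dynkin's formula to $s\mapsto e^{\lambda_1 s/p}\psi_1(X_s)$ stopped at $t\wedge\tau_L\wedge\tau_\d$ (respectively at $t\wedge\tau_\d$ for the second line). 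If $k_0$ is taken large enough that $D_0\subset L$, then $\mathbbm{1}_{D_0}(X_s)=0$ for all $s<\tau_L$, and one readily obtains
\[
\E_x\bigl[\psi_1(X_t)\mathbbm{1}_{t<\tau_L\wedge\tau_\d}\bigr] \;\leq\; e^{-\lambda_1 t/p}\,\psi_1(x) \;=\; \gamma_1^t\,\psi_1(x),\qquad \gamma_1:=e^{-\lambda_1/p},
\]
which is the first line of~(F2); the second line follows similarly, using $\sup_L\psi_1<+\infty$ and $t\in[0,t_2]$.

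For the third line of~(F2), choose $\lambda'\in(\lambda_0,\lambda_1/p)$ (nonempty since $p<\lambda_1/\lambda_0$) and set $\gamma_2:=e^{-\lambda'}$, so that $\gamma_1<\gamma_2$. Pick $\lambda''\in(\lambda_0,\lambda')$: the definition~\eqref{eq:def-lambda_0} of $\lambda_0$, together with its independence of the reference point and reference ball (itself a Harnack-type consequence of local uniform ellipticity), gives $\liminf_{t\to\infty}e^{\lambda''t}\P_x(X_t\in L)>0$ for every $x\in L$, and multiplication by $e^{(\lambda'-\lambda'')t}\to+\infty$ yields $\gamma_2^{-t}\P_x(X_t\in L)\to+\infty$. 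Theorem~\ref{thm:E-F} then provides the quasi-stationary distribution $\nu_{QSD}$, the exponential convergence in total variation with $\varphi_{2,p}$ of the form of the $\psi_2$ described in the remark following Theorem~\ref{thm:E-F}, the eigenfunction $\eta$, and uniqueness among QSDs with $\nu(\varphi^{1/p})<+\infty$. Since the above can be performed with $p\in(1,\lambda_1/\lambda_0)$ arbitrarily close to $1$ and since $\varphi\geq 1$ makes $p\mapsto\varphi^{1/p}$ monotone non-increasing, we obtain $\nu_{QSD}(\varphi^{1/p})<+\infty$ for every $p>1$.

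The main technical obstacle is the case where $D_0$ fails to be relatively compact in $D$, since then no $K_{k_0}$ contains $D_0$ and the Dynkin argument above does not directly yield the first line of~(F2). Hypothesis~\eqref{eq:conv-to-0} is introduced precisely to handle this case: starting from any $x\in D_0$, the process either reaches $K_k$ or is absorbed before time $s_1$, with probability tending to $1$ as $k\to\infty$. Combining this control of excursions in $D_0\setminus L$ with the previous Dynkin estimate restricted to the complementary event recovers the first line of~(F2) with $t_2$ a suitable multiple of $s_1$ and $k_0$ large enough, completing the verification of~(F).
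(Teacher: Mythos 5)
Your overall strategy (verify Assumption~(F) with $L=K_{k_0}$ and $\psi_1=\varphi^{1/p}$, then invoke Theorem~\ref{thm:E-F}) is the paper's, and your treatment of (F0), (F1), (F3) and of the third line of (F2) matches the actual proof in substance. However, the verification of the first line of (F2) — which is the heart of the proof — has a genuine gap. Your route is to pass to the pointwise generator inequality $\cL\psi_1\leq -(\lambda_1/p)\psi_1+C'\11_{D_0}$ and then apply Dynkin's formula stopped at $\tau_L\wedge\tau_\d$. This only kills the source term $C'\11_{D_0}(X_s)$ if $D_0\subset L$; when $D_0$ is not relatively compact in $D$ (the case the theorem is designed for), Dynkin leaves you with an occupation-time term $C'\,\E_x\!\int_0^{t\wedge\tau_L\wedge\tau_\d}e^{-(\lambda_1/p)(t-s)}\11_{D_0}(X_s)\,ds$ over the non-compact set $D_0\setminus L$, and hypothesis~\eqref{eq:conv-to-0} gives no direct control of it. Your proposed fix ("the Dynkin estimate restricted to the complementary event") does not supply the missing mechanism: on the only relevant event $\{s_1<\tau_L\wedge\tau_\d\}$ you must bound an \emph{expectation} of $\psi_1(X_{t_2})$, not a probability, and the generator inequality does not do this.

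The paper's resolution (Section~\ref{sec:proof-diff-multidim}) abandons the generator inequality for $\psi_1$ entirely and instead interpolates by H\"older's inequality: for $x\in D_0$,
\begin{align*}
\E_x\left(\psi_1(X_t)\11_{t<\tau_L\wedge\tau_\d}\right)\leq
\E_x\left(\varphi(X_t)\11_{t<\tau_\d}\right)^{1/p}\,\P_x(s_1<\tau_L\wedge\tau_\d)^{\frac{p-1}{p}},
\end{align*}
where the first factor is controlled by $e^{Ct}\varphi(x)\leq e^{Ct}\sup_{D_0}\varphi<\infty$ (via It\^o and $\cL\varphi\leq C\varphi$) and the second is made as small as needed by choosing $k_0$ large in~\eqref{eq:conv-to-0}; for $x\notin D_0$ one then applies the strong Markov property at $\tau_{D_0}$, using $\cL\varphi\leq-\lambda_1\varphi$ off $D_0$ on the event $\{\tau_{D_0}>t_2-s_1\}$ and the $D_0$-estimate above on the complementary event. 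This is exactly the point of Remark~\ref{rem:Lyapunov-generator}: a Foster--Lyapunov inequality at the generator level cannot be converted directly into the first line of~(F2) here. It is also why the statement excludes $p=1$: the exponent $(p-1)/p$ on the small probability degenerates at $p=1$, whereas your argument, if it worked, would prove the (false in general) $p=1$ case. You should replace the Dynkin step for $\psi_1$ by this H\"older interpolation and add the $\tau_{D_0}$ decomposition for starting points outside $D_0$.
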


\begin{rem}
  Note that $\tau_{K_k}=0$ $\P_x$-a.s.\ for all $x\in K_k$, thus
  \[
    \sup_{x\in D_0} \P_x(s_1<\tau_{K_k}\wedge \tau_\d)=\sup_{x\in
      D_0\setminus K_k} \P_x(s_1<\tau_{K_k}\wedge \tau_\d).
  \]
  Hence Condition~\eqref{eq:conv-to-0} requires the process to be absorbed
  or return in $K_k$ fast starting in $D_0\setminus K_k$.
\end{rem}

\begin{rem}
  \label{rem:(F)-diffusion}
  We shall actually prove that, under the conditions of the previous theorem, Assumption~(F) is satisfied with $L=K_k$ for some
  $k\geq 1$, and $\psi_1=\varphi^{1/p}$, for any $p\in (1,\lambda_1/\lambda_0)$.
\end{rem}

\begin{rem}
  \label{rem:non-explosion}
  In general, the assumptions of Theorem~4.1 do not ensure the non-explosion of the Markov process $X$. In the case of an explosive
  Markov process, the definition of $\tau_\d$ in~\eqref{eq:def-tau-d-diffusion-general} implies that, in the event of an explosion, the
  absorption time $\tau_\d$ is defined as equal to the explosion time.
\end{rem}

The last result has other consequences of interest, gathered in the next corollary, proved in Section~\ref{sec:pf-cor-diff}.
\begin{cor}
  \label{cor:main-diffusion}
  Under the assumptions of Theorem~\ref{thm:main-diffusion}, the infimum defining the constant $\lambda_0$ in~\eqref{eq:def-lambda_0}
  is actually a minimum and it satisfies $\P_{\nu_{QSD}}(t<\tau_\d)=e^{-\lambda_0 t}$ for all $t\geq 0$. In addition, the function
  $\eta$ of Theorem~\ref{thm:E-F} satisfies $P_t\eta=e^{-\lambda_0 t}\eta$ for all $t\geq 0$. In particular, $\eta$ belongs to the
  domain of the infinitesimal generator of the semigroup of the process $X$ defined as acting on the Banach space
  $L^\infty(\varphi_1)$, and it is an eigenfunction for the eigenvalue $-\lambda_0$. In addition, $\eta\in\mathcal{C}^2(D)$ and
  $\mathcal{L}\eta(x)=-\lambda_0\eta(x)$ for all $x\in D$.
\end{cor}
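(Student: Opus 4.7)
The plan is to identify the constant $\lambda_0$ defined in~\eqref{eq:def-lambda_0} with the exponential absorption rate of $\nu_{QSD}$ produced by Theorem~\ref{thm:E-F} via Theorem~\ref{thm:main-diffusion}, to read off the eigenfunction identity for $\eta$ from that theorem, and finally to upgrade it to a classical pointwise PDE through interior parabolic regularity.

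Temporarily write $\widetilde\lambda_0\geq 0$ for the constant produced by Theorem~\ref{thm:E-F}, so that $\PP_{\nu_{QSD}}(t<\tau_\d)=e^{-\widetilde\lambda_0 t}$, $P_t\eta=e^{-\widetilde\lambda_0 t}\eta$ and $\eta(x)=\lim_{t\to+\infty}e^{\widetilde\lambda_0 t}\PP_x(t<\tau_\d)$. Combining this with the total variation convergence of Theorem~\ref{thm:main-diffusion}, one has, for every $x\in D$,
\[
e^{\widetilde\lambda_0 t}\,\PP_x(X_t\in B) = \bigl[e^{\widetilde\lambda_0 t}\,\PP_x(t<\tau_\d)\bigr]\,\PP_x(X_t\in B\mid t<\tau_\d) \xrightarrow[t\to+\infty]{} \eta(x)\,\nu_{QSD}(B).
\]
This limit is strictly positive: $\eta(x)>0$ because the diffusion is irreducible in $D$ (so $E'=D$ and Theorem~\ref{thm:eta} applies), while $\nu_{QSD}(B)>0$ because $\nu_{QSD}(B)=e^{\widetilde\lambda_0 t}\PP_{\nu_{QSD}}(X_t\in B)$ and the absorbed transition density is strictly positive on $D\times D$ by the interior Harnack estimates of Section~\ref{sec:harnack-diff}. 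Conversely, for any $\lambda<\widetilde\lambda_0$, $e^{\lambda t}\PP_x(X_t\in B)\leq e^{\lambda t}\PP_x(t<\tau_\d)\sim\eta(x)\,e^{(\lambda-\widetilde\lambda_0)t}\to 0$. Together with the monotonicity in $\lambda$, these estimates show that the set in~\eqref{eq:def-lambda_0} equals $[\widetilde\lambda_0,+\infty)$, hence $\lambda_0=\widetilde\lambda_0$ and the infimum is attained. Claims 2 and 3 then follow directly from Theorem~\ref{thm:E-F}, and the generator statement is immediate: $\eta\in L^\infty(\varphi_1)$, and $P_t\eta=e^{-\lambda_0 t}\eta$ yields $(P_t\eta-\eta)/t\to-\lambda_0\eta$ in $L^\infty(\varphi_1)$ as $t\to 0^+$.

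The main obstacle is the classical regularity $\eta\in\mathcal{C}^2(D)$ together with the pointwise identity $\cL\eta=-\lambda_0\eta$. I would set $u(t,x):=P_t\eta(x)=e^{-\lambda_0 t}\eta(x)$ and prove that $u\in\mathcal{C}^{1,2}((0,+\infty)\times D)$ satisfies $\partial_t u=\cL u$ classically. Because $b,\sigma$ are locally H\"older and $\sigma\sigma^*$ is locally uniformly elliptic on $D$, interior parabolic Schauder theory applies to $\partial_t-\cL$ on every compactly contained open subset of $D$; a standard localization argument (stop $X$ at the first exit of a smooth bounded open set $D'\Subset D$ on which a fundamental solution exists and Schauder estimates apply, then exhaust $D$ by such $D'$ using dominated convergence provided by $\eta\in L^\infty(\varphi_1)$) shows that for any locally bounded $f$ on $D$, $(t,x)\mapsto\EE_x[f(X_t)\11_{t<\tau_\d}]$ is locally $\mathcal{C}^{1,2}$ on $(0,+\infty)\times D$ and classically solves $\partial_t v=\cL v$. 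Applying this to $f=\eta$ (locally bounded since $\eta\in L^\infty(\varphi_1)$ and $\varphi_1$ is locally bounded on $D$) and identifying $u(t,x)=e^{-\lambda_0 t}\eta(x)$ forces $\eta\in\mathcal{C}^2(D)$ and $\cL\eta=-\lambda_0\eta$ pointwise on $D$. The delicate point is the interior parabolic regularity in the absence of any boundary regularity on $D$, which is exactly why localization by smooth compactly contained subdomains is essential.
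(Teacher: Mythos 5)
Your identification of $\lambda_0$ with the rate produced by Theorem~\ref{thm:E-F} is essentially the paper's own argument (factor $e^{\lambda_0' t}\PP_x(X_t\in B)$ as $e^{\lambda_0' t}\PP_x(t<\tau_\d)\cdot\PP_x(X_t\in B\mid t<\tau_\d)$ and let $t\to\infty$); the paper merely picks one $x$ with $\eta(x)>0$ and one ball with $\nu_{QSD}(B)>0$, invoking the independence of $\lambda_0$ from these choices, instead of verifying positivity for every $x$ and the given $B$ as you do. The eigenfunction and generator statements are handled identically.

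For the $\mathcal{C}^2$ regularity the routes genuinely differ. The paper first proves that $\eta$ is \emph{continuous} (continuity of $x\mapsto e^{\lambda_0 t}\PP_x(t<\tau_\d)$ via Stroock--Varadhan, combined with the locally uniform convergence of Theorem~\ref{thm:E-F}), then solves the initial-boundary value problem $\partial_t u=\cL u+\lambda_0 u$ on a ball $\overline{B}\subset D$ with continuous data $\eta$ (Friedman), and identifies $u(T,\cdot)\equiv\eta$ by It\^o's formula together with the martingale property of $e^{\lambda_0 t}\eta(X_t)$; this only requires classical solvability with \emph{continuous} data. You instead invoke the full interior smoothing property of the killed semigroup on locally bounded \emph{measurable} data. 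That property does hold here, but the step as written --- exhaust $D$ by smooth $D'\Subset D$ and pass to the limit ``by dominated convergence'' --- does not by itself deliver $C^{1,2}$ regularity of the limit: pointwise convergence of $v_k(t,x)=\EE_x\bigl[f(X_t)\11_{t<\tau_{(D'_k)^c}}\bigr]$ says nothing about their derivatives. You need to note that the $v_k$ are classical solutions on $(0,\infty)\times D'_k$ that are locally uniformly bounded in $k$ (e.g.\ by $e^{Ct}\varphi_1(x)$ via~\eqref{eq:Ito-Lyap}), apply uniform interior Schauder estimates on compact subcylinders, and extract a subsequence converging in $C^{1,2}_{\mathrm{loc}}$, whose limit then solves the equation and coincides with $e^{-\lambda_0 t}\eta(x)$. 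With that compactness argument supplied, your route closes; the paper's continuity-first route avoids it at the cost of the Stroock--Varadhan continuity input and the martingale/It\^o identification step.
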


\subsection{Application to uniformly elliptic diffusion processes}
\label{sec:appl-diff-unif-ellipt}

We consider the case where $\sigma$ can be extended to $\RR^d$ as a locally uniformly elliptic matrix-valued function. In the
following corollary, we give a general situation where~\eqref{eq:conv-to-0} holds true. We emphasize that, contrary to previous
results on existence of quasi-stationary distributions for diffusions in a domain (see
e.g.~\cite{Pinsky1985,GongQianEtAl1988,KnoblochPartzsch2010,DelVillemonais2018,ChampagnatCoulibaly-PasquierEtAl2016}), no regularity
on the boundary of $D$ is required.

\begin{cor}
  \label{cor:diff-unbounded-domain}
  Let $D$ be an open connected subset of $\R^d$, $d\geq 1$. Let $X$ be solution to the SDE
  \begin{align}
    \label{eq:SDE-unif-ellipt}
    \mathrm dX_t=b(X_t)\mathrm dt+\sigma(X_t)\mathrm dB_t,\ t<\tau_\d,
  \end{align}
  where $b:\R^d\rightarrow \R^d$ and $\sigma:\R^d\rightarrow\R^{d\times r}$ are locally H\"older continuous in $\R^d$ and $\sigma$ is
  locally uniformly elliptic on $\R^d$. Recall the definition~\eqref{eq:def-lambda_0} of $\lambda_0$ and assume that there exist
  constants $C>0$, $\lambda_1>\lambda_0$, a $\mathcal{C}^2(D)$ function $\varphi: D\rightarrow [1,+\infty)$ and a bounded
  subset $D_0\subset D$ closed in $D$ such that
   \begin{align}
     \label{eq:lyapunov-diff-unbounded}
     \cL \varphi(x) \leq -\lambda_1 \varphi(x)+C\11_{x\in D_0},\ \forall x\in D.
   \end{align}
   Then the process $X$ absorbed at the boundary of $D$ (in the sense of~\eqref{eq:def-tau-d-diffusion-general}) satisfies the assumptions of Theorem~\ref{thm:main-diffusion}.
\end{cor}

Note that we do not assume that $\varphi(x)\to+\infty$ when $|x|\to+\infty$, hence the process $X$ may be explosive (see
Remark~\ref{rem:non-explosion}).

\begin{proof}
  Let us consider the diffusion process $Y$ solution to~\eqref{eq:SDE-unif-ellipt} on $\RR^d$. Due to our regularity assumptions on $b$ and
  $\sigma$, this process is well-defined up to a possibly finite explosion time $\tau_{\text{expl}}$. The Harnack
  inequality~\eqref{eq:Harnack-u} applied to $Y$ on the compact set $\overline{D}_0$ ensures the existence of constants $\delta>0$
  and $N$ such that, for all $f:\RR^d\rightarrow [0,1]$, for all $x\in\overline{D}_0$ and all $y\in B(x,\delta)$,
  \begin{align*}
    \EE_x[\11_{\delta+\delta^2<\tau_{\text{expl}}}f(Y_{\delta+\delta^2})]\leq N \EE_y[\11_{\delta+2\delta^2<\tau_{\text{expl}}}f(Y_{\delta+2\delta^2})].
  \end{align*}
  By compactness of $\overline{D}_0$, there exist a positive integer $n$ and $y_1,\ldots,y_n\in D_0$ such that
  $\overline{D}_0\subset\bigcup_{i=1}^n B(y_i,\delta)$. Setting $s_1=\delta+\delta^2$, we deduce that, for all $k\geq 1$ and all
  $x\in D_0$,
  \begin{align*}
    \PP_x(Y_{s_1}\in D\setminus K_k)\leq N\max_{1\leq i\leq n}\PP_{y_i}(Y_{s_1+\delta^2}\in D\setminus K_k)\xrightarrow[k\rightarrow+\infty]{}0.
  \end{align*}
  Hence~\eqref{eq:conv-to-0} is satisfied. This and Theorem~\ref{thm:main-diffusion} end the proof of
  Corollary~\ref{cor:diff-unbounded-domain}.
\end{proof}

We give three examples of application.

\begin{exa}
  \label{exa:D-bounded}
  Assume that $D$ is bounded. Then, one can choose $D_0=D$ and $\varphi=1$ in Corollary~\ref{cor:diff-unbounded-domain}. This
  implies that Assumption~(F) is satisfied for $\psi_1=\varphi^p$ bounded (see Remark~\ref{rem:(F)-diffusion}), so that it follows from
  Theorem~\ref{thm:E-F} that the convergence of $e^{\lambda_0 t}\mathbb{P}_X(t<\tau_\d)$ to $\eta$ is uniform and that $\eta$ is
  bounded. Theorem~\ref{thm:E-F} also implies that Assumption~(E) is satisfied for some bounded $\varphi_1$ and $\varphi_2$. Since
  $P_1\eta=e^{-\lambda_0}\eta$ and $e^{-\lambda_0}\geq\theta_2>\theta_1$, we deduce that (E) is still satisfied if $\varphi_2$ is
  replaced by $\eta/\|\eta\|_\infty$. Therefore,
  \[
  \left\|\P_\mu(X_{n}\in\cdot\mid n<\tau_\d) -\nu_{QSD}\right\|_{TV}
  \leq \frac{C}{\mu(\eta)}\,\alpha^n,\ \forall n\in\mathbb{N}.
  \]
  The extension to any $t\in[0,+\infty)$ can be obtained using the same argument as in Section~\ref{sec:full-QSD} replacing
  $\varphi_2$ and $\varphi'_2$ with $\eta$. This implies Theorem~\ref{thm:intro-diffusion} of the introduction.
\end{exa}

\begin{exa}
  \label{exa:any-sigma}
  Assume that $D\subset\R_+^d$ is open connected and that
  \begin{align*}
    \mathrm d X_t=b(X_t) \mathrm dt+\sigma(X_t)\mathrm dB_t
  \end{align*}
  in $D$, where $b:\R^d\rightarrow \R^d$ and $\sigma:\R^d\rightarrow\R^{d\times r}$ are locally H\"older continuous in $\R^d$,
  $\sigma$ is locally uniformly elliptic on $\R^d$ and
  \begin{align*}
    \frac{\langle b(x),1\rangle}{\langle x,1\rangle}\xrightarrow[|x|\rightarrow+\infty]{}-\infty,
  \end{align*}
  where $\langle\cdot,\cdot\rangle$ is the standard Euclidean product in $\RR^d$ and $|\cdot|$ is the associated norm.
  Then~\eqref{eq:lyapunov-diff-unbounded} is satisfied for $\varphi(x)=1+x_1+\ldots+x_d$ and hence the process $X$ absorbed at the
  boundary of $D$ satisfies the assumptions of Theorem~\ref{thm:main-diffusion}.
\end{exa}

\begin{exa}
  \label{exa:drifted-Brownian}
  Assume that $D\subset\R^d$ is open connected and that
  \begin{align*}
    \mathrm d X_t=b(X_t) \mathrm dt+\mathrm dB_t
  \end{align*}
  in $D$, where $b:\R^d\rightarrow \R^d$ is locally H\"older continuous in $\R^d$ and
  \begin{align}
  \label{eq:lyap-cond-expli}
   \limsup_{|x|\rightarrow+\infty} \frac{\langle b(x),x\rangle}{|x|}< -\frac{3}{2}\sqrt{\lambda_0},
  \end{align}
  where $\langle\cdot,\cdot\rangle$ is the standard Euclidean product in $\RR^d$ and $\lambda_0$ is defined
  in~\eqref{eq:def-lambda_0}. Then the process $X$ absorbed at the boundary of $D$ satisfies the assumptions of
  Theorem~\ref{thm:main-diffusion}.

  Indeed, let us check that~\eqref{eq:lyapunov-diff-unbounded} is satisfied for $\varphi(x)=\exp(\sqrt{\lambda_0}|x|)$. One has, for
  all $x\neq 0$,
  \begin{align*}
    \cL\varphi(x)&=\sum_{i=1}^d\frac{e^{\sqrt{\lambda_0}|x|}}{2}\left(\frac{\sqrt{\lambda_0}}{|x|}-\frac{\sqrt{\lambda_0}x_i^2}{|x|^{3}}+\frac{\lambda_0
        x_i^2}{|x|^2}\right)+\sum_{i=1}^d e^{\sqrt{\lambda_0}|x|}\,\frac{\sqrt{\lambda_0}b_i(x)\,x_i}{|x|}\\
    &\leq \sqrt{\lambda_0}\varphi(x)\,\left(\frac{d-1}{2|x|}+\frac{\sqrt{\lambda_0}}{2}+\frac{\langle b(x),x\rangle}{|x|}\right)\\
    &\leq -(\lambda_0+\varepsilon) \varphi(x)
  \end{align*}
  for some $\varepsilon>0$ and for all $x$ such that $|x|$ is large enough. This implies~\eqref{eq:lyapunov-diff-unbounded}.

  To apply this criterion, it is necessary to obtain a priori bounds on $\lambda_0$. We will give some ideas about how to do so for
  one-dimensional diffusions in Section~\ref{sec:diffusion-1d}. In general, one can also use of course that~\eqref{eq:lyap-cond-expli} is
  implied by
  \begin{align*}
   \lim_{|x|\rightarrow+\infty} \frac{\langle b(x),x\rangle}{|x|}=-\infty.
  \end{align*}  
\end{exa}

\subsection{Non-uniformly elliptic diffusions: the Feller diffusion with competition}
\label{sec:non-unif-ellipt}

We provide an example where the diffusion matrix $\sigma$ cannot be extended out of $D$ as a locally uniformly elliptic matrix. This
example deals with Feller diffusions with competition and is motivated by models of population dynamics with $d$ species in
interaction, where absorption corresponds to the extinction of one of the
populations~\cite{CattiauxMeleard2010,ChampagnatVillemonais2017}.

Assume that $D=(0,\infty)^d$ and
\begin{align*}
  \mathrm d X^i_t & =\sqrt{\gamma_i X^i_t}\,\mathrm dB^i_t+X^i_t b_i(X_t)\,\mathrm dt,
\end{align*}
where $\gamma_i>0$ for all $1\leq i\leq d$, $B^1,\ldots,B^d$ are independent standard Brownian motions and $b_i$ are locally H\"older
in $(0,\infty)^d$ and locally bounded in $\RR_+^d$.

\begin{prop}
  \label{prop:Feller}
  Assume that there exist constants $c_0,c_1>0$ such that
  \begin{align*}
    \sum_{i=1}^d\frac{x_i b_i(x)}{\gamma_i}\leq c_0-c_1|x|,\quad\forall x\in (0,\infty)^d.
  \end{align*}
  Then the process $X$ absorbed at the boundary of $D$ satisfies the assumptions of Theorem~\ref{thm:main-diffusion}.
\end{prop}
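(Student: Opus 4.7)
The plan is to verify the two hypotheses of Theorem~\ref{thm:main-diffusion}: the Lyapunov inequality~\eqref{eq:lyapunov-diffusions} and the escape-avoidance condition~\eqref{eq:conv-to-0}, for a suitable choice of Lyapunov function $\varphi$ and closed set $D_0\subset D$.

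\textbf{Step 1 (Lyapunov function).} I would try $\varphi(x)=\exp\bigl(a\sum_{i=1}^d x_i/\gamma_i\bigr)$ for a small constant $a>0$ to be fixed. Using $\partial_i\varphi=(a/\gamma_i)\varphi$ and $\partial^2_{ii}\varphi=(a^2/\gamma_i^2)\varphi$, a direct computation gives
\[
\cL\varphi(x)=\varphi(x)\left[\frac{a^2}{2}\sum_{i=1}^d\frac{x_i}{\gamma_i}+a\sum_{i=1}^d\frac{x_i b_i(x)}{\gamma_i}\right].
\]
The hypothesis on the drift and the bound $\sum_i x_i/\gamma_i\leq |x|_1/\min_i\gamma_i\leq C'|x|$ yield
\[
\cL\varphi(x)\leq \varphi(x)\left[ac_0 - \Bigl(ac_1 - \tfrac{a^2 C'}{2}\Bigr)|x|\right],
\]
which tends to $-\infty$ as $|x|\to\infty$ once $a$ is taken small enough. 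For any $\lambda_1>\lambda_0$, choose $M$ large so that the bracket is $\leq -\lambda_1$ on $\{|x|>M\}$, and set $D_0=\{x\in D:|x|\leq M\}$, which is closed in $D$ and on which $\varphi$ is bounded. Because the coefficients $b_i$ are locally bounded on all of $\RR_+^d$ (including the boundary of $D$), $\cL\varphi$ is bounded on $D_0$, and \eqref{eq:lyapunov-diffusions} follows.

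\textbf{Step 2 (Escape-avoidance condition).} One has $D\setminus K_k=\{x\in D:|x|>k\}\cup\{x\in D:\min_i x_i<1/k\}$. Itô's formula applied to $\sum_i X^i_t/\gamma_i$ together with the drift hypothesis gives the first-moment bound
\[
\EE_x\Bigl[\sum_i X^i_{s_1}/\gamma_i\Bigr]\leq \sum_i x_i/\gamma_i + c_0\,s_1,
\]
uniform for $x\in D_0$; Markov's inequality therefore controls the contribution of $\{|X_{s_1}|>k\}$. For the contribution of $\{\min_i X^i_{s_1}<1/k\}$, degeneracy of $\sigma$ at $\partial D$ prevents the Harnack argument of Corollary~\ref{cor:diff-unbounded-domain}; instead I would exploit the fact that the noises $B^1,\ldots,B^d$ are independent and that each coordinate $X^i$ is, pathwise, a Feller-type one-dimensional SDE with drift $X^i b_i(X)$. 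On the event $\{\sup_{t\leq s_1}|X_t|\leq M'\}$, the drift $b_i(X_t)$ is bounded (by local boundedness of $b$ on $\RR_+^d$), and a one-dimensional Girsanov transform reduces the estimation of $\PP_x(0<X^i_{s_1}<1/k)$ to the corresponding probability for the driftless Feller diffusion $dZ_t=\sqrt{\gamma_i Z_t}\,dB^i_t$, for which the explicit Bessel-type transition density gives $\sup_{z\in[0,M']}\PP_z(0<Z_{s_1}<1/k)\to 0$ as $k\to\infty$. A union bound over $i=1,\ldots,d$, combined with a further application of the first-moment Markov bound to remove the restriction $\sup_{t\leq s_1}|X_t|\leq M'$ (letting $M'\to\infty$ with $k$), yields~\eqref{eq:conv-to-0}.

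\textbf{Main obstacle.} The nontrivial step is Step~2: because $\sigma$ degenerates on $\partial D$, none of the elliptic/Harnack inputs used for uniformly elliptic diffusions is available to show that $X_{s_1}$ is not concentrated near $\partial D$ when started from $D_0$. Overcoming this requires genuinely using the product structure of the SDE (independent Brownians, coordinate-wise Feller form) and a quantitative control on the density of a one-dimensional Feller diffusion near $0$, uniform in the starting point. Once Steps~1 and~2 are in place, Theorem~\ref{thm:main-diffusion} applies and gives the conclusion.
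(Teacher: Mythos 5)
Your Step~1 is exactly the paper's argument: the same exponential Lyapunov function $\varphi(x)=\exp\bigl(a\sum_i x_i/\gamma_i\bigr)$, the same computation of $\cL\varphi$, and the same choice of a bounded $D_0=\{x\in D:|x|\leq M\}$. Step~2, however, takes a genuinely different route. The paper does not estimate the law of $X_{s_1}$ near the boundary at all; it shows directly that $\P_x(1<\tau_\d)\to 0$ as $x\to\partial D$ within $D_0$, by (i) using It\^o's formula on $\varphi$ to find a superlevel set $F$ of $\varphi$ with $\P_x(\tau_F\leq 1)\leq\varepsilon$ uniformly on $D_0$, and (ii) on $\{1<\tau_F\}$, comparing each coordinate pathwise with an \emph{independent} one-dimensional Feller diffusion $dZ^i=\sqrt{\gamma_iZ^i}\,dB^i+\beta Z^i\,dt$ with constant rate $\beta=\sup_{F^c}|b_i|$, for which reachability of $0$ is classical; no Girsanov transform or transition-density estimate is needed. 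Your route instead bounds $\P_x(X_{s_1}\in D\setminus K_k,\,s_1<\tau_\d)$ (which indeed dominates $\P_x(s_1<\tau_{K_k}\wedge\tau_\d)$) via a first-moment bound for the tail $\{|X_{s_1}|>k\}$ and a Girsanov reduction to the driftless Feller density for $\{\min_iX^i_{s_1}<1/k\}$; this is workable and even gives a quantitative rate in $k$, but it costs you moment control of the Girsanov density and explicit Bessel-type kernel bounds, where the paper's comparison argument only needs the qualitative fact that $0$ is accessible. One point in your sketch needs repair: "a further application of the first-moment Markov bound" does not remove the restriction $\sup_{t\leq s_1}|X_t|\leq M'$, since a fixed-time Markov inequality does not control a running supremum; you need Doob's maximal inequality for the nonnegative supermartingale built from $\sum_iX^i_t/\gamma_i-c_0t$ (or, equivalently, the paper's device of the set $F$ with $\P_x(\tau_F\leq 1)\leq\varepsilon$). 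With that fix, your proof goes through.
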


Compared to the existing literature on multi-dimensional Feller diffusions \cite{CattiauxMeleard2010,ChampagnatVillemonais2017}, the
main novelty of this result is that it covers cases where the process does not come down from infinity, e.g.
$b_i(x)=r_i-\sum_{j=1}^d c_{ij}\frac{x_j}{1+x_j}$, for some positive constants $r_i$ and $c_{ij}$ such that $r_i<c_{ii}$ for all
$1\leq i\leq d$, and where $b$ does not derive from a potential (see for instance~\cite{CattiauxMeleard2010}, based on a spectral
theoretic approach). While our results on existence and convergence to quasi-stationary distributions are more general than those
of~\cite{CattiauxMeleard2010}, we do not recover finer results on the spectrum of the process, such as its discreteness.

\begin{proof}
Our aim is to prove that the assumptions of Theorem~\ref{thm:main-diffusion} hold true with $\varphi(x)=\exp(c(x_1/\gamma_1+\ldots+x_n/\gamma_n))$, where $c= c_1\,\min_i\gamma_i/\sqrt{d}$.

We have, for all $x\in D$,
\begin{align*}
\cL\varphi(x)&=\sum_{i=1}^d \left(\frac{x_i c^2}{2\gamma_i}+\frac{c x_i b_i(x)}{\gamma_i}\right)\varphi(x)\leq \left(c_0 c-\frac{c_1 c|x|}{2}\right)\varphi(x).
\end{align*}
Choosing $\lambda_1=\lambda_0+1$ and $D_0=\{x\in D,\text{ s.t. }|x|\leq (2c_0+2\lambda_1/c)/c_1\}$, one deduces that~\eqref{eq:lyapunov-diffusions} holds true with $C=c_0 c\,\max_{D_0}\varphi$.

Let us now prove that
\begin{align}
\label{eq:feller-diff-proof}
\P_x(1<\tau_\d)\xrightarrow[x\rightarrow \d D, x\in D_0]{} 0,
\end{align}
which implies that~\eqref{eq:conv-to-0} holds true with $s_1=1$. Fix $\varepsilon>0$ and define the set $F=\left\{x\in \R_+^d,\text{
    s.t. } \varphi(x)\geq e^{C}\sup_{y\in D_0}\varphi(y)/\varepsilon\right\}$. Using It\^o's formula (see the proof
of~\eqref{eq:Ito-Lyap} in Section~\ref{sec:proof-diff-multidim} for details), we deduce from~\eqref{eq:lyapunov-diffusions} that, For
all $x\in D_0$,
\[
\P_x(\tau_F\leq 1)\,e^{C}\sup_{y\in D_0}\varphi(y)/\varepsilon \leq \E_x\left(\varphi(X_{\tau_F\wedge 1})\11_{\tau_F\wedge 1<\tau_\d}\right)\leq e^C \varphi(x),
\] 
so that $\P_x(\tau_F\leq 1)\leq \varepsilon$ for all $x\in D_0$. Since $F^c$ is bounded, we have 
\[
\beta:=\sup_{x\in F^c,i\in\{1,\ldots,d\}} |b_i(x)|<+\infty.
\]
Let $(Z_t)_{t\in[0,+\infty)}:=(Z^1_t,\ldots,Z^d_t)_{t\in[0,+\infty)}$ be the solution of the system of SDEs
\begin{align*}
\mathrm dZ^i_t=\sqrt{\gamma_i Z^i_t}\,\mathrm dB^i_t+Z^i_t \beta\,\mathrm dt,\ Z^i_0=X^i_0\in (0,+\infty),
\end{align*}
with absorption at the boundary of $D$. Note that the components of $Z$ are independent one dimensional diffusion processes such that $0$ is reachable and hence that
\[
\P_x\left(\forall t\in [0,1],\,\forall i\in\{1,\ldots,d\},\ Z^i_t>0\right)\xrightarrow[x\rightarrow \d D]{} 0.
\]
Standard comparison arguments show that $X^i_t\leq Z^i_t$ for all $t<\tau_\d\wedge \tau_F\wedge 1$ and all $i\in \{1,\ldots,d\}$, so that
\[
\P_x\left(\forall t\in [0,1],\,\forall i\in\{1,\ldots,d\},\ X^i_t> 0\text{ and }1<\tau_F\right)\xrightarrow[x\rightarrow \d D]{} 0.
\]
But $\P_x(1<\tau_F)\geq 1-\varepsilon$, so that
\[
\limsup_{x\rightarrow \d D}\, \P_x\left(\forall t\in [0,1],\,\forall i\in\{1,\ldots,d\},\ X^i_t> 0\right)\leq \varepsilon.
\]
Since this is true for all $\varepsilon>0$ and since $\{\forall t\in [0,1],\,\forall i\in\{1,\ldots,d\},\ X^i_t> 0\}=\{1<\tau_\d\}$, we deduce that~\eqref{eq:feller-diff-proof} holds true, which concludes the proof or Proposition~\ref{prop:Feller}.
\end{proof}

\subsection{Diffusion processes with killing}

\label{sec:diff-with-killing}

This section is devoted to the study of diffusion processes with killing. More precisely, we consider as above a diffusion process $X$ on a connected, open domain $D\subset\R^d$ for some $d\geq 1$, solution to the SDE
\begin{align}
  \label{eq:SDE-new}
  \mathrm dX_t=b(X_t)\mathrm dt+\sigma(X_t)\mathrm dB_t
\end{align}
absorbed in $\d$ at its first exit time $\tau_{\text{exit}}$ of $D$, as defined in~\eqref{eq:def-tau-d-diffusion-general}, with the
same assumptions as in Section~\ref{sec:general-diffusion}. We also assume that the process is subject to an additional measurable killing
rate $\kappa:D\rightarrow\R_+$ which is locally bounded: there exists an independent exponential random variable $\xi$ with parameter
$1$ such that the process is instantaneously sent to the cemetery point $\d\notin D$ at time
\begin{align*}
\tau_\d=\tau_{\text{exit}} \wedge \inf\left\{t\geq 0,\, \int_0^t \kappa(X_s)\,\mathrm ds\,>\xi \right\}.
\end{align*}

Since $\kappa$ is assumed to be locally bounded, one easily checks that $\lambda_0$ in~\eqref{eq:def-lambda_0} is finite, and that it
does not depend on $x\in D$ or on the open ball $B$ such that $\overline{B}\subset D$.

The following result is an extension to the multi-dimensional setting of~\cite[Theorem~4.3]{KolbSteinsaltz2012}.

\begin{thm}
Assume that there exist a subset $D_0\subsetneq D$ closed in $D$ such that
\begin{align}
\label{eq:kappa-bigger-than-lambda0}
\inf_{x\in D\setminus D_0} \kappa(x) > \lambda_0,
\end{align}
and a time $s_1>0$ such that
\begin{align}
\label{eq:wk-conv-to-0}
\sup_{x\in D_0} \P_x(s_1<\tau_\d\wedge \tau_{K_k})\xrightarrow[k\rightarrow+\infty]{} 0.
\end{align}
Then the process $X$ absorbed at time $\tau_\d$ admits a unique quasi-stationary distribution $\nu_{QSD}$ and there exist a positive
function $\varphi_2$ on $D$ (uniformly bounded away from $0$ on compact subsets of $D$) and a positive constant $C$ such that
\begin{align*}
    \left\|\P_\mu(X_{t}\in\cdot\mid t<\tau_\d) -\nu_{QSD}\right\|_{TV}
    &\leq \frac{C}{\mu(\varphi_{2})}\,\alpha^t,\ \forall t\in [0,+\infty)
  \end{align*}
  for all probability measures $\mu$ on $E$.
\end{thm}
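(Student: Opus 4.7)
The plan is to verify Assumption~(F) of Section~\ref{sec:continuous-time} with the bounded Lyapunov function $\psi_1\equiv 1$, and then invoke Theorem~\ref{thm:E-F}. Since $\mu(\psi_1)=1$ for every probability measure $\mu$, the general bound~\eqref{eq:expo-cv-prop-E-F} collapses to the claimed form $C\alpha^t/\mu(\varphi_2)$ with $\varphi_2:=\psi_2$. The function $\psi_2$ is positive and uniformly bounded away from zero on compact subsets of $D$ by a standard Harnack chaining argument in the connected domain $D$, so any quasi-stationary distribution $\nu$ automatically satisfies $\nu(\psi_1)=1<\infty$ and $\nu(\varphi_2)>0$; uniqueness of $\nu_{QSD}$ then follows from the uniqueness clause of Theorem~\ref{thm:E-F}.

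Fix $\lambda_1\in(\lambda_0,\inf_{x\in D\setminus D_0}\kappa(x))$, which is available by~\eqref{eq:kappa-bigger-than-lambda0}, set $L=K_k$ with $k$ large (to be chosen), and take $t_2=ns_1$ with $n$ large (to be chosen). Conditions~(F0), (F1), (F3) and the third line of~(F2) are handled exactly as in the proof of Theorem~\ref{thm:main-diffusion}: (F0) is the standard strong Markov property; (F1) and (F3) follow from the parabolic Harnack inequality on compact subsets of $D$ established in Section~\ref{sec:harnack-diff}, which remains valid for the killed subMarkovian semigroup since $\kappa$ is locally bounded; and the third line of~(F2) follows from the definition of $\lambda_0$ in~\eqref{eq:def-lambda_0}, combined with Harnack chaining to transfer positivity to every $x\in L$. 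The second line of~(F2) is trivial with $c_2=1$ and $\psi_1\equiv 1$.

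The main step is the first line of~(F2): to find $\gamma_1<\gamma_2$ such that $\PP_x(t_2<\tau_{K_k}\wedge\tau_\d)\leq\gamma_1^{t_2}$ for all $x\in D$. Write $Y$ for the pure diffusion without killing, let $\tau_{K_k}^Y$ be its first hitting time of $K_k$ and $\tau_{\text{exit}}^Y$ its first exit time from $D$. The Feynman-Kac formula gives
\[
\PP_x(t_2<\tau_{K_k}\wedge\tau_\d)=\EE_x\!\left[\exp\!\left(-\int_0^{t_2}\kappa(Y_s)\,ds\right)\11_{t_2<\tau_{K_k}^Y\wedge\tau_{\text{exit}}^Y}\right].
\]
Let $\sigma=\inf\{s\geq 0:Y_s\in D_0\}$. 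On $\{\sigma\geq t_2\}$, the path $Y$ stays in $D\setminus D_0$ during $[0,t_2]$, so $\kappa(Y_s)\geq\lambda_1$ and the contribution is bounded by $e^{-\lambda_1 t_2}$. On $\{\sigma\leq(n-1)s_1\}$, the strong Markov property at $\sigma$ combined with~\eqref{eq:wk-conv-to-0} (applied to $Y_\sigma\in D_0$ with remaining time $t_2-\sigma\geq s_1$) bounds the contribution by $\varepsilon_k:=\sup_{y\in D_0}\PP_y(s_1<\tau_\d\wedge\tau_{K_k})$. On $\{(n-1)s_1<\sigma<t_2\}$, the killing during $[0,\sigma]$ in $D\setminus D_0$ gives $\int_0^\sigma\kappa(Y_s)\,ds\geq\lambda_1(n-1)s_1$, hence the contribution is at most $e^{-\lambda_1(n-1)s_1}$. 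Choosing $\lambda_1'\in(\lambda_0,\lambda_1)$, then $n$ large enough that $e^{-\lambda_1(n-1)s_1}\leq \frac{1}{3}e^{-\lambda_1' n s_1}$, and finally $k$ large enough that $\varepsilon_k\leq \frac{1}{3}e^{-\lambda_1' n s_1}$, one obtains $\PP_x(t_2<\tau_{K_k}\wedge\tau_\d)\leq e^{-\lambda_1' n s_1}=\gamma_1^{t_2}$ with $\gamma_1:=e^{-\lambda_1'}$; one then fixes $\gamma_2:=e^{-\lambda_1''}$ for any $\lambda_0<\lambda_1''<\lambda_1'$.

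The main obstacle is precisely this iterated Feynman-Kac estimate: the killing does effective work only outside $D_0$, while inside $D_0$ one must rely on the probabilistic escape bound~\eqref{eq:wk-conv-to-0}; balancing the two mechanisms requires choosing the time window $ns_1$ large enough that the killing term dominates the power $\gamma_1^{ns_1}$ for $\gamma_1$ close to $e^{-\lambda_1}$, and then choosing the truncation level $k$ large enough that $\varepsilon_k$ becomes negligible on this time scale. Once this estimate is in place, Assumption~(F) is verified and Theorem~\ref{thm:E-F} yields $\nu_{QSD}$ together with the announced exponential convergence in total variation.
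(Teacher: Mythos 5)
Your proposal is correct and follows essentially the same route as the paper: the paper's own proof simply says to repeat the argument of Theorem~\ref{thm:main-diffusion} with $\varphi_1\equiv 1$, and your verification of Assumption~(F) with $\psi_1\equiv 1$ — in particular the three-case Feynman--Kac estimate split at the hitting time of $D_0$, with the killing rate $\kappa\geq\lambda_1>\lambda_0$ doing the work outside $D_0$ and~\eqref{eq:wk-conv-to-0} handling the excursion after reaching $D_0$ — is exactly the decomposition~\eqref{eq:diff-E-2-ct1-step2} of that proof specialized to the killed semigroup. The balancing of $n$ and $k$ and the choice $\gamma_1=e^{-\lambda_1'}<\gamma_2=e^{-\lambda_1''}$ with $\lambda_0<\lambda_1''<\lambda_1'<\lambda_1$ match the paper's choices, so the argument is sound.
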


\begin{rem} Let us make some comments on the assumptions of the above result.
\begin{enumerate}
\item If the process without killing rate satisfies~\eqref{eq:wk-conv-to-0}, then the process with killing rate also satisfies this
  property. Hence the analysis provided in Section~\ref{sec:appl-diff-unif-ellipt} can also be used to check the assumptions of the
  above theorem.
\item If $\inf_{x\in D\setminus K_k}\kappa(x)\rightarrow +\infty$ when $k\rightarrow+\infty$, then the assumptions of Theorem~4.7 are
  trivially satisfied.
\item In order to reach the conclusion of Theorem~\ref{thm:main-diffusion} in the setting of killed diffusion, it is also possible to
  use a Lyapunov type criterion: the assumption~\eqref{eq:lyapunov-diffusions} can be simply replaced by the assumption that there
  exist $\lambda>\lambda_0$ and $C>0$ such that
\begin{align*}
\mathcal{L}\varphi(x)-\kappa(x)\varphi(x)\leq -\lambda \varphi(x)+C\11_{x\in D_0}.
\end{align*}
Note that~\eqref{eq:kappa-bigger-than-lambda0} of course implies the last inequality for $\varphi\equiv 1$. This extension follows
from a simple adaptation of the arguments of Theorem~\ref{thm:main-diffusion} observing that
\begin{align*}
  \EE_x\left[f(X_t)\11_{t<\tau_\d}\right]=\EE_x\left[f(X^D_t)\11_{t<\tau_{\text{exit}}}\exp\left(-\int_0^t \kappa(X^D_s) \mathrm ds\right) \right],
\end{align*}
where the process $X^D$ is the process solution to~\eqref{eq:SDE-new} without killing, absorbed at its first exit time of $D$, at
time $\tau_{\text{exit}}$.
\item If in addition the killing rate $\kappa$ is locally H\"older in $D$, we can apply~\cite[Cor.\,3.1]{Friedman1964} as in
  Section~\ref{sec:pf-cor-diff} to prove that $\eta$ is $\mathcal{C}^2(D)$ and
  $\mathcal{L}\eta(x)-\kappa(x)\eta(x)=-\lambda_0\eta(x)$ for all $x\in D$.
\end{enumerate}
\end{rem}

\begin{proof}
  The proof follows the same lines as the proof of Theorem~\ref{thm:main-diffusion} in Section~\ref{sec:pf-general-diffusion}. We
  emphasize that the construction of the process in Section~\ref{sec:construction-diff-multidim} is still valid. The same is true for
  the Harnack inequalities of Section~\ref{sec:harnack-diff} since they are based on Krylov's and Safonov's general
  result~\cite{KrylovSafonov1980} which is obtained for diffusion processes with a bounded and measurable killing rate. The rest of
  the proof is exactly the same, replacing $\varphi_1=\varphi$ by $\varphi_1=1$.
\end{proof}

\subsection{The case of one-dimensional diffusions}
\label{sec:diffusion-1d}

In this section, we consider the case of one-dimensional diffusion processes. Here, the H\"older regularity of the
coefficients is not needed. Let $X$ be the solution in $D=(\alpha,\beta)$, where $-\infty\leq\alpha<\beta\leq+\infty$, to the SDE
\begin{align*}
\mathrm dX_t=\sigma(X_t)\,\mathrm dB_t+b(X_t)\,\mathrm dt,\quad X_0\in D,
\end{align*}
where $\sigma:D\rightarrow(0,+\infty)$ and $b:D\rightarrow \R$ are measurable functions such that $(1+|b|)/\sigma^2$ is locally
integrable on $D$. We assume that the process is sent to a cemetery point $\d$ when it reaches the boundary of $D$ and that it is
subject to an additional killing rate $\kappa:D\rightarrow\R_+$ which is measurable and locally integrable w.r.t.\ Lebesgue's
measure. This assumption implies that the killed process is regular in the sense that, for all $x,y\in D$,
$\P_x(\tau_{\{y\}}<\infty)>0$.

We define $\lambda_0$ as in~\eqref{eq:def-lambda_0}. The fact that $\lambda_0$ does not depend on $x$ nor $B$ is a consequence of the
regularity of the process.

Let $\delta:D\rightarrow \R_+$ and $s:D\rightarrow\R$ be defined by
\begin{align*}
\delta(x)=\exp\left(-2\int_{\alpha_0}^x \frac{b(u)}{\sigma(u)^2}\,\mathrm du\right)\quad\text{and}\quad s(x)=\int_{\alpha_0}^x \delta(u)\,\mathrm du,
\end{align*}
for some arbitrary $\alpha_0\in D$.  We recall that $s$ is the scale function of $X$ (unique up to an affine
transformation), meaning that $s(X_t)$ is a local martingale. We also recall that the boundary $\alpha$ (and similarly
for $\beta$) is said to be reachable (for the process without killing) if $s(\alpha_+)>-\infty$ and
\begin{align*}
\int_\alpha^+ \frac{s(x)-s(\alpha_+)}{\sigma(x)^2\delta(x)}\,\mathrm dx\,<+\infty.
\end{align*}

\begin{thm}
\label{thm:diffusion-1d}
Assume that one among the following conditions \textnormal{(i)}, \textnormal{(ii)} or \textnormal{(iii)} holds true:
\begin{description}
\item[\textmd{(i)}] $\alpha$ and $\beta$ are reachable boundaries;
\item[\textmd{(ii)}] $\alpha$ is reachable and there exist $\lambda_1>\lambda_0$, a $\mathcal{C}^2(D)$ function $\varphi: D\rightarrow
    [1,+\infty)$ and $x_1\in D$ such that, for all $x\geq x_1$,
   \begin{align}
     \label{eq:1d-lyapunov-diffusions}
     \frac{\sigma(x)^2}{2} \varphi''(x) +b(x)\varphi'(x)-\kappa(x)\varphi(x) \leq -\lambda_1 \varphi(x);
   \end{align}
 \item[\textmd{(iii)}] there exist $\lambda_1>\lambda_0$, a $\mathcal{C}^2(D)$ function $\varphi: D\rightarrow [1,+\infty)$
   and $x_0<x_1\in D$ such that~\eqref{eq:1d-lyapunov-diffusions} holds true for all $x\in (\alpha,x_0)\cup (x_1,\beta)$.
\end{description}
   Then the conclusions of Theorem~\ref{thm:main-diffusion} hold true.
\end{thm}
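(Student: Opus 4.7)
The plan is to deduce Theorem~\ref{thm:diffusion-1d} from Theorem~\ref{thm:main-diffusion} (in its variant for diffusions with killing, as described after the statement of Theorem~4.7). In each of the three cases I would build a $\mathcal{C}^2(D)$ global Lyapunov function $\varphi:D\to[1,+\infty)$ and a subset $D_0$ closed in $D$ so that
\[
\frac{\sigma(x)^2}{2}\varphi''(x)+b(x)\varphi'(x)-\kappa(x)\varphi(x)\leq-\lambda_1\varphi(x)+C\mathbf{1}_{D_0}(x),\quad\forall x\in D,
\]
and so that the tail estimate~\eqref{eq:conv-to-0} holds. In dimension one the absence of H\"older regularity is not an obstruction, because the Krylov--Safonov Harnack inequality used in the multi-dimensional proof can be replaced by the explicit scale-function identity $\P_x(\tau_y<\tau_z)=(s(x)-s(z))/(s(y)-s(z))$ for $\alpha<z<x<y<\beta$, together with the strict positivity of the transition density on compacts of $D$, both of which are classical under $(1+|b|)/\sigma^2\in L^1_{\mathrm{loc}}(D)$.

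In case (iii) I would extend the given $\varphi$ across $[x_0,x_1]$ to a $\mathcal{C}^2(D)$ function with $\sup_{[x_0,x_1]}\varphi<\infty$, take $D_0=[x_0,x_1]$, and absorb the interior error into $C\mathbf{1}_{D_0}$; since $D_0$ is compactly contained in $D$, $D_0\subset K_{k_0}$ for some $k_0$ and~\eqref{eq:conv-to-0} is then trivial. In case (ii) I would extend $\varphi$ as a bounded $\mathcal{C}^2$ function on $(\alpha,x_1]$ and take $D_0=(\alpha,x_1]$; the nontrivial piece of~\eqref{eq:conv-to-0} near $\alpha$ is handled by using that $\alpha$ is reachable, together with the explicit formulas for $\E_x[\tau_\alpha]$, which give $\sup_{x\in(\alpha,x_0]}\P_x(s_1<\tau_\d)\to 0$ as $x_0\to\alpha$. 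In case (i) I would take $\varphi\equiv 1$ and $D_0=D$, so that the Lyapunov inequality reduces to $-\kappa\leq-\lambda_1+\lambda_1$; condition~\eqref{eq:conv-to-0} is verified piecewise, compactly inside $D$ (where it is trivial since $x\in K_{k_0}\subset K_k$ for $k\geq k_0$) and near each boundary (where reachability again yields $\P_x(\tau_\d\leq s_1)\to 1$).

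The remaining hypotheses --- the local Dobrushin coefficient, the local Harnack inequality, the strong Markov identity (F0) and aperiodicity --- are standard 1D consequences of the regularity of $X$, the explicit scale-function and speed-measure formulas and the continuity of the semigroup on compacts of $D$; they slot directly into the argument of Section~\ref{sec:pf-general-diffusion} in place of the H\"older-based parabolic Harnack inequality. This gives Condition~(F) and hence the conclusions of Theorem~\ref{thm:main-diffusion}.

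The main technical obstacle is the passage from the differential Lyapunov inequality~\eqref{eq:1d-lyapunov-diffusions} to the semigroup Lyapunov inequality $\E_x[\varphi(X_{t_2})\mathbf{1}_{t_2<\tau_L\wedge\tau_\d}]\leq\gamma_1^{t_2}\varphi(x)$ required by (F2), since under merely measurable $b$, $\sigma$ and $\kappa$ the classical It\^o/Dynkin formula does not apply pointwise. I would handle this by smoothing $b$, $\sigma$ and $\kappa$ into H\"older coefficients, applying It\^o to the approximating diffusions and passing to the limit via the stability of 1D weak solutions in natural scale; alternatively, a Meyer--It\^o formula for functions whose second distributional derivative is a Radon measure works directly. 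A secondary obstacle is to make the reachability arguments in cases (i) and (ii) quantitative and uniform in $x$ near the reachable boundary, which is done explicitly through the scale function and the speed measure.
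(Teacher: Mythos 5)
Your proposal is correct and follows essentially the same route as the paper's own (sketched) proof: identical choices of $D_0$ and of the extension of $\varphi$ in each of the three cases, the same use of reachability to get~\eqref{eq:conv-to-0} near the boundary (trivial in case (iii)), and the same replacement of the Krylov--Safonov Harnack inequality by scale-function/coupling arguments specific to regular one-dimensional diffusions. You additionally flag and resolve the passage from the differential to the semigroup Lyapunov inequality under merely measurable coefficients (via smoothing or a Meyer--It\^o argument), a point the paper leaves implicit.
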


\begin{rem}
  We shall not detail the proof of this result since it is very close to the proof of Theorem~\ref{thm:main-diffusion} given in
  Section~\ref{sec:pf-general-diffusion}. We only explain the places that need to be modified. First, weak existence, weak uniqueness
  and the strong Markov property are well-known under the assumptions that $\sigma>0$ and $(1+|b|)/\sigma^2\in
  L^1_{\textnormal{loc}}(D)$ (weak existence and uniqueness in law are proved up to an explosion time in~\cite[Thm.
  5.5.15]{KaratzasShreve1991}, so we can construct a unique weak solution and prove the strong Markov property as in
  Section~\ref{sec:construction-diff-multidim}). Second, in order to construct an appropriate function $\varphi$ on $D$, we choose
  $D_0=(\alpha,x_1]$ in case (ii) and $D_0=[x_0,x_1]$ in case (iii) and we can extend $\varphi$ on $D_0$ as a bounded
  $\mathcal{C}^2(D)$ function. In case (i), we can take $\varphi\equiv 1$ and $D_0=D$. Third,~\eqref{eq:conv-to-0} follows from the
  fact that the boundaries $\alpha$ and $\beta$ are reachable in case (i) and $\alpha$ is reachable in case (ii), since
  \[
  \sup_{x\in (\alpha,\alpha+1/k]}\P_x(s_1<\tau_\d)\leq \P_{\alpha+1/k}(s_1<\tau_{\{\alpha\}})\xrightarrow[k\rightarrow+\infty]{} 0.
  \]
  In case (iii), the limit is trivial since $D_0\subset K_k$ for $k$ large enough. Finally, all the arguments using Harnack's
  inequality can be replaced by arguments using the regularity of the process and standard coupling arguments for one-dimensional
  diffusions (see~\cite{ChampagnatVillemonais2015,ChampagnatVillemonais2017ALEA}).
\end{rem}

In order to apply this result in practice, one needs to find computable estimates for $\lambda_0$ and candidates for $\varphi$. One
may for instance use the bounds for the first eigenvalue of the (Dirichlet) infinitesimal generator of $(X_t,t\geq 0)$ obtained in a
$L^2$ (symmetric) setting using Rayleigh-Ritz formula in~\cite{Pinsky2009,Wang2009,Wang2012}, as observed
in~\cite{KolbSteinsaltz2012}. We propose here two different upper bounds for $\lambda_0$ which follow from the
characterization~\eqref{eq:def-lambda_0} of the eigenvalue $\lambda_0$ and Dynkin's formula. 

\begin{prop}
\label{prop:lambda0-bound}
For all $\alpha< \mathfrak{a}<\mathfrak{b} <\beta$, we have
\begin{align*}
\lambda_0 \leq \sup_{x\in[\mathfrak{a},\mathfrak{b}]}\left\{\frac{1}{2}\left(\frac{\pi\sigma(x)}{\int_\mathfrak{a}^\mathfrak{b}
      \exp\left(-2\int_x^y\,\frac{b(z)}{\sigma^2(z)}\,\mathrm dz\right)\,\mathrm dy}\right)^2+\kappa(x)\right\}. 
\end{align*}
If $x\mapsto b(x)/\sigma(x)^2$ is $\mathcal{C}^1([\mathfrak{a},\mathfrak{b}])$, then
\begin{align*}
\lambda_0\leq \sup_{x\in[\mathfrak{a},\mathfrak{b}]} \frac{\pi^2\sigma(x)^2}{2(\mathfrak{b}-\mathfrak{a})^2}+\sigma(x)^2\left(\frac{b}{2\sigma^2}\right)'(x)+\frac{b(x)^2}{2\sigma(x)^2}+\kappa(x).
\end{align*}
\end{prop}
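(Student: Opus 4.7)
The plan is to use the characterization~\eqref{eq:def-lambda_0} of $\lambda_0$: in order to prove $\lambda_0\le\Lambda$ for a given candidate $\Lambda>0$, it suffices to exhibit an open ball $B$ with $\overline{B}\subset D$ and a point $x\in D$ such that $\liminf_{t\to\infty}e^{\Lambda t}\,\P_x(X_t\in B)>0$. I would take $B=(\mathfrak{a},\mathfrak{b})$, which satisfies $\overline B\subset D$ since $\alpha<\mathfrak{a}<\mathfrak{b}<\beta$, and for each of the two claimed inequalities construct a test function $f\in\mathcal{C}^2((\mathfrak{a},\mathfrak{b}))$, positive on the open interval, vanishing continuously at $\mathfrak{a}$ and $\mathfrak{b}$, and satisfying
\[
\mathcal{L}f(x)-\kappa(x)f(x)\ge -\Lambda f(x),\qquad x\in(\mathfrak{a},\mathfrak{b}),
\]
with $\Lambda$ equal to the corresponding right-hand side. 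Given such an $f$, the Feynman--Kac identity from Section~\ref{sec:diff-with-killing} combined with It\^o's formula implies that
\[
Z_t:=e^{\Lambda t}f(X^D_t)\exp\!\left(-\int_0^t\kappa(X^D_s)\,ds\right),\qquad 0\le t\le \tau,
\]
is a bounded submartingale up to the first exit time $\tau$ of $X^D$ from $(\mathfrak{a},\mathfrak{b})$. Since $f$ vanishes on $\partial(\mathfrak{a},\mathfrak{b})$, optional stopping at $t\wedge\tau$ yields $\E_x[Z_t\11_{t<\tau}]\ge f(x)$, hence $e^{\Lambda t}\,\P_x(X_t\in(\mathfrak{a},\mathfrak{b}))\ge f(x)/\|f\|_\infty>0$ for every $x\in(\mathfrak{a},\mathfrak{b})$, and the conclusion $\lambda_0\le\Lambda$ follows.

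For the first bound I would choose the scale-function sine,
\[
f(x)=\sin\!\left(\pi\,\frac{s(x)-s(\mathfrak{a})}{s(\mathfrak{b})-s(\mathfrak{a})}\right).
\]
Since $\mathcal{L}s\equiv 0$ (a direct check using $s'=\delta$ and $\delta'=-2(b/\sigma^2)\delta$), the chain rule gives $\mathcal{L}(g\circ s)(x)=\tfrac12\sigma(x)^2\delta(x)^2\,g''(s(x))$, which combined with $g''=-(\pi/(s(\mathfrak{b})-s(\mathfrak{a})))^2g$ and the identity
\[
\frac{\delta(x)}{s(\mathfrak{b})-s(\mathfrak{a})}=\frac{1}{\int_\mathfrak{a}^\mathfrak{b}\exp\bigl(-2\int_x^y b(z)/\sigma(z)^2\,dz\bigr)\,dy}
\]
produces exactly the integrand of the first bound (up to adding $\kappa(x)$ after subtracting $\kappa f$), and taking the supremum over $[\mathfrak{a},\mathfrak{b}]$ closes the argument. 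For the second bound, the extra $\mathcal{C}^1$ regularity of $b/\sigma^2$ allows the Doob-type conjugate
\[
f(x)=\exp\!\left(-\int_{\alpha_0}^x\frac{b(u)}{\sigma(u)^2}\,du\right)\sin\!\left(\pi\,\frac{x-\mathfrak{a}}{\mathfrak{b}-\mathfrak{a}}\right).
\]
Expanding $\mathcal{L}f$, the first-order terms in the sine factor cancel exactly (as the exponential prefactor was designed to do), and what remains, after subtracting $\kappa f$, is $-\phi(x)f(x)$ with $\phi$ the integrand of the second bound; a pointwise supremum then gives $\Lambda$.

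The main technical obstacle is the regularity of the first test function: under mere local integrability of $b/\sigma^2$ the scale function $s$ is only $\mathcal{C}^1$, so $f=g\circ s$ is not $\mathcal{C}^2$ in the classical sense and It\^o cannot be applied naively. The cleanest workaround is to notice that $s(X^D_\cdot)$ is itself a continuous local martingale with quadratic variation $\int_0^\cdot\sigma(X^D_u)^2\delta(X^D_u)^2\,du$, and to apply the standard It\^o formula to the smooth $g$ of this semimartingale; the resulting drift is exactly $\tfrac12\sigma^2\delta^2\,g''(s)$, which matches the formal computation of $\mathcal{L}f$ above and makes the submartingale argument rigorous. (Alternatively, one may approximate $b,\sigma$ by smooth coefficients and pass to the limit using uniqueness of the martingale problem, as in Section~\ref{sec:construction-diff-multidim}.) The second bound is free of this subtlety since $f$ is then genuinely $\mathcal{C}^2$.
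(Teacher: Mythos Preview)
Your proposal is correct and follows essentially the same approach as the paper: the paper uses exactly the two test functions you chose, namely $\varphi(x)=\sin\bigl(\pi\,(s(x)-s(\mathfrak{a}))/(s(\mathfrak{b})-s(\mathfrak{a}))\bigr)$ for the first bound and $\varphi(x)=\exp\bigl(-\int_{\mathfrak{c}}^x b/\sigma^2\bigr)\sin\bigl(\pi(x-\mathfrak{a})/(\mathfrak{b}-\mathfrak{a})\bigr)$ for the second, computes $\mathcal{L}\varphi-\kappa\varphi\ge -C\varphi$ with $C$ the claimed supremum, and then applies It\^o's formula to obtain $\E_x[\varphi(X_t)\11_{t<\tau_{\{\mathfrak{a},\mathfrak{b}\}}}]\ge e^{-Ct}\varphi(x)$ and concludes via the definition of $\lambda_0$. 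Your discussion of the regularity issue for the first test function (applying It\^o to the smooth $g$ composed with the local martingale $s(X^D)$ rather than to $g\circ s$ directly) is a genuine refinement: the paper simply asserts that $\varphi$ is $\mathcal{C}^2$, which under the minimal hypothesis that $(1+|b|)/\sigma^2$ be merely locally integrable is not literally true, and your workaround is the standard clean fix.
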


\begin{proof}
For the proof of the first inequality, set
\begin{align*}
f (x)=\sin\left(\pi\frac{s(x)-s(\mathfrak{a})}{s(\mathfrak{b})-s(\mathfrak{a})}\right).
\end{align*}
Then, for all $x\in(\mathfrak{a},\mathfrak{b})$,
\begin{align*}
\frac{\sigma(x)^2}{2} f ''(x) +b(x)f '(x) & -\kappa(x)f (x)
= -\left(\frac{\pi^2\sigma(x)^2\delta(x)^2}{2(s(\mathfrak{b})-s(\mathfrak{a}))^2}+\kappa(x)\right)f (x)\\
&= -\left(\frac{\pi^2\sigma(x)^2}{2\left(\int_\mathfrak{a}^\mathfrak{b} \exp\left(-2\int_x^y\,\frac{b(z)}{\sigma^2(z)}\,\mathrm dz\right)\,\mathrm dy\right)^2}+\kappa(x)\right)f (x)\\
&\geq -Cf (x),
\end{align*}
where 
\[
C:=\sup_{x\in[\mathfrak{a},\mathfrak{b}]}\left\{\frac{1}{2}\left(\frac{\pi\sigma(x)}{\int_\mathfrak{a}^\mathfrak{b}
    \exp\left(-2\int_x^y\,\frac{b(z)}{\sigma^2(z)}\,\mathrm dz\right)\,\mathrm dy}\right)^2+\kappa(x)\right\}.
\]
Since $f $ is $C^2$ and bounded, we deduce from It\^o's formula that, for all $x\in(\mathfrak{a},\mathfrak{b})$,
\begin{align*}
\E_x(f (X_t)\11_{t<\tau_{\{\mathfrak{a},\mathfrak{b}\}}})\geq e^{-Ct}f (x).
\end{align*}
Now, using the fact that $0<f (x)\leq 1$ for all $x\in(\mathfrak{a},\mathfrak{b})$,
we deduce that
\begin{align*}
\P_x(X_t\in (\mathfrak{a},\mathfrak{b}))\geq e^{-Ct}f (x),\ \forall x\in D.
\end{align*}
As a consequence, the definition of $\lambda_0$ entails $\lambda_0\leq C$.

The proof of the second inequality is the same, using instead the function
\begin{align*}
f (x):=\exp\left(-\int_{\mathfrak{c}}^x\frac{b(u)}{\sigma(u)^2}\,\mathrm du\right)\,\sin\left(\pi\frac{x-\mathfrak{a}}{\mathfrak{b}-\mathfrak{a}}\right)
\end{align*}
for some $\mathfrak{c}\in(\mathfrak{a},\mathfrak{b})$.
\end{proof}

The next result provides two candidates for $\varphi$. Its proof is a straightforward computation.

\begin{prop}
  \label{prop:varphi-candidate}
  Let $\varphi:(0,+\infty)$ be any $\mathcal{C}^2(D)$ function such that, for some constants $\alpha_-<\alpha_0<\alpha_+\in D$,
  \begin{equation}
    \label{eq:varphi-candidate-1}
    \varphi(x)=
    \begin{cases}
      \sqrt{s(x)}&\text{ if }x\geq \alpha_+,\\
      \sqrt{-s(x)}&\text{ if }x\leq \alpha_-.
    \end{cases}
  \end{equation}
  Then, for all $x\in(\alpha,\alpha_-]\cup[\alpha_+,\beta)$
  \begin{align*}
    \frac{\sigma(x)^2}{2} \varphi''(x) +b(x)\varphi'(x)-\kappa(x)\varphi(x) \leq -\left(\frac{\sigma(x)^2\delta(x)^2}{8s(x)^2}+\kappa(x)\right)\,\varphi(x).
  \end{align*}
  If $x\mapsto b(x)/\sigma(x)^2$ is $C^1(D)$, then
  \begin{equation}
    \label{eq:varphi-candidate-2}
    \varphi(x)= \exp\left(-\int_{\alpha_0}^x\frac{b(u)}{\sigma^2(u)}\,\mathrm du\right)
  \end{equation}
  satisfies
  \begin{align*}
    \frac{\sigma(x)^2}{2} \varphi''(x) +b(x)\varphi'(x)-\kappa(x)\varphi(x)= -\left(\frac{b^2(x)}{2\sigma^2(x)}+\frac{\sigma^2(x)}{2}\left(\frac{b}{\sigma^2}\right)'(x)+\kappa(x)\right)\varphi(x).
  \end{align*}
\end{prop}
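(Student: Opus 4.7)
The proof is a direct computation, so my plan is simply to differentiate the two candidate functions and plug them into the operator $\mathcal{L}\varphi - \kappa\varphi$, using the defining relations $s'=\delta$ and $\delta'=-2(b/\sigma^2)\delta$.

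For the first candidate, I would treat the two regions separately. On $[\alpha_+,\beta)$, where $\varphi=\sqrt{s}$, I would compute
\[
\varphi'=\frac{\delta}{2\sqrt{s}},\qquad \varphi''=\frac{\delta'}{2\sqrt{s}}-\frac{\delta^2}{4 s^{3/2}}=-\frac{b\delta}{\sigma^2\sqrt{s}}-\frac{\delta^2}{4 s^{3/2}}.
\]
Substituting into $\tfrac{\sigma^2}{2}\varphi''+b\varphi'$, the terms $\pm\tfrac{b\delta}{2\sqrt{s}}$ cancel and what remains is $-\tfrac{\sigma^2\delta^2}{8 s^{3/2}} = -\tfrac{\sigma^2\delta^2}{8 s^2}\varphi$. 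Subtracting $\kappa\varphi$ yields the claimed inequality (in fact an equality on this region). On $(\alpha,\alpha_-]$, where $\varphi=\sqrt{-s}$ with $s\le 0$, the same calculation goes through with flipped signs on $\varphi'$ and using $(-s)^2=s^2$; the drift terms again cancel and the same bound results. Since on the intermediate region $[\alpha_-,\alpha_+]$ we only know $\varphi$ is some $\mathcal{C}^2$ extension, the statement restricts to $(\alpha,\alpha_-]\cup[\alpha_+,\beta)$, which is why an inequality (rather than an equality everywhere) is stated.

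For the second candidate, logarithmic differentiation is cleaner: from $\varphi(x)=\exp(-\int_{\alpha_0}^x b/\sigma^2)$ we get $\varphi'/\varphi=-b/\sigma^2$ and hence
\[
\frac{\varphi''}{\varphi}=-\Bigl(\tfrac{b}{\sigma^2}\Bigr)'+\Bigl(\tfrac{b}{\sigma^2}\Bigr)^2.
\]
Plugging these into $\tfrac{\sigma^2}{2}\varphi''+b\varphi'-\kappa\varphi$, dividing by $\varphi$ gives
\[
\frac{\sigma^2}{2}\Bigl[-\bigl(\tfrac{b}{\sigma^2}\bigr)'+\tfrac{b^2}{\sigma^4}\Bigr]-\frac{b^2}{\sigma^2}-\kappa
=-\frac{\sigma^2}{2}\Bigl(\tfrac{b}{\sigma^2}\Bigr)'-\frac{b^2}{2\sigma^2}-\kappa,
\]
which is exactly the stated equality.

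There is no real obstacle here; the only mildly delicate point is bookkeeping of signs in the region $x\le\alpha_-$, where $s(x)<0$ forces $\varphi=\sqrt{-s}$ and introduces several minus signs. Once these are tracked carefully, both identities fall out immediately from the chain rule together with $s'=\delta$ and $\delta'=-2(b/\sigma^2)\delta$.
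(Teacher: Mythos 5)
Your computation is correct and is exactly what the paper intends: the authors state only that ``its proof is a straightforward computation'' and give no details, and your proposal supplies that computation, with the sign bookkeeping on $(\alpha,\alpha_-]$ and the cancellation of the drift terms handled properly, and the logarithmic-derivative treatment of the second candidate matching the stated identity.
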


\begin{rem}
The first function $\varphi$ is always uniformly lower bounded on $(\alpha,\alpha_-]\cup[\alpha_+,\beta)$ by
$\min\{\sqrt{s(\alpha_+)},\sqrt{-s(\alpha_-)}\}$. To ensure that the second one is also uniformly lower bounded, one needs further
assumptions on the behavior of $b/\sigma^2$ close to $\alpha$ and $\beta$.
\end{rem}

The above results can be used as follows. In the case where $\alpha$ is reachable and $b\equiv 0$, Condition (ii) of Theorem~\ref{thm:diffusion-1d} holds true if
\[
\liminf_{x\rightarrow \beta-}\frac{\sigma^2(x)}{8(x-\alpha)^2}+\kappa(x)>\lambda_0,
\]
choosing $\alpha_0=\alpha$ and using the function $\varphi$ of~\eqref{eq:varphi-candidate-1}. Similarly, in the case where $\alpha$
is reachable, $\sigma\equiv 1$ and $b$ is $C^1$, condition (ii) of Theorem~\ref{thm:diffusion-1d} holds true if
\[
\liminf_{x\rightarrow \beta-}\frac{b^2(x)}{2}+\frac{b'(x)}{2}+\kappa(x)>\lambda_0,
\] 
using the function $\varphi$ of~\eqref{eq:varphi-candidate-2}.

We give below more precise examples.

\begin{exa}
Assume that $D=(0,+\infty)$, $\kappa$ is locally bounded and that $X$ is solution to the SDE in $D$
\begin{align*}
\mathrm dX_t=\sqrt{X_t}\mathrm dB_t-X_t \mathrm dt.
\end{align*}
Then $0$ is reachable for $X$ and, since 
\[
\frac{\sigma(x)^2\delta(x)^2}{8s(x)^2}\xrightarrow[x\rightarrow+\infty]{} +\infty,
\]
we deduce from Proposition~\ref{prop:varphi-candidate} and Theorem~\ref{thm:diffusion-1d} that $X$ admits a quasi-statio\-na\-ry
distribution $\nu_{QSD}$ and, for all $p\geq 1$, there exist positive constants $C_p,\gamma_p$ and a positive function $\varphi_{2,p}$ on
$(0,+\infty)$ such that
\begin{align*}
\left\|\P_\mu(X_t\in\cdot\mid t<\tau_\d)-\nu_{QSD}\right\|_{TV(\exp(\cdot/p))}\leq C_p\frac{\int_{(0,+\infty)}
  \exp(x/p)\,\mu(\mathrm dx)}{\mu(\varphi_{2,p})}\,e^{-\gamma_p t}, 
\end{align*}
for all probability measure $\mu$ on $D$. In particular, one deduces that the domain of attraction $\nu_{QSD}$ contains any initial
distribution $\mu$ admitting a finite exponential moment. Note that, in the case where $\kappa\equiv 0$, the process $X$ is a
continuous state branching process (Feller diffusion), for which quasi-stationarity was already studied (see~\cite{Lambert2007} and
the references therein).
\end{exa}

\begin{exa}
Assume that $(\alpha,\beta)=\R$, that $b\equiv 0$ and $\sigma$ is bounded measurable on $\RR$. Assume also that the absorption of $X$
is due to the killing rate $\kappa(x)=\kappa_0\left(1-\frac{1}{1+|x|}\right)$ for some constant $\kappa_0>0$. We deduce from the
first inequality of Proposition~\ref{prop:lambda0-bound} (taking $\mathfrak{b}>0$ and $\mathfrak{a}=-\mathfrak{b}$) that
 \[
 \lambda_0\leq \frac{\pi^2\|\sigma\|_\infty^2}{8\mathfrak{b}^2}+\kappa_0\left(1-\frac{1}{1+\mathfrak{b}}\right)\leq \kappa_0\left(1-\frac{1}{1+2\mathfrak{b}}\right)
 \]
 for $\mathfrak{b}$ large enough.
 Moreover, choosing $\varphi=1$ and $x_0=-3\mathfrak{b}$, $x_1=3\mathfrak{b}$, one deduces that, for all $x\not\in[-x_1,x_1]$,
\[
\frac{\sigma(x)^2}{2} \varphi''(x) -\kappa(x)\varphi(x) \leq -\kappa_0\left(1-\frac{1}{1+3\mathfrak{b}}\right) \varphi(x).
\]
Hence Theorem~\ref{thm:diffusion-1d} implies that there exists a unique quasi-stationary distribution $\nu_{QSD}$ for $X$ and that it
attracts all probability measures $\mu$ on $D$. 
\end{exa}

\begin{exa}
  We consider the case $(\alpha,\beta)=(0,+\infty)$, $\sigma(x)=1$, $b(x)=x\sin x$, and
  $\kappa(x)=\kappa_0\left(1-\frac{1}{1+x}\right)$ for some constant $\kappa_0>\pi^2+3$. This corresponds to a SDE $\mathrm dX_t=\mathrm dB_t+\nabla
  U(X_t) \mathrm dt$ where the potential $U(x)=\sin x-x\cos x$ has infinitely many wells with arbitrarily large depths, meaning that the process
  $X$ without killing has a tendency to be ``trapped'' away from zero for large initial conditions. Nevertheless, thanks to the
  killing, we are able to prove convergence to a unique quasi-stationary distribution. Indeed, using the second inequality of
  Proposition~\ref{prop:lambda0-bound}, we have
  \begin{align*}
    \lambda_0 &\leq \sup_{x\in(0,1)} \frac{\pi^2}{2}+\frac{\sin x+x\cos x+x^2\sin^2 x}{2}+\kappa_0\left(1-\frac{1}{1+x}\right)
    \leq \frac{\pi^2}{2}+\frac{3}{2}+\kappa_0/2.
  \end{align*}
  Moreover, $0$ is a reachable boundary for $X$ and, taking $\varphi=1$, one has, for all $x_1>0$ and all $x>x_1$,
  \[
  \frac{\sigma(x)^2}{2} \varphi''(x)+b(x)\varphi'(x) -\kappa(x)\varphi(x) \leq -\kappa_0\left(1-\frac{1}{1+x_1}\right) \varphi(x)
  \]
  Hence, since we assumed that $\kappa_0>\pi^2+3$, one deduces that there exists a unique quasi-stationary distribution
  $\nu_{QSD}$ for $X$ and that it attracts all probability measures $\mu$ on $D$.
\end{exa}

\begin{rem}
  The case of general one-dimensional diffusion processes~\cite{Kallenberg2002} can be handled using our framework, although using
  the infinitesimal generator is more tricky~\cite{ItoMcKean1974}. However, in the case of a regular diffusion process on
  $(0,+\infty)$ such that $0$ is a reachable boundary and such that $+\infty$ is entrance, one easily shows (see for
  instance~\cite{ChampagnatVillemonais2015}) that, for all $\lambda>0$, there exists $y>0$ such that
  \begin{align*}
    \sup_{x\in(0,+\infty)}\E_x\left(e^{\lambda \tau_{[0,y]}}\right)<+\infty.
  \end{align*}
  Hence, using the same proof as in Theorem~\ref{thm:main-diffusion} and using Lemma~\ref{prop:E-G}, we deduce that there
  exists a unique quasi-stationary distribution $\nu_{QSD}$ for $X$ and that it satisfies
  \[
  \left\|\P_\mu(X_{t}\in\cdot\mid t<\tau_\d) -\nu_{QSD}\right\|_{TV}
  \leq \frac{1}{\mu(\varphi_{2})}\,\alpha^t,\ \forall t\in [0,+\infty)
   \]
   for some positive function $\varphi_2$ and some $\alpha<1$. Whether the convergence to $\nu_{QSD}$ holds uniformly with respect to
   the initial distribution (as in Proposition~\ref{prop:A1-A2}) without further assumptions remains an open problem. It has been
   shown to be true for a wide range of cases in~\cite{ChampagnatVillemonais2015,ChampagnatVillemonais2017ALEA}.
\end{rem}

\section{Application to processes in discrete state space and continuous time}
\label{sec:appl-discr-state}

Let $X$ be a non-explosive\footnote{One could actually consider the case of explosive Markov processes as in
  Section~\ref{sec:diffusion} (see Remark~\ref{rem:non-explosion}), with $\tau_\d$ defined as the infimum between the first hitting
  time of $\d$ and the explosion time.} Markov process in a countable state space $E\cup\{\d\}$ absorbed in $\d$, with jump rate 
$q_{x,y}$ from $x$ to $y\neq x$ such that $\sum_{y\in E\cup\{\d\}\setminus\{x\}}q_{x,y}<\infty$ for all $x\in E$. The
extended generator $\cL$ acts on nonnegative real functions $f$ on $E\cup\{\d\}$ such that $\sum_{y\in
  E\cup\{\d\}}q_{x,y}f(y)<\infty$ for all $x\in E$ as
\begin{equation}
  \label{eq:def-L-discret}
  \cL f(x)=\sum_{y\neq x\in E\cup\{\d\}}q_{x,y}(f(y)-f(x)),\quad\forall x\in E,\quad Lf(\d)=0.
\end{equation}

\begin{thm}
  \label{thm:main-E-denumerable}
  Assume that there exists a finite subset $D_0$ of $E$ such that $\P_x(X_1=y)>0$ for all $x,y\in D_0$, so that the constant
  \begin{align*}
    \lambda_0:=\inf\left\{\lambda>0,\text{ s.t. }\liminf_{t\rightarrow+\infty}e^{\lambda t}\,\P_{x}\left(X_t=x\right)>0\right\}
  \end{align*}
  is finite and independent of $x\in D_0$. If in addition there exist constants $C>0$,
  $\lambda_1>\lambda_0$, a function $\varphi: E\cup\{\d\}\rightarrow \R_+$ such that
  $\restriction{\varphi}{E}\geq 1$, $\varphi(\d)=0$, $\sum_{y\in E\setminus\{x\}}q_{x,y}\varphi(y)<\infty$ for all
  $x\in E$ and such that
  \begin{align}
    \label{eq:discr-cont-lyap}
    \cL\varphi(x) \leq -\lambda_1 \varphi(x)+C\11_{x\in D_0},\ \forall x\in E,
  \end{align}
  then Assumption~(F) is satisfied with $L=D_0$, $\gamma_1=e^{-\lambda_1}$, any $\gamma_2\in(e^{-\lambda_1},e^{-\lambda_0})$ and
  $\psi_1=\restriction{\varphi}{E}$. {In addition, $\P_{\nu_{QSD}}(t<\tau_\d)=e^{-\lambda_0 t}$ for all $t\geq 0$, the
    function $\eta$ of Theorem~\ref{thm:eta} satisfies $P_t\eta=e^{-\lambda_0 t}\eta$ for all $t\geq 0$ and $\sum_{y\in
      E\setminus\{x\}}q_{x,y}\eta(y)<\infty$ and $\mathcal{L}\eta(x)=-\lambda_0\eta(x)$ for all $x\in E$.}
\end{thm}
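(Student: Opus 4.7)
The strategy is to verify Assumption~(F) of Section~\ref{sec:equivalent-formulations} with $L = D_0$, $t_1 = t_2 = 1$, $\psi_1 = \restriction{\varphi}{E}$, $\gamma_1 = e^{-\lambda_1}$ and $\gamma_2 \in (e^{-\lambda_1}, e^{-\lambda_0})$, and then invoke Theorem~\ref{thm:E-F}. Conditions (F0), (F1) and (F3) are structural: (F0) is the standard strong Markov property at the hitting time $\tau_L$ (valid since $L$ is a discrete set); (F1) follows from the finiteness of $D_0$ and the positivity of one-step transitions, taking $\nu$ to be the normalized counting measure on $D_0$; and (F3) is obtained from the Markov property at time $1$, using $\delta := \min_{y,z \in D_0} \P_y(X_1 = z) > 0$ to get $\P_y(t < \tau_\d) \geq \delta\, \sup_{z \in D_0} \P_z(t < \tau_\d)$ for $t \geq 1$, complemented for $t \in [0,1]$ by the no-jump bound $\P_y(t<\tau_\d)\geq e^{-q_y}$ with $\sup_{y\in D_0} q_y<\infty$.

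The heart of the argument is (F2). The Lyapunov inequality~\eqref{eq:discr-cont-lyap}, combined with a suitable extension of Dynkin's formula (as in~\cite{MeynTweedie1993}) justified by $\sum_y q_{x,y}\varphi(y)<\infty$, shows that $e^{\lambda_1 (t \wedge \tau_L \wedge \tau_\d)} \varphi(X_{t \wedge \tau_L \wedge \tau_\d})$ is a nonnegative supermartingale, since $(\mathcal{L} + \lambda_1)\varphi \leq 0$ outside $D_0$. Taking expectation produces the first line of (F2). The second line follows from $\mathcal{L} \varphi \leq C$ and the analogous bound $\E_x[\varphi(X_t) \mathbbm{1}_{t < \tau_\d}] \leq \varphi(x) + Ct$, which is uniformly bounded for $x \in D_0$ (finite) and $t \in [0, 1]$. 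For the third line, pick $\lambda \in (\lambda_0, -\log \gamma_2)$; by the very definition of $\lambda_0$, $\liminf_{t \to \infty} e^{\lambda t} \P_x(X_t = x) > 0$ for $x \in D_0$, so
$$\gamma_2^{-t}\,\P_x(X_t \in D_0)\ \geq\ \gamma_2^{-t}\,\P_x(X_t = x)\ =\ e^{t(-\log\gamma_2-\lambda)}\cdot e^{\lambda t}\P_x(X_t=x) \xrightarrow[t \to \infty]{} +\infty.$$

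Theorem~\ref{thm:E-F} then yields $\nu_{QSD}$, the exponential convergence, and an eigenfunction $\eta$ with $P_t\eta = e^{-\mu t}\eta$ for a constant $\mu \geq 0$ satisfying $\mu \leq -\log \gamma_2$. Letting $\gamma_2 \uparrow e^{-\lambda_0}$ yields $\mu \leq \lambda_0$. For the reverse inequality, I would appeal to the continuous-time analogue of Theorem~\ref{thm:Q-proc}(iii): the $Q$-process is ergodic with invariant distribution $\beta = \eta\, \nu_{QSD}$ on $E'$, and taking the test function $\mathbbm{1}_{\{x\}}$ in the continuous-time version of~\eqref{eq:semi-group-Q} gives $e^{\mu t}\P_x(X_t = x) = \Q_x(X_t = x) \to \beta(x)$ as $t \to \infty$. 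The irreducibility of the $Q$-process on $D_0 \subset E'$ (inherited from $\P_y(X_1 = z) > 0$ on $D_0$ and $\eta > 0$ on $D_0$) together with positive recurrence forces $\beta(x) > 0$ for some $x \in D_0$; hence $e^{(\mu + \varepsilon)t}\P_x(X_t = x) \to +\infty$ for every $\varepsilon > 0$, and the definition of $\lambda_0$ gives $\lambda_0 \leq \mu$. Therefore $\mu = \lambda_0$, and $\P_{\nu_{QSD}}(t<\tau_\d)=e^{-\lambda_0 t}$ follows from Theorem~\ref{thm:E-F}.

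Finally, by Theorem~\ref{thm:eta}, $\eta \in L^\infty(\varphi^{1/p})$ for some $p > 1$, and since $\varphi \geq 1$, we have $\eta \leq C\varphi$; the hypothesis $\sum_y q_{x,y}\varphi(y) < \infty$ then yields $\sum_{y\neq x} q_{x,y}\eta(y) < \infty$. To obtain $\mathcal{L}\eta = -\lambda_0\eta$ pointwise, differentiate $P_t\eta(x) = e^{-\lambda_0 t}\eta(x)$ at $t = 0^+$: the LHS tends to $-\lambda_0\eta(x)$, while expanding
$$\frac{P_t\eta(x)-\eta(x)}{t}=\sum_{y\neq x,\,y\in E}\eta(y)\frac{\P_x(X_t=y)}{t}-\eta(x)\frac{1-\P_x(X_t=x)}{t}$$
and using $\P_x(X_t = y)/t \to q_{x,y}$ for $y \neq x$ in $E\cup\{\d\}$ together with the elementary bound $\P_x(X_t = y) \leq tq_{x,y}+O(t^2)$, dominated convergence (with dominating series $2\sum_y \eta(y) q_{x,y}$) yields the limit $\mathcal{L}\eta(x)$, recalling $\eta(\d)=0$. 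The main anticipated obstacle is the identification $\mu = \lambda_0$, which rests on the $Q$-process ergodicity and on showing that $\beta$ (equivalently $\nu_{QSD}$) charges some point of $D_0$.
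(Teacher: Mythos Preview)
Your verification of (F0)--(F3) matches the paper's, up to cosmetic differences (the paper obtains (F1) and (F3) in one stroke from the two-step comparison $\P_x(X_2\in\cdot)\geq\min_{u,v\in D_0}\P_u(X_1=v)\,\P_y(X_1\in\cdot)$, but your separate arguments are fine).

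For the identification $\mu=\lambda_0$, your $Q$-process route is correct but heavier than needed. The paper does not spell this out in the proof of Theorem~\ref{thm:main-E-denumerable}, but the argument of Corollary~\ref{cor:main-diffusion} transfers directly: Theorem~\ref{thm:E-F} gives $\nu_{QSD}(L)>0$ (via quasi-stationarity and the characterization $\P_{\nu_{QSD}}(X_t\in L)>0$), so since $L=D_0$ is finite there is $x_0\in D_0$ with $\nu_{QSD}(\{x_0\})>0$; then the exponential convergence~\eqref{eq:expo-cv-prop-E-F} and~\eqref{eq:eta-prop-E-F} give $e^{\mu t}\P_{x_0}(X_t=x_0)\to\eta(x_0)\nu_{QSD}(\{x_0\})>0$, hence $\mu\geq\lambda_0$ from the definition. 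No appeal to $Q$-process ergodicity is needed.

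Your proof that $\mathcal{L}\eta=-\lambda_0\eta$ has a genuine gap. The bound $\P_x(X_t=y)\leq tq_{x,y}+O(t^2)$ with a \emph{uniform} $O(t^2)$ is false whenever $q_{x,y}=0$ but $y$ is reachable from $x$ in two or more jumps; and if the $O(t^2)$ depends on $y$, you cannot use $2\sum_y q_{x,y}\eta(y)$ as a dominating series. The paper avoids this by a martingale argument: it writes $X$ as an SDE driven by a Poisson point measure, observes that $e^{\lambda_0 t}\eta(X_t)$ is a $\P_x$-martingale (from $P_t\eta=e^{-\lambda_0 t}\eta$), and applies the Doob--Meyer decomposition to force the finite-variation part $\int_0^t e^{\lambda_0 s}(\mathcal{L}\eta+\lambda_0\eta)(X_s)\,ds$ to vanish a.s.; since $X$ holds at its initial state for an exponential time, this gives $\mathcal{L}\eta(x)=-\lambda_0\eta(x)$ pointwise. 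An elementary alternative that repairs your approach is the first-jump decomposition: writing
\[
e^{-\lambda_0 t}\eta(x)=P_t\eta(x)=e^{-q_x t}\eta(x)+\sum_{y\neq x}q_{x,y}\int_0^t e^{-q_x s}P_{t-s}\eta(y)\,ds
=e^{-q_x t}\eta(x)+\Big(\sum_{y\neq x}q_{x,y}\eta(y)\Big)\int_0^t e^{-q_x s}e^{-\lambda_0(t-s)}\,ds,
\]
which is legitimate because $\sum_{y\neq x}q_{x,y}\eta(y)<\infty$, and solving this algebraic identity yields $\sum_{y\neq x}q_{x,y}\eta(y)-q_x\eta(x)=-\lambda_0\eta(x)$ directly, with no dominated-convergence issue.
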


\begin{rem}
  If in addition to the assumptions of Theorem~\ref{thm:main-E-denumerable} we assume that $\lambda_1>\sup_{x\in E} q(x,\d)$, it is
  possible to adapt the proof of Theorem~\ref{thm:E-F} given in Section~\ref{sec:proof-E-F} to prove that the conclusion of
  Theorem~\ref{thm:E-F} holds true with $\psi_2\equiv 1$. Therefore, we obtain the improved convergence, for all $h\in L^\infty(\varphi)$,
  \[
  \left|\E_\mu(h(X_{t})\mid t<\tau_\d) -\nu_{QSD}(h)\right|
  \leq C\,\mu (\varphi)\,\alpha^t\,\|h/\varphi\|_\infty,\ \forall t\geq 0,
  \]
  instead of~\eqref{eq:expo-cv-prop-E-F}. If moreover $\varphi$ is bounded over $E$, the convergence is uniform and there exists a
  unique quasi-stationary distribution.
\end{rem}

Before turning to the proof of Theorem~\ref{thm:main-E-denumerable}, we give an example of application.

\begin{exa}
	Assume that $X$ is a birth and death process with killing on $E=\NN$ and $\d=0$. This means that there exist non-negative numbers $(b_x)_{x\in\N}$, $(d_x)_{x\in\N}$, $(\kappa_x)_{x\in\N}$ such that $b_x>0$ for all $x\geq 1$, $d_x>0$ for all $x\geq 2$, and $d_1=0$, and such that, for all $x\in E$,
	\[
		q_{x,y}=\begin{cases}
		b_x&\text{ if }y=x+1,\\
		d_x&\text{ if }y=x-1,\\
		\kappa_x&\text{ if }y=0,\\
		0&\text{ otherwise.}
		\end{cases}
	\]
	We set
	\begin{align}
	S:=\sum_{k\geq 1}\frac{1}{d_k\alpha_k}\sum_{l\geq k} \alpha_l,
	\end{align}
	with $\alpha_k=\left(\prod_{i=1}^{k-1} b_i\right)/\left(\prod_{i=1}^{k} d_i\right)$. Recent advances on existence of quasi-stationary distribution of birth and death processes with killing were obtained in~\cite{Coolen-SchrijnerDoorn2006,DoornErik2012,Doorn2013}, see also the nice survey~\cite{DoornPollett2013}.
	
In this setting, Theorems~\ref{thm:main-E-denumerable} and~\ref{thm:E-F}  translate as follows: if there exists a function
$\varphi:\Z_+\to[1,+\infty)$ such that $\varphi(0)=0$ and
\begin{align}
  \label{eq:critere-general-BDK}
  \lambda_0<\liminf_{x\rightarrow+\infty}\,-\frac{b_x (\varphi(x+1)-\varphi(x))+d_x (\varphi(x-1)-\varphi(x))}{\varphi(x)}+\kappa_x,
\end{align}
then there exists a unique quasi-stationary distribution $\nu_{QSD}$ such that $\nu_{QSD}(\varphi)<+\infty$ which attracts
exponentially fast all initial distributions integrating $\varphi$. To check~\eqref{eq:critere-general-BDK}, one may use in practice
the fact that $\lambda_0\leq \inf_{x\in\N} b_x+d_x+\kappa_x$, or adapt the ideas of Section \ref{sec:diffusion-1d} to birth and death
processes, or use the finer upper bounds for $\lambda_0$ proved in~\cite{DoornZeifman2005}. We consider now three situations where
the criterion~\eqref{eq:critere-general-BDK} improves known results in the literature.
	


First, if $S<+\infty$,~\cite[Theorem~6.6]{Coolen-SchrijnerDoorn2006} proves that there exists a unique quasi-stationary distribution
for $X$ assuming that $(\kappa_x)_{x\in\NN}$ has finite support. We extend this result to any killing rates $(\kappa_x)_{x\in\NN}$
and also prove that the unique quasi-stationary distribution attracts all initial distributions exponentially fast. We can indeed
check that~\eqref{eq:critere-general-BDK} is satisfied by a bounded function $\varphi$ defined as follows: fix $\lambda_1>\lambda_0$
and choose $x_0\in\NN$ large enough such that (see for instance~\cite[Equation~(4.7)]{ChampagnatVillemonais2016b})
\[
  \sup_{x\in \NN} \E_x(e^{\lambda_1 \tau_{D_0}\wedge\tau_\d})<+\infty.
\]
where $D_0=\{1,\ldots,x_0\}$. Then we define $\varphi(0)=0$ and
\[
  \varphi(x)=  \E_x\left(e^{\lambda_1 \tau_{D_0}\wedge\tau_\d}\right),\quad\forall x\in\NN.
\]
Using Markov's property at the first time of jump, one checks that (since $b_x+d_x\to+\infty$ when $x\to+\infty$, we assume w.l.o.g.
that $b_x+d_x+\kappa_x>\lambda_1$ for all $x\geq x_0+1$),
\begin{multline*}
  \varphi(x)=\frac{d_x}{b_x+d_x+\kappa_x-\lambda_1}\varphi(x-1)+\frac{b_x}{b_x+d_x+\kappa_x-\lambda_1}\varphi(x+1)\\+\frac{\kappa_x}{b_x+d_x+\kappa_x-\lambda_1},\ \forall x\geq x_0+1.
\end{multline*}
Hence $\lambda_1=-[b_x (\varphi(x+1)-\varphi(x))+d_x (\varphi(x-1)-\varphi(x))]/\varphi(x)+\kappa_x$ for $x\geq x_0+1$
and~\eqref{eq:critere-general-BDK} is satisfied.


Second, if $S=+\infty$ and if $\lambda_0<\liminf_{x\rightarrow+\infty}\kappa_x$, it was proved in~\cite[Theorem~4.3]{Doorn2013} that
there exists a quasi-stationary distribution. The criterion~\eqref{eq:critere-general-BDK} improves this result since it implies that
the quasi-stationary distribution is unique and that it attract all initial distributions exponentially fast.
Indeed,~\eqref{eq:critere-general-BDK} is clearly satisfied for $\varphi\equiv 1$.
	 
Last, we can also extend~\cite[Theorem~4.3]{Doorn2013} to processes that do not necessarily admit a unique quasi-stationary
distribution, and in particular that do not come down from infinity. For example, assuming that, for some $\varepsilon>0$,
\[
  \liminf_{x\rightarrow+\infty}\ \kappa_x+\frac{\varepsilon}{1+\varepsilon}d_x-\varepsilon b_x>\lambda_0,
\]
Condition~\eqref{eq:critere-general-BDK} is satisfied for $\varphi(x)=(1+\varepsilon)^x$.

Note that, because of Corollary~\ref{cor:quasi-comp}, our criteria imply the $\lambda_0$-positive recurrence of the process $X$ (cf.\
e.g.~\cite[Eq.\,(26)]{DoornPollett2013}). Therefore, it can only apply to such situations. For results on birth and death processes
with killing which are not $\lambda_0$-positive recurrent, we refer the reader to~\cite[Theorem~4.2]{Doorn2013}.


\end{exa}

\begin{exa}
\label{exa:multi-bd}
  We consider general multitype birth and death processes in continuous time, taking values in a connected (in the sense of the
  nearest neighbors structure of $\ZZ^d$) subset $E$ of $\ZZ_+^d$ for some $d\geq 1$, with transition rates
  \[
  q_{x,y}=
  \begin{cases}
    b_i(x) & \text{if }y=x+e_i,\\ 
    d_i(x) & \text{if }y=x-e_i,\\
    0 & \text{otherwise,}
  \end{cases}
  \]
  with $e_i=(0,\ldots,0,1,0,\ldots,0)$ where the nonzero coordinate is the $i$-th one and with the convention that the process is
  sent instantaneously to $\d$ when it jumps to a point $y\not\in E$ according to the previous rates. To ensure irreducibility, it is
  sufficient (although not optimal) to assume that $b_i(x)>0$ and $d_i(x)>0$ for all $1\leq i\leq d$ and $x\in E$.
  
  We show below that Theorem~\ref{thm:main-E-denumerable} applies either under the assumption that
  \begin{align}
  \label{eq:multi-bd-1}
  \frac{1}{1+|x|}\sum_{i=1}^d(d_i(x)-b_i(x))\xrightarrow[x\in E,\ |x|\rightarrow+\infty]{}+\infty.
  \end{align}
  or that there exists $\delta>1$ such that
  \begin{align}
  \label{eq:multi-bd-2}
  \sum_{i=1}^d(d_i(x)-\delta\,b_i(x))\xrightarrow[x\in E,\ |x|\rightarrow+\infty]{}+\infty.
  \end{align}
  This improves the general criteria obtained in~\cite{ChampagnatVillemonais2017} since this reference assumes (among other
  assumptions) that $E=\Z_+^d$ and that $\sum_{i=1}^d(d_i(x)-b_i(x))\geq |x|^{1+\eta}$ for some $\eta>0$ and $|x|$ large enough.

  Let us first show that~\eqref{eq:multi-bd-1} implies that the assumptions of Theorem~\ref{thm:main-E-denumerable} are satisfied. In
  order to do so, we define $\varphi(x)=|x|+1=x_1+\ldots+x_d+1$ and $\varphi(\d)=0$ and obtain
  \begin{align*}
    \cL\varphi(x) & =\sum_{i=1}^d (b_i(x)-d_i(x))-\sum_{i=1}^d\left(b_i(x)\11_{x+e_i\not\in E}\varphi(x+e_i)+d_i(x)\11_{x-e_i\not\in E}\varphi(x-e_i)
                    \right)\\ & \leq-\varphi(x)\frac{\sum_{i=1}^d (d_i(x)-b_i(x))}{|x|+1}
  \end{align*}
  The proof is concluded by setting $D_0=\left\{x\in E,\text{ s.t. }\frac{\sum_{i=1}^d (d_i(x)-b_i(x))}{|x|+1}\leq \lambda_0+1\right\}$. 

  Let us now show that~\eqref{eq:multi-bd-2} implies that the assumptions of Theorem~\ref{thm:main-E-denumerable} are satisfied.
  Setting $\varphi(x)=\exp\langle a,x\rangle$ for a given $a\in(0,\infty)^d$ and $\varphi(\d)=0$, we obtain
  \[
  \cL\varphi(x)\leq-\varphi(x)\left(\sum_{i=1}^d (1-e^{-a_i})d_i(x)+(1-e^{a_i})b_i(x)\right).
  \]
  Choosing $a=(\varepsilon,\ldots,\varepsilon)$ with $\varepsilon$ small enough, we have
  \[
 \liminf_{x\in E,\ |x|\rightarrow+\infty}\sum_{i=1}^d (1-e^{-a_i})d_i(x)+(1-e^{a_i})b_i(x)=+\infty.
  \]
  Taking $D_0=\left\{x\in E,\text{ s.t. }\sum_{i=1}^d (1-e^{-a_i})d_i(x)+(1-e^{a_i})b_i(x)\leq \lambda_0+1\right\}$ allows us to conclude the proof.
\end{exa}

\begin{proof}[Proof of Theorem~\ref{thm:main-E-denumerable}]
  The fact that $\lambda_0$ is independent of $x$ is classical for irreducible processes (cf.\ e.g.~\cite{kingman-63}). We set
  $L=D_0$. Since $X$ is a non-explosive pure jump continuous time process, it satisfies the strong Markov property and the entrance
  times $\tau_L$ and $\tau_\d$ are stopping times. This entails~(F0).

  For all $x,y\in L$, we have
  \[
    \P_x(X_2\in \cdot)\geq\inf_{u,v\in L}\P_u(X_1=v)\,\P_y(X_1\in\cdot),
  \]
  where $\inf_{u,v\in L}\P_u(X_1=v)>0$ by assumption, which implies~(F1) and (F3).

  We set $\psi_1=\varphi$. For all $0\leq s\leq 1$, using~\eqref{eq:discr-cont-lyap} and Dynkin's formula, one has that for all $x\in
  L$
  \begin{align*}
    \E_x\left(\psi_1(X_s)\11_{s<\tau_\d}\right)&\leq e^{Cs}\sup_{y\in L}\psi_1(y).
  \end{align*}
  Similarly, setting $\gamma_1=e^{-\lambda_1}$, for all $x\in E\setminus L$,
  \begin{align*}
    \E_x\left(\psi_1(X_1)\11_{1<\tau_L\wedge \tau_\d}\right)&\leq e^{-\lambda_1}\psi_1(x)=\gamma_1\psi_1(x).
  \end{align*}
  Choosing any $\gamma_2\in(\gamma_1,e^{-\lambda_0})$, one obtains that~(F2) is satisfied and the first part of
  Theorem~\ref{thm:main-E-denumerable} is proved.

  {The inequality $\sum_{y\in E\setminus\{x\}}q_{x,y}\eta(y)<\infty$ for all $x\in E$ follows from the fact that $\eta\in
    L^\infty(\psi_1)$ and the fact that $P_t\eta(x)=e^{-\lambda_0 t}\eta(x)$ was proved in Theorem~\ref{thm:E-F}. It then follows
    from Markov's property and the last equality that $(e^{\lambda_0 t}\eta(X_t),t\geq 0)$ is a martingale for the canonical
    filtration associated to $X$, with the convention that $\eta(\d)=0$. Now, it is standard to represent the Markov process $X$ as a
    solution to a stochastic differential equation driven by a Poisson point process: assume that the elements of the finite or
    countable set $E$ are labeled by distinct positive integers, that $\d=0$ and, for all $x,i\in\ZZ_+$, let
    $\kappa_i(x)=q_{x,0}+q_{x,1}+\ldots+q_{x,i}$ with the convention that $q_{x,x}=0$ and $q_{x,i}=0$ for all $x$ or $i\not\in
    E\cup\{\d\}$ and set $q(x)=\sum_{i\in\ZZ_+}q_{x,i}<\infty$. Given a Poisson point measure $N(\mathrm ds,\mathrm d\theta)$ on
    $\RR_+^2$ with intensity the Lebesgue measure on $\RR_+^2$, the process $X$ solution
    \[
    X_t=X_0+\int_0^t\int_0^{q(X_{s-})}\sum_{i=0}^\infty\11_{\theta\in[\kappa_{i+1}(X_{s-}),\kappa_i(X_{s-}))}(i-X_{s-})N(\mathrm ds,\mathrm d\theta)
    \]
    is well-defined for all time $t\geq 0$ almost surely and is a Markov process with matrix of jump rates $(q_{i,j})_{i,j\in\ZZ_+}$.
    Introducing the compensated Poisson measure $\widetilde{N}(\mathrm ds,\mathrm d\theta)=N(\mathrm ds,\mathrm d\theta)-\mathrm
    ds\,\mathrm d\theta$, it follows from basic stochastic calculus for jump processes (cf.\ e.g.~\cite{protter-04}) that
    \begin{align*}
      e^{\lambda_0 t}\eta(X_t) = X_0 & +\int_0^t \int_0^{q(X_{s-})}e^{\lambda_0
        s}\sum_{i=0}^\infty\11_{\theta\in[\kappa_{i+1}(X_{s-}),\kappa_i(X_{s-}))}(\eta(i)-\eta(X_{s-}))\widetilde{N}(\mathrm ds,\mathrm d\theta) \\ &
      +\int_0^t e^{\lambda_0
        s}\left(\sum_{i=0}^\infty q_{X_{s},i}(\eta(i)-\eta(X_{s}))+\lambda_0 \eta(X_s)\right) \mathrm ds.
    \end{align*}
    Since $e^{\lambda_0 t}\eta(X_t)$ is a $\PP_x$-martingale, the Doob-Meyer decomposition theorem entails that
    \[
    \int_0^t e^{\lambda_0
      s}\left(\sum_{i=0}^\infty q_{X_{s},i}(\eta(i)-\eta(X_{s}))+\lambda_0 \eta(X_s)\right) \mathrm ds=0
    \]
    $\PP_x$-almost surely for all $t\geq 0$ and all $x\in E$. Hence, if there exists $y\in E$ such that $\mathcal{L}
    \eta(y)\neq-\lambda_0\eta(y)$, by irreducibility, there exists an event with positive probability under $\PP_x$ such that the
    previous integral is non-constant. We obtain a contradiction and hence $\mathcal{L}\eta(x)=-\lambda_0\eta(x)$ for all $x\in E$.
  }
\end{proof}

\section{On reducible examples}
\label{sec:reducible}

The criteria and examples studied in the last two sections assume that the process $X$ is irreducible in $E$. However, the abstract
results of Section~\ref{sec:main-discrete-time} do not require the state space to be irreducible. Our goal in this section is to
explain that our criteria are also well-suited to cases of reducible absorbed Markov processes, in the sense that the state space $E$
can be partitioned in a finite or countable family of communication classes. The study of quasi-stationary behavior for such
processes has been up to now restricted to the case of finite state spaces or to particular classes of
models~\cite{Mandl1959,darroch-seneta-65,Ogura1975,gosselin-01,ChampagnatRoelly2008,DoornPollett2008,DoornPollett2009,ChampagnatDiaconisEtAl2012,DoornPollett2013,BenaimCloezEtAl2016}. Our criteria provide new
practical tools to tackle this problem, further exploited in~\cite{ChampagnatVillemonais2022}.

In Subsection~\ref{sec:three-com-class}, we consider a general setting with three successive sets. In
Subsection~\ref{sec:count-com-class}, we consider a birth and death process with a countable infinity of communication
classes.

\subsection{Three successive sets}
\label{sec:three-com-class}

In this section, we consider a discrete time Markov process $(X_n,n\in\ZZ_+)$ evolving in a measurable set $E\cup\{\d\}$ with
absorption at $\d\notin E$. We assume that the transition probabilities of $X$ satisfy the structure displayed in
Figure~\ref{fig:trangraph} : one can find a partition $\{D_1,D_2,D_3\}$ of $E$ such that the process starting from $D_1$ can access
$D_1\cup D_2\cup D_3\cup\{\d\}$, the process starting from $D_2$ can only access $D_2\cup D_3\cup\{\d\}$, and the process starting
from $D_3$ can only access $D_3\cup\{\d\}$. More formally, we assume that $\P_x( T_{D_3}\wedge\tau_\d< T_{D_1})=1$ for all $x\in D_2$
and that $\P_x(\tau_\d< T_{D_1\cup D_2})=1$ for all $x\in D_3$, where we recall that, for any measurable set $A\subset E$, $
T_A=\inf\{n\in\ZZ_+,\ X_n\in A\}$.

\begin{figure}
  \center
  \includegraphics[width=6cm]{./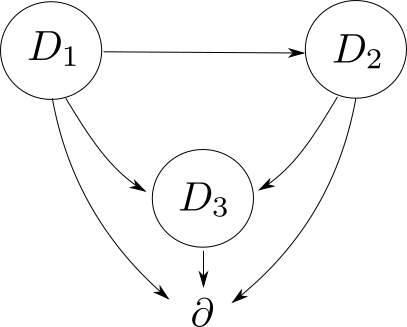}
   \caption{Transition graph displaying the relation between the sets $D_1$, $D_2$, $D_3$ and $\d$.}
   \label{fig:trangraph}
\end{figure}

Our aim is to provide sufficient conditions ensuring that $X$ satisfies Assumption~(E). In order to do so, we assume that
Assumption~(E) is satisfied by the process $X$ before exiting $D_2$. This corresponds to the following assumption.

\medskip

\textbf{Assumption (H1).} The absorbed Markov process $Y$ evolving in $D_2\cup\{\d\}$, defined by
\[
  Y_n=
  \begin{cases}
    X_n&\text{ if }n< T_{D_1\cup D_3\cup\{\d\}},\\
    \d&\text{ if }n\geq  T_{D_1\cup D_3\cup\{\d\}},
  \end{cases}
\]
satisfies Assumption~(E). In what follows, we denote the objects related to $Y$ with a superscript $Y$, for instance, the constants
of Assumption~(E) for $Y$ are denoted by $\theta^Y_1>0$, $\theta^Y_2>0$.

\medskip
We also assume that the exit times from $D_1$ and $D_3$ for the
process $X$ admit exponential moments of sufficiently high order, as
stated by the following assumption.

\medskip
\textbf{Assumption (H2).}
There exists a positive constant $\gamma<\theta^Y_0$ such that, for all $x\in D_1$, 
\[
  \E_x\left(\gamma^{- T_{D_2}}\varphi^Y_1\left(X_{ T_{D_2}}\right)\11_{ T_{D_2}< T_{D_3}\wedge\tau_\d}\right)<+\infty,\quad
  \E_x\left(\gamma^{- T_{D_3}\wedge \tau_\d}\11_{ T_{D_3}\wedge\tau_\d< T_{D_2}}\right)<+\infty,
\]
and such that
\[
 \sup_{x\in D_3}\E_x\left(\gamma^{-\tau_{\d}}\right)<+\infty.
\]

\medskip

We are now able to state the main result of this section.

\begin{thm}
  \label{thm:three-comm-class-1}
  Under Assumptions~(H1) and~(H2), the process $X$ satisfies Assumption~(E) with $K=K^Y$,
  \begin{equation}
    \label{eq:def-phi-reducible}
    \varphi_1(x)=\E_x\left(\gamma^{- T_K\wedge\tau_\d}\right)\quad\text{and}\quad\varphi_2(x)\geq c\11_{x\in K},\ \forall x\in E,
  \end{equation}
  for some constant $c>0$.
  In particular, it admits a unique quasi-stationary distribution $\nu_{QSD}$ such that $\nu_{QSD}(\varphi_1)<\infty$ and
  $\nu_{QSD}(\varphi_2)>0$. Moreover, there exist two constants $C>0$ and $\alpha\in(0,1)$ such that, for all probability measure
  $\mu$ on $E$ such that $\mu(\varphi_1)<\infty$ and $\mu(\varphi_2)>0$,
  \begin{align}
    \label{eq:cv-thm-reducible}
    \left\|\P_{\mu}(X_n\in\cdot\mid n<\tau_\d)-\nu_{QSD}\right\|_{TV(\varphi_1)}\leq C\alpha^n\,\frac{\mu(\varphi_1)}{\mu(\varphi_2)}.
  \end{align}
  Finally, $\theta_0=\theta_0^Y$, $\nu_{QSD}(D_1)=0$ and the function $\eta$ of Theorem~\ref{thm:eta} vanishes on $D_3$.
\end{thm}


Before turning to the proof of this result, let us make some remarks.

\begin{rem}
  \label{rem:comments-reducible}
  \begin{enumerate}
  \item The fact that there are three different sets $D_1$, $D_2$ and $D_3$ in the decomposition of $E$ is not restrictive on the
    number of communication classes. Indeed, the three sets can contain several communication classes. 
  \item A similar result can be obtained for continuous time processes, based on Assumption~(F) instead of~(E), with the additional
    technical assumption that the strong Markov property can be applied at the exit times of $D_1$ and $D_2$.
  \item We emphasize that, besides the exponential moment assumption, there is no additional requirement on the behavior of the Markov
    process in $D_1$ and $D_3$. In these sets, the process might be for instance periodic or deterministic and could satisfy that
    $\P_x(n<\tau_\d)=0$ for some $n\in\N$. 
  \item One easily checks from the proof that the function $\varphi_1$ in~\eqref{eq:def-phi-reducible} is bounded (up to a
    multiplicative positive constant) from above by
    \[
    \E_x\left(\gamma^{- T_{D_2}}\varphi^Y_1\left(X_{ T_{D_2}}\right)\11_{ T_{D_2}< T_{D_3}\wedge\tau_\d}\right)+
    \E_x\left(\gamma^{- T_{D_3}\wedge \tau_\d}\11_{ T_{D_3}\wedge\tau_\d< T_{D_2}}\right)
    \]
    on $D_1$, by $\varphi^Y_1$ on $D_2$ and by a constant on $D_3$.
  \item In particular, if $\varphi^Y_1$ is uniformly bounded and if the first statement in Assumption~(H2) is replaced by
    \[
    \sup_{x\in D_1}\,\E_x\left(\gamma^{- T_{D_2\cup D_3}\wedge \tau_\d}\right)<+\infty,
    \]
    then one can also choose a bounded function $\varphi_1$ in Assumption~(E) for $X$.
  \end{enumerate}
\end{rem}


 \begin{rem}
   In general, processes on reducible state spaces may not satisfy Assumption~(E). For example the convergence
     in~\eqref{eq:cv-thm-reducible} may not be exponential, or quasi-stationary distributions may not be unique, even if the process
     $X$ restricted to $D_1$, $D_2$ or $D_3$ satisfy condition~(E). We refer the reader to~\cite{ChampagnatVillemonais2022} for a
   more general discussion on quasi-stationary distributions and quasi-limiting behavior for general processes on reducible state spaces.
 \end{rem}   

\begin{proof}[Proof of Theorem~\ref{thm:three-comm-class-1}]
  Let us prove that Assumption~(E) is satisfied by the process $X$. Note that, because of Lemma~\ref{lem:theta-2-et-0}, one can
  assume without loss of generality that $\gamma<\theta_2^Y$.

\medskip
\noindent
\textit{Step 1. Assumption~(E1).}

We set $K=K^Y$, $n_1=n_1^Y$, $c_1=c_1^Y$ and $\nu=\nu^Y$.
Assumption~(E1) for $X$ is an immediate consequence of Assumption~(E1) for $Y$.

\medskip
\noindent
\textit{Step 2. Assumption~(E2).} 

We set $\theta_2=\theta_2^Y$ and
\[
\varphi_2(x)=\begin{cases}
\varphi_2^Y(x)&\text{ if }x\in D_2\\
0&\text{ if }x\in D_1\cup D_3.
\end{cases}
\]
Then the second and fourth lines of Assumption~(E) for $X$ are direct consequences of the same lines of Assumption~(E) for $Y$.

Without loss of generality, we assume (increasing $\gamma$ if necessary, which does not change the fact that Assumptions~(H2) is
true) that $\gamma\in (\theta^Y_1,\theta^Y_2)$. We define
\[
\varphi_1(x)=\E_x\left(\gamma^{- T_K\wedge \tau_\d}\right),\quad\forall x\in E\cup\{\d\}.
\]
Let us first check that $\varphi_1$ is finite on $E$. For all $x\in D_3$, using that $\P_x(\tau_\d< T_{D_1\cup D_2})=1$ and that $K\subset D_2$, one deduces that
\[
\varphi_1(x)=\E_x\left(\gamma^{-\tau_\d}\right)\leq A:=\sup_{x\in D_3}\E_x\left(\gamma^{-\tau_\d}\right)<+\infty.
\]
For all $x\in D_2$, using the strong Markov property and inequality~\eqref{eq:comp-surv-1} for the process $Y$, one deduces that 
\begin{align}
  \varphi_1(x)&=\E_x\left(\gamma^{- T_K\wedge  T_{D_2^c}}\11_{ T_K< T_{D_2^c}}\right)+\E_x\left(\gamma^{-\tau_\d}\11_{ T_{D_2^c}<
      T_K}\right) \notag \\
  &=\E_x\left(\gamma^{- T_K\wedge  T_{D_2^c}}\11_{ T_K< T_{D_2^c}}\right)+\E_x\left(\gamma^{- T_K\wedge T_{D_2^c}}\11_{ T_{D_2^c}< T_K}\E_{X_{
        T_{D_2^c}}}\left(\gamma^{-\tau_\d}\right)\right) \notag \\
  &\leq A \E_x\left(\gamma^{- T_K\wedge  T_{D_2^c}}\right)\leq \frac{A}{1-\theta_1^Y/\gamma}\varphi_1^Y(x). \label{eq:pf-cas-reductible}
\end{align}

For all $x\in D_1$, one has, using the Markov property and the above inequalities,
\begin{multline}
  \E_x\left(\gamma^{- T_K\wedge\tau_\d}\right)=\E_x\left(\gamma^{- T_{D_2\cup D_3}\wedge \tau_\d}\varphi_1(X_{ T_{D_2\cup D_3}\wedge
      \tau_\d})\right)\\ 
  \leq \frac{A}{1-\theta_1^Y/\gamma}\left[
    \E_x\left(\gamma^{- T_{D_2}}\varphi^Y_1\left(X_{ T_{D_2}}\right)\11_{ T_{D_2}< T_{D_3}\wedge\tau_\d}\right)+ 
    \E_x\left(\gamma^{- T_{D_3}\wedge \tau_\d}\11_{ T_{D_3}\wedge\tau_\d< T_{D_2}}\right)\right],
  \label{eq:pf-cas-reductible-bis}
\end{multline}
which is finite by Assumption~(H2).

The definition of $\varphi_1$ immediately implies that $\inf_E\varphi_1\geq 1$ and, since $\varphi_1^Y$ is uniformly bounded over
$K\subset D_2$,~\eqref{eq:pf-cas-reductible} implies that $\sup_K \varphi_1<+\infty$. Hence the first line of Assumption~(E2) is
satisfied. Moreover, for all $x\in K$,
\begin{align*}
  P_1\varphi_1(x)&=\E_x\left(\11_{X_1\in D_2}\E_{X_1}\left(\gamma^{- T_K\wedge\tau_\d}\right)\right)+\E_x\left(\11_{X_1\in
      D_3}\E_{X_1}\left(\gamma^{-\tau_\d}\right)\right)\\
  &\leq \E_x\left(\11_{X_1\in D_2}\frac{A}{1-\theta_1^Y/\gamma}\varphi^Y_1(X_1)\right)+A\\
  &= \frac{A}{1-\theta_1^Y/\gamma}\,P^Y_1\varphi_1^Y(x)+A\,\leq \frac{A}{1-\theta_1^Y/\gamma}\,\left(\theta^Y_1\sup_K\varphi^Y_1+c_2^Y\right)+A.
\end{align*}
Hence, the third line of (E2) for $X$ with $\theta_1=\gamma$ follows from Lemma~\ref{lem:varphi1}.

\medskip
\noindent
\textit{Step 3. Assumption~(E3).}

For all $x\in K$, we have, for all $n\geq 1$,
\begin{align}
  \label{eq:three_sets_sub_step}
  \P_x(n<\tau_\d)\leq \P_x(n<\tau_\d\wedge  T_{D_3})+\P_x( T_{D_3}\leq n<\tau_\d).
\end{align}
On the one hand, by Lemma~\ref{lem:comp-surv}, there exists a constant $C>0$ such that
\[
\P_x(n<\tau_\d\wedge T_{D_3})\leq \frac{C\varphi_1^Y(x)}{1-\theta^Y_1/\theta^Y_2}\inf_{y\in K} \P_y(n< T_{D_2^c})\leq
\frac{C\,\sup_K\varphi_1^Y}{1-\theta^Y_1/\theta^Y_2}\inf_{y\in K} \P_y(n<T_{D_2^c}).
\]
On the other hand, using Markov's property and Markov's inequality,
\begin{align*}
\P_x( T_{D_3}\leq n<\tau_\d)&=\E_x\left(\11_{ T_{D_3}\leq n}\restriction{\P_{X_{ T_{D_3}}}(n-u<\tau_\d)}{u= T_{D_3}}\right)\\
      &\leq \E_x\left(\11_{ T_{D_3}\leq n}\varphi_1(X_{ T_{D_3}})\gamma^{n- T_{D_3}}\right)\leq A \E_x\left(\11_{ T_{D_2^c}\leq n}\gamma^{n- T_{D_2^c}}\right),
\end{align*}
since $\{ T_{D_3}\leq n\}\subset\{ T_{D_2^c}= T_{D_3}\}$. Now, using Theorem~\ref{thm:eta} and the fact that $\eta^Y$ is uniformly
bounded from above and away from 0 on $K$, we deduce that there exist constants $C,C'>0$ such that
\begin{align*}
\E_x\left(\11_{ T_{D_2^c}\leq n}\gamma^{n- T_{D_2^c}}\right)
      &=\sum_{k=1}^n \P_x( T_{D_2^c}=k)\,\gamma^{n-k}\leq \sum_{k=1}^n \P_x( T_{D_2^c}>k-1)\,\gamma^{n-k}\\
      &\leq C\,\sum_{k=1}^n (\theta_0^Y)^{k-1}\,\gamma^{n-k}\leq C\,(\theta_0^Y)^{n-1} \frac{1}{1-\gamma/\theta_0^Y}\\
      &\leq C\, C'\,\frac{(\theta_0^Y)^{-1}}{1-\gamma/\theta_0^Y}\,\inf_{y\in K}\P_y(n< T_{D_2^c}).
\end{align*}

Finally, we obtain from~\eqref{eq:three_sets_sub_step} that there exists a constant $C''>0$ such that, for all $x\in K$,
\begin{equation}
  \label{eq:pf-cas-reductible-2}
  \P_x(n<\tau_\d)\leq C''\inf_{y\in K}\P_y(n< T_{D_2}^c)\leq C''\inf_{y\in K}\PP_y(n<\tau_\d).
\end{equation}
This concludes Step~3.

\medskip
\noindent
\textit{Step 4. Conclusion.}

Assumption~(E4) for the process $X$ is an immediate consequence of Assumption~(E4) for the process $Y$, and hence we have checked
that $X$ satisfies Assumption~(E). The convergence result of Theorem~\ref{thm:three-comm-class-1} is exactly the convergence result
obtained in~Theorem~\ref{thm:main}.

Note that~\eqref{eq:pf-cas-reductible-2} entails that, for any $x\in K$,
\begin{align*}
  \limsup_{n\rightarrow+\infty} \,(\theta_0^Y)^{-n} \P_x(n< T_{D_2^c})&\leq\limsup_{n\rightarrow+\infty} \,(\theta_0^Y)^{-n} \P_x(n<\tau_\d)\\
  &\leq C'' \limsup_{n\rightarrow+\infty} \,(\theta_0^Y)^{-n} \P_x(n< T_{D_2^c})
\end{align*}
and that Theorem~\ref{thm:eta} applied to $Y$ entails
\[
\limsup_{n\rightarrow+\infty} \,(\theta_0^Y)^{-n} \P_x(n< T_{D_2^c})=\eta^Y(x)<+\infty.
\]
Since it follows from Theorem~\ref{thm:eta} applied to $X$ that $\lim_{n\rightarrow+\infty} \theta_0^{-n} \P_x(n<\tau_\d)>0$, we
deduce that $\theta_0= \theta_0^Y$.

Finally, for all $x\in K$, the structure of the transition graph of $X$ implies that
\begin{align*}
0=\P_x(X_n\in D_1\mid n<\tau_\d)\xrightarrow[n\rightarrow+\infty]{}\nu_{QSD}(D_1),
\end{align*}
so that $\nu_{QSD}(D_1)=0$. Moreover, for all $x\in D_3$, Markov's inequality and Assumption~(H2) yield the inequality
$\P_x(n<\tau_\d)\leq A\,\gamma^n$, for all $x\in K$ and all $n\geq 1$. Since  $\theta_0=\theta_0^Y>\gamma$ by assumption, we deduce that, for all $x\in K$,
$\lim_{n\rightarrow+\infty} \theta_0^{-n}\P_x(n<\tau_\d)=0$, which means that $\eta(x)=0$.

This concludes the proof of Theorem~\ref{thm:three-comm-class-1}.
\end{proof}

\subsection{Countably many communication classes}
\label{sec:count-com-class}

In this section, we study a particular case of a continuous time c\`adl\`ag Markov process $(X_t)_{t\in[0,+\infty)}$
with a countable infinity of communication classes and we show that the process admits a quasi-stationary distribution.

More precisely, we assume that $X$ evolves in the state space $\N\times \Z_+$ and, denoting $N_t\in \N$ and
$Y_t\in \Z_+$ the two components of $X_t$ for all $t\in[0,+\infty)$, that there exist three positive functions
$b,d,f:\N\rightarrow (0,+\infty)$ such that
\begin{itemize}
\item $N$ is a Poisson process with intensity $1$,
\item $Y$ is a process such that, at time $t$,
\[
\text{ $Y$ jumps from $Y_t$ to $y\in\Z_+$ with rate }\begin{cases}
f(N_t)\,b(Y_t)&\text{ if }y=Y_t+1\text{ and }Y_t\geq 1,\\
f(N_t)\,d(Y_t)&\text{ if }y=Y_t-1\text{ and }Y_t\geq 1,\\
0&\text{ otherwise.}
\end{cases}
\]
\end{itemize}
The set $\N\times\{0\}$ is absorbing for $X$ and we are interested in the quasi-stationary behavior of $X$ conditioned to not hit
this set. Note that, in this case, each set $\{n\}\times \N$ is a communication class.

\begin{rem}
  This process can be used to model the survival of an individual (for example a bacterium) whose
  metabolic efficiency (for example its ability to consume resources) changes with time, due to
  aging~\cite{SteinsaltzEvans2004}. Here $Y$ is the vitality of the individual, who dies when its vitality hits $0$,
  $f(N)$ is the metabolic rate of the individual, which may for example decrease in the early life of the individual up to age $n_0$
  and then accelerates progressively.

  This can also model the accumulation of deleterious mutations in a population under the assumption that mutations do not overlap,
  i.e.\ that when a mutant succeeds to invade the population (either because they are advantaged or due to genetic drift for
  deleterious mutations), other types of mutants disappear rapidly. Here $Y$ represents the size of the population and $N$ the number
  of mutations. It is typical to assume that the first $n_0$ mutations that invade are advantageous (which corresponds to
  adaptation), and afterwards that deleterious mutations start to accumulate, hence accelerating the extinction of the species
  (extinction vortex~\cite{CoronMeleardEtAl2013,Coron2013}).

  In both cases, it is relevant to assume that $f$ is decreasing on $\{1,2,\ldots,n_0\}$ and increasing on $\{n_0,n_0+1,\ldots\}$.
\end{rem}

We assume that $(d(y)-b(y))/y\rightarrow +\infty$ when $y\rightarrow+\infty$ or that there exists $\delta>1$ such that
$d(y)-\delta\,b(y)\rightarrow +\infty$. Hence the birth and death process $Z$ evolving in $\N$, with birth rates $(b(z))_{z\in\N}$
and death rates $(d(z))_{z\in\N}$, satisfies Assumption~(F) by Theorem~\ref{thm:main-E-denumerable} (see Example~\ref{exa:multi-bd}).
In particular, there exist an eigenvalue $\lambda^Z_0>0$ and eigenfunction $\eta^Z:\N\rightarrow(0,+\infty)$ such that, for all
$z\in\N$, $\mathcal{L}^Z \eta^Z=-\lambda^Z_0\eta^Z$, where the operator $\mathcal{L}^Z$ is defined as the operator $\mathcal{L}$
in~\eqref{eq:def-L-discret}.

\begin{thm}
  \label{thm:inf_comm_class}
  Assume also that there exists a unique $n_0\in\NN$ such that $f(n_0)=\min_{n\in\NN}f(n)$ and that
  $\liminf_{n\rightarrow+\infty} f(n)>f(n_0)+\frac{1}{\lambda_0^Z}$. Then the process $X$ satisfies
  Assumption~(F) and admits a quasi-stationary distribution $\nu_{QSD}$ whose domain of attraction
  contains all Dirac measures $\delta_{n,y}$, with $n\leq n_0$ and $y\in
  \N$.
\end{thm}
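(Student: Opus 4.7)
The plan is to apply the continuous-time analog of Theorem~\ref{thm:three-comm-class-1} (sketched in Remark~\ref{rem:comments-reducible}(2)) with the decomposition $D_1=\{1,\ldots,n_0-1\}\times\NN$, $D_2=\{n_0\}\times\NN$, $D_3=\{n_0+1,n_0+2,\ldots\}\times\NN$. The structural hypothesis holds trivially since $N$ is strictly increasing: one cannot return from $D_2$ to $D_1$ nor from $D_3$ to $D_1\cup D_2$, and the process started in $D_3$ is almost surely absorbed because $Y$ is the time change $Z_{\tau(\cdot)}$ of the birth-death process $Z$ (with $\tau(t)=\int_0^t f(N_s)\,ds$), and $Z$ hits $0$ in finite time under the drift condition assumed on $(b,d)$.

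To verify Assumption~(H1), observe that the process $Y^Y$ obtained by killing $X$ upon exit from $D_2$ is the birth-death process $Z$ with all rates scaled by $f(n_0)$ and augmented by a constant killing rate $1$ (the rate at which $N$ jumps to $n_0+1$). Since $Z$ satisfies Assumption~(F) via Theorem~\ref{thm:main-E-denumerable} with Lyapunov $\varphi(y)=y$ under the stated drift condition (cf.~Example~\ref{exa:multi-bd}), and the constant killing simply subtracts $\varphi$ from the generator, we obtain $\mathcal{L}^{Y^Y}\varphi\leq -(1+\lambda_1^Z f(n_0))\varphi+Cf(n_0)\11_{D_0}$ for any $\lambda_1^Z>\lambda_0^Z$, which can be taken arbitrarily large. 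This gives (H1) with $\theta_0^Y=e^{-(1+\lambda_0^Z f(n_0))}$ and $\varphi_1^Y=\varphi$.

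For Assumption~(H2), the unique-minimum property together with $\liminf_n f(n)>f(n_0)+1/\lambda_0^Z$ ensures one can choose $\gamma\in(e^{-\lambda^\star},\theta_0^Y)$ where $\lambda^\star=\min\bigl(1+\lambda_0^Z\min_{k<n_0}f(k),\,1+\lambda_0^Z\inf_{n>n_0}f(n)\bigr)$ strictly exceeds $1+\lambda_0^Z f(n_0)$. For $x=(n,y)\in D_1$, I would condition on the i.i.d.\ $\mathrm{Exp}(1)$ inter-jump times $(\xi_i)$ of $N$ and use the eigenrelation $\EE_y[\eta^Z(Z_s)\11_{s<\tau_\d^Z}]=e^{-\lambda_0^Z s}\eta^Z(y)$ together with $\tau(T_{D_2})=\sum_{i=1}^{n_0-n}f(n+i-1)\xi_i$ to obtain the explicit product
\[
  \EE_x\bigl[\gamma^{-T_{D_2}}\eta^Z(Y_{T_{D_2}})\11_{T_{D_2}<\tau_\d}\bigr]=\eta^Z(y)\prod_{k=n}^{n_0-1}\frac{1}{1+\lambda_0^Z f(k)+\log\gamma},
\]
which is finite by the choice of $\gamma$. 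The second bound in (H2) follows similarly by decomposing $\{T_{D_3}\wedge\tau_\d<T_{D_2}\}=\{\tau_\d<T_{D_2}\}$ (since $D_3$ is unreachable without passing through $D_2$) and using the estimate $\PP_x(u<\tau_\d\wedge T_{D_2})\leq C\eta^Z(y)u^{n_0-n-1}e^{-(1+\lambda_0^Z\min_{k<n_0}f(k))u}$.

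The main obstacle is the third condition in (H2), the uniform bound $\sup_{x\in D_3}\EE_x[\gamma^{-\tau_\d}]<+\infty$, since $\eta^Z$ is generally unbounded on $\NN$. I would build a Lyapunov $\psi(n,y)=g(n)\varphi^Z(y)$ on $D_3$ with $\varphi^Z(y)=y$ and $g(n+1)/g(n)=1+\lambda_0^Z f(n)-\lambda_1$ for some $\lambda_1\in(-\log\theta_0^Y,\,1+\lambda_0^Z\inf_{n>n_0}f(n))$; the strict gap $\inf_{n>n_0}f(n)>f(n_0)$ ensures that each ratio is strictly positive, so $g>0$ and $\mathcal{L}\psi\leq-\lambda_1\psi$ on $D_3$. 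Combined with the time-change comparison $\tau_\d\leq\tau_\d^Z/\inf_{k\geq n_0+1}f(k)$ and the linear-in-$y$ moment bound $\EE_y[e^{s\tau_\d^Z}]\leq C_s\,y$ inherited from the Lyapunov $\varphi^Z(y)=y$ for $Z$, an appropriate balancing of $g$ against the growth in $y$ yields uniform control of $\EE_{(n,y)}[\gamma^{-\tau_\d}]$ on $D_3$. Once (H1) and (H2) are in place, the continuous-time version of Theorem~\ref{thm:three-comm-class-1} produces Assumption~(F) for $X$. The domain-of-attraction claim follows by identifying $E'=\{(n,y):n\leq n_0,\,y\geq 1\}$: from $(n,y)$ with $n\leq n_0$ the process reaches any $K\subset D_2$ with positive probability before absorption, so $\delta_{(n,y)}(E')>0$, whereas $(n,y)$ with $n>n_0$ lies outside $E'$ because $\{n_0\}\times\NN\supset K$ is unreachable when $N$ is strictly increasing.
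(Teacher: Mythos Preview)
Your decomposition $D_1=\{1,\ldots,n_0-1\}\times\NN$, $D_2=\{n_0\}\times\NN$, $D_3=\{n_0+1,\ldots\}\times\NN$ is natural, and (H1) can indeed be verified as you outline. The fatal problem is the third requirement of (H2): the bound
\[
\sup_{x\in D_3}\EE_x\bigl[\gamma^{-\tau_\d}\bigr]<+\infty
\]
is generally \emph{false} here. The set $D_3$ contains states $(n,y)$ with $y$ arbitrarily large, and from such states the absorption time is long: the time-change $Y_t=Z_{\tau(t)}$ gives $\tau_\d\leq\tau_\d^Z/\inf_{k>n_0}f(k)$, but $\EE_y[e^{s\tau_\d^Z}]$ grows at least like $\eta^Z(y)$ for any $s\in(0,\lambda_0^Z)$, and $\eta^Z$ is unbounded on $\NN$ unless $Z$ comes down from infinity, which the hypotheses do not ensure. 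Your proposed Lyapunov $\psi(n,y)=g(n)\varphi^Z(y)$ cannot repair this: even granting $\mathcal{L}\psi\leq-\lambda_1\psi$ on $D_3$ (which itself fails for $y$ in the finite set $L^Z$, where the $C\11_{L^Z}$ term in $\mathcal{L}^Z\varphi^Z$ contributes an unbounded-in-$n$ error $g(n)f(n)C$), it would only yield $\EE_{(n,y)}[\gamma^{-\tau_\d}]\lesssim g(n)\varphi^Z(y)$, and no choice of $g$ can make this uniform in $y$ for each fixed $n$. The phrase ``balancing $g$ against the growth in $y$'' has no content here because $n$ and $y$ range independently over $D_3$.

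This is precisely why the paper does \emph{not} go through Theorem~\ref{thm:three-comm-class-1}. Instead it applies Theorem~\ref{thm:main-E-denumerable} directly, building a single global Lyapunov $\psi_1(n,y)$ on all of $E$, defined piecewise: equal to $\psi_1^Z(y)$ on the slice $n=n_0$, to a linear combination $e^{a(n_0-n)}\psi_1^Z(y)+e^{b(n_0-n)}\eta^Z(y)$ for $n<n_0$, and to a multiple of $\eta^Z(y)$ for $n>n_0$ (with a damping factor $e^{-a(n-n_0)}$ up to some $n_1$ and constant thereafter). The constants $a,b,\varepsilon,n_1$ are tuned so that the rate-$1$ jump $n\mapsto n+1$ is absorbed into the drift, yielding $\mathcal{L}\psi_1\leq-\lambda_1\psi_1+C\11_{D_0}$ with $D_0=\{n_0\}\times L^Z$ finite and $\lambda_1>\lambda_0=1+f(n_0)\lambda_0^Z$. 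The point is that Theorem~\ref{thm:main-E-denumerable} only asks for a \emph{pointwise} Lyapunov inequality; $\psi_1$ is allowed to be unbounded on $D_3$, which is exactly what is needed. The role of the hypothesis $\liminf f(n)>f(n_0)+1/\lambda_0^Z$ is to make $\lambda_0^Z f(n)\geq\lambda_1$ for all large $n$, so that on those slices the eigenrelation $\mathcal{L}^Z\eta^Z=-\lambda_0^Z\eta^Z$ alone already gives the required decay.
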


Of course, all the consequences of Theorem~\ref{thm:E-F} also apply here, taking the functions $\psi_1$ and $\psi_2$ as described in
the proof.


\begin{proof}
  The proof maks use of the special structure of the process $Y$, which can be constructes as
  \[
  Y_t=Z_{\int_0^t f(N_s)\mathrm ds},\quad\forall t\geq 0.
  \]
  In general, we shall denote the objects related to $Z$ with a superscript $Z$, for example $\psi^Z_1$ is the functions involved
  in~(F2) and $L^Z$ is the set involved in~(F) for $Z$. We can assume without loss of generality 
  as in Theorem~\ref{thm:main-E-denumerable} that $L^Z=D_0^Z$, i.e.
  \begin{equation}
    \label{eq:preuve-preuve}
    \mathcal{L}^Z\psi_1^Z\leq-\lambda_1^Z\psi_1^Z+\bar{C}\11_{L^Z}  
  \end{equation}
  with $\psi_1^Z(0)=0$ and $\lambda^Z_1>\lambda^Z_0$.

  Our goal is to apply Theorem~\ref{thm:main-E-denumerable} to the process $X=(N,Y)$. We define the finite set $D_0=\{n_0\}\times
  L^Z$, so that $\P_{(n_0,x)}(X_1=(n_0,y))>0$ for all $(n_0,x)$ and $(n_0,y)$ in $D_0$, and check that $\lambda_0\leq
  f(n_0)\lambda_0^Z+1$. Indeed, for all $y\in L^Z$,
  \begin{align*}
    e^{t(f(n_0)\lambda_0^Z+1)}\P_{(n_0,y)}((N_t,Y_t)=(n_0,y)) & \geq
    e^{tf(n_0)\lambda_0^Z}\P^Z_{y}(Z_{f(n_0)t}=y) \\ & \xrightarrow[t\rightarrow+\infty]{}
    \eta^Z(y)\nu^Z_{QSD}(\{y\})>0.
  \end{align*}
  We fix $\lambda_1$ such that
  \[
  f(n_0)\lambda_0^Z+1<\lambda_1<\left(\lambda_0^Z\inf_{n\neq n_0}f(n)+1\right)\wedge \left(\lambda_0^Z\liminf_{n\rightarrow+\infty}
    f(n)\right)\wedge\left(\lambda^Z_1 f(n_0)+1\right)
  \]
  and we choose
  \begin{itemize}
  \item $n_1> n_0$ such that, for all $n\geq n_1$, $\lambda_1<\lambda_0^Z f(n)$;
  \item $c>0$ small enough so that $\psi_1^Z(x)\geq c\eta^Z(x)$ for all $x\geq 1$ (such a constant exists thanks to Theorem~\ref{thm:eta});
  \item $a>0$ large enough so that $\lambda_1<\lambda^Z_1 f(n_0)+1-e^{-a}$;
  \item $\varepsilon>0$ small enough so that $\lambda_1<(\lambda_0^Z-\varepsilon)\inf_{n\neq n_0}f(n) +1$;
  \item $b>a$ large enough so that $\lambda_1<(\lambda_0^Z-\varepsilon)\inf_{n\neq n_0}f(n) +1-e^{-b}$ and $\bar{C}
    e^{a-b}<\varepsilon\inf_{y\in L^Z}\eta^Z(y)$, where the constant $\bar{C}$ is the one of~\eqref{eq:preuve-preuve}.
  \end{itemize}
  We can now define
  \[
  \psi_1(n,y)=
  \begin{cases}
    \psi_1^Z(y) & \text{if }n=n_0, \\
    e^{a(n_0-n)}\psi_1^Z(y)+e^{b(n_0-n)}\eta^Z(y) & \text{if }n<n_0, \\
    ce^{-a(n-n_0)}\eta^Z(y) & \text{if }n_0<n< n_1, \\
    ce^{-a(n_1-n_0)}\eta^Z(y) & \text{if } n_1\leq n.
  \end{cases}
  \]
  In the case where $n<n_0$, it follows from~\eqref{eq:preuve-preuve} that
  \begin{align*}
    \mathcal{L}\psi_1(n,y) \leq & -\left(\lambda_1^Z f(n)+1-e^{-a}\right) e^{a(n_0-n)}\psi_1^Z(y) \\ & \qquad -\left(\lambda_0^Z f(n)+1-e^{-b}\11_{n<n_0-1}\right)
                                                                                                       e^{b(n_0-n)}\eta^Z(y)\\
    & \qquad\qquad+\frac{\bar{C}}{\inf_{z\in L^Z}\eta^Z(z)}f(n)e^{a(n_0-n)}\eta^Z(y) \\
    \leq & -\lambda_1 e^{a(n_0-n)}\psi_1^Z(y)-\left[(\lambda_0^Z-\varepsilon)f(n)+1-e^{-b}\11_{n<n_0-1}\right]e^{b(n_0-n)}\eta^Z(y) \\ 
    & \qquad+\varepsilon f(n) e^{a(n_0-n)}\left(e^{b-a}-e^{(b-a)(n_0-n)}\right)\eta^Z(y) \\
    \leq & -\lambda_1\psi_1(n,y).
  \end{align*}
  When $n=n_0$, we have
  \begin{align*}
    \mathcal{L}\psi_1(n_0,y) & \leq -\lambda_1^Zf(n_0)\psi_1^Z(y)+\bar{C}\11_{L^Z}(y)f(n_0) + ce^{-a}\eta^Z(y)-\psi^Z_1(y) \\
    & \leq -\lambda_1 \psi_1(n_0,y)+\bar{C}f(n_0)\11_{D_0}(n_0,y).
  \end{align*}
  When $n_0<n<n_1$, we have
  \begin{align*}
    \mathcal{L}\psi_1(n,y) & \leq -\lambda_0^Z f(n)\,c\,e^{-a(n-n_0)}\eta^Z(y) +
    c\,e^{-a(n-n_0)}\eta^Z(y)\left(e^{-a}-1\right) 
    \\ & \leq -\lambda_1 \psi_1(n,y).
  \end{align*}
  When $n_1\leq n$, we have
  \begin{align*}
    \mathcal{L}\psi_1(n,y) & \leq -\lambda_0^Z f(n)\eta^Z(y) \leq -\lambda_1 \psi_1(n,y).  
  \end{align*}
  Finally we have proved that $\mathcal{L}\psi_1(n,y) \leq -\lambda_1 \psi_1(n,y)+\bar{C}f(n)\11_{D_0}(n,y)$, where
  $\lambda_1>\lambda_0$. Now, note that, since $Z$ is a birth-death process, basic comparison arguments imply that
  $\eta^Z(k)\geq\eta^Z(1)>0$ for all $k\geq 1$. Therefore, the function $\psi_1$ is uniformly lower bounded, so that it satisfies the
  assumptions of Theorem~\ref{thm:main-E-denumerable} up to a multiplicative constant.

  Hence, Theorem~\ref{thm:main-E-denumerable} allows us to conclude the proof. The fact that all Dirac masses $\delta_{(n,y)}$ with
  $n\leq n_0$ belong to the domain of attraction follows from Corollary~\ref{cor:attraction-domain}.
\end{proof}

\section{Application to processes in continuous state space and discrete time}
\label{sec:continuous-state-space}

Discrete time Markov models in continuous state space and with absorption naturally arise in many applications. Examples of such
processes are given by perturbed dynamical systems, cf.\ e.g.\
\cite{FaureSchreiber2014,BerglundLandon2012,baudel-berglund-17,HinrichKolbEtAl2018}, or piecewise deterministic Markov processes when
one looks at the process at jump times only (see e.g.~\cite{azais-et-al-14}). We provide in Section~\ref{sec:two-side} a general
criterion applying to such processes with arbitrarily close to 1, state-dependent killing probability, and we give applications to
Euler schemes for diffusions absorbed at the boundary of a domain. In Section~\ref{sec:perturbed-dyn-syst}, we consider perturbed
dynamical systems in finite dimension. We first consider the case of unbounded domains with unbounded perturbation.
Subsection~\ref{sec:unbounded-perturbation} assumes that the perturbation has bounded density with respect to Lebesgue's measure and
Subsection~\ref{sec:singul-density} provides examples with perturbations with unbounded density. Finally, the case of bounded
perturbations is studied in Subsection~\ref{sec:bounded-perturbation}. Theorem~\ref{thm:perturbed-SD-intro} of the Introduction is
obtained as an application of the results of Section~\ref{sec:unbounded-perturbation}.

\subsection{Two sided estimates for processes with killing}
\label{sec:two-side}

Let $(Y_n,n\in\ZZ_+)$ be a Markov process evolving on a measurable state space $E\cup\{\d\}$ with transition kernel
$(Q(y,\cdot)_{y\in E\cup\{\d\}})$ such that $\d\notin E$ is absorbing (i.e. $Q(\d,\{\d\})=1$) satisfying a two-sided estimate (see for
instance~\cite{birkhoff-57,davies-simon-84,ChampagnatCoulibaly-PasquierEtAl2016}), which means that there exist a probability measure $\zeta$ on
$E$, a positive function $g:E\rightarrow (0,+\infty)$ and a constant $C>1$ such that, for all $y\in E$ and all measurable sets
$A\subset E$,
\begin{align}
  \label{eq:two-sided-Y}
  g(y)\zeta(A)\leq Q(y,A) \leq C g(y)\zeta(A).
\end{align}
Condition~\eqref{eq:two-sided-Y} is known to be satisfied for various models (see e.g.~\cite{BerglundLandon2012} or the references
in~\cite{ChampagnatCoulibaly-PasquierEtAl2016}). It is also well known (see~\cite{birkhoff-57,ChampagnatCoulibaly-PasquierEtAl2016})
that this implies that $Y$ admits a unique quasi-stationary distribution $\nu_{QSD}^Y$ for which the convergence
in~\eqref{eq:thm-main-zxzx} holds true for the total variation distance with geometric speed uniform with respect to the initial
distribution $\mu$ on $E$. Our aim is to generalize this result to processes obtained from $Y$ with additional killing (or
penalization).

More precisely, let $p:E\times E\rightarrow(0,1]$ be measurable and consider the Markov process $X$ evolving in $E\cup\{\d\}$ with
transition kernel $P(x,\cdot)_{x\in E\cup\{\d\}}$ defined by
\begin{align*}
  P(x,\mathrm dy)=
  \begin{cases}
    p(x,y)Q(x,\mathrm dy)+(1-p(x,y))\delta_\d(\mathrm dy)&\text{if }x\in E\\
    \delta_\d(\mathrm dy)&\text{if }x=\d.
  \end{cases}
\end{align*}
Observe that Condition~\eqref{eq:two-sided-Y} may not be satisfied by the kernel $P$ in cases where $\inf_{x,y\in E}p(x,y)=0$.
We also emphasize that the kernel $P$ generates a penalized semigroup of $(Y_n)_{n\in\Z_+}$, in the sense that, for any function $f:E\rightarrow \R_+$, all $x\in E$ and all $n\geq 1$, one has
    \[
    \E_x\left(f(X_n)\11_{n<\tau_\d}\right)=\E_x\left(p(x,Y_1)\cdots
    p(Y_{n-1},Y_n)\,f(Y_n)\11_{n<\tau^Y_\d}\right),
    \]
    where $\tau_\d$, resp.\ $\tau_\d^Y$, is the absorption time for $X$, resp.\ $Y$, in $\d$.

\begin{thm}
  \label{thm:two-side}
  Assume that there exists an increasing sequence $(L_k)_{k\geq 1}$ of measurable subsets of $E$
  such that $E=\cup_{k=1}^{+\infty}L_k$ and $\inf_{x,y\in L_k}p(x,y)>0$ for all $k\geq 1$.
  Then $X$ satisfies Assumption~(E) with $\varphi_1=1$ and $\varphi_2$ positive on $E$. In
  particular, $X$ admits a unique quasi-stationary distribution whose domain of attraction contains
  all probability measures on $E$.
\end{thm}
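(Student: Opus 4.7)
I would verify Condition~(E) for the kernel $P$ with $\varphi_1\equiv 1$ and a positive $\varphi_2$, and then invoke Proposition~\ref{prop:phi_1-borne} together with a check that $E'=E$ to conclude. All estimates derive from two elementary inequalities: using~\eqref{eq:two-sided-Y} together with $p_k:=\inf_{L_k\times L_k}p(x,y)>0$, for any $x\in L_k$ and measurable $A\subset L_k$,
\begin{equation*}
  p_k\,g(x)\,\zeta(A)\ \leq\ P(x,A)\ \leq\ C\,g(x)\,\zeta(A),
\end{equation*}
while $P(x,E)\leq Cg(x)$ for any $x\in E$. In particular, the first inequality yields the uniform Doeblin bound $P(x,\cdot)/P(x,E)\geq(p_k/C)\zeta(\cdot\cap L_k)$ for every $x\in L_k$.

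I would pick $k$ large and $\delta>0$ small so that $K:=L_k\cap\{g\geq\delta\}$ has $\zeta(K)>0$, and then select $\theta_1\in(0,1)$ with $\sup_{E\setminus K}Cg\leq\theta_1<p_k\int_K g\,d\zeta$. With this choice, (E1) holds for $n_1=1$, $\nu(\cdot):=\zeta(\cdot\cap K)/\zeta(K)$ and $c_1:=p_k\delta\zeta(K)$ directly from the lower two-sided estimate, and (E4) is immediate. Condition (E2) with $\varphi_1\equiv 1$ reduces to $P(x,E)\leq\theta_1+c_2\11_K(x)$, which follows from $P(x,E)\leq Cg(x)\leq\theta_1$ on $E\setminus K$. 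For $\varphi_2$ I would apply Lemma~\ref{lem:varphi2}: iterating the lower bound yields $P^n(x,K)\geq p_k\,g(x)\,\zeta(K)\bigl(p_k\int_K g\,d\zeta\bigr)^{n-1}$ on $K$, so $\theta_2^{-n}P^n(x,K)\to+\infty$ on $K$ for any $\theta_2\in(\theta_1,p_k\int_K g\,d\zeta)$, which produces a positive $\varphi_2$ on $E$. For (E3), the uniform Doeblin bound implies that the conditional kernel is uniformly ergodic on $K$; iterating the two-sided inequalities inside $L_k$ and controlling the contribution of paths escaping $L_k$ via the Foster--Lyapunov estimate from (E2) gives $\PP_y(n<\tau_\d)/\PP_{y'}(n<\tau_\d)\leq c_3$ for all $n$ and all $y,y'\in K$.

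Once (E) is verified, Theorem~\ref{thm:main} yields $\nu_{QSD}$ and Proposition~\ref{prop:phi_1-borne} delivers uniform convergence on $\{\mu:\mu(E')>0\}$; since $P(x,K)\geq p_{k(x)}g(x)\zeta(K)>0$ whenever $g(x)>0$ (taking $k(x)$ with $x\in L_{k(x)}$), and points where $g$ vanishes are absorbed in one step and may be discarded, $E'=E$ and the domain of attraction contains every probability measure on $E$. The main technical obstacle is the joint choice of $k$, $\delta$ and $\theta_1<1$ realizing simultaneously $\{Cg>\theta_1\}\subset L_k$ (needed for (E2) with $\varphi_1\equiv 1$), $\zeta(L_k\cap\{g\geq\delta\})>0$, and $\theta_1<p_k\int_K g\,d\zeta$. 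The first of these requirements is the delicate one, since $\{g\geq\theta_1/C\}$ need not a priori lie in any single $L_k$; a covering argument exploiting $E=\cup_k L_k$ together with the monotone convergence of $\int_{L_k}g\,d\zeta$ should provide the required parameters, possibly after replacing $(L_k)_{k\geq 1}$ by a subsequence engineered to exhaust the super-level sets of $g$.
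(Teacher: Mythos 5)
Your overall route is the paper's: take $K$ of the form $L_k\cap\{g\geq\delta\}$, set $\varphi_1\equiv 1$, obtain $\varphi_2$ from Lemma~\ref{lem:varphi2} by iterating the lower two-sided bound on $K$, and get the minorization (E1) from $P(x,\cdot)\geq p_k\,g(x)\,\zeta(\cdot\cap K)$. The one place where you genuinely diverge is (E3): you propose to prove the Harnack-type ratio bound directly by uniform ergodicity of the conditioned kernel together with a Lyapunov control of excursions out of $L_k$. The paper instead verifies the single domination $\P_x(X_1\in\cdot\cap K)\leq \frac{C\|g\|_\infty k}{\inf_{K\times K}p}\,\P_y(X_1\in\cdot)$ for all $x\in E$, $y\in K$ (using $g\geq\delta$ on $K$ and $g\leq 1$) and then invokes Proposition~\ref{lem:E2thenE} with $n_0=m_0=1$, which yields (E1) and (E3) simultaneously; this is much lighter than what you sketch and I would adopt it.

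The obstacle you flag at the end is a genuine gap, and your proposed repair does not close it. With $K\subset L_k$, condition (E2) for $\varphi_1\equiv 1$ requires $P_1\11_E(x)\leq Cg(x)\leq\theta_1$ for \emph{every} $x\in E\setminus K$, with $\theta_1$ strictly below the $\theta_2\leq p_k\int_K g\,d\zeta$ that Lemma~\ref{lem:varphi2} can deliver; this forces the inclusion $\{g>\theta_1/C\}\subset L_k$. Passing to a subsequence of $(L_j)_{j\geq 1}$ cannot produce this inclusion in general: the $L_j$ are dictated by $p$ while the super-level sets of $g$ are dictated by $Q$, and the two are unrelated. For instance, if $g$ is bounded below on $E$ (the typical situation for the Euler scheme, where $g$ is constant) then $\{g>\theta_1/C\}=E$, which is contained in no $L_j$ as soon as $\inf_{E\times E}p=0$; monotone convergence of $\int_{L_j}g\,d\zeta$ says nothing about the values of $g$ off $L_j$. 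You have in fact put your finger on the thinnest point of the paper's own argument, which asserts $Cg(x)\leq C/k=\theta_1$ on $E\setminus K_k$ with $K_k=\{x\in L_k: g(x)\geq 1/k\}$ — an inequality that likewise presupposes $\{g\geq 1/k\}\subset L_k$ and is not justified for $x\notin L_k$. So the crux is correctly located, but neither a covering argument nor a subsequence extraction supplies the missing inclusion, and as written your verification of (E2) is incomplete: one must either control $P_1\11_E$ off $L_k$ by something sharper than $Cg$ (which the stated hypotheses do not provide), or allow $K$ to leave the family $(L_k)$, which then undermines your proofs of (E1) and (E3).
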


\begin{exa}
Typical examples of discrete-time Markov processes in continuous state space are given by Euler schemes for stochastic differential
equations. We consider the SDE $\mathrm dY_t=b(Y_t)\mathrm dt+\sigma(Y_t)\mathrm dB_t$ in $\RR^d$, with $b$ and $\sigma$ bounded measurable
on $\RR^d$ and $\sigma$ uniformly elliptic on $\RR^d$. Its standard Euler scheme with time-step $\delta$ is
the Markov chain $(X_n,n\geq 0)$ defined as
\begin{equation}
  \label{eq:Euler-scheme}
  X_{n+1}=b(X_n)\delta+\sqrt{\delta}\sigma(X_n)G_n,
\end{equation}
where $(G_n,n\geq 0)$ is an i.i.d.\ sequence of $\mathcal{N}(0,\text{Id})$ Gaussian variables in $\RR^d$. In the case of a SDE
absorbed at its first exit time of a bounded open connected domain $D\subset \RR^d$, the ``naive'' Euler scheme, constructed as above
with the additional rule that $X_n$ is immediately sent to $\d$ when $X_n\not\in D$, is not good in terms of weak error. Indeed, when
$X_n$ is close to the boundary of $D$ and $X_{n+1}$ remains in $D$, the path of the SDE $Y$ in the time interval
$[n\delta,(n+1)\delta]$ might have exited $D$. In this case, it is more efficient to construct the Brownian path that links $0$ to
$G_n$ on the time interval $[n\delta,(n+1)\delta]$ as a Brownian bridge $(\tilde{G}_t,t\in [n\delta,(n+1)\delta])$ such that
$\tilde{G}_{n\delta}=0$ and $\tilde{G}_{(n+1)\delta}=G_n$, so that one can approximate the path of the diffusion on this time
interval as
\[
\tilde{X}_{t}=b(X_n)(t-n\delta)+\sqrt{\delta}\sigma(X_n)\tilde{G}_t,\quad\forall t\in [n\delta,(n+1)\delta],
\]
and approximate the absorption event as $\{\exists t\in [n\delta,(n+1)\delta]:\tilde{X}_t\not\in D\}$. The corresponding Euler scheme
is thus obtained as the Markov chain $X$ as defined in~\eqref{eq:Euler-scheme} with the penalization $p(X_n,X_{n+1})=\PP(\exists t\in
[n\delta,(n+1)\delta]:\tilde{X}_t\not\in D)$. For a detailed presentation and study of this kind of modified Euler schemes, we refer
the reader to~\cite{manella-99,gobet-00,Gobet2001,buchmann-05}.

Using Theorem~\ref{thm:two-side}, we obtain the existence and convergence to a unique quasi-stationary distribution for this modified
Euler scheme. Indeed,~\eqref{eq:two-sided-Y} is satisfied for the naive Euler scheme with $\zeta$ equal to the restriction of
Lebesgue's measure to $D$ and a constant function $g$, thanks to the boundedness of the domain $D$, the uniform ellipticity of
$\sigma$ and the boundedness of $b$ and $\sigma$. In addition, it follows from the connectedness of the domain $D$, the uniform
ellipticity of $\sigma$ and the boundedness of $b$ and $\sigma$ that $\inf_{x,y\in K} p(x,y)>0$ for any compact subset $K$ of $D$.
\end{exa}

\begin{proof}[Proof of Theorem~\ref{thm:two-side}] 
 { For all $k\geq 1$, we define the set $K_k=\{x\in L_k\text{ s.t. }g(x)\geq 1/k\}$. Let
  $k_0$ be large enough so that $\zeta(K_{k_0})>0$. Then one has, for all $k\geq k_0$, all $x\in K_{k}$ and all
  measurable set $A\subset E$,
  \begin{align}
    \label{eq:two-side-pf1}
    \P_x(X_1\in A\cap K_{k_0})\geq g(x) \int_{A\cap K_{k_0}} p(x,y)\,\zeta(\mathrm dy)\geq  \frac{\zeta(K_{k_0})\inf_{u,v\in L_{k}}p(u,v)}{k}\,\nu(A\cap K_{k_0}),
  \end{align}
  where $\nu$ is the probability measure on $K_{k_0}$ defined by $\nu(A)=\zeta(A)/\zeta(K_{k_0})$. We fix $k\geq k_0$ large enough so
  that $C/k< \frac{\zeta(K_{k_0})\inf_{u,v\in L_{k_0}}p(u,v)}{k_0}$, where the constant $C$ is the one of~\eqref{eq:two-sided-Y}, and
  set $K=K_k$.}

{ Let us now check that Condition~(E) is satisfied with the above choices of $K$ and $\nu$ (extended by $0$ to
  $K_k\setminus K_{k_0}$), and with $\theta_1=C/k$ and $\theta_1<\theta_2<\frac{\zeta(K_{k_0})\inf_{u,v\in L_{k_0}}p(u,v)}{k_0}$.}

 {
   Setting $\varphi_1=1$, one has
  \begin{align*}
    & P_1\varphi_1(x)\leq 1,\ \forall x\in K,\\
    & P_1\varphi_1(x)\leq C\,g(x)\leq \theta_1 =\theta_1\varphi_1(x),\ \forall x\in E\setminus K,
  \end{align*}
  so that the first and third lines of Condition~(E2) are satisfied. Using Markov's property, one deduces
  from~\eqref{eq:two-side-pf1} that $\theta_2^{-n}\inf_{x\in K} \P_x(X_n\in K_{k_0})\rightarrow+\infty$ when
  $n\rightarrow+\infty$. Hence Lemma~\ref{lem:varphi2} implies that the second and fourth lines of Condition~(E2) are
  satisfied. It also implies that Condition~(E4) is satisfied.  Note also that the function $\varphi_2$ provided by
  Lemma~\ref{lem:varphi2} is positive on $E$ since $g$ is positive in~\eqref{eq:two-sided-Y}.}

{
  Moreover, for all $x\in E$, all $y\in K$ and all measurable set $A\subset E$,
  \begin{align*}
    \P_x(X_1\in A\cap K)&\leq Cg(x)\zeta(A\cap K)\leq \frac{Cg(x) k g(y)}{\inf_{K\times K} p}\,\int_{A\cap K} p(y,z)\,\zeta(\mathrm dz)\\
                        &\leq \frac{C\|g\|_\infty k}{\inf_{K\times K} p}\,\P_y(X_1\in A\cap K).
  \end{align*}
  We deduce from Proposition~\ref{lem:E2thenE} with $n_0=m_0=1$ that Conditions~(E1) and~(E3) are satisfied, which concludes the
  proof of Theorem~\ref{thm:two-side}.}
\end{proof}

\subsection{Perturbed dynamical systems}
\label{sec:perturbed-dyn-syst}

We consider the following perturbed dynamical system
\begin{align*}
X_{n+1}=f(X_n)+\xi_n,
\end{align*}
where $f:\R^d\rightarrow \R^d$ is a measurable function and $(\xi_n)_{n\in\N}$ is an i.i.d. sequence in $\R^d$. We assume that the
process evolves in a measurable set $D$ of $\RR^d$ with positive Lebesgue measure, meaning that it is immediately sent to
$\d\not\in\RR^d$ as soon as $X_n\not\in D$. We shall consider two situations below, where the random variables $\xi_n$ are unbounded
or almost surely bounded. In the unbounded case, different methods must be used depending on whether $\xi_n$ has a bounded density
with respect to Lebesgue's measure or not.

The same arguments would also work if $X_{n+1}=f(X_n)+\xi_n(X_n)$, where the sequence of random maps $(x\mapsto \xi_n(x))_{n\geq 0}$
are i.i.d. We leave the appropriate extensions of our assumptions and arguments to the reader.

\subsubsection{The case of unbounded perturbation with bounded density}
\label{sec:unbounded-perturbation}

We consider here the case where the random variables $\xi_n$ have support $\RR^d$.

\begin{prop}
  \label{prop:unbounded-perturbation}
  Assume that $f$ is locally bounded, that the law of $\xi_n$ has a boun\-ded density $g(x)$ with respect to Lebesgue's measure such that
  \begin{equation*}
    \inf_{|x|\leq R} g(x)>0,\quad\forall R>0,
  \end{equation*}
  and that there exists a locally bounded function $\varphi:\R^d\to [1,+\infty)$ such that
  $x\mapsto \E(\varphi(x+\xi_1))$ is locally bounded on $\R^d$ and such that
  \begin{equation}
    \label{eq:Lyap-SD-perturbed}
    \limsup_{|x|\rightarrow+\infty,\,x\in D} \frac{\E(\varphi(f(x)+\xi_1))}{\varphi(x)}=0.    
  \end{equation}
  Then Condition~(E) is satisfied with $\varphi_1=\varphi$ and $\varphi_2$ positive on $D$.
\end{prop}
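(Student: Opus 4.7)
The plan is to reduce the proposition to Proposition~\ref{lem:E2thenE} by verifying Condition~(E2) for $\varphi_1=\varphi$ and a suitable $\varphi_2$, together with the one-step two-sided bound~\eqref{eq:condition-E2thenE} coming from the density $g$. The main balancing act, which I expect to be the only technical subtlety, is a calibration of scales: strengthening the Lyapunov drift forces the compact set $K$ to be large, while the one-step return probability to $K$ may shrink as $K$ grows because $g$ can decay at infinity. I therefore work with two scales, a fixed inner set $K_0$ on which return probabilities are controlled in a way independent of the outer geometry, and a large outer set $K$ used to trigger the Lyapunov drift.

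I first fix the constants. Pick $R_0>0$ such that $K_0:=D\cap\overline{B(0,R_0)}$ has positive Lebesgue measure and set $M_{R_0}:=\sup_{x\in K_0}|f(x)|<\infty$. Because every point of $\R^d$ lies in some ball on which $g$ is bounded below, $g$ is strictly positive on $\R^d$, so
\[
\tilde c_{R_0}:=\mathrm{Leb}(K_0)\cdot\inf_{|v|\leq R_0+M_{R_0}}g(v)>0.
\]
Using~\eqref{eq:Lyap-SD-perturbed}, choose $R\geq R_0$ and $\theta_1\in(0,\tilde c_{R_0})$ so that $\E[\varphi(f(x)+\xi_1)]\leq\theta_1\varphi(x)$ for every $x\in D$ with $|x|>R$. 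Let $K:=D\cap\overline{B(0,R)}$, $M_R:=\sup_{x\in K}|f(x)|<\infty$, and fix $\theta_2\in(\theta_1,\tilde c_{R_0})$.

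With these choices, (E2) for $\varphi_1=\varphi$ is immediate: on $D\setminus K$, $P_1\varphi(x)\leq\E[\varphi(f(x)+\xi_1)]\leq\theta_1\varphi(x)$, and on $K$, the local boundedness of $y\mapsto\E[\varphi(y+\xi_1)]$ combined with $|f(x)|\leq M_R$ gives $P_1\varphi(x)\leq\sup_{|y|\leq M_R}\E[\varphi(y+\xi_1)]=:c_2<\infty$; together with $\varphi\geq 1$ and $\sup_K\varphi<\infty$ this yields the first and third lines of~(E2). To construct $\varphi_2$ I invoke Lemma~\ref{lem:varphi2}: for $x\in K$ the density lower bound gives $\P_x(X_1\in K_0)\geq\mathrm{Leb}(K_0)\inf_{|v|\leq R_0+M_R}g(v)>0$, while for $z\in K_0$ one has $\P_z(X_1\in K_0)\geq\tilde c_{R_0}$. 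Iterating via the Markov property yields $\P_x(X_n\in K)\geq\P_x(X_n\in K_0)\geq c\,\tilde c_{R_0}^{\,n-1}$ for some $c>0$ independent of $x\in K$, so $\inf_{x\in K}\theta_2^{-n}\P_x(X_n\in K)\to+\infty$ because $\theta_2<\tilde c_{R_0}$. Lemma~\ref{lem:varphi2} then produces $\varphi_2\in[0,1]$ satisfying the second and fourth lines of~(E2) and also implies~(E4). The function $\varphi_2$ is positive on $D$ because $g>0$ on $\R^d$ forces $\P_x(X_1\in K_0)>0$ for every $x\in D$.

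It remains to establish~\eqref{eq:condition-E2thenE} with $n_0=m_0=1$. For any $x\in D$ and any measurable $A\subset E$,
\[
\P_x(X_1\in A\cap K)=\int_{A\cap K}g(z-f(x))\,dz\leq\|g\|_\infty\,\mathrm{Leb}(A\cap K),
\]
while for any $y\in K$, since $|z-f(y)|\leq R+M_R$ for every $z\in K$,
\[
\P_y(X_1\in A)\geq\int_{A\cap K}g(z-f(y))\,dz\geq c_R\,\mathrm{Leb}(A\cap K),
\]
with $c_R:=\inf_{|v|\leq R+M_R}g(v)>0$. Therefore $\P_x(X_1\in A\cap K)\leq(\|g\|_\infty/c_R)\,\P_y(X_1\in A)$, and Proposition~\ref{lem:E2thenE} supplies (E1) and (E3), completing the verification of Condition~(E).
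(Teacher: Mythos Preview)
Your proof is correct and follows essentially the same approach as the paper's: both use a two-scale construction (your $K_0\subset K$ is the paper's $K_1\subset K$), fix $\theta_2$ via the return probability to the inner set, choose $\theta_1<\theta_2$ and enlarge to $K$ using~\eqref{eq:Lyap-SD-perturbed}, build $\varphi_2$ through Lemma~\ref{lem:varphi2}, and then verify~\eqref{eq:condition-E2thenE} with $n_0=m_0=1$ via the upper bound $\|g\|_\infty$ and the lower bound $\inf_{|v|\leq R+M_R}g(v)$ to invoke Proposition~\ref{lem:E2thenE}. Your explicit identification of the ``calibration of scales'' issue is exactly the reason for the two-scale argument.
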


Note that,  if $D$ is bounded, the last result is already a
consequence of the classical criterion based on~\eqref{eq:two-sided-Y}.
Before proving this result, let us give three applications.

\begin{exa}
  \label{exa:expo-perturbed-SD}
  If there exists $\alpha>0$ such that $\EE e^{\alpha|\xi_1|}<+\infty$ and if $|x|-|f(x)|\rightarrow+\infty$ when
  $|x|\rightarrow+\infty$, then Proposition~\ref{prop:unbounded-perturbation} applies. Indeed, choosing $\varphi(x)=\exp(\alpha|x|)$,
  we have
  \begin{align*}
    \frac{\EE\varphi(|f(x)+\xi_1|)}{\varphi(x)} & \leq e^{\alpha(|f(x)|-|x|)}\EE e^{\alpha|\xi_1|}\xrightarrow[|x|\rightarrow+\infty]{} 0.
  \end{align*}
  For instance, this covers the case of Gaussian perturbations, as stated in Theorem~\ref{thm:perturbed-SD-intro} in
    the introduction.
\end{exa}

\begin{exa}
  If there exists $p>0$ such that $\EE(\xi_1^p)<+\infty$ and if $|f(x)|=o(|x|)$ when
  $|x|\rightarrow+\infty$, then Proposition~\ref{prop:unbounded-perturbation} applies. Indeed, choosing $\varphi(x)=(1+|x|)^p$, we have
  \begin{align*}
     \frac{\EE\varphi(|f(x)+\xi_1|)}{\varphi(x)} & \leq \frac{(1+|f(x)|)^p}{(1+|x|)^p}\EE[(1+|\xi_1|)^p]\xrightarrow[|x|\rightarrow+\infty]{} 0.
  \end{align*}
\end{exa}

\begin{exa}
  If $\EE\log(1+|\xi_1|)<\infty$ and $|f(x)|\leq C|x|^{\varepsilon(x)}$ for some $C>0$ and some $\varepsilon(x)\rightarrow 0$ when
  $|x|\rightarrow+\infty$, then Proposition~\ref{prop:unbounded-perturbation} applies. Indeed, choosing
  $\varphi(x)=\log(e+|x|)$, we have
  \begin{align*}
    \frac{\EE\varphi(|f(x)+\xi_1|)}{\varphi(x)} & \leq \frac{\log (e+C)+\varepsilon(x)\log(e+|x|)}{\log(e+|x|)}+\frac{\EE\log(1+|\xi_1|)}{\log(e+|x|)}.
  \end{align*}
\end{exa}

\begin{proof}[Proof of Proposition~\ref{prop:unbounded-perturbation}]
  We first prove Conditions~(E2) and~(E4) and conclude the proof with Proposition~\ref{lem:E2thenE}.

  \medskip
  \noindent\textit{Step 1. Conditions~(E2) and~(E4) are satisfied.}

  Let $K_1\subset D$ be a bounded measurable set with positive Lebesgue measure. Then, for all $x\in K_1$, denoting by $\lambda_d$
  the Lebesgue measure on $\R^d$,
  \[
  \P_x(X_1\in K_1)= \P(f(x)+\xi_1\in K_1)\geq \lambda_d(K_1)\,\inf_{u\in K_1+B(0,\sup_{K_1} |f|)} g(u)>0.
  \]
  Fix $\theta_2\in(0,\lambda_d(K_1)\,\inf_{u\in K_1+B(0,\sup_{K_1} |f|)}g(u)\,)$, we deduce that, for all $x\in K_1$,
  \[
  \theta_2^{-n}\inf_{x\in K_1}\P_x(X_n\in K_1)\geq \theta_2^{-n}\inf_{x\in K_1}\P_x(X_1\in K_1,\ldots, X_n\in
  K_1)\xrightarrow[n\rightarrow+\infty]{} +\infty.
  \]
  Fix $0<\theta_1<\theta_2$, and, using~\eqref{eq:Lyap-SD-perturbed}, consider a bounded subset $K\subset D$ containing $K_1$ and
  such that, for all $x\in D\setminus K$, $P_1\varphi(x)\leq \theta_1\varphi(x)$. Since $K$ is bounded, one has
  \[
    \inf_{x\in K} \P_x(X_1\in K_1)\geq \lambda_d(K_1)\,\inf_{u\in K_1+B(0,\sup_{K} |f|)} g(u)>0,
  \]
  so that 
  \begin{align*}
    \theta_2^{-n}\inf_{x\in K}\P_x(X_n\in K) & \geq \theta_2^{-n}\lambda_d(K_1)\,\inf_{u\in K_1+B(0,\sup_{K} |f|)} g(u) \inf_{x\in
      K_1}\P_x(X_{n-1}\in K_1) 
  \end{align*}
  and thus $\theta_2^{-n}\inf_{x\in K}\P_x(X_n\in K)$ converges to $+\infty$ when $n\rightarrow+\infty$. Lemma~\ref{lem:varphi2} then
  entail that Condition~(E4) is satisfied and that there exists a function $\varphi_2:D\rightarrow [0,1]$ such that
  $P_1\varphi_2(x)\geq\theta_2\varphi_2(x)$ for all $x\in D$ and such that $\inf_K \varphi_2>0$. In addition, for all
    $x\in D$, $\PP_x(X_1\in K)\geq \lambda_d(K)\inf_{u\in K-f(x)}g(u)>0$, so that $P_1\11_K(x)>0$. Hence, the function
    $\varphi_2$ of Lemma~\ref{lem:varphi2} also satisfies that $\varphi_2(x)>0$ for all $x\in E$.

  Setting $\varphi_1=\varphi$, we deduce that Conditions~(E2) and~(E4) are satisfied for the set $K$.

  \medskip
  \noindent\textit{Step 2. Comparison of transition probabilities.}

  Let us prove that Proposition~\ref{lem:E2thenE} applies with $n_0=m_0=1$. For all $x\in D$, we have
  \[
    \P_x(X_1\in \cdot\cap K)\leq \sup_{u\in\RR^d} g(u)\,\lambda_d(\cdot\cap K).
  \]
  Moreover, for all $y\in K$,
  \begin{align*}
    \P_y(X_1\in \cdot)&\geq \P(f(y)+\xi_1\in \cdot\cap K) \\
                  &\geq \inf_{u\in K+B(0,\sup_K |f|)} g(u)\,\lambda_d(\cdot\cap K).
  \end{align*}
  Hence, for all $x\in E$ and all $y\in K$,
  \[
    \P_x(X_1\in \cdot\cap K)\leq \frac{\sup_{\RR^d} g}{\inf_{K+B(0,\sup_K |f|)} g}\,\P_y(X_1\in\cdot).
  \]
  We deduce from Step~1 and Proposition~\ref{lem:E2thenE} that Condition~(E) is satisfied with the functions $\varphi_1$ and
  $\varphi_2$, which concludes the proof.
\end{proof}

\subsubsection{An example with unbounded perturbation and singular density}
\label{sec:singul-density}

The last result made strong use of the boundedness of $g$. Actually, our criteria also apply to perturbations with singular density.
We consider here the following example: assume that $f(x)=Ax+B$, where $A$ is an invertible $d\times d$ matrix and $B\in \RR^d$, and
that there exists $a>0$ such that the density $g$ of $\xi_n$ satisfies for some constant $C_g$
\begin{equation}
  \label{eq:hyp-g-pointue}
  g(x)\leq C_g\left(\frac{1}{|x|^{d-a}}\vee 1\right)\,\quad\forall x\in \RR^d.
\end{equation}
We have the following result.

\begin{prop}
  \label{prop:g-singular}
  Let $\|\cdot\|$ be a norm on $\RR^d$ and assume that
  \begin{equation}
    \label{eq:hyp-|A|<1}
    \sup_{x\in\RR^d\setminus\{0\}}\frac{\|Ax\|}{\|x\|}<1.    
  \end{equation}
  Assume also that $\EE e^{\alpha|\xi_1|}<\infty$ for some $\alpha>0$ and that
  \begin{equation*}
    \inf_{|x|\leq R} g(x)>0,\quad\forall R>0.
  \end{equation*}
  Then Condition~(E) is satisfied with $\varphi_1=\varphi$ and $\varphi_2$ positive on $D$.
\end{prop}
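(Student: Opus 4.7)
The plan is to adapt the proof strategy of Proposition~\ref{prop:unbounded-perturbation}: first establish (E2) and (E4) via a direct Lyapunov estimate, and then verify (E1) and (E3) through the comparison condition of Proposition~\ref{lem:E2thenE}. The difficulty compared to Proposition~\ref{prop:unbounded-perturbation} is that $g$ is not bounded, so the one-step comparison no longer works; I will replace it by a multistep comparison, using the affine structure $f(x)=Ax+B$ to smooth the singularity of $g$ via iterated convolutions.

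For (E2) and (E4), set $r:=\sup_{x\neq 0}\|Ax\|/\|x\|\in(0,1)$ and fix $\beta\in(0,\alpha)$ (recalling that $|\cdot|$ and $\|\cdot\|$ are equivalent). Let $\varphi(x)=e^{\beta\|x\|}$. The inequality
\[
\EE\varphi(f(x)+\xi_1)\leq e^{\beta\|B\|}\EE e^{\beta\|\xi_1\|}\cdot e^{\beta r\|x\|}
\]
implies $\EE\varphi(f(x)+\xi_1)/\varphi(x)\to 0$ as $\|x\|\to+\infty$, which is~\eqref{eq:Lyap-SD-perturbed}. The construction of a set $K\supset K_1$ and of $\varphi_2$ on $D$, as well as the verification of (E4), proceeds verbatim as in Step~1 of the proof of Proposition~\ref{prop:unbounded-perturbation}, using only the hypothesis $\inf_{|x|\leq R}g(x)>0$ (not the boundedness of $g$).

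The core new step is to verify the comparison condition~\eqref{eq:condition-E2thenE} with some $n_0=m_0=n_*$ large enough. Iterating the recursion $X_{n+1}=AX_n+B+\xi_n$ gives, for a constant vector $c_n$ depending only on $A,B$,
\[
X_n=A^nX_0+c_n+\sum_{k=0}^{n-1}A^k\xi_{n-1-k},
\]
so that $\PP_x(X_n\in dz)=h_n(z-A^nx-c_n)\,dz$, where $h_n$ is the density of $\sum_{k=0}^{n-1}A^k\xi_{n-1-k}$. The bound~\eqref{eq:hyp-g-pointue} combined with $g\in L^1(\RR^d)$ implies $g\in L^p(\RR^d)$ for every $1\leq p<d/(d-a)$, and an affine change of variable shows that the density of $A^k\xi_{n-1-k}$ lies in the same $L^p$ spaces (with a constant depending on $k$). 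Writing $h_n$ as a convolution, iterated application of Young's convolution inequality raises the integrability exponent at each step until, for some finite $n_*$, we obtain $h_{n_*}\in L^\infty(\RR^d)$. In parallel, the induction $h_{n+1}=\tilde h_n * g$ together with $g>0$ on $\RR^d$ shows that each $h_n$ is strictly positive on $\RR^d$, and a standard argument (convolving an $L^\infty$ factor with an $L^1$ factor yields a uniformly continuous function) shows $h_{n_*}$ is continuous. Hence, for the bounded set $K$ chosen above, $c_K:=\inf_{y,z\in K}h_{n_*}(z-A^{n_*}y-c_{n_*})>0$, and
\[
\PP_x(X_{n_*}\in\cdot\cap K)\leq\|h_{n_*}\|_\infty\,\lambda_d(\cdot\cap K)\leq\frac{\|h_{n_*}\|_\infty}{c_K}\,\PP_y(X_{n_*}\in\cdot)
\]
for all $x\in D$ and $y\in K$, where $\lambda_d$ is Lebesgue measure on $\RR^d$. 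Proposition~\ref{lem:E2thenE} with $n_0=m_0=n_*$ then yields (E1) and (E3), completing the verification of (E) with $\varphi_1=\varphi$ and $\varphi_2$ positive on $D$.

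The main obstacle is the iterated-convolution step: one has to track the integrability exponent under Young's inequality through the affine changes of variable induced by the $A^k$ and argue that finitely many convolutions suffice to pass from $L^p$ ($p<d/(d-a)$) to $L^\infty$. Here $a>0$ is essential, as it provides $p>1$ to start the iteration; everything else in the proof is a routine adaptation of Proposition~\ref{prop:unbounded-perturbation}.
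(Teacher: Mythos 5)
Your proof is correct and follows the same overall architecture as the paper's: the same Lyapunov function $e^{\beta\|x\|}$ for (E2) and (E4), the same reduction to Proposition~\ref{lem:E2thenE} via a multi-step comparison, and the same key idea that after roughly $\lceil d/a\rceil$ iterations the noise density becomes bounded. The one place where you genuinely diverge is the regularization step itself, which is the heart of the proposition. The paper proves boundedness of the iterated density $g_{n_0}$ by an explicit pointwise convolution estimate: it bounds $\int |x-y|^{-(d-a)}|y|^{-(d-a)}\,dy$ by a rescaling argument and shows the singularity exponent improves from $d-a$ to $d-2a$ at each step, terminating once $n_0 a>d$. You instead work in $L^p$: the hypothesis~\eqref{eq:hyp-g-pointue} together with $g\in L^1$ gives $g\in L^p$ for all $p<d/(d-a)$, and iterated Young's inequality decreases $1/r$ by roughly $a/d$ per convolution until $L^\infty$ is reached — the same step count, obtained more softly and without the explicit integral computation (at the price of losing the pointwise description of how the singularity decays). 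Your treatment of the lower bound also differs slightly: the paper reuses the compactness argument of Proposition~\ref{prop:unbounded-perturbation} based on $\inf_{|x|\le R}g>0$, whereas you invoke continuity of $h_{n_*}$ (as a convolution of an $L^\infty$ function with an $L^1$ function) plus strict positivity; both are valid. Two minor points to keep in mind when writing this up: the case $a\ge d$ should be dispatched separately (then $g$ is already bounded and $n_*=1$ suffices), and when $1/r_n+1/q-1<0$ you should lower one exponent (using $g\in L^1\cap L^q$) to land exactly on the conjugate pair before concluding $L^\infty$ membership.
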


The proof of Proposition~\ref{prop:unbounded-perturbation} made use of Proposition~\ref{lem:E2thenE} with $n_0=m_0=1$. The proof of
Proposition~\ref{prop:g-singular} requires to apply Proposition~\ref{lem:E2thenE} with $n_0\geq 2$.

\begin{proof}
  The first step of the proof of Proposition~\ref{prop:unbounded-perturbation} remains valid taking $\varphi(x)=e^{\alpha\|x\|}$ for
  $\alpha>0$ small enough and using~\eqref{eq:hyp-|A|<1} and the equivalence of the norms $|\cdot|$ and $\|\cdot\|$ (the computation
  is similar to the one of Example~\ref{exa:expo-perturbed-SD}). So we only have to prove that~\eqref{eq:condition-E2thenE} is
  satisfy and apply Proposition~\ref{lem:E2thenE}.

  We define $n_0=\lceil d/a\rceil$ and we assume without loss of generality (reducing slightly $a$ if needed) that $n_0a>d$. We
  observe that
  \[
  X_{n_0}=A^{n_0} x+A^{n_0-1}(B+\xi_1)+\cdots+B+\xi_{n_0}.
  \]
  Using~\eqref{eq:hyp-g-pointue} and the fact that $\sup_{x\neq 0}\frac{|Ax|}{|x|}\leq C^2_{\|\cdot\|}$ where the constant
  $C_{\|\cdot\|}$ is such that $C^{-1}_{\|\cdot\|}|\cdot|\leq\|\cdot\|\leq C_{\|\cdot\|}|\cdot|$, the density $g_2$ of $A\xi_1+\xi_2$
  satisfies
  \begin{align}
    g_2(x) & =\frac{1}{|\text{det}A|}\int_{\RR^d} g(x-y)g(A^{-1}y)\mathrm dy \notag \\
    & \leq \frac{C_g^2}{|\text{det}A|}\int_{\{y:|A^{-1}y|\leq 1\}\cap
      B(x,1)}\frac{1}{|x-y|^{d-a}}\frac{1}{|A^{-1}y|^{d-a}}\mathrm dy +C_g\left(1+\frac{1}{|\text{det}A|}\right) \notag \\
    & \leq \frac{C_g^2 C_{\|\cdot\|}^{2(d-a)}}{|\text{det}A|}\int_{B(0,C^2_{\|\cdot\|})}\frac{1}{|x-y|^{d-a}}\frac{1}{|y|^{d-a}}\mathrm dy
    +C_g\left(1+\frac{1}{|\text{det}A|}\right) \notag \\
    &
    =\frac{C_g^2 C_{\|\cdot\|}^{2(d-a)}}{|\text{det}A|}\frac{1}{|x|^{d-2a}}\int_{B(0,C^2_{\|\cdot\|}/|x|)}\frac{1}{\left|\frac{x}{|x|}-u\right|^{d-a}}\frac{1}{|u|^{d-a}}\mathrm du
    +C_g\left(1+\frac{1}{|\text{det}A|}\right), \label{eq:preuve-g-unbounded}
  \end{align}
  where we made the change of variable $u=y/|x|$. 

  If $2a>d$ (i.e.\ if $n_0=2$), we can bound the integral in the right-hand side as follows:
  \begin{align*}
    \int_{B\left(0,\frac{C^2_{\|\cdot\|}}{|x|}\right)}\frac{1}{\left|\frac{x}{|x|}-u\right|^{d-a}}\frac{1}{|u|^{d-a}}\mathrm du & \leq
    C+2^d\int_{B\left(0,\frac{C^2_{\|\cdot\|}}{|x|}\right)\setminus 
      B(0,2)}\frac{1}{|u|^{2d-2a}}\mathrm du \\ & \leq C+\frac{C}{2a-d}\frac{1}{|x|^{2a-d}},
  \end{align*}
  where the constant $C$ may change from line to line. Therefore, $g_2$ is bounded if $2a>d$.

  Otherwise, if $2a<d$, the integral in the right-hand side of~\eqref{eq:preuve-g-unbounded} can be boun\-ded by the same integral over
  $\RR^d$ and thus it is uniformly bounded with respect to $x$, so $g_2$ is bounded by $C(1\vee 1/|x|^{d-2a})$. In this case, we can proceed similarly to
  bound the density $g_3$ of $A^2\xi_1+A\xi_2+\xi_3$, and prove by induction that the density $g_{n_0}$ of
  $A^{n_0-1}\xi_1+\cdots+\xi_{n_0}$ is bounded.

  We deduce that
  \[
  \P_x(X_{n_0}\in \cdot\cap K)\leq \sup_{u\in\RR^d} g_{n_0}(u)\,\lambda_d(\cdot\cap K).
  \]
  The end of the proof is the same as for Proposition~\ref{prop:unbounded-perturbation}, using Proposition~\ref{lem:E2thenE} with
  $m_0=n_0$.
\end{proof}

\subsubsection{Two examples with bounded perturbation}
\label{sec:bounded-perturbation}

The case where $\xi_1$ is a bounded random variable is more involved. To avoid complications, we will focus on the case where $\xi_n$
is a uniform random variable on the unit ball $B(0,1)$ of $\RR^d$. Extensions to different distributions are possible.

We start with the simpler case of bounded domain $D$ and contracting dynamical system $f$.

\begin{prop}
  \label{prop:xi-bounded-D-bounded}
  Assume that $D$ is a bounded, connected open set of $\RR^d$, that $f$ is continuous and satisfies $|f(x)-x|<1$ for all $x\in D$.
  Then Condition~(E) is satisfied.
\end{prop}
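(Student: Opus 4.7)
The plan is to verify Condition~(E) via Proposition~\ref{lem:E2thenE}: it suffices to establish (E2), (E4) and the comparison~\eqref{eq:condition-E2thenE}. I first choose a small compact set $K$: pick $x_*\in D$ and $r_*>0$ so small that $\bar B(x_*,2r_*)\subset D$ and $M:=\sup_{\bar B(x_*,2r_*)}|f-\mathrm{id}|<1-2r_*$; this is possible by continuity of $f$ at $x_*$ and the assumption $|f(x_*)-x_*|<1$. Set $K=\bar B(x_*,r_*)$. For every $y\in K$, the triangle inequality gives $|f(y)-x_*|\leq M+r_*$, so $K\subset B(f(y),1)$ (since $M+2r_*<1$), and consequently $\P_y(X_1\in A\cap K)\geq c_d\lambda_d(A\cap K)$ for every measurable $A$, with $c_d=1/\lambda_d(B(0,1))$. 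This immediately yields (E4) (since $\P_y(X_1\in K)\geq c_K:=c_d\lambda_d(K)>0$ for $y\in K$) and the comparison of Proposition~\ref{lem:E2thenE} with $n_0=m_0=1$ and $C=1$, because for every $x\in D$ and $y\in K$, $\P_x(X_1\in A\cap K)\leq c_d\lambda_d(A\cap K)\leq \P_y(X_1\in A)$.

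For (E2), the function $\varphi_2$ is built via Lemma~\ref{lem:varphi2}: iterating the Dobrushin bound gives $\inf_{y\in K}\P_y(X_n\in K)\geq c_K^n$, so the hypothesis of that lemma holds for any $\theta_2\in(0,c_K)$. For $\varphi_1$, I would use Lemma~\ref{lem:varphi1}, which rests on a uniform exponential moment for $T_K\wedge\tau_\d$. The key intermediate estimate is the uniform lower bound
\[
\P_x(T_K\wedge\tau_\d\leq N)\geq c,\qquad\forall x\in D,
\]
for some $N\in\NN$, $c>0$ (call it $(\star)$). The proof of $(\star)$ splits into two parts. First, setting $D_{\eta_0}:=\{x\in D:d(x,D^c)\geq \eta_0\}$, inner regularity gives $\lambda_d(D\setminus D_{\eta_0})\to 0$ as $\eta_0\to 0^+$, so choosing $\eta_0$ with $\lambda_d(D\setminus D_{\eta_0})\leq 1/(2c_d)$ yields $\P_x(X_1\in D\setminus D_{\eta_0})\leq 1/2$, hence $\P_x(X_1\in D_{\eta_0}\cup\{\d\})\geq 1/2$ for every $x\in D$. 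Second, for $y\in D_{\eta_0}$ (compact) we have $\sup_{D_{\eta_0}}|f-\mathrm{id}|<1$, and a finite cover of $D_{\eta_0}$ combined with a chaining argument along discrete paths in $D$ to $x_*$ (each step has the next way-point in $B(f(X_n),1)$ and is attained with probability bounded below by $c_d$ times the volume of a small ball) produces uniform constants $N_1,c_1>0$ with $\P_y(X_{N_1}\in K)\geq c_1$ for every $y\in D_{\eta_0}$. Combining via the Markov property gives $(\star)$, and iterating yields $\P_x(T_K\wedge\tau_\d>kN)\leq(1-c)^k$, so $\sup_{x\in D}\E_x[\theta_1^{-T_K\wedge\tau_\d}]<\infty$ for any $\theta_1>(1-c)^{1/N}$; Lemma~\ref{lem:varphi1} then produces a uniformly bounded $\varphi_1\geq 1$ satisfying the drift inequality.

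The main obstacle is ensuring the compatibility $\theta_1<\theta_2$: the crude threshold $(1-c)^{1/N}$ from $(\star)$ may exceed the bound $c_K$ on $\theta_2$ from Lemma~\ref{lem:varphi2}, especially when $D$ is elongated or $c_K$ is small. The resolution is spectral: on the bounded set $D$, the kernel operator $P_1$ is Hilbert--Schmidt on $L^2(D)$ (its density $c_d\11_{B(f(x),1)\cap D}$ is bounded and $D$ has finite Lebesgue measure), so Perron--Frobenius/Krein--Rutman theory for compact positive operators provides a principal eigenvalue $\theta_0$ with positive eigenfunction, and, since $K$ carries positive mass under the principal left eigenmeasure, the strict inequality $\rho<\theta_0$ for the eigenvalue $\rho$ of the operator additionally killed on $K$. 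These refinements upgrade the lower bound on $\inf_K\P_x(X_n\in K)$ from $c_K^n$ to $\sim c'\theta_0^n$ (so Lemma~\ref{lem:varphi2} applies for any $\theta_2<\theta_0$) and show $\E_x[\theta_1^{-T_K\wedge\tau_\d}]<\infty$ for any $\theta_1>\rho$. Choosing $\theta_1\in(\rho,\theta_0)$ and $\theta_2\in(\theta_1,\theta_0)$ gives all the compatible constants, and Proposition~\ref{lem:E2thenE} yields Condition~(E).
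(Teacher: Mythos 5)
Your setup---a small ball $K$ around a well-chosen $x_*$, the one-step minorization $\P_y(X_1\in\cdot\cap K)\geq c_d\lambda_d(\cdot\cap K)$ for $y\in K$, the domination $\P_x(X_1\in\cdot\cap K)\leq c_d\lambda_d(\cdot\cap K)$ for all $x\in D$ giving~\eqref{eq:condition-E2thenE} with $n_0=m_0=1$, and the two-stage return estimate $(\star)$ (first land in $D_{\eta_0}\cup\{\d\}$ with probability $\geq 1/2$, then chain to $K$)---is sound and close in spirit to Steps 1--2 of the paper's proof. You also correctly diagnose the crux: with these crude constants nothing forces $\theta_1<\theta_2$.

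The gap is in your resolution of that crux. The spectral patch is only asserted: the two claims it rests on---that $\inf_{x\in K}\P_x(X_n\in K)\geq c'\theta_0^n$, and that $\E_x[\theta_1^{-T_K\wedge\tau_\d}]<\infty$ uniformly for every $\theta_1>\rho$ with $\rho<\theta_0$ \emph{strictly}---are precisely the hard content. To extract them from compactness of $P_1$ on $L^2(D)$ you would need irreducibility and aperiodicity of the kernel (to get a simple dominant eigenvalue and a spectral gap for the compact operator), strict positivity and boundedness of both the right eigenfunction (in particular $\inf_K h>0$) and the density of the left eigenmeasure (needed both for the lower bound on $\P_x(X_n\in K)$ and for the strict inequality $\rho<\theta_0$), and an $L^1\to L^\infty$ smoothing step to convert $L^2$ spectral estimates into the pointwise, uniform bounds that Lemmas~\ref{lem:varphi1} and~\ref{lem:varphi2} require. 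None of this is supplied, and as written the argument essentially presupposes the quasi-stationary estimates that Condition~(E) is designed to deliver. The paper sidesteps the whole issue with an elementary trick you could adopt: fix $\theta_2$ from the minorization on a fixed compact set $K_{\varepsilon_0}$, then \emph{enlarge} $K$ to a compact $K_{\varepsilon_1}\supset K_{\varepsilon_0}$ such that $\sup_{x\in D}\P_x(X_1\in D\setminus K_{\varepsilon_1})\leq\theta_2/4$ (possible because the one-step law has density bounded by $c_d$ and $\lambda_d(D\setminus K_{\varepsilon_1})\to 0$); the explicit two-valued function $\varphi_1=\11_{K}+\frac{4}{\theta_2}\11_{D\setminus K}$ then satisfies $P_1\varphi_1\leq 2=\frac{\theta_2}{2}\varphi_1$ off $K$, so that $\theta_1=\theta_2/2<\theta_2$ holds automatically. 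The moral is that $K$ should be chosen large (so that leaving it in one step is unlikely) \emph{after} $\theta_2$ is fixed, rather than as a small ball fixed at the outset.
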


\begin{proof}
  Again, the proof makes use of the criterion of Proposition~\ref{lem:E2thenE}.
  \medskip

  \noindent\emph{Step 1. Construction and properties of the sets $K_\varepsilon$, $\varepsilon> 0$.}

  For all $\varepsilon>0$, let $K'_\varepsilon$ be the connected component of $\{x\in D:d(x,\partial D)\geq 2\varepsilon\}$ with
  larger Lebesgue measure and let
  \[
  K_\varepsilon:=\bigcup_{x\in K'_\varepsilon}\overline{B(x,\varepsilon)},
  \]
  which is a also a connected compact subset of $D$ with distance to $D^c$ larger than $\varepsilon$. For all $\delta>0$ and all
  $x,y\in K_\varepsilon$, we call a sequence $(x_0,x_1,\ldots,x_n)\in K_\varepsilon^{n+1}$ for some $n\in\NN$ a $\delta$-path linking
  $x$ to $y$ in $K_\varepsilon$ if $x_0=x$, $x_n=y$ and $|x_k-x_{k-1}|<\delta$ for all $1\leq k\leq n$. By construction, the set
  $K_\varepsilon$ satisfies that, for all $\delta>0$ and all $x,y\in K_\varepsilon$, there exists a $\delta$-path linking $x$ to $y$
  in $K_\varepsilon$. In addition, since $K_\varepsilon$ is compact, there exists an integer $n_{\varepsilon,\delta}$ depending only
  on $\varepsilon$ and $\delta$ such that, for all $x,y\in K_\varepsilon$, there exists a $\delta$-path in $K_\varepsilon$ linking
  $x$ to $y$ with length less than $n_{\varepsilon,\delta}$. For all $x\in K_\varepsilon$ and all
  $k\in\{1,\ldots,n_{\varepsilon,\delta}\}$ let us define
  \begin{multline*}
    K^{(k)}_{\varepsilon,\delta}(x)=\left\{y\in\RR^d:
      \exists x_1,\ldots,x_{k-1}\in K_\varepsilon,\ |x_\ell-x_{\ell-1}|<\delta\text{ for all } 1\leq\ell\leq k
    \right. \\
    \text{ with }x_0=x\text{ and }x_k=y \biggr\}.
  \end{multline*}
  Note that in general, $K^{(k)}_{\varepsilon,\delta}$ is not included in $K_\varepsilon$, but it is included in $D$ if
  $\delta<\varepsilon$. It follows from above that $K^{(n_{\varepsilon,\delta})}_{\varepsilon,\delta}(x)\supset K_\varepsilon$ for
  all $x\in K_\varepsilon$.

  Let us also prove that $\cup_{\varepsilon>0} K_\varepsilon= D$. Let $(x_n)_{n\geq 1}$ be a dense sequence in $D$ and for all $n\geq
  1$, let $r_n=d(x_n,\partial D)/2$. Since $D=\cup_{n\geq 1} B(x_n,r_n)$, there exists $n_0\geq 1$ such that $\cup_{1\leq n\leq n_0}
  B(x_n,r_n)$ has Lebesgue measure larger than $\lambda_d(D)/2$. Since $D$ is connected, there exists a continuous path in $D$
  linking $x_i$ to $x_j$ for all $1\leq i,j\leq n_0$. Since the distance between this path and $\partial D$ is positive,
  there exists $\varepsilon>0$ small enough such that all the points $x_1,\ldots,x_{n_0}$ belong to
  the same connected component of $\{x\in D:d(x,\partial D)\geq 2\varepsilon\}$. We can assume without loss of generality that
  $\varepsilon<r_n/2$ for all $1\leq n\leq n_0$, so that this connected component actually contains $\cup_{1\leq n\leq n_0}
  B(x_n,r_n)$ and hence has the largest Lebesgue measure among all the connected components of $\{x\in D:d(x,\partial D)\geq
  2\varepsilon\}$. In particular, $K_\varepsilon$ contains $B(x_1,r_1)$ for all $\varepsilon$ small enough. Now, given any $x\in D$,
  there exists a path linking $x$ to $x_1$ in $D$. Since the distance between this path and $\partial D$ is positive, $x$ belongs to
  $K_\varepsilon$ for all $\varepsilon>0$ small enough. Hence, we have proved that $\cup_{\varepsilon>0} K_\varepsilon= D$ and that
  the family $(K_\varepsilon)_{\varepsilon>0}$ is non-increasing with respect to $\varepsilon>0$ when $\varepsilon$ is small enough.

  \medskip
  \noindent\emph{Step 2. Proof of Condition~\eqref{eq:condition-E2thenE} of Proposition~\ref{lem:E2thenE}.}

  For all $\varepsilon>0$, since $f$ is continuous,
  \[
  \delta_\varepsilon:=\left(1-\sup_{x\in K_\varepsilon}|f(x)-x|\right)\wedge \varepsilon>0.
  \]
  Hence, for all $x\in K_\varepsilon$,
  \begin{align}
    \label{eq:proof-step2-one-sided}
    \PP_x(X_1\in \cdot\cap B(x,\delta_\varepsilon))\geq c_d\lambda_d(\cdot\cap B(x,\delta_\varepsilon)),
  \end{align}
  for a positive constant $c_d$ only depending on the dimension of the space. In other words, for all $x\in K_\varepsilon$,
  \[
  \PP_x(X_1\in\cdot)\geq c_d\PP(x+U\in \cdot)
  \]
  where $U$ is a uniform random variable on $B(0,\delta_\varepsilon)$. Hence, defining the Markov chain $Y_n=Y_0+U_1+\ldots+U_n$
  where $U_i$ are i.i.d.\ uniform random variable on $B(0,\delta_\varepsilon)$, we deduce that
  \begin{equation}
    \label{eq:key-bounded-perturbation}
    \PP_x(X_k\in\cdot) \geq c_d^k\PP_x(Y_1,\ldots,Y_{k-1}\in K_\varepsilon\text{ and }Y_k\in \cdot),\quad\forall x\in K_\varepsilon,\ \forall k\in\NN.
  \end{equation}
  In view of Step 1, the following Lemma~\ref{lem:key-lemma-bounded-perturb} about the process $Y$ implies that there exists a
  constant $c'>0$ such that
  \begin{equation}
    \label{eq:step-2-g-unbounded}
    \PP_x(X_{n_{\varepsilon,\delta_\varepsilon/3}}\in\cdot)\geq c'\lambda_d(\cdot\cap K_\varepsilon),\quad\forall x\in K_\varepsilon.
  \end{equation}
  Since the law of $X_1$ is dominated by the Lebesgue measure independently of $X_0$, we have proved that, for all
  $\varepsilon>0$,~\eqref{eq:condition-E2thenE} is satisfied for $K=K_\varepsilon$, $n_0=1$ and
  $m_0=n_{\varepsilon,\delta_\varepsilon/3}$. This concludes Step~2 of the proof.

  \begin{lem}
    \label{lem:key-lemma-bounded-perturb}
    For all $1\leq k\leq n_{\varepsilon,\delta_\varepsilon/3}$, there exists a constant $c'_k>0$ such that, for all $x\in K_\varepsilon$,
    \begin{equation}
      \label{eq:key-lemma-bounded-perturb}
      \PP_x(Y_1,\ldots,Y_{k-1}\in K_\varepsilon\text{ and }Y_k\in \cdot)\geq c'_k\lambda_d\left(\cdot\cap K^{(k)}_{\varepsilon,\delta_\varepsilon/3}(x)\right),
    \end{equation}
    where $\lambda_d$ is Lebesgue's measure on $\RR^d$.
  \end{lem}
  \medskip

  \noindent\emph{Step 3. Proof of (E2) and (E4).}

  Fix $\varepsilon_0 > 0$ such that $K_{\varepsilon_0}$ is non-empty and $(K_\varepsilon)_{\varepsilon\in(0,\varepsilon_0]}$ is
  non-increasing. It follows from the definition of $K_{\varepsilon}$ that 
  $\inf_{x\in K_{\varepsilon_0}}\lambda_d(K_{\varepsilon_0}\cap B(x,\delta_{\varepsilon_0})) > 0$.
  Fixing
  \[
  \theta_2 < 4\wedge\left\{c_d \inf_{x\in K_{\varepsilon_0}}\lambda_d(K_{\varepsilon_0}\cap
    B(x,\delta_{\varepsilon_0}))\right\},
  \]
  we deduce from~\eqref{eq:proof-step2-one-sided} that
  \begin{align}
    \label{eq:pf-g-unbounded-E4}
     \lim_{n\rightarrow+\infty} \theta_2^{-n}\inf_{x\in K_{\varepsilon_0}}\PP_x(X_n\in K_{\varepsilon_0})=+\infty.
  \end{align}
  Since the law of $X_1$ is dominated by the Lebesgue measure and $D=\cup_{0<\varepsilon\leq\varepsilon_0} K_\varepsilon$, there
  exists $\varepsilon_1\in(0,\varepsilon_0]$ small enough such that
  \[
  \sup_{x\in D}\P_x(X_1\in D\setminus K_{\varepsilon_1})\leq \theta_2/4.
  \]
  Hence, the function
  \[
  \varphi_1:x\in D\mapsto
  \begin{cases}
    1&\text{if }x\in K_\varepsilon,\\
    4/\theta_2&\text{if }x\in D\setminus K_{\varepsilon_1},
  \end{cases}
  \]
  satisfies $P_1\varphi_1(x)\leq 2\leq (\theta_2/2)\varphi_1(x)$ for all $x\in D\setminus K_{\varepsilon_1}$. Hence the first and third
  lines of Condition~(E2) are satisfied with $\theta_1=\theta_2/2$ and $K=K_{\varepsilon_1}$. 

  We also deduce from~\eqref{eq:step-2-g-unbounded},~\eqref{eq:pf-g-unbounded-E4}, the fact that $K_{\varepsilon_0}\subset
  K_{\varepsilon_1}$ and Markov's property that
  \[
  \lim_{n\rightarrow+\infty} \theta_2^{-n}\inf_{x\in K_{\varepsilon_1}}\PP_x(X_n\in K_{\varepsilon_1})=+\infty.
  \]
  Hence, it follows from Lemma~\ref{lem:varphi2} that (E4) is satisfied with $K=K_{\varepsilon_1}$ and that there exists a function
  $\varphi_2$ satisfying the conditions of (E2) with $\theta_2$ defined above and $K=K_{\varepsilon_1}$.

  Therefore, the result follows from Step~2 and Proposition~\ref{lem:E2thenE} with $K=K_{\varepsilon_1}$, $n_0=1$ and
  $m_0=n_{\varepsilon_1,\delta_{\varepsilon_1}/3}$.
\end{proof}

\begin{proof}[Proof of Lemma~\ref{lem:key-lemma-bounded-perturb}]
  We prove this result by induction over $k$. Since $Y_1=x+U_1$ is uniform in $B(x,\delta_\varepsilon)$, the case $k=1$ is clear since
  $K^{(1)}_{\varepsilon,\delta_\varepsilon/3}=B(x,\delta_\varepsilon/3)\subset B(x,\delta_\varepsilon)$.

  So assume that~\eqref{eq:key-lemma-bounded-perturb} is satisfied for some $1\leq k\leq n_{\varepsilon,\delta_\varepsilon/3}-1$ and let us prove
  it for $k+1$. Let $A\subset \RR^d$ be measurable. Using~\eqref{eq:key-lemma-bounded-perturb} for $k$ and the fact that $Y_{k+1}$ is
  uniform in $B(Y_k,\delta_\varepsilon)$ conditionally on $Y_k$, we have
  \begin{multline*}
    \PP_x(Y_1,\ldots,Y_{k}\in K_\varepsilon,\ Y_{k+1}\in A) \\
    \begin{aligned}
      & \geq \PP_x\left(Y_1,\ldots,Y_{k-1}\in K_\varepsilon,\ Y_k\in K^{(k)}_{\varepsilon,\delta_\varepsilon/3}(x)\cap
        K_\varepsilon,
      \ Y_{k+1}\in A\cap B(Y_k,\delta_\varepsilon)\right) \\
      & \geq \frac{c'_k}{\lambda_d(B(0,\delta_\varepsilon))}\int_{
          K^{(k)}_{\varepsilon,\delta_\varepsilon/3}(x)\cap K_\varepsilon}\mathrm dy\int_{A\cap
        B(y,\delta_\varepsilon)}\mathrm dz \\
      & =\frac{c'_k}{\lambda_d(B(0,\delta_\varepsilon))}\int_A \lambda_d\left\{
        K^{(k)}_{\varepsilon,\delta_\varepsilon/3}(x)\cap K_\varepsilon\cap
        B(z,\delta_\varepsilon)\right\}\,\mathrm dz \\
      & \geq\frac{c'_k}{\lambda_d(B(0,\delta_\varepsilon))}\int_{A\cap K^{(k+1)}_{\varepsilon,\delta_\varepsilon/3}(x)}\lambda_d\left\{
        K^{(k)}_{\varepsilon,\delta_\varepsilon/3}(x)\cap K_\varepsilon\cap
        B(z,\delta_\varepsilon)\right\}\,\mathrm dz,
    \end{aligned}
  \end{multline*}
  where the third equality follows from Fubini's theorem. 

  Now, for all $z\in K^{(k+1)}_{\varepsilon,\delta_\varepsilon/3}(x)$, there exists a path $x_0=x,x_1,\ldots, x_k\in K_\varepsilon$
  such that $|x_\ell-x_{\ell-1}|<\delta_\varepsilon/3$ for all $1\leq \ell\leq k$ and $|x_k-z|<\delta_\varepsilon/3$. By definition
  of $K_\varepsilon$, there exists $y\in K_\varepsilon$ such that $x_{k-1}\in B(y,\varepsilon)\subset K_\varepsilon$. Let $y'$ be the
  unique point such that $|y'-x_{k-1}|=\delta_\varepsilon/6$ of the half-line with initial point $x_{k-1}$ and containing $y$. Then
  $B(y',\delta_\varepsilon/6)\subset K_\varepsilon$. Since $|x_k-z|<\delta_\varepsilon/3$ and $|x_{k-1}-x_k|<\delta_\varepsilon/3$,
  we also have $B(y',\delta_\varepsilon/6)\subset B(z,\delta_\varepsilon)$. In addition, for all $y''\in B(y',\delta_\varepsilon/6)$,
  the path $x_0=x,x_1,\ldots,x_{k-1},y''$ lies in $K_\varepsilon$ and has distance between consecutive point smaller than
  $\delta_\varepsilon/3$. Therefore, $B(y',\delta_\varepsilon/6)\subset K^{(k)}_{\varepsilon,\delta_\varepsilon/3}(x)$. We conclude
  that, for all $z\in K^{(k+1)}_{\varepsilon,\delta_\varepsilon/3}(x)$, 
  \[
  \lambda_d\left\{
    K^{(k)}_{\varepsilon,\delta_\varepsilon/3}(x)\cap K_\varepsilon\cap B(z,\delta_\varepsilon)\right\}\geq
  \lambda_d(B(0,\delta_\varepsilon/6)).
  \]
  Hence
  \[
  \PP_x(Y_1,\ldots,Y_{k}\in K_\varepsilon,\ Y_{k+1}\in A)\geq c'_{k+1} \lambda_d\left(A\cap
    K^{(k+1)}_{\varepsilon,\delta_\varepsilon/3}(x)\right)
  \]
  for a positive constant $c'_{k+1}$.
\end{proof}

The general case of dynamical systems with bounded perturbations raises several additional difficulties. We illustrate two of them
with the next example in dimension 1. We consider the Markov process in $D=(0,+\infty)$ defined as
\[
X_0\in (0,+\infty),\quad X_{n+1}=\alpha X_n-\frac{1}{1+X_n}+\xi_n,\quad\forall n\geq 0
\]
where $\alpha\in(0,1)$ and $\xi_n$ are i.i.d.\ with uniform distribution on $[-1,1]$ and the process is immediately sent to the cemetery
point $\d$ when it leaves $D$. The first difficulty comes from the fact that
\[
\PP_x(X_1>0)=\left[1-\left(\frac{1}{1+x}-\alpha x\right)\right]\vee 0\xrightarrow[x\rightarrow 0+]{} 0,
\]
which means that the probability of immediate absorption converges to 1 when $x$ approaches the boundary of $D$. The second
difficulty comes from the fact that $|f(x)-x|$ is unbounded on $D$ (in contrast with Proposition~\ref{prop:xi-bounded-D-bounded}).
This example is covered by the following general result.

\begin{prop}
  \label{prop:ex2-bounded-perturbation}
  Assume that $X_{n+1}=f(X_n)+\xi_n$ with $D=(0,+\infty)$, $\xi_n$ i.i.d.\ uniform on $[-1,1]$, $f$ continuous and there exists
  $x^*\in D$ such that 
  \[
  (0,x^*)=\left\{x\in D:|f(x)-x|<1\right\}\quad\text{and}\quad[x^*,+\infty)=\left\{x\in D:f(x)+1\leq x\right\}.
  \]
  Then Condition~(E) is satisfied.
\end{prop}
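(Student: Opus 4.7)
The plan is to verify Condition~(E) via Proposition~\ref{lem:E2thenE}, following the template of the proof of Proposition~\ref{prop:xi-bounded-D-bounded}. The small set will be a compact interval $K=K_\varepsilon\subset(0,x^*)$ bounded away from both $0$ and $x^*$, chosen so that the $\delta$-path mixing argument can be applied verbatim inside the open connected set $(0,x^*)$. The main novelty compared to Proposition~\ref{prop:xi-bounded-D-bounded} is that the bounded-jump property $|f(x)-x|<1$ only holds on $(0,x^*)$, and on the complementary half-line $[x^*,\infty)$ we must exploit the decrease condition $f(x)+1\leq x$ instead.

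\textbf{Mixing on $K_\varepsilon$.} Since $f$ is continuous and $|f(x)-x|<1$ strictly for all $x\in(0,x^*)$, the construction of Step~1 in the proof of Proposition~\ref{prop:xi-bounded-D-bounded} can be applied with $(0,x^*)$ in place of $D$. This yields, for every sufficiently small $\varepsilon>0$, a compact path-connected set $K_\varepsilon\subset(0,x^*)$ and a constant $\delta_\varepsilon>0$ such that $\PP_x(X_1\in\cdot)\geq \tfrac12 \text{Leb}(\cdot\cap B(x,\delta_\varepsilon))$ for every $x\in K_\varepsilon$. Then a verbatim repetition of Lemma~\ref{lem:key-lemma-bounded-perturb} produces an integer $n_\varepsilon$ and a constant $c>0$ such that
\[
  \PP_x(X_{n_\varepsilon}\in A)\geq c\,\text{Leb}(A\cap K_\varepsilon),\quad \forall x\in K_\varepsilon,\ \forall A\text{ measurable}.
\]
This simultaneously gives the mixing needed for $\varphi_2$ via Lemma~\ref{lem:varphi2} (hence also~(E4)) and the comparison hypothesis of Proposition~\ref{lem:E2thenE}: since the density of $X_1$ with respect to Lebesgue measure is bounded by $1/2$, one has $\PP_x(X_1\in A\cap K_\varepsilon)\leq\tfrac12 \text{Leb}(A\cap K_\varepsilon)\leq\tfrac{1}{2c}\PP_y(X_{n_\varepsilon}\in A)$ for every $x\in D$ and $y\in K_\varepsilon$, which is exactly~\eqref{eq:condition-E2thenE} with $n_0=1$ and $m_0=n_\varepsilon$.

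\textbf{Lyapunov function.} To construct $\varphi_1$ I apply Lemma~\ref{lem:varphi1} with $K=K_\varepsilon$, which reduces the problem to verifying finiteness of $\E_x[\theta_1^{-T_{K_\varepsilon}\wedge\tau_\d}]$ for all $x\in D$ and some $\theta_1<1$, together with the integrability of the one-step quantity. For $x$ in the right-tail region $[x^*,\infty)$, I exploit the drift $X_{n+1}\leq X_n-1+\xi_n$ through the test function $\psi(x)=e^{\alpha x}$ with $\alpha>0$ small: using $f(x)\leq x-1$ one gets $\E_x[\psi(X_1)\11_{X_1>0}]\leq \tfrac{\sinh\alpha}{\alpha}e^{-\alpha}\psi(x)$, and the prefactor is $<1$ for $\alpha$ small, yielding by standard Foster-Lyapunov arguments the exponential integrability of the return time to a bounded neighborhood of $K_\varepsilon$. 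For $x$ in the bounded region $(0,x^*)\setminus K_\varepsilon$, the lower bounds from the mixing step together with continuity of $f$ give that from every such $x$ the process reaches $K_\varepsilon$ in a bounded number of steps with a probability bounded away from $0$, whence geometric tail bounds on $\PP_x(T_{K_\varepsilon}\wedge\tau_\d>n)$. Patching these estimates together produces a $\theta_1\in(0,1)$ for which Lemma~\ref{lem:varphi1} applies.

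\textbf{Main obstacle.} The delicate point is the interface between the two regimes: once the process initiated at $x\geq x^*$ descends through $x^*$, it lands somewhere in $(-\infty,x^*)$ whose precise location is hard to control (because the jump $X_n-X_{n+1}$ can be as large as $X_n-f(X_n)+1$, which is not bounded uniformly in $X_n$). One must therefore argue that after entering $(0,x^*)$, the process enters $K_\varepsilon$ in a controlled $\theta$-weighted number of steps — this uses the uniform mixing from Step~1 applied to the (random) entry point in $(0,x^*)$, together with an a priori bound on $\PP_x(X_1\leq 0)$ along the descent path to handle direct absorption. Once this is established, Lemmas~\ref{lem:varphi1}--\ref{lem:varphi2} deliver $\varphi_1$ and $\varphi_2$, Condition~(E2) and~(E4) hold, and the comparison estimate of the first paragraph lets Proposition~\ref{lem:E2thenE} conclude that Condition~(E) is satisfied.
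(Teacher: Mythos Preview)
Your overall architecture---mixing on a compact interval inside $(0,x^*)$, comparison via the bounded density $\tfrac12$, and Proposition~\ref{lem:E2thenE}---matches the paper. The divergence is in the construction of $\varphi_1$, and here the paper's route is both simpler and avoids precisely the ``interface obstacle'' you flag.

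Rather than going through Lemma~\ref{lem:varphi1} and hitting-time moments, the paper writes $\varphi_1$ down explicitly as a three-piece function: $\varphi_1=1$ on $K$, $\varphi_1=M$ (a large constant) on $(0,x^*)\setminus K$, and $\varphi_1(x)=e^{x/\theta_1}$ on $[x^*,\infty)$. The order of choices is important and opposite to yours: first fix a small interval $K_0\subset(0,x^*)$ and extract $\theta_2$ from the mixing; then fix any $\theta_1<\theta_2$; and only \emph{then} choose $K\supset K_0$ so large that $\lambda_1((0,x^*)\setminus K)\leq \theta_1/M$. The point is that the density of $X_1$ is bounded by $\tfrac12$ regardless of $X_0$, so $\PP_x(X_1\in(0,x^*)\setminus K)\leq\tfrac12\lambda_1((0,x^*)\setminus K)\leq\theta_1/(2M)$ \emph{uniformly in $x\in D$}. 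This single inequality kills your interface problem: when the process drops from $[x^*,\infty)$ into $(0,x^*)$, it does not matter where it lands, because $\varphi_1\leq M$ there and the contribution $M\cdot\PP_x(X_1\in(0,x^*)\setminus K)$ is already $\leq\theta_1/2$. The verification of $P_1\varphi_1\leq\theta_1\varphi_1+c_2\11_K$ is then a three-line computation in each region.

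Your route through $\E_x[\theta_1^{-T_{K_\varepsilon}\wedge\tau_\d}]$ is not wrong, but it is circuitous: to patch the exponential-moment estimate on $[x^*,\infty)$ with a return estimate on $(0,x^*)\setminus K_\varepsilon$ you would ultimately need the same bounded-density observation (to bound $\PP_x(X_1\in(0,x^*)\setminus K_\varepsilon)$ uniformly), and you would still have to arrange $\theta_1<\theta_2$, which your sketch does not address. The paper's explicit $\varphi_1$ makes this ordering transparent and turns the ``main obstacle'' into a non-issue.
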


\begin{proof}
  Fix $K_0\subset (0,x^*)$ a closed interval with non-empty interior. As in the proof of
  Proposition~\ref{prop:xi-bounded-D-bounded}, using in
  particular~(\ref{eq:key-bounded-perturbation})
  and~(\ref{eq:key-lemma-bounded-perturb}), there exists $n_0\geq 1$ and $c_0>0$ 
  such that, for all $x\in K_0$,
  \[
    \PP_x(X_{n_0}\in \cdot)\geq c_0\lambda_1(\cdot\cap K_0).      
  \]
  Hence there exists a constant $\theta_2\in(0,1)$ such that
  \begin{align}
    \label{eq:proof-prop-eq1}
    \theta_2^{-n}\inf_{x\in K_0}\P_x(X_n\in K_0)\xrightarrow[n\rightarrow+\infty]{} +\infty.
  \end{align}
  
  Fix now $\theta_1<\theta_2$ and $K\subset (0,x^*)$ a closed interval such that $K_0\subset K$ and
  \[
  \lambda_1\left\{(0,x^*)\setminus K\right\}\leq \frac{\theta_1}{M},
  \]
  where
  \[
  M:=\frac{2(1+e^{(x^*+2)/\theta_1})}{\theta_1}.
  \]
  As above, there exists $n_1\geq 1$ and $c_1>0$ 
  such that, for all $x\in K$,
  \[
    \PP_x(X_{n_1}\in \cdot)\geq c_1\lambda_1(\cdot\cap K).      
  \]
  In particular, $\inf_{x\in K}\PP_x(X_{n_1}\in K_0)>0$, so that, using Markov property and~\eqref{eq:proof-prop-eq1}, we deduce that
  \[
  \theta_2^{-n}\inf_{x\in K}\P_x(X_n\in K)\xrightarrow[n\rightarrow+\infty]{} +\infty.
  \]
  Using Lemma~\ref{lem:varphi2}, we deduce that there exists a function $\varphi_2$ satisfying the conditions of~(E2)
  and that~(E4) is satisfied. For all $x\in D$, let
  \[
  \varphi_1(x)=
  \begin{cases}
    1 & \text{if }x\in K, \\
    M & \text{if }x\in (0,x^*)\setminus K, \\
    e^{x/\theta_1} & \text{if }x\geq x^*.
  \end{cases}
  \]
  For $x\geq x^*$, using the fact that the density of $X_1$ on $D$ with respect to Lebesgue measure is bounded by $\frac{1}{2}\11_D$
  for all value of $X_0$, we have
  \begin{align*}
    P_1\varphi_1(x) & \leq \EE_x(e^{X_1/\theta_1}\11_{X_1\geq x^*})+\PP_x(X_1\in K)+M\PP_x(X_1\in (0,x^*)\setminus K) \\
    & \leq \EE_x(e^{X_1/\theta_1})+\frac{M}{2}\lambda_1\left\{(0,x^*)\setminus K\right\} \\
    & \leq \varphi_1(x) e^{(f(x)-x)/\theta_1}\EE_x e^{\xi_1/\theta_1}+\frac{\theta_1}{2} \\
    & \leq \varphi_1(x) e^{-\theta_1^{-1}}\frac{e^{\theta_1^{-1}}-e^{-\theta_1^{-1}}}{2\theta_1^{-1}}+\frac{\theta_1}{2}\varphi_1(x)
    \leq \theta_1\varphi_1(x). 
  \end{align*}
  For $x\in (0,x^*)\setminus K$, since $f(x)+\xi_1\leq x+2\leq x^*+2$,
  \begin{align*}
    P_1\varphi_1(x) & \leq \PP_x(X_1\in K)+e^{(x^*+2)/\theta_1}\PP_x(X_1\geq x^*)+M\PP_x(X_1\in (0,x^*)\setminus K) \\
    & \leq 1+e^{(x^*+2)/\theta_1}+\frac{M}{2}\lambda_1\left\{(0,x^*)\setminus K\right\} \\
    & \leq M\left(\frac{\theta_1}{2}+\frac{\theta_1}{2M}\right)  \leq \theta_1\varphi_1(x).
  \end{align*}
  Since $P_1\varphi_1(x)$ is clearly bounded for $x\leq x^*$, we have proved~(E2).

  To conclude, it remains to observe that~\eqref{eq:condition-E2thenE} can be deduced for
  $n_0=1$ and $m_0$ large enough exactly as in the proof of Proposition~\ref{prop:xi-bounded-D-bounded}. Hence the result follows from
  Proposition~\ref{lem:E2thenE}.
\end{proof}

\section{Irreducible processes in discrete state space and discrete time}
\label{sec:discrete-state-space}

The theory of $R$-positive matrices is a powerful tool to study absorbed Markov processes in discrete time and
space~\cite{FerrariKestenMartinez1996}. The goal of Section~\ref{sec:R-positive} is to show that our criteria allow to recover
the results on convergence to quasi-stationarity of this theory. We then study in Section~\ref{sec:ex-GW} a class of discrete Markov
chains in discrete time to which criteria based on $R$-positive matrices do not apply easily.

\subsection{$R$-positive matrices}
\label{sec:R-positive}

We consider a Markov chain $(X_n,n\in\ZZ_+)$ in a countable state space $E\cup\{\d\}$with $\d\not\in E$ an absorbing point and with
irreducible transition probabilities in $E$, i.e.\ such that for all $x,y\in E$, there exists $n=n(x,y)\geq 1$ such that
$\P_x(X_n=y)>0$. In this case, the most general criterion for existence and convergence to a quasi-stationary distribution is
provided in~\cite{FerrariKestenMartinez1996}. In this paper, the authors obtain a convergence result similar to the one of
Theorem~\ref{thm:main} restricted to Dirac initial distributions, and the pointwise convergence to $\eta$ as in Theorem~\ref{thm:eta},
using the theory of R-positive matrices. In this section, we show how our criterion allows to recover these results, providing in
addition the several refinements of Section~\ref{sec:main-discrete-time} (including the characterization of a non-trivial subset of
the domain of attraction, the convergence of~\eqref{eq:thm-main-zxzx} for unbounded functions $f$ and a stronger convergence to $\eta$).

We denote by $P$ the transition matrix of the chain $(X_n,n\in\ZZ_+)$ and we assume that the absorption time $\tau_\d$ is almost
surely finite. Without loss of generality, we will assume that the process is aperiodic, meaning that $\PP_x(X_n=y)>0$ for all
$x,y\in E$ provided $n$ is large enough; the extension to general periodic processes is routine, as observed
in~\cite{FerrariKestenMartinez1996} (see also~\cite{ChampagnatVillemonais2022b} on this topic in our general setting).

\begin{prop}
  \label{prop:comparions-with-FKM96}
  The assumptions of \cite[Theorem~1]{FerrariKestenMartinez1996} imply Assumption~(E).
\end{prop}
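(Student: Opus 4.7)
The plan is to verify each of the four conditions (E1)--(E4) of Assumption~(E) using the R-positive structure supplied by the hypotheses of Theorem~1 of~\cite{FerrariKestenMartinez1996}. These hypotheses provide an irreducible, aperiodic and almost surely absorbed substochastic matrix $P$ on $E$, a convergence parameter $R=1/\theta_0$, positive left and right eigenvectors $\mu$ and $h$ with $\mu P=\theta_0\mu$ and $Ph=\theta_0 h$, the $R$-positive normalization $\sum_x\mu(x)h(x)<\infty$, and the pointwise convergence $\theta_0^{-n}P^n(x,y)\to h(x)\mu(y)/\sum_z h(z)\mu(z)$ for all $x,y\in E$. Throughout I will take $K$ to be a finite subset of $E$, chosen large enough to satisfy the constraints appearing below.

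Conditions (E1), (E3) and (E4) are the easy ones. Irreducibility and aperiodicity of $P$ on the finite set $K$ yield an integer $n_1$ with $P^{n_1}(x,y)>0$ uniformly on $K\times K$, which gives (E1) with $\nu$ the uniform probability on $K$, while (E4) is exactly aperiodicity at each point of $K$. For (E3), I will check the comparison hypothesis of Proposition~\ref{lem:E2thenE}: for $x\in E$ and $z\in K$ one has $P(x,z)\leq 1$, while by irreducibility/aperiodicity one has $P^{m_0}(y,z)\geq c>0$ uniformly in $y,z\in K$ for $m_0$ large enough, so $\P_x(X_1\in\cdot\cap K)\leq C\,\P_y(X_{m_0}\in\cdot)$ for some constant $C$ and all $x\in E$, $y\in K$, and Proposition~\ref{lem:E2thenE} delivers (E1) and (E3) as soon as (E2) is in hand.

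The heart of the argument is therefore condition (E2). For $\varphi_2$, I will apply Lemma~\ref{lem:varphi2}: summing the $R$-positive pointwise convergence over the finite set $K$ gives $\theta_0^{-n}\P_x(X_n\in K)\to h(x)\mu(K)/\sum h\mu>0$ for every $x\in K$, so for any $\theta_2\in(0,\theta_0)$ the hypothesis of Lemma~\ref{lem:varphi2} is satisfied and produces a function $\varphi_2\leq 1$ with $\inf_K\varphi_2>0$ and $P_1\varphi_2\geq\theta_2\varphi_2$. For $\varphi_1$, I will apply Lemma~\ref{lem:varphi1} with some $\theta_1\in(0,\theta_2)$, which reduces the task to the exponential integrability $\E_x(\theta_1^{-T_K\wedge\tau_\d})<\infty$ for every $x\in E$, together with the usual uniform bound on $K$ after one step.

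The main obstacle is precisely this exponential integrability: the naive bound $T_K\wedge\tau_\d\leq\tau_\d$ combined with the sharp $R$-positive asymptotics $\P_x(\tau_\d>n)\sim C(x)\theta_0^n$ would only yield integrability at base $\theta_0$, whereas we need a strictly smaller base $\theta_1<\theta_2\leq\theta_0$. The way out is to enlarge $K$ until the sub-Markovian kernel obtained by killing on $K\cup\{\d\}$ has convergence parameter strictly greater than $R$, equivalently $\P_x(T_K\wedge\tau_\d>n)$ decays at a rate strictly faster than $\theta_0^n$. This strict improvement of the decay rate on the complement of sufficiently large finite sets is exactly the content of the tightness part of the FKM96 hypothesis (the summability $\sum\mu h<\infty$ combined with the $R$-positive classification is precisely what guarantees it), and once invoked it closes the construction of $\varphi_1$ and hence the verification of Assumption~(E).
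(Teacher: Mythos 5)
Your treatment of (E1), (E4), the comparison inequality feeding Proposition~\ref{lem:E2thenE}, and the construction of $\varphi_2$ via Lemma~\ref{lem:varphi2} is sound and close in spirit to the paper's proof (the paper also invokes the ratio limit~(1.16) of~\cite{FerrariKestenMartinez1996} to feed Lemma~\ref{lem:varphi2}, and it is legitimate to use conclusions of their Theorem~1 since you are assuming its hypotheses). The gap is in the step you yourself flag as the heart of the matter: the exponential integrability $\E_x(\theta_1^{-T_K\wedge\tau_\d})<\infty$ with $\theta_1<\theta_0=1/R$. You claim that $R$-positivity together with $\sum_x\mu(x)h(x)<\infty$ forces the taboo probabilities $\P_x(n<T_K\wedge\tau_\d)$ to decay strictly faster than $R^{-n}$ once $K$ is a large enough finite set. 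That implication is false: $R$-positivity only gives $\sum_n nR^n\,\P_x(\sigma_K\wedge\tau_\d=n)<\infty$, i.e.\ a first moment of the return time under the $R$-tilt, not a geometric moment at a base strictly larger than $R$. One can have $\P_x(\sigma_{\{x\}}\wedge\tau_\d=n)\asymp n^{-3}R^{-n}$, which is perfectly compatible with $R$-positivity and summable eigenvectors but admits no $\varepsilon_0>0$ improvement of the rate; whether enlarging $K$ helps is model-dependent and not guaranteed by the eigenvector structure. In short, you are trying to derive a hypothesis of~\cite{FerrariKestenMartinez1996} from its conclusions.

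The resolution is that you have mischaracterized the assumptions of their Theorem~1. Via their Lemma~1, those assumptions are precisely: there exist $K\subset E$, $x_0\in K$ and $\varepsilon_0>0$ such that (a) $\P_x(n<\sigma_K\wedge\tau_\d)\leq C_1(R+\varepsilon_0)^{-n}$ for all $x\in K$, (b) $\P_x(n<\tau_\d)\leq C_2\P_{x_0}(n<\tau_\d)$ on $K$, and (c) a uniform lower bound on $\P_x(T_{\{x_0\}}\leq n_0)$ on $K$. Condition~(a) \emph{is} the geometric moment you need: it gives $\sup_{y\in K}\E_y\bigl[(R+\tfrac{\varepsilon_0}{2})^{\sigma_K\wedge\tau_\d}\bigr]<\infty$, which extends to $\E_x\bigl[(R+\tfrac{\varepsilon_0}{2})^{\sigma_K\wedge\tau_\d}\bigr]<\infty$ for every $x\in E$ by irreducibility (choose $y\in K$ and $n$ with $\P_y(X_n=x,\,n<\sigma_K)>0$ and apply the Markov property), and then Lemma~\ref{lem:varphi1} with $\theta_1=(R+\tfrac{\varepsilon_0}{3})^{-1}$ produces $\varphi_1$. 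Condition~(b) combined with~(E1) then yields~(E3) directly, without needing Proposition~\ref{lem:E2thenE}. With the hypotheses stated correctly, no ``strict improvement of the decay rate'' has to be manufactured; it is handed to you.
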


\begin{proof}
 Since $E$ is finite or countable and because of
  the irreducibility assumption, it is known~\cite{VereJones1967} that the limit
  \begin{align}
    \label{eq:R-def}
    \frac{1}{R}:=\lim_{n\rightarrow+\infty}\P_x(X_n=y)^{1/n}
  \end{align}
  exists with $1\leq R<\infty$, and is independent of $x,y\in E$. Using~\cite[Lemma~1]{FerrariKestenMartinez1996}, the assumptions of \cite[Theorem~1]{FerrariKestenMartinez1996} can be
  stated as follows: there exist a non-empty set $K\subset E$ and $x_0\in K$ such that
  \begin{description}
  \item[\textmd{(a)}] there exist $\varepsilon_0>0$ and a constant $C_1$ such that, for all $x\in K$
    and all $n\geq 0$,
    \begin{align*}
      \P_x(n<\sigma_K\wedge \tau_\d)\leq C_1(R+\varepsilon_0)^{-n},
    \end{align*}
    where $\sigma_K$ is the first return time in $K$
    \[
    \sigma_K:=\inf\{n\geq 1,X_n\in K\}.
    \]
  \item[\textmd{(b)}] there exists a constant $C_2$ such that, for all $x\in K$ and $n\geq 0$,
    \begin{align*}
      \P_x(n<\tau_\d)\leq C_2\P_{x_0}(n<\tau_\d);
    \end{align*}
  \item[\textmd{(c)}] there exist $n_0\geq 0$ and a constant $C_3>0$ such that, for all $x\in K$,
    \begin{align*}
      \P_x( T_{\{x_0\}}\leq n_0)\geq C_3,
    \end{align*}
    where we recall that $ T_L:=\inf\{n\in\ZZ_+:X_n\in L\}$ for all $L\subset E$.
  \end{description}
  \medskip

  Let us first prove (E1). By aperiodicity and irreducibility, there exists $m_1\geq 1$ such that, for all $n\geq m_1$,
  $\P_{x_0}(X_n=x_0)>0$. Combining this with (c), the Markov Property entails that, for all $x\in K$,
  \begin{align*}
    \P_{x}(X_{n_0+m_1}=x_0)\geq C_3\min_{m_1\leq k\leq n_0+m_1}\P_{x_0}(X_k=x_0).
  \end{align*}
  This is (E1) with $\nu=\delta_{x_0}$ and $n_1=n_0+m_1$.

  We now prove (E2) and (E4). Condition~(a) implies that
  \begin{align*}
    \left(R+\frac{\varepsilon_0}{2}\right)\sup_{y\in K}
    \E_y\left[\11_{1<\tau_\d}\E_{X_1}\left(\left(R+\frac{\varepsilon_0}{2}\right)^{ T_K\wedge\tau_\d}\right)\right]=\sup_{y\in
      K}\E_y\left[\left(R+\frac{\varepsilon_0}{2}\right)^{\sigma_K\wedge\tau_\d}\right]<\infty.
  \end{align*}
  For all $x\in E\setminus K$, the irreducibility assumption implies that there exist $y\in K$ and $n=n(x,y)\geq 1$ such that
  $\P_y(X_n=x\text{ and }n<\sigma_K)>0$. By Markov's property,
  \begin{align*}
    \E_y\left[\left(R+\frac{\varepsilon_0}{2}\right)^{\sigma_K\wedge\tau_\d}\right]\geq \P_y(X_n=x\text{ and }n<\sigma_K)\E_x\left[\left(R+\frac{\varepsilon_0}{2}\right)^{\sigma_K\wedge\tau_\d}\right].
  \end{align*}
  Since $\sigma_K= T_K$ almost surely under $\P_x$ for $x\in E\setminus K$, Lemma~\ref{lem:varphi1} provides a function $\varphi_1$
  satisfying the conditions of~(E2), with $\theta_1:=(R+\frac{\varepsilon_0}{2})^{-1}$. According
  to~\cite[(1.16)]{FerrariKestenMartinez1996}, which holds true under their assumption
  by~\cite[Theorem~1]{FerrariKestenMartinez1996}, and setting $\theta_2=\left(R+\frac{\varepsilon_0}{3}\right)^{-1}$, one has
  \begin{align*}
    \lim_{n\rightarrow+\infty}\theta_2^{-n}\P_{x_0}(X_{n}=x_0)=+\infty.
  \end{align*}
  Using Markov's property, Condition~(c) immediately entails that
  \begin{align*}
    \lim_{n\rightarrow+\infty}\theta_2^{-n}\inf_{x\in K}\P_{x}(X_{n}\in K)=+\infty.
  \end{align*}
  Using Lemma~\ref{lem:varphi2}, we deduce that there exists a function~$\varphi_2:E\rightarrow[0,1]$ satisfying the
  conditions of~(E2) and that~(E4) holds true. This concludes the proof of (E2) and (E4).

  To conclude, Conditions~(b) and~(E1) imply, for all $n\geq 0$,
  \begin{align*}
    \inf_{y\in K}\P_{y}(n<\tau_\d)&\geq \inf_{y\in K}\P_{y}(n+n_1<\tau_\d)\geq  c_1\P_{x_0}(n<\tau_\d)\geq \frac{c_1}{C_2}\sup_{y\in K}\P_y(n<\tau_\d).
  \end{align*}
  This proves (E3) and concludes the proof of Proposition~\ref{prop:comparions-with-FKM96}.
\end{proof}

\begin{rem}
  One can actually prove that, in the particular case of a discrete state space $E$ and aperiodic and irreducible transition probability on $E$,
  Assumption (E) is equivalent to the Conditions (a), (b) and (c) of~\cite{FerrariKestenMartinez1996}. Besides the additional
  properties provided in Section~\ref{sec:main-discrete-time}, one of our main contribution in this particular setting is to provide
  a more tractable criterion. Indeed, the use of Lyapunov type functions has the advantage to be quite flexible.
\end{rem}

\subsection{Application to the extinction of biological populations dominated by Galton-Watson processes}
\label{sec:ex-GW}

In this section, we show how our criteria can be applied to general
population processes dominated by population-dependent Galton-Watson
processes. In particular, we refine existing results for the classical
multi-type Galton-Watson process.

More precisely, we consider an aperiodic and irreducible Markov population process
$(Z_n)_{n\in\N}$ on $\Z^d_+=E\cup\{\d\}$ absorbed at $\d=0$ such that, for all $n\geq 0$,
\begin{align}
  \label{eq:GW-multitype}
  \|Z_{n+1}\|\leq\sum_{i=1}^{|Z_n|}\xi^{(Z_n)}_{i,n},
\end{align}
where $\|\cdot\|$ is a norm on $\R^d$ and $|z|=z_1+\ldots+z_d$ for all $z\in\Z_+^d$ and, for all
$n\geq 0$, the nonnegative random variables $\xi^{(Z_n)}_{1,n},\ldots, \xi^{(Z_n)}_{|Z_n|,n}$ are assumed
independent (but not necessarily identically distributed) given $Z_n$ and the families
  $(\xi^{(z)}_{i,n},z\in\Z_+^d,1\leq i\leq |z|)$ are i.i.d.\ for $n\in\Z_+$.

We assume that 
\begin{align}
  \label{eq:hyp-subcritical}
  \E\left(\sum_{i=1}^{|z|}\xi^{(z)}_{i,n}\right)\leq m \|z\|,\quad\forall z\in\Z_+^d\text{\ such that\ }|z|\geq n_0,
\end{align}
for some $m<1$ and $n_0\in\N$. This means that the population size has a tendency to
decrease (in mean) when it is too large. This also implies that $\tau_\d<\infty$ a.s.

In the following theorem, $R>0$ is the limiting value defined in~\eqref{eq:R-def}.

\begin{thm}
  \label{thm:GW-dominated-bis}
  Assume that $(Z_n,n\in\ZZ_+)$ is aperiodic irreducible, that it satisfies the
  assumptions~\eqref{eq:GW-multitype} and~\eqref{eq:hyp-subcritical} and that, for some
  $q_0>\frac{\log R}{\log(1/m)}\vee 1$,
  \begin{align*}
    \sup_{z\in\Z_+^d,\ 1\leq i\leq|z|}\E[(\xi^{(z)}_{i,1})^{q_0}]<\infty,
  \end{align*}
  Then Condition~(E) holds true with $\varphi_1(x)=\|x\|^q$, for all
  $q\in\left(\frac{\log R}{\log(1/m)}\vee 1,q_0\right]$.
\end{thm}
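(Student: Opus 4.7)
The plan is to verify Assumption~(E) via Proposition~\ref{lem:E2thenE}: it suffices to check (E2), (E4), and the one-sided comparison~\eqref{eq:condition-E2thenE} of transition kernels. Throughout I take $K=\{z\in E:|z|\le N\}$ with $N\in\NN$ to be chosen large; this is a finite subset of $E\subset\Z_+^d$ on which the irreducible and aperiodic chain is well behaved.

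\emph{Lyapunov inequality for $\varphi_1(z)=|z|^q$.} Using norm equivalence on $\R^d$, write $c|z|\le\|z\|\le C|z|$ for positive constants $c,C$; then the domination~\eqref{eq:GW-multitype} gives $|Z_1|\le c^{-1}S$ with $S=\sum_{i=1}^{|z|}\xi^{(z)}_{i,0}$. Combining Minkowski's inequality $(\E S^q)^{1/q}\le\E S+(\E|S-\E S|^q)^{1/q}$ with a Rosenthal/Marcinkiewicz--Zygmund estimate $\E|S-\E S|^q\le A_q M_q\,|z|^{q/2\vee 1}$ --- finite since $M_q:=\sup_{z,i,n}\E(\xi^{(z)}_{i,n})^q<\infty$ by Jensen and the $q_0$-moment hypothesis ($q\le q_0$) --- together with the mean bound $\E S\le m\|z\|$ from~\eqref{eq:hyp-subcritical}, yields, for every $\varepsilon>0$ and every $z$ with $|z|\ge n_0$,
\[
  \E_z|Z_1|^q\le(1+\varepsilon)(mC/c)^q|z|^q+C_\varepsilon|z|^{q/2\vee 1}.
\]
The hypothesis $q>\log R/\log(1/m)\vee 1$ yields $m^q<1/R$, hence (choosing the norm $\|\cdot\|$ appropriately; the Perron eigenvector provides the optimal choice in the classical multitype Galton--Watson case) one can pick $\theta_1<1/R$ larger than the $|z|^q$-coefficient above. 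The remainder $|z|^{q/2\vee 1}$ being $o(|z|^q)$, enlarging $N$ gives
\[
  P_1\varphi_1(z)\le\theta_1\varphi_1(z)+c_2\11_K(z),\qquad z\in E,
\]
with $c_2:=\sup_{z\in K}\E_z|Z_1|^q<\infty$ by finiteness of $K$ and of $M_q$. This is the $\varphi_1$-part of~(E2).

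\emph{Construction of $\varphi_2$, (E4), and the comparison estimate.} By the definition~\eqref{eq:R-def} of $R$ and irreducibility, for every $\theta_2<1/R$ and every $x,y\in E$ one has $\theta_2^{-n}\P_x(X_n=y)\to+\infty$. Fix $\theta_2\in(\theta_1,1/R)$ and $y_0\in K$; since $\P_x(X_n\in K)\ge\P_x(X_n=y_0)$ and $K$ is finite, $\theta_2^{-n}\inf_{x\in K}\P_x(X_n\in K)\to+\infty$. Lemma~\ref{lem:varphi2} then produces $\varphi_2$ giving the remainder of~(E2) and delivers~(E4). Finally, by aperiodicity and irreducibility on the finite set $K$, there exists $m_0\ge 1$ with $\delta:=\inf_{y,z\in K}\P_y(X_{m_0}=z)>0$, so that for every $x\in E$, $y\in K$ and every measurable $A\subset E$,
\[
  \P_x(X_1\in A\cap K)=\sum_{z\in A\cap K}\P_x(X_1=z)\le\delta^{-1}\sum_{z\in A\cap K}\P_y(X_{m_0}=z)\le\delta^{-1}\P_y(X_{m_0}\in A),
\]
which is~\eqref{eq:condition-E2thenE} with $n_0=1$ and $C=\delta^{-1}$. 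Proposition~\ref{lem:E2thenE} then gives~(E).

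The principal obstacle lies in the Lyapunov step: arranging that $\theta_1$ sits strictly below $\theta_0=1/R$ requires both a sharp Rosenthal/Marcinkiewicz--Zygmund bound for the centred sum (to push the error to order $|z|^{q/2\vee 1}$) and a careful treatment of the norm-equivalence factors, since the coefficient obtained naturally is $(mC/c)^q$ rather than $m^q$. The assumption $q>\log R/\log(1/m)$ is precisely what opens up the interval in which $\theta_1$ can be chosen.
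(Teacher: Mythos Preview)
Your approach is essentially the same as the paper's: establish the Lyapunov inequality for $\varphi_1$ via Minkowski plus a Marcinkiewicz--Zygmund/Burkholder bound on the centred sum (the paper isolates this as a separate lemma), build $\varphi_2$ and (E4) from Lemma~\ref{lem:varphi2} and the definition of $R$, and close with Proposition~\ref{lem:E2thenE} using $n_0=1$ and $m_0$ large enough on the finite set $K$.

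The one point where you are less clean than the paper is the norm bookkeeping, and you flag it yourself. You compute with $\varphi_1(z)=|z|^q$ and pick up the factor $(C/c)^q$ from the norm equivalence $c|\cdot|\le\|\cdot\|\le C|\cdot|$; your proposed fix, ``choosing the norm $\|\cdot\|$ appropriately'', is not available, since $\|\cdot\|$ is the norm \emph{given} by the domination hypothesis~\eqref{eq:GW-multitype}--\eqref{eq:hyp-subcritical}. The paper's remedy is simpler: take $\varphi_1(z)=\|z\|^q$ in the proof. Then $P_1\varphi_1(z)\le\E S^q$ directly from~\eqref{eq:GW-multitype}, Minkowski gives $(\E S^q)^{1/q}\le m\|z\|+(C_q|z|^{1\vee q/2})^{1/q}$, and one obtains the clean leading coefficient $m^q$ with remainder of order $|z|^{q-1+1/(q\wedge 2)}=o(|z|^q)$, so any $\theta_1\in(m^q,1/R)$ works. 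Since $\|\cdot\|$ and $|\cdot|$ are equivalent, the resulting Lyapunov function is comparable to $|z|^q$ and the conclusions (domain of attraction, moment of $\nu_{QSD}$, etc.) are unchanged. With that one-line adjustment your argument is complete and matches the paper's.
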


\begin{rem}
  This result easily applies if $\sup_{z\in\Z_+^d,\ 1\leq i\leq|z|}\E[(\xi^{(z)}_{i,n})^q]<\infty$ for all $q>0$. In other cases, we
  need an upper bound for $R>0$ to check the assumptions of Theorem~\ref{thm:GW-dominated-bis}. For instance, one may use the fact
  that $R\leq 1/\sup_{z\in\Z_+^d}\P_z(Z_1=z)$. One may also use Lyapunov techniques, in the same spirit as in
  Section~\ref{prop:lambda0-bound} for diffusion processes.
\end{rem}

\begin{rem}
  A particular case of application of the above theorem is when $Z$ is obtained from a Galton-Watson multi-type process (see below
  for a more precise definition) with additional population-dependent death rates. For example, one can assume that additional death
  events may affect a fraction of the population, modelling global death events. In this case, compared to the Galton-Watson case,
  the independence between the progeny of individuals breaks down. Another situation covered by the above result
    is the case where the domain of absorption of $Z$ is a larger set than $0$, for example the process may be absorbed when it
    reaches one edge of $\Z^d_+$ (i.e.\ when one type disappears). Another typical application of Theorem~\ref{thm:GW-dominated-bis}
  is the case of population-depen\-dent Galton-Watson processes, i.e.\ of processes such that, given $Z_n$, $Z_{n+1}$ is the sum of
  $|Z_n|$ i.i.d.\ random variables whose law may depend on $Z_n$. In this situation, Theorem~\ref{thm:GW-dominated-bis} and its
  consequences stated in Section~\ref{sec:main-discrete-time} generalize the results of~\cite{gosselin-01} to the multi-type
  situation and provides finer results on the domain of attraction of the minimal quasi-stationary distribution. The reducible cases
  considered in~\cite{gosselin-01} can also be recovered using the criterion of Theorem~\ref{thm:three-comm-class-1} in
  Section~\ref{sec:three-com-class} or the criteria of~\cite{ChampagnatVillemonais2022}. Of course, the above cases may be combined.
\end{rem}

  Let us now consider the case of multi-type Galton-Watson processes.  A Mar\-kov process $(Z_n,n\in\ZZ_+)$ evolving in
  $\Z^d_+=E\cup\{\d\}$ absorbed at $\d=0$ is called a Galton-Watson process with $d$ types if, for all
  $n\geq 0$ and all $i\in\{1,\ldots,d\}$,
\begin{align}
  \label{eq:GW-multitype-bis}
  Z_{n+1}^i=\sum_{k=1}^{d}\sum_{\ell=1}^{Z^k_n} \zeta^{(n,\ell)}_{k,i},
\end{align}
where the random variables $(\zeta^{(n,\ell)}_{k,1},\ldots,\zeta^{(n,\ell)}_{k,d})_{n,\ell,k}$
in $\ZZ_+$ are assumed independent and such that, for all $k\in\{1,\ldots,d\}$,
$(\zeta^{(n,\ell)}_{k,1},\ldots,\zeta^{(n,\ell)}_{k,d})_{n,\ell}$ is an i.i.d. family. We
define the matrix $M=(M_{k,i})_{1\leq k,i\leq d}$ of mean offspring as
\begin{align*}
  M_{k,i}=\E(\zeta^{(n,\ell)}_{k,i}),\quad\forall k,i\in\{1,\ldots,d\},
\end{align*}
and assume that $M_{k,i}<+\infty$ and that there exists $n\geq 1$ such that
$[M^n]_{k,i}>0$ for all $k,i\in\{1,\ldots,d\}$.

Using the classical formalism of~\cite{harris-63}, we consider a positive right eigenvector $v$ of the matrix $M$ of
mean offspring and we denote by $\rho(M)$ its spectral radius. The sub-critical case corresponds to $\rho(M)<1$. It is
well-known~\cite{JoffeSpitzer1967} (see also~\cite{heathcote-seneta-verejones-67,athreya-ney-72}) that this implies the
existence of a quasi-stationary distribution whose domain of attraction contains all Dirac measures (a so-called
\textit{Yaglom limit} or \textit{minimal quasi-stationary distribution}). The authors also prove that
$\nu_{QSD}(|\cdot|)<\infty$ if and only if $\E[|Z_1|\log(|Z_1|)\mid Z_0=(1,\ldots,1)]<\infty$. While the following result makes the
stronger assumption that $\E[|Z_1|^{q_0}\mid Z_0=(1,\ldots,1)]<\infty$ for some $q_0>1$, we obtain the finer results of
Section~\ref{sec:main-discrete-time}, including a stronger form of convergence (in total variation norm with exponential
speed), a non-trivial subset of the domain of attraction of the minimal quasi-stationary distribution and stronger
moment properties for this quasi-stationary distribution.

\begin{cor}
  \label{cor:GW-bis}
  If $(Z_n,n\geq 0)$ is a $d$-type irreducible, aperiodic sub-critical Galton-Watson
  process, and if, for some $q_0>1$,
  \begin{align*}
    \E[|Z_1|^{q_0}\mid Z_0=(1,\ldots,1)]<\infty,
  \end{align*}
  then Condition~(E) holds true with $\varphi_1(z)=|z|^q$ for any $q\in(1,q_0]$. In
  particular, the domain of attraction of $\nu_{QSD}$ contains all the probability
  measures such that $\mu(|\cdot|^q)<\infty$ for some $q>1$.
\end{cor}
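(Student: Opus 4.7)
The plan is to verify the hypotheses of Theorem~\ref{thm:GW-dominated-bis} applied to $(Z_n)$. Let $v\in(0,+\infty)^d$ be a right Perron--Frobenius eigenvector of the mean matrix $M$, i.e.\ $Mv=\rho(M)v$ with $\rho(M)\in(0,1)$; existence and positivity of $v$ follow from the irreducibility of $M$, itself a consequence of the irreducibility of $Z$ on $E=\Z_+^d\setminus\{0\}$. Define the norm on $\R^d$ by $\|z\|:=\sum_{i=1}^d v_i|z_i|$, which is equivalent to $|\cdot|$. Reindexing the offspring vectors $(\zeta_{k,\cdot}^{(n,\ell)})_{1\leq k\leq d,\,1\leq\ell\leq Z_n^k}$ via a bijection $j\mapsto(k_j,\ell_j)$ with $j\in\{1,\ldots,|Z_n|\}$, and setting $\xi_{j,n}^{(Z_n)}:=\sum_i v_i\zeta_{k_j,i}^{(n,\ell_j)}$, the recursion~\eqref{eq:GW-multitype-bis} rewrites as $\|Z_{n+1}\|=\sum_{j=1}^{|Z_n|}\xi_{j,n}^{(Z_n)}$, with the $\xi$'s conditionally independent given $Z_n$ by the classical independence of offspring. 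Hence~\eqref{eq:GW-multitype} holds with equality.

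The subcriticality assumption~\eqref{eq:hyp-subcritical} follows from the eigenvector identity:
\[
\E\bigl[\|Z_{n+1}\|\bigm|Z_n=z\bigr]=\sum_{k=1}^d z^k\sum_{i=1}^d M_{k,i}v_i=\rho(M)\,\|z\|,
\]
so $m:=\rho(M)<1$ and $n_0=1$ work. For the uniform moment bound, each $\xi_{j,n}^{(z)}$ is distributed as $\|\zeta_k\|$ for some $k\in\{1,\ldots,d\}$, where $\zeta_k:=(\zeta_{k,i}^{(0,1)})_i$. Under $Z_0=(1,\ldots,1)$, one has $|Z_1|=\sum_{k=1}^d|\zeta_k|$ with independent summands, so $\E[|Z_1|^{q_0}\mid Z_0=(1,\ldots,1)]<\infty$ forces $\E|\zeta_k|^{q_0}<\infty$ for every $k$; the equivalence of $\|\cdot\|$ and $|\cdot|$ then yields $\sup_{n,z,j}\E[(\xi_{j,n}^{(z)})^{q_0}]<\infty$.

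It remains to check that $\log R/\log(1/m)\vee 1=1$, i.e.\ $R\leq 1/\rho(M)$, so that Theorem~\ref{thm:GW-dominated-bis} delivers Condition~(E) for $\varphi_1(z)=|z|^q$ and every $q\in(1,q_0]$. The inequality $R\geq 1/\rho(M)$ is straightforward: the identity $\E_x\|Z_n\|=\rho(M)^n\|x\|$ combined with $\|y\|\geq \min_i v_i>0$ on $E$ yields $\P_x(Z_n\neq 0)\leq C_x\rho(M)^n$, hence $\P_x(Z_n=y)^{1/n}\to 1/R\leq\rho(M)$. The matching bound $R\leq 1/\rho(M)$ is the classical sharp asymptotic $\P_x(Z_n\neq 0)\asymp \rho(M)^n$ for subcritical irreducible multi-type Galton--Watson processes under an $L\log L$ moment~\cite{JoffeSpitzer1967,athreya-ney-72}, which is implied by $\E|Z_1|^{q_0}<\infty$ with $q_0>1$. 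The characterization of the domain of attraction then follows from Corollary~\ref{cor:dom-attract-final}. The main obstacle is precisely the identification $R=1/\rho(M)$, which relies on a non-trivial external result about the precise rate of extinction; all other verifications amount to routine manipulations of the Perron--Frobenius structure of $M$.
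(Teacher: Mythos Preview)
Your proof is correct and follows essentially the same route as the paper: both set $\|z\|=\langle v,z\rangle$ for the Perron right eigenvector $v$, define $\xi_{j,n}^{(Z_n)}=\sum_i v_i\zeta_{k_j,i}^{(n,\ell_j)}$ to obtain~\eqref{eq:GW-multitype} with equality and~\eqref{eq:hyp-subcritical} with $m=\rho(M)$, and then invoke the classical identity $R=1/\rho(M)$ (cited from~\cite{JoffeSpitzer1967}) so that $\frac{\log R}{\log(1/m)}\vee 1=1$ and Theorem~\ref{thm:GW-dominated-bis} applies. You are simply more explicit than the paper on two points it leaves implicit: deducing $\sup_k\E\|\zeta_k\|^{q_0}<\infty$ from the hypothesis on $Z_1$ starting from $(1,\ldots,1)$, and splitting $R=1/\rho(M)$ into its easy and hard inequalities.
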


This corollary easily derives from Theorem~\ref{thm:GW-dominated-bis}. Indeed, setting
$\|z\|=\langle v,z \rangle$ and
$\xi_{i,n}^{(Z_n)}=\sum_{j=1}^d v_j \zeta_{k,j}^{(n,\ell)}$ (assuming that $i$ is the
$\ell-th$ individual of type $k$ in the population), one obtains
\[
  \|Z_{n+1}\|=\sum_{i=1}^{|Z_n|}\xi^{(Z_n)}_{i,n}
\]
and
\[
  \E\left.\left(\sum_{i=1}^{|Z_n|}\xi^{(Z_n)}_{i,n}\,\right|\,
    Z_n=z\right)=\sum_{k=1}^{d}\sum_{\ell=1}^{z_k} \sum_{j=1}^d
  v_j\E\left(\zeta^{(n,\ell)}_{k,j}\right) =\rho(M) \|z\|,
\]
for all $z\in \Z_+^d$. Since, in the case of multi-type Galton-Watson process,
one has $R=1/\rho(M)$ (see for instance Theorems~2 and~3 of\cite{JoffeSpitzer1967}),
Theorem~\ref{thm:GW-dominated-bis} applies with $m=\rho(M)$.

\medskip
To prove Theorem~\ref{thm:GW-dominated-bis}, we use the following lemma.
\begin{lem}
  \label{lem:Chatterji-burkholder}
  For all $q\in\left(\frac{\log R}{\log(1/m)}\vee 1,q_0\right]$, there exists a constant $C_q$ such that,
    for all $z\in\Z_+^d$,
  \begin{align*}
    \E\left[\left(\sum_{i=1}^{|z|}\xi^{(z)}_{i,n}-\E(\xi^{(z)}_{i,n})\right)^q\right]\leq C_q
    |z|^{1\vee (q/2)}.
  \end{align*}
\end{lem}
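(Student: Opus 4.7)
The plan is to recognize this as a standard moment estimate for sums of independent, uniformly $L^{q_0}$-bounded random variables, and apply the classical Marcinkiewicz--Zygmund/Rosenthal inequalities.

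First, I would set $Y_i := \xi^{(z)}_{i,n} - \E(\xi^{(z)}_{i,n})$ for $1 \leq i \leq |z|$. Conditionally on $Z_n = z$ these are independent and centered. Using the hypothesis $\sup_{n,z,i}\E[(\xi^{(z)}_{i,n})^{q_0}] < \infty$ together with $q \leq q_0$, Jensen's inequality, and the elementary bound $|Y_i|^q \leq 2^{q-1}(|\xi^{(z)}_{i,n}|^q + (\E \xi^{(z)}_{i,n})^q)$, I would first establish that there exists a constant $M_q$ with
\begin{equation*}
\sup_{n,z,i}\E|Y_i|^q \leq M_q.
\end{equation*}

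Next, I would split into two cases according to whether $q \in (1,2]$ or $q > 2$. For $q \in (1,2]$, I would invoke the von Bahr--Esseen inequality for independent centered random variables,
\begin{equation*}
\E\left|\sum_{i=1}^{|z|} Y_i\right|^q \leq 2 \sum_{i=1}^{|z|} \E|Y_i|^q \leq 2\, M_q\, |z|,
\end{equation*}
which gives the desired bound since $1 \vee q/2 = 1$ on this range. For $q > 2$, I would invoke Rosenthal's inequality,
\begin{equation*}
\E\left|\sum_{i=1}^{|z|} Y_i\right|^q \leq C'_q\left(\sum_{i=1}^{|z|} \E|Y_i|^q + \Bigl(\sum_{i=1}^{|z|} \E Y_i^2\Bigr)^{q/2}\right) \leq C'_q\bigl(M_q |z| + M_2^{q/2} |z|^{q/2}\bigr),
\end{equation*}
and since $q/2 \geq 1$ here, the right-hand side is dominated by a constant times $|z|^{q/2} = |z|^{1\vee q/2}$.

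There is essentially no obstacle here beyond bookkeeping; the only mild subtlety is the choice of which moment inequality to use in which regime, and the fact that the statement is slightly abusive in writing $(\cdot)^q$ for a possibly-signed quantity (one should read it as $|\cdot|^q$, which is anyway what one needs in the sequel where the lemma is combined with $(a+b)^q \leq 2^{q-1}(|a|^q + b^q)$ to bound $\E\|Z_{n+1}\|^q$). No appeal to any earlier result in the paper is required.
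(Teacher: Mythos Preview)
Your proof is correct and follows essentially the same strategy as the paper: split at $q=2$ and invoke a classical moment inequality on each side. The only difference is in the names of the inequalities cited --- the paper quotes Chatterji's Lemma~1 for $q\in(1,2]$ (which is the same statement as von Bahr--Esseen) and, for $q\geq 2$, uses Burkholder's inequality followed by Jensen applied to the normalized sum, which is precisely a derivation of the Rosenthal bound you invoke directly.
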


\begin{proof}
  If $q\in(1,2]$, this is exactly Lemma~1 of~\cite{chatterji-69}. If $q\geq 2$, Burkholder's inequality~\cite{burkholder-66} implies
  that there exists a constant $c_q$ such that
  \begin{align*}
    \E\left[\left(\sum_{i=1}^{|z|}\xi^{(z)}_{i,n}-\E(\xi^{(z)}_{i,n})\right)^q\right] & \leq
    c_q\E\left[\left(\sum_{i=1}^{|z|}\left\{\xi^{(z)}_{i,n}-\E(\xi^{(z)}_{i,n})\right\}^2\right)^{q/2}\right] \\
    & =c_q|z|^{q/2}\E\left[\left(\frac{1}{|z|}\sum_{i=1}^{|z|}\left\{\xi^{(z)}_{i,n}-\E(\xi^{(z)}_{i,n})\right\}^2\right)^{q/2}\right] \\
    & \leq c_q|z|^{q/2}\E\left[\frac{1}{|z|}\sum_{i=1}^{|z|}\left|\xi^{(z)}_{i,n}-\E(\xi^{(z)}_{i,n})\right|^q\right] \\
    & \leq c_q|z|^{q/2}\E\left[\frac{1}{|z|}\sum_{i=1}^{|z|}\left|\xi^{(z)}_{i,n}\right|^q+\E(\xi^{(z)}_{i,n})^q\right] \\
    & \leq 2c_q|z|^{q/2}\, \sup_{z\in\Z_+^d,\ 1\leq i\leq|z|}\E[(\xi^{(z)}_{i,n})^{q}],
  \end{align*}
  where we used Jensen's inequality in the third line, that the r.v.\ $\xi^{(z)}_{i,n}$ are nonnegative in the
  fourth line and H\"older's inequality in the last inequality.
\end{proof}

\begin{proof}[Proof of Theorem~\ref{thm:GW-dominated-bis}]
  We introduce an increasing sequence $(K_k,k\geq 0)$ of finite subsets of
  $\Z_+^d\setminus\{\d\}$, where $K_k$ is the smallest set containing
  $\{z\in\Z_+^d:1\leq |z|\leq k\}$ such that the process $Z$ restricted to $K_k$ is
  irreducible and aperiodic. The existence of this set follows from the irreducibility
  assumption and the fact that $\Z_+^d$ is countable. We shall choose $K=K_k$ for an
  appropriate value of $k\geq 0$.

  Fix $q\in\left(\frac{\log R}{\log(1/m)}\vee 1,q_0\right]$, $\theta_1\in (m^q,1/R)$, $\theta_2\in(\theta_1,1/R)$ and
  $\varphi_1(z)=\|z\|^q$. Using Minkowski's inequality in the first inequality, Lemma~\ref{lem:Chatterji-burkholder}
  in the third line and the equivalence between norms on $\R_+^d$,
  \begin{align}
    P_1\varphi_1(z) =\E\left(\left|\sum_{i=1}^{|z|}\xi^{(z)}_{i,n}\right|^q\right)
    & \leq\left[\E\left(\left|\sum_{i=1}^{|z|}\xi^{(z)}_{i,n}-\E(\xi^{(z)}_{i,n})\right|^q\right)^{1/q}+
      \sum_{i=1}^{|z|}\E(\xi^{(z)}_{i,n})\right]^q \notag \\
    & \leq\left[\left(C_q |z|^{1\vee (q/2)}\right)^{1/q}+m\|z\|\right]^q \notag \\ 
    & =m^q\|z\|^q\left(1+C'_q |z|^{1/(q\wedge 2)-1}\right)^q \notag \\
    & \leq m^q\|z\|^q+C''_q|z|^{q-1+1/(q\wedge 2)}, \label{eq:calcul-GW}
  \end{align}
  for constants $C'_q$ and $C''_q$ only depending on $q$ and $m$. Since $q-1+1/(q\wedge 2)<q$, there exists $k_1\geq 0$
  such that, for all $z\not\in K_{k_1}$,
  \begin{align}
    \label{eq:E2-GW-phi_1}
    P_1\varphi_1(z)\leq \theta_1\varphi_1(z).
  \end{align}
  We also deduce that, for all $z\in K_{k_1}$,
  \[
    P_1\varphi_1(z)\leq \max_{x\in K_{k_1}} m^q\|x\|^q+C''_q|x|^{q-1+1/(q\wedge
      2)}<+\infty.
  \]
  Setting $K=K_{k_1}$, we deduce that the first and third lines of Condition~(E2) are satisfied.

  By definition of $R$, we have
  $\lim_{n\to\infty}\theta_2^{-n}\inf_{z\in K}\P_z(X_n\in K)=+\infty$ and hence, using
  Lemma~\ref{lem:varphi2}, there exists a function $\varphi_2:E\rightarrow[0,1]$ such
  that the second and fourth lines of Condition~(E2) are satisfied. It also implies that
  Condition~(E4) holds true.
 
  Since the process is irreducible and aperiodic and $K$ is finite,~\eqref{eq:condition-E2thenE} is clearly satisfied for $n_0=1$ and
  $m_0$ large enough, so that Theorem~\ref{thm:GW-dominated-bis} follows from Proposition~\ref{lem:E2thenE}.
\end{proof}

\section{Proof of Theorem~\ref{thm:main}}
\label{sec:proof}



In all the proof, the constants $C$ are all positive and finite and may change from line to line. We first assume from
Subsections~\ref{sec:main-steps} to~\ref{sec:end-proof} that for all $n\geq 0$ and all $x\in E$, $\PP_x(n<\tau_\d)>0$. The general
case will be handled in Subsection~\ref{sec:general-case-pf-main}.

\subsection{Main steps of the proof}
\label{sec:main-steps}

The proof is based on a careful study of the semigroup of the process conditioned to not be absorbed before time $T$. In this
section, we give the main ideas and steps of the proof of~\eqref{eq:thm-main-zxzx} for the total variation norm
$\|\cdot\|_{TV}:=\|\cdot\|_{TV(1)}$ in place of $\|\cdot\|_{TV(\varphi_1)}$, and leave the details for the following subsections,
where preliminary results and the following
Propositions~\ref{prop:lyap-conditioned},~\ref{prop:dobrushin-conditioned},~\ref{prop:expo-conv-conditioned} and
Lemma~\ref{lem:expo-moment-mu} are proved. The general case of the $\|\cdot\|_{TV(\varphi_1)}$ norm is handled in
Subsection~\ref{sec:pf-cor-main}.

For any $T\in\ZZ_+$, we consider the law of the process $X$ conditioned to not be absorbed before time $T$. We introduce the linear operators $(S_{m,n}^T)_{0\leq m\leq n\leq T}$ defined by
\begin{align*}
S_{m,n}^T f(x)=\E(f(X_n)\mid X_m=x,\ T<\tau_\d)=\frac{P_{n-m}\left(f P_{T-n}\11_E\right)(x)}{P_{T-m}\11_E(x)}.
\end{align*}
It is well-known that $(S_{m,n}^T)_{0\leq m\leq n\leq T}$ forms a time-inhomogeneous semi\-group (i.e.\ $S_{m,n}^TS_{n,p}^T=S_{m,p}^T$ for all $m\leq n\leq
p\leq T$) and that the process $(X_n,0\leq n\leq T)$ under $\PP^{S^T_{0,\cdot}}_x$ is a (time-inhomogeneous) Markov process, where we
denote by $\PP^{S^T_{0,\cdot}}_x$ the law of the process $(X_n,0\leq n\leq T)$ conditionally on $T<\tau_\d$ and $X_0=x$.

Fix $\theta\in(\theta_1/\theta_2,1)$. For any $T\geq 0$, we set, for $x\in E$,
\begin{align*}
\psi_T(x)=\E_x(\theta^{- T_K\wedge T}\mid T<\tau_\d)=\E_x^{S^T_{0,\cdot}}\left(\theta^{- T_K\wedge
    T}\right),
\end{align*}
where 
\begin{align*}
T_K:=\inf\{n\in\ZZ_+:X_n\in K\}
\end{align*}
is the first hitting time of $K$ by the process $(X_n,n\in\ZZ_+)$. Be careful that $ T_K$ is not the first hitting time of $K$ by
the full process $(X_t,t\in I)$, unless $I=\ZZ_+$.

The following proposition provides a Lyapunov-type property for the inhomogeneous semigroup $S$.
\begin{prop}
    \label{prop:lyap-conditioned}
    There exists a constant $\bar{C}>0$ such that, for all $0\leq m< T$ and $1\leq k\leq T-m$,
    \begin{align}
    S^{T}_{m,m+k}\psi_{T-(m+k)}(x)\leq \theta^{k} \psi_{T-m}(x)+\bar{C},\quad\forall x\in E.
    \end{align}
\end{prop}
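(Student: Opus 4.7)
The plan is to reduce the inequality for general $k$ to the one-step case via iteration, then handle the one-step bound by splitting on whether the starting point lies in $K$. Since each transition kernel $S^T_{i,i+1}$ is Markovian (and hence preserves constants), a one-step bound $S^T_{i,i+1}\psi_{T-i-1}(y)\leq \theta\,\psi_{T-i}(y)+\bar{C}_1$ iterates to
\[
  S^T_{m,m+k}\psi_{T-m-k}(x)\leq \theta^k\psi_{T-m}(x)+\bar{C}_1\sum_{i=0}^{k-1}\theta^i\leq \theta^k\psi_{T-m}(x)+\frac{\bar{C}_1}{1-\theta}.
\]
By time-homogeneity we may set $m=0$, and unfolding the definitions using the Markov property at time $1$ produces the working identity
\[
  S^T_{0,1}\psi_{T-1}(x)\,\PP_x(T<\tau_\d)=\EE_x\!\left[\theta^{-(T_K\circ\theta_1)\wedge(T-1)}\,\11_{T<\tau_\d}\right],
\]
where $T_K\circ\theta_1:=\inf\{n\geq 0:X_{n+1}\in K\}$ denotes the first hitting time of $K$ strictly after time $0$.

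For $x\notin K$ we have $T_K\geq 1$ almost surely, hence $T_K\circ\theta_1=T_K-1$ and $\theta^{-(T_K\circ\theta_1)\wedge(T-1)}=\theta\cdot\theta^{-T_K\wedge T}$, giving the exact equality $S^T_{0,1}\psi_{T-1}(x)=\theta\psi_T(x)$; the one-step inequality then holds with $\bar{C}_1=0$. For $x\in K$ we have $T_K=0$ and so $\psi_T(x)=1$; writing $\sigma_K=\inf\{n\geq 1:X_n\in K\}$ for the first return time to $K$, we have $T_K\circ\theta_1=\sigma_K-1$, and the inequality reduces to proving
\[
  \EE_x\!\left[\theta^{-\sigma_K\wedge T}\,\11_{T<\tau_\d}\right]\leq C\,\PP_x(T<\tau_\d)
\]
uniformly in $T\geq 1$ and $x\in K$. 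We split this expectation over $\{\sigma_K>T\}$ and $\{\sigma_K\leq T\}$.

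On $\{\sigma_K>T\}$: iterating (E2) on the sub-Markov semigroup killed on entering $K$ (for which the correction $c_2\11_K$ vanishes after the first step) gives $\PP_x(\sigma_K\wedge\tau_\d>T)\leq C_1\theta_1^T$, while iterating $P_1\varphi_2\geq \theta_2\varphi_2$ produces the lower bound $\PP_x(T<\tau_\d)\geq \theta_2^T\inf_K\varphi_2$; the hypothesis $\theta>\theta_1/\theta_2$, i.e.\ $\theta\theta_2>\theta_1$, then ensures that $\theta^{-T}\PP_x(\sigma_K>T,T<\tau_\d)/\PP_x(T<\tau_\d)\leq C\,(\theta_1/(\theta\theta_2))^T$ is uniformly bounded. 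On $\{\sigma_K\leq T\}$: strong Markov at $\sigma_K$ together with the Harnack inequality (E3) reduces the task to the bound
\[
  \sum_{s=1}^{T}\theta^{-s}\,\PP_x(\sigma_K=s)\,\PP_x(T-s<\tau_\d)\leq C'\,\PP_x(T<\tau_\d).
\]
This last estimate is the main obstacle: the weights $\theta^{-s}\PP_x(\sigma_K=s)$ are absolutely summable thanks to the exponential tail $\PP_x(\sigma_K\geq s)\leq C\theta_1^{s-1}$ and the inequality $\theta>\theta_1$, but converting summability into the desired proportionality to $\PP_x(T<\tau_\d)$ requires coupling the Harnack-based identity $\PP_x(T<\tau_\d)\geq c_3^{-1}\sum_s\PP_x(\sigma_K=s)\PP_x(T-s<\tau_\d)$ with a careful control of the ratios $\PP_x(T-s<\tau_\d)/\PP_x(T<\tau_\d)$, where the spectral gap $\theta\theta_2>\theta_1$ is once again essential to absorb the $\theta^{-s}$ growth into the decay rates granted by (E2).
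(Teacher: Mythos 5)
Your architecture coincides with the paper's: reduce to a one-step bound and iterate using that $S^T_{i,i+1}$ preserves constants; observe the exact identity $S^T_{0,1}\psi_{T-1}(x)=\theta\,\psi_T(x)$ for $x\notin K$; and for $x\in K$ reduce everything to the uniform bound $\sup_{T\ge 0}\sup_{x\in K}\EE_x\bigl(\theta^{-\sigma_K\wedge T}\mid T<\tau_\d\bigr)<\infty$ (the paper's auxiliary lemma). Your treatment of the event $\{\sigma_K>T\}$ is correct: the tail bound $\PP_x(T<\sigma_K\wedge\tau_\d)\le C\theta_1^{T}$ together with $\PP_x(T<\tau_\d)\ge\theta_2^T\inf_K\varphi_2$ and $\theta>\theta_1/\theta_2$ does the job.

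The gap is in the event $\{\sigma_K\le T\}$, which you yourself flag as ``the main obstacle'' and then describe rather than prove. The estimate you need is
\begin{equation*}
  \PP_x(T-s<\tau_\d)\;\le\; C\,\theta_2^{-s}\,\PP_x(T<\tau_\d),\qquad\text{uniformly in }x\in K,\ 0\le s\le T,
\end{equation*}
and this does not follow from the Harnack condition (E3) alone: (E3) compares survival probabilities of different starting points in $K$ at the \emph{same} horizon, whereas here you must compare the \emph{different} horizons $T-s$ and $T$ from the same point. The identity $\PP_x(T<\tau_\d)\ge c_3^{-1}\sum_s\PP_x(\sigma_K=s)\PP_x(T-s<\tau_\d)$ that you invoke is too weak, because the weights $\PP_x(\sigma_K=s)$ decay geometrically and admit no useful lower bound. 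The missing ingredient is the uniform minorization $\inf_{T\ge n_6}\inf_{x\in K}\PP_x(X_T\in K\mid T<\tau_\d)>0$, which the paper derives from (E1), (E2) \emph{and} (E4) (its Lemmas~\ref{lem:E'4} and~\ref{lem:to-be-in-K}); combined with the Markov property and $P_1\varphi_2\ge\theta_2\varphi_2$ it yields $\PP_x(T<\tau_\d)\ge\PP_x(X_s\in K)\,\inf_{y\in K}\PP_y(T-s<\tau_\d)\ge c\,\theta_2^{s}\,\inf_{y\in K}\PP_y(T-s<\tau_\d)$, which is exactly the horizon comparison required. Your proposal never uses (E1) or (E4), so as written the argument cannot close; supplying this minorization (and the small adjustments for $s\le n_6$) is the substantive remaining work.
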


The next proposition provides a Dobrushin coefficient-type property for the inhomogeneous semigroup $S$.
\begin{prop}
    \label{prop:dobrushin-conditioned}
    There exists a constant $\alpha_0\in(0,1)$ such that, for all $R>0$, there exists $k_R\geq 1$ such that,
    for all $T\geq k_R$ and all $x,y\in E$ such that
    $\psi_T(x)+\psi_T(y)\leq R$, we have
    \begin{align*}
    \left\|\delta_xS_{0,k_R}^T-\delta_y S_{0,k_R}^T\right\|_{TV}\leq 2(1-\alpha_0).
    \end{align*}
\end{prop}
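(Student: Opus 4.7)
The plan is to construct, for $x,y\in E$ with $\psi_T(x)+\psi_T(y)\le R$ and $T\ge k_R$, a common sub-probability minorant for the two measures $\delta_x S^T_{0,k_R}$ and $\delta_y S^T_{0,k_R}$ of total mass at least $\alpha_0>0$, where $\alpha_0$ depends only on the constants of Assumption~(E) (not on $R,T,x,y$), while $k_R$ is allowed to depend on $R$. Setting $k_R:=n_0+n_1$, Markov's inequality applied to the nonnegative random variable $\theta^{-T_K\wedge T}$ under the conditioned law $\PP_x^{S^T_{0,\cdot}}$ yields $\PP_x^{S^T_{0,\cdot}}(T_K>n_0)\le R\theta^{n_0}$; choosing $n_0=n_0(R)$ so that $R\theta^{n_0}\le 1/2$ gives $\PP_x(T_K\le n_0, T<\tau_\d)\ge P_T\11_E(x)/2$, and similarly for $y$.

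Applying the strong Markov property of $X$ at $T_K$, then invoking Assumption~(E1) at the shifted time $n_1$ on the event $\{T_K\le n_0\}$ (where $T_K+n_1\le k_R$), one obtains, for every measurable $A\subseteq E$:
\begin{equation*}
\EE_x\!\left[\11_A(X_{k_R})(P_{T-k_R}\11_E)(X_{k_R})\11_{T_K\le n_0,\,k_R<\tau_\d}\right] \ge c_1\sum_{j=0}^{n_0}\PP_x(T_K=j)\int_A (P_{T-k_R}\11_E)(u)\,(\nu P_{n_0-j})(du).
\end{equation*}
On the other hand, the strong Markov identity $\PP_x(T_K=j,T<\tau_\d)=\EE_x[\11_{T_K=j}(P_{T-j}\11_E)(X_j)]$, combined with Assumption~(E3) on $K$ and the pointwise monotonicity $P_{T-j}\11_E\le P_{T-j-n_1}\11_E$, yields
\begin{equation*}
\PP_x(T_K=j,T<\tau_\d)\le c_3\,\PP_x(T_K=j)\,(\nu P_{n_0-j})(P_{T-k_R}\11_E).
\end{equation*}
Summing over $j\le n_0$ and combining with the first step shows that the right-hand side of the first displayed inequality, divided by $P_T\11_E(x)$, has total mass at least $c_1/(2c_3)$, uniformly in $x$ and $T$.

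The lower bound thus obtained is a $\PP_x$-mixture $\sum_{j=0}^{n_0}w_j^x\,\bar\sigma_j^T$ of probability measures $\bar\sigma_j^T$ (proportional to $(P_{T-k_R}\11_E)(u)(\nu P_{n_0-j})(du)$) that do not depend on $x$, with nonnegative weights $w_j^x$ summing to at least $c_1/(2c_3)$. To turn this into a \emph{common} minorant for $\delta_x S^T_{0,k_R}$ and $\delta_y S^T_{0,k_R}$, one must show that the family $(\bar\sigma_j^T)_{0\le j\le n_0}$ admits a common sub-probability minorant of mass bounded below by some $\beta>0$ depending only on $c_1$ and $c_3$. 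This is the main obstacle: a naive iteration of (E1) along the intermediate $P_{n_1}$-blocks of $\nu P_{n_0-j}$ produces a minorant of mass shrinking like $c_1^{n_0/n_1}$, which would make $\alpha_0$ depend on $R$. To prevent this, Assumption~(E3) is used to control the ratios of $(P_{T-\cdot}\11_E)$-values on $K$ and to extract an $n_0$-independent lower bound from the very last $n_1$ steps of each $\nu P_{n_0-j}$, producing a common minorant proportional to the reference measure $\bar\sigma_{n_0}^T$ (supported on $K$).

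Combining these steps yields $\delta_x S^T_{0,k_R}\wedge \delta_y S^T_{0,k_R}\ge \alpha_0\,\bar\sigma_{n_0}^T$ for some $\alpha_0>0$ independent of $R,T,x,y$, from which the stated estimate $\|\delta_x S^T_{0,k_R}-\delta_y S^T_{0,k_R}\|_{TV}\le 2(1-\alpha_0)$ follows.
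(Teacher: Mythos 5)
Your proposal starts along the same lines as the paper (Markov's inequality under $\PP_x^{S^T_{0,\cdot}}$ to localize $T_K$, the strong Markov property at $T_K$, (E1) for the minorization, (E3) to compare survival probabilities on $K$), and the mixture decomposition with total weight at least $c_1/(2c_3)$ is correct. The difference is that you apply (E1) immediately \emph{after} $T_K$, which produces minorizing measures $\bar\sigma_j^T\propto (P_{T-k_R}\11_E)\,d(\nu P_{n_0-j})$ that genuinely depend on the hitting time $j$; the paper instead applies (E1) on the \emph{last} $n_1$ steps of the window, which yields directly a minorant proportional to the fixed measure $\nu$ for every $j$ (this is Lemma~\ref{lem:prop-dobrushin}: $\PP_z(X_n\in\cdot\mid T<\tau_\d)\geq c_1'\nu$ for all $z\in K$ and all $n_1+n_6\leq n\leq T$, with $c_1'$ independent of $n$ and $T$), so no re-unification step is needed.

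The genuine gap is in your final step. You correctly identify that the family $(\bar\sigma_j^T)_j$ must admit a common minorant of mass independent of $n_0$ (hence of $R$), but the claim that ``(E3) is used to control the ratios \dots and to extract an $n_0$-independent lower bound from the very last $n_1$ steps of each $\nu P_{n_0-j}$'' is not a proof and cannot be carried out with (E3) alone. Writing $\nu P_{n_0-j}=(\nu P_{n_0-j-n_1})P_{n_1}$ and applying (E1) to the last block gives a minorant with mass proportional to $(\nu P_{n_0-j-n_1})(K)$, and reducing $\bar\sigma_j^T\geq\beta\,\bar\sigma_{n_0}^T$ to a uniform $\beta$ requires an estimate of the form
\begin{align*}
  (\nu P_{m})(K)\,\PP_\nu(T-k_R<\tau_\d)\;\geq\;c\,\PP_\nu\bigl(m+(T-k_R)<\tau_\d\bigr),\quad m=n_0-j-n_1,
\end{align*}
uniformly in $m$ and $T$. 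Condition (E3) only compares survival probabilities for starting points \emph{in} $K$; to control the contribution of the mass of $\nu P_m$ lying outside $K$ one needs the survival comparison $\PP_z(n<\tau_\d)\leq C\varphi_1(z)\inf_{y\in K}\PP_y(n<\tau_\d)$ (Lemma~\ref{lem:comp-surv}, which uses (E2) and (E3)), the moment bound of Lemma~\ref{lem:borne-P_n-phi1/phi2}, and crucially the uniform conditional return-to-$K$ estimate $\inf_{n\geq n_6}\inf_{x\in K}\PP_x(X_n\in K\mid n<\tau_\d)>0$ (Lemma~\ref{lem:to-be-in-K}), which rests on the aperiodicity/irreducibility condition (E4) via Lemma~\ref{lem:E'4}. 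Without these ingredients the mass of your common minorant could a priori degenerate as $n_0-j\to\infty$, and $\alpha_0$ would not be independent of $R$. In short, the hardest part of the proposition is exactly this uniform-in-time minorization, and your proposal names it but does not establish it.
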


The following property is a consequence of the two previous ones.
\begin{prop}
    \label{prop:expo-conv-conditioned}
    There exist constants $n_0\geq 1$, $C>0$ and $\alpha\in(0,1)$ such that, $\forall n\geq 1$ and all $x,y\in E$,
    \begin{align*}
    \left\|\delta_x S_{0,n_0 n}^{n_0 n}-\delta_y S_{0,n_0 n}^{n_0 n}\right\|_{TV}\leq C\alpha^n(2+\psi_{n_0 n}(x)+\psi_{n_0 n}(y)).
    \end{align*}
\end{prop}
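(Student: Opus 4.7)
The plan is to adapt the Hairer--Mattingly weighted-semimetric approach to the time-inhomogeneous conditioned semigroup $S^T$. For a small parameter $\beta \in (0, 1]$ to be chosen later, I introduce
\[
\rho_T(x, y) := \11_{x \neq y}\bigl(2 + \beta\psi_T(x) + \beta\psi_T(y)\bigr),
\]
and denote by $\mathcal{W}_T(\mu, \nu) := \inf_\pi \int \rho_T\, d\pi$ the associated transport cost over couplings $\pi$ of $\mu$ and $\nu$. The core of the proof is to establish a one-step contraction: there exist $n_0 \geq 1$ and $\alpha \in (0, 1)$ such that, for all $T \geq n_0$ and all $x, y \in E$,
\[
\inf_\pi \int \rho_{T - n_0}(x', y')\, \pi(dx', dy') \leq \alpha\, \rho_T(x, y),
\]
where the infimum is over couplings of $\delta_x S^T_{0, n_0}$ and $\delta_y S^T_{0, n_0}$.

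To prove this contraction I would first iterate Proposition~\ref{prop:lyap-conditioned} to obtain the $n_0$-step drift $S^T_{0, n_0}\psi_{T - n_0}(z) \leq \theta^{n_0}\psi_T(z) + \bar{C}/(1 - \theta)$, and then split into two cases. In the ``large'' case $\psi_T(x) + \psi_T(y) > R$, the trivial (product) coupling combined with the drift gives
\[
\int \rho_{T - n_0}\, d\pi \leq 2 + \beta\theta^{n_0}(\psi_T(x) + \psi_T(y)) + \frac{2\beta\bar{C}}{1 - \theta},
\]
which is at most $\alpha(2 + \beta(\psi_T(x) + \psi_T(y)))$ provided $\theta^{n_0} < \alpha$ and $R$ is large enough. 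In the ``small'' case $\psi_T(x) + \psi_T(y) \leq R$, Proposition~\ref{prop:dobrushin-conditioned} (valid once $n_0 \geq k_R$) yields a coupling $\pi$ with $\pi(X' \neq Y') \leq 1 - \alpha_0$, hence
\[
\int \rho_{T - n_0}\, d\pi \leq 2(1 - \alpha_0) + \beta\theta^{n_0} R + \frac{2\beta\bar{C}}{1 - \theta},
\]
which is bounded by $2\alpha$ for $\beta$ small and $n_0$ large. The main technical obstacle is to balance the parameters $\alpha, \beta, R, n_0$ so that both cases contract at the same rate: I would set $\alpha := 1 - \alpha_0/2$, then fix $\beta$ small enough that $2\beta\bar{C}/(1 - \theta) \leq \alpha_0/2$, then take $R$ proportional to $1/\beta$ as required by the large case, and finally choose $n_0 \geq k_R$ large enough to kill the residual $\beta\theta^{n_0} R$ term while keeping $\theta^{n_0} \leq \alpha$.

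To conclude, I would iterate the one-step contraction $n$ times. Using the identity $S^T_{m, m+k} = S^{T - m}_{0, k}$, which follows directly from the definitions, the one-step contraction applies at each time slice with index $T_k = n_0(n - k)$ for $k = 0, 1, \ldots, n - 1$, all of which are at least $n_0 \geq k_R$. By chaining the optimal one-step couplings through a standard gluing lemma for transport costs, this yields
\[
\mathcal{W}_0\bigl(\delta_x S^{n_0 n}_{0, n_0 n},\ \delta_y S^{n_0 n}_{0, n_0 n}\bigr) \leq \alpha^n\, \rho_{n_0 n}(x, y).
\]
Since $\psi_0 \equiv 1$ on $E$, we have $\rho_0(x, y) = (2 + 2\beta)\11_{x \neq y}$ and therefore $\mathcal{W}_0(\mu, \nu) = (1 + \beta)\|\mu - \nu\|_{TV}$ for probability measures on $E$. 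Combined with the elementary bound $\rho_{n_0 n}(x, y) \leq 2 + \psi_{n_0 n}(x) + \psi_{n_0 n}(y)$ (using $\beta \leq 1$), this produces the claimed estimate with a suitable constant $C$.
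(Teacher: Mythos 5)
Your proof is correct and is essentially the paper's own argument: the paper runs the same Hairer--Mattingly scheme in the dual formulation, contracting the weighted seminorm $\vertiii{\varphi}_T=\sup_{x,y}|\varphi(x)-\varphi(y)|/(2+\beta\psi_T(x)+\beta\psi_T(y))$ over test functions via the same two-case split (large versus small $\psi_T(x)+\psi_T(y)$), the same inputs (Propositions~\ref{prop:lyap-conditioned} and~\ref{prop:dobrushin-conditioned}) and the same parameter balancing, with $R=4\bar C/(1-\theta)$ and $\beta=\alpha_0/2\bar C$ fixed explicitly and $k_R$ playing the role of your $n_0$. Your primal coupling/transport-cost version, the gluing step, and the identification $\mathcal{W}_0=(1+\beta)\|\cdot\|_{TV}$ via $\psi_0\equiv 1$ all check out.
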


Let us now deduce~\eqref{eq:thm-main-zxzx} with the total variation norm in place of $\|\cdot\|_{TV(\varphi_1)}$, from the last
proposition. We have, for all $x,y\in E$,
\begin{multline*}
\left\|\delta_x P_{nn_0} -\delta_x P_{nn_0}\11_E\,\delta_y S_{0,n_0 n}^{n_0 n}\right\|_{TV}\\
\leq C\alpha^n\left(2\delta_x P_{nn_0}\11_E+\E_x\left(\theta^{- T_K\wedge nn_0}\11_{nn_0<\tau_\d}\right)+\psi_{n_0 n}(y)\delta_x P_{nn_0}\11_E\right).
\end{multline*}
Hence, for any probability measure $\mu$ on $E$, integrating the above inequality over $\mu(\mathrm dx)$ leads to
\begin{multline*}
\left\|\mu P_{nn_0} -\mu P_{nn_0}\11_E\,\delta_y S_{0,n_0 n}^{n_0 n}\right\|_{TV}\\
\leq C\alpha^n\left(2\mu P_{nn_0}\11_E+\E_\mu\left(\theta^{- T_K\wedge nn_0}\11_{nn_0<\tau_\d}\right)+\psi_{n_0 n}(y)\mu P_{nn_0}\11_E\right).
\end{multline*}
We make use of the following lemma.

\begin{lem}
    \label{lem:expo-moment-mu}
    For all $\theta\in(\theta_1/\theta_2,1)$, there exists a constant $C$ such that, for
    all $0\leq m\leq T$ and all probability measure $\mu$ over $E$ such that $\mu(\varphi_2)>0$,
    \begin{align*}
    \E_\mu\left(\theta^{- T_K\wedge T}\11_{T<\tau_\d}\right)\leq C\,\frac{\mu(\varphi_1)}{\mu(\varphi_2)} \P_\mu\left(T<\tau_\d\right).
    \end{align*}
\end{lem}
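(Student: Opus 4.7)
The plan is to split the expectation according to whether the process visits $K$ before time $T$. I would write
$\theta^{-T_K\wedge T}\11_{T<\tau_\d} = \theta^{-T}\11_{T<T_K\wedge\tau_\d} + \theta^{-T_K}\11_{T_K\le T<\tau_\d}$
and bound the two pieces separately, in each case exploiting the lower bound $\PP_\mu(T<\tau_\d)\ge \theta_2^T\mu(\varphi_2)$ obtained by iterating the Lyapunov inequality $P_1\varphi_2\ge\theta_2\varphi_2$ from (E2) and using $\varphi_2\le 1$.

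For the no-hit piece, I would iterate $P_1\varphi_1\le\theta_1\varphi_1$ restricted to $E\setminus K$: more precisely, using the substochastic kernel $Q(x,dy):=P_1(x,dy)\11_{y\notin K}$, one has $Q\varphi_1(x)\le\theta_1\varphi_1(x)$ for every $x\in E\setminus K$, and iterating gives $\PP_\mu(T<T_K\wedge\tau_\d)\le \theta_1^T\mu(\varphi_1)$. Multiplying by $\theta^{-T}$ produces $(\theta_1/\theta)^T\mu(\varphi_1)\le \theta_2^T\mu(\varphi_1)$, the last step using the standing hypothesis $\theta>\theta_1/\theta_2$. Combined with $\PP_\mu(T<\tau_\d)\ge \theta_2^T\mu(\varphi_2)$, this piece is already of the form $(\mu(\varphi_1)/\mu(\varphi_2))\PP_\mu(T<\tau_\d)$.

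For the hit piece, I would apply the strong Markov property at $T_K$ to rewrite it as $\EE_\mu(\theta^{-T_K}\11_{T_K\le T}\PP_{X_{T_K}}(T-T_K<\tau_\d))$. On the event $\{T_K\le T\}$ we have $X_{T_K}\in K$, so the local Harnack inequality (E3) yields the comparison $\PP_{X_{T_K}}(m<\tau_\d)\le c_3\,h(m)$ with $h(m):=\inf_{z\in K}\PP_z(m<\tau_\d)$. Applying the same strong Markov decomposition to the denominator and using (E3) the other way gives the matching lower bound $\PP_\mu(T<\tau_\d)\ge \PP_\mu(T_K\le T<\tau_\d)\ge c_3^{-1}\EE_\mu(\11_{T_K\le T}h(T-T_K))$. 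The hit piece divided by $\PP_\mu(T<\tau_\d)$ is then bounded by $c_3^2$ times the ratio of $\sum_{k=0}^T\theta^{-k}\PP_\mu(T_K=k)h(T-k)$ to $\sum_{k=0}^T\PP_\mu(T_K=k)h(T-k)$.

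Controlling this last ratio is the crux of the proof. I would plug in the exponential-moment estimate $\PP_\mu(T_K=k)\le A\theta_1^k\mu(\varphi_1)$ (another consequence of iterating the $\varphi_1$-Lyapunov, in the spirit of Lemma~\ref{lem:varphi1}) in the numerator, and use the lower bound $h(m)\ge \theta_2^m\inf_K\varphi_2$ from the $\varphi_2$-Lyapunov together with $\PP_\mu(T<\tau_\d)\ge\theta_2^T\mu(\varphi_2)$ to bring in $\mu(\varphi_2)$. The main obstacle is the geometric balancing: the naive bound $h(T-k)\le 1$ loses the $\theta_2^T$ factor needed to cancel the decay of $\PP_\mu(T<\tau_\d)$, so one must carefully split the sum according to whether $(\theta_1/\theta)^k h(T-k)$ behaves like $\theta_2^T$ or dominates, and then invoke the convergence of the geometric series of common ratio $\theta_1/(\theta\,\theta_2)<1$ to collapse the remaining $T$-dependence and leave only the desired factor $\mu(\varphi_1)/\mu(\varphi_2)$.
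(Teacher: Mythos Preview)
Your no-hit piece is correct and matches the paper. The gap is in the hit piece: the ratio
\[
\frac{\sum_{k=0}^{T}\theta^{-k}p_k\,h(T-k)}{\sum_{k=0}^{T}p_k\,h(T-k)},\qquad p_k=\PP_\mu(T_K=k),\quad h(m)=\inf_{z\in K}\PP_z(m<\tau_\d),
\]
cannot be bounded by $C\mu(\varphi_1)/\mu(\varphi_2)$ using only the tail estimate $p_k\le A\theta_1^k\mu(\varphi_1)$, the bounds $h(m)\ge\theta_2^m\inf_K\varphi_2$ and $\PP_\mu(T<\tau_\d)\ge\theta_2^T\mu(\varphi_2)$, and~(E3). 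The obstruction is that $h$ decays at the true rate $\theta_0\ge\theta_2$, so the crude lower bound $\theta_2^T\mu(\varphi_2)$ on the denominator is off by a factor $(\theta_0/\theta_2)^T$ relative to the numerator. Your proposed split at a threshold $n_0$ produces either a factor $\theta^{-n_0}$ (from the small-$k$ part, compared to the hit-piece denominator) or a factor $(\theta_1/\theta)^{n_0}\theta_2^{-T}$ (from the large-$k$ part, compared to $\theta_2^T\mu(\varphi_2)$); no choice of $n_0$ depending only on $\mu(\varphi_1)/\mu(\varphi_2)$ makes both pieces bounded uniformly in $T$.

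The paper does not try to control this ratio. Instead, it upgrades the lower bound on $\PP_\mu(T<\tau_\d)$: Lemmas~\ref{lem:borne-P_n-phi1/phi2} and~\ref{lem:E'4} show that the ratio $\mu P_n\varphi_1/\mu P_n\varphi_2$ falls below a fixed constant after a time $n_\mu\asymp\log(\mu(\varphi_1)/\mu(\varphi_2))$, which forces $\PP_\mu(X_n\in K)\ge a_5\,\PP_\mu(n<\tau_\d)\ge a_5\theta_2^n\mu(\varphi_2)$, and hence
\[
\PP_\mu(T<\tau_\d)\ \ge\ a_5\,\theta_2^{n}\,\mu(\varphi_2)\,h(T-n)\qquad\text{for every }n\ge n_\mu.
\]
This is combined, for each fixed $n$, with the numerator bound $\PP_\mu(n<T_K,\,T<\tau_\d)\le C\theta_1^n\mu(\varphi_1)h(T-n)$ coming from Lemma~\ref{lem:comp-surv}. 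The factors $h(T-n)$ cancel, giving $\PP_\mu(n<T_K\mid T<\tau_\d)\le C(\theta_1/\theta_2)^n\mu(\varphi_1)/\mu(\varphi_2)$ for all $n\ge n_\mu$; summing against $\theta^{-n}$ handles the tail, and the range $n<n_\mu$ is disposed of by the trivial bound $\theta^{-n_\mu}\le C\mu(\varphi_1)/\mu(\varphi_2)$, which is exactly why $n_\mu$ is chosen logarithmic in that ratio. The ingredient you are missing is thus the ``return-to-$K$'' mechanism (Lemmas~\ref{lem:borne-P_n-phi1/phi2}--\ref{lem:to-be-in-K}), not merely~(E3) and the two Lyapunov inequalities.
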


This implies that, for all $\mu$ such that $\mu(\varphi_2)>0$,
\begin{multline*}
\left\|\mu P_{nn_0} -\delta_y S_{0,n_0 n}^{n_0 n}\mu P_{nn_0}\11_E\right\|_{TV} \\
\leq C\alpha^n\left(2\mu P_{nn_0}\11_E+
\frac{\mu(\varphi_1)}{\mu(\varphi_2)} \mu P_{nn_0}\11_E +\psi_{n_0 n}(y)\mu P_{nn_0}\11_E\right).
\end{multline*}
Hence
\begin{align*}
\left\|\frac{\mu P_{nn_0}}{\mu P_{nn_0}\11_E} -\delta_y S_{0,n_0 n}^{n_0 n}\right\|_{TV} &\leq C\alpha^n\left(2+
\frac{\mu(\varphi_1)}{\mu(\varphi_2)} +\psi_{n_0 n}(y)\right).
\end{align*}
Using the same procedure w.r.t.\ $y$, we deduce that, for any probability measures $\mu_1$ and $\mu_2$ on $E$ such that
$\mu_1(\varphi_2)>0$ and $\mu_2(\varphi_2)>0$,
\begin{align*}
\left\|\frac{\mu_1 P_{nn_0}}{\mu_1 P_{nn_0}\11_E} -\frac{\mu_2 P_{nn_0}}{\mu_2 P_{nn_0}\11_E}\right\|_{TV}
&\leq C\alpha^n\left(\frac{\mu_1(\varphi_1)}{\mu_1(\varphi_2)}+\frac{\mu_2(\varphi_1)}{\mu_2(\varphi_2)}\right),
\end{align*}
where we used the fact that $\mu(\varphi_1)/\mu(\varphi_2)\geq 1$ for all probability measure $\mu$ on $E$ such that $\mu(\varphi_2)>0$.

Because of Lemma~\ref{lem:borne-P_n-phi1/phi2} below, we deduce that, for some constant $D_1>0$ and for all $0\leq k< n_0$,
\begin{align*}
\left\|\frac{\mu_1 P_{nn_0+k}}{\mu_1 P_{nn_0+k}\11_E} -\frac{\mu_2 P_{nn_0+k}}{\mu_2 P_{nn_0+k}\11_E}\right\|_{TV}
&\leq C\alpha^n\left(\frac{\mu_1 P_k\varphi_1}{\mu_1 P_k\varphi_2} +\frac{\mu_2 P_k\varphi_1}{\mu_2
    P_k\varphi_2}\right) \\ & \leq C\alpha^n\left(\frac{\mu_1(\varphi_1)}{\mu_1(\varphi_2)}\vee D_1
+\frac{\mu_2(\varphi_1)}{\mu_2(\varphi_2)}\vee D_1\right).
\end{align*}
Therefore, up to a change in the constant $C$ and replacing $\alpha$ by $\alpha^{1/n_0}$, we deduce that, for all probability
measures $\mu_1$ and $\mu_2$ on $E$ such that $\mu_1(\varphi_2)>0$ and $\mu_2(\varphi_2)>0$ and for all $n\geq 0$,
\begin{align}
\label{eq:pf-main-1}
\left\|\frac{\mu_1 P_{n}}{\mu_1 P_{n}\11_E} -\frac{\mu_2 P_{n}}{\mu_2 P_{n}\11_E}\right\|_{TV}
&\leq C\alpha^n\left(\frac{\mu_1(\varphi_1)}{\mu_1(\varphi_2)} +\frac{\mu_2(\varphi_1)}{\mu_2(\varphi_2)}\right).
\end{align}
Fix $x_0\in K$. We set $\mu_1=\delta_{x_0}$ and $\mu_2=\frac{\mu_1P_1}{\mu_1P_1\11_E}$ in~\eqref{eq:pf-main-1}. Since
$\frac{\mu_1\varphi_1}{\mu_1\varphi_2}<\infty$ and because of Lemma~\ref{lem:borne-P_n-phi1/phi2} below, we have
$\frac{\mu_2\varphi_1}{\mu_2\varphi_2}<\infty$. We deduce that, for some constant $C>0$,
$$
\left\|\frac{\delta_{x_0} P_{n+1}}{\delta_{x_0} P_{n+1}\11_E} -\frac{\delta_{x_0} P_{n}}{\delta_{x_0} P_{n}\11_E}\right\|_{TV}
\leq C\alpha^n,
$$
and hence, using the completeness of the space of probability measures on $E$ for the total variation norm, we deduce that there exists a quasi-limiting measure $\nu_{QSD}$ (which is hence a quasi-stationary
distribution) such that
\begin{align*}
\left\|\frac{\delta_{x_0} P_{n}}{\delta_{x_0} P_{n}\11_E} -\nu_{QSD}\right\|_{TV}
&\leq \frac{2 C}{1-\alpha}\alpha^n.
\end{align*}
In particular, it follows from Lemma~\ref{lem:to-be-in-K} below that $\nu_{QSD}(K)>0$ and hence that $\nu_{QSD}(\varphi_2)>0$. Since
Lemma~\ref{lem:borne-P_n-phi1/phi2} implies that $\frac{P_n\varphi_1(x_0)}{P_n\11_E(x_0)}$ is uniformly bounded in $n\geq 0$, we
deduce that $\nu_{QSD}(\varphi_1\wedge M)$ is bounded uniformly in $M>0$ and hence $\nu_{QSD}(\varphi_1)<\infty$.

Using~\eqref{eq:pf-main-1} again (up to another change of the constant $C$), we obtain that, for all probability measure $\mu$ on $E$
such that $\frac{\mu(\varphi_1)}{\mu(\varphi_2)}<\infty$,
\begin{align*}
\left\|\frac{\mu P_{n}}{\mu P_{n}\11_E} -\nu_{QSD}\right\|_{TV}
&\leq C\alpha^n \frac{\mu(\varphi_1)}{\mu(\varphi_2)}.
\end{align*}
This also entails that there exists a unique quasi-stationary distribution such that
$\nu_{QSD}(\varphi_1)/\nu_{QSD}(\varphi_2)<\infty$.

This ends the proof of~\eqref{eq:thm-main-zxzx} for the total variation norm. The general case with the norm
$\|\cdot\|_{TV(\varphi_1)}$ is proved in Subsection~\ref{sec:pf-cor-main}.

\subsection{Preliminary results}

We start by proving two basic inequalities which are direct consequences of (E2). 

\begin{lem}
    \label{lem:basic-inequalities}
    For all $x\in E\setminus K$ and all $n\geq 0$,
    $$
    \PP_x(n< T_K\wedge\tau_\d)\leq\EE_x[\varphi_1(X_n)\11_{n< T_K\wedge\tau_\d}]\leq\theta_1^n\varphi_1(x).
    $$
    For all $x\in E$ and $n\geq 0$,
    $$
    \PP_x(n<\tau_\d)\geq\EE_x[\varphi_2(X_n)\11_{n<\tau_\d}]\geq\theta_2^n\varphi_2(x).
    $$
\end{lem}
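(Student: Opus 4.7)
Both inequalities will follow by a straightforward induction on $n$ combined with the Markov property at time $1$ and the pointwise bounds in (E2). The outer inequalities $\PP_x(n<T_K\wedge\tau_\d)\leq\EE_x[\varphi_1(X_n)\11_{n<T_K\wedge\tau_\d}]$ and $\PP_x(n<\tau_\d)\geq\EE_x[\varphi_2(X_n)\11_{n<\tau_\d}]$ are immediate from the pointwise bounds $\varphi_1\geq 1$ on $E$ and $\varphi_2\leq 1$ on $E$ in~(E2); there is nothing to do there.

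For the upper bound involving $\varphi_1$, I would induct on $n$, the case $n=0$ being trivial for $x\in E\setminus K$. For the step, observe that on $\{n+1<T_K\wedge\tau_\d\}$ the whole trajectory $X_0,\ldots,X_{n+1}$ lies in $E\setminus K$, so in particular $X_1\in E\setminus K$. Applying the Markov property at time $1$ gives
\[
\EE_x\bigl[\varphi_1(X_{n+1})\11_{n+1<T_K\wedge\tau_\d}\bigr] = \EE_x\bigl[\11_{X_1\in E\setminus K}\,\EE_{X_1}[\varphi_1(X_n)\11_{n<T_K\wedge\tau_\d}]\bigr].
\]
By the induction hypothesis applied to $X_1\in E\setminus K$, the inner expectation is bounded by $\theta_1^n\varphi_1(X_1)$, so the right-hand side is at most $\theta_1^n\,P_1\varphi_1(x)$. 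The third line of~(E2) then yields $P_1\varphi_1(x)\leq\theta_1\varphi_1(x)$ since $\11_K(x)=0$, which closes the induction.

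For the lower bound involving $\varphi_2$, I would again induct on $n$, with the case $n=0$ holding because $x\in E$ forces $X_0\neq\d$. For the step, the Markov property at time $1$ with the convention $\varphi_2(\d)=0$ gives
\[
\EE_x\bigl[\varphi_2(X_{n+1})\11_{n+1<\tau_\d}\bigr] = \EE_x\bigl[\EE_{X_1}[\varphi_2(X_n)\11_{n<\tau_\d}]\,\11_{1<\tau_\d}\bigr] \geq \theta_2^n\,\EE_x[\varphi_2(X_1)\11_{1<\tau_\d}] = \theta_2^n P_1\varphi_2(x),
\]
using the induction hypothesis on each $X_1\in E$. Since the fourth line of~(E2) holds on all of $E$ with no correction term, we conclude $P_1\varphi_2(x)\geq\theta_2\varphi_2(x)$, finishing the induction. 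There is no real obstacle: the argument is a one-line induction in each case, and the only delicate point is to make sure the induction hypothesis is applied on the correct subset ($E\setminus K$ for $\varphi_1$, all of $E$ for $\varphi_2$), reflecting the fact that the Lyapunov inequality for $\varphi_1$ carries the indicator $\11_K$ while the one for $\varphi_2$ does not.
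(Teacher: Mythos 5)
Your proof is correct and follows essentially the same route as the paper's: a one-step induction via the Markov property at time $1$ combined with the Lyapunov inequalities of (E2), with the key observation that on $\{n+1<T_K\wedge\tau_\d\}$ one has $X_1\in E\setminus K$ so the induction hypothesis for $\varphi_1$ applies there and the indicator $\11_K(x)$ vanishes. The outer inequalities from $\varphi_1\geq 1$ and $\varphi_2\leq 1$ are handled exactly as intended.
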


\begin{proof}[Proof of Lemma~\ref{lem:basic-inequalities}]
    These two properties follow easily by induction from (E2). For example, the first one makes use of the following relation: for all
    $n\geq 1$ and $x\in E$,
    \begin{equation*}
    \EE_x[\varphi_1(X_n)\11_{n< T_K\wedge\tau_\d}]=\11_{x\in E\setminus
        K}\,P_1\left[\EE_\cdot\left(\varphi_1(X_{n-1})\11_{n-1< T_K\wedge\tau_\d}\right)\right](x).
    \end{equation*}
    This and~(E2) entail the property at time $n=1$ and, by induction, at any time $n\geq 1$.
\end{proof}

The next lemma states that the expectation of $\varphi_1(X_n)$ is controlled by the expectation of $\varphi_2(X_n)$ uniformly in
time.

\begin{lem}
    \label{lem:borne-P_n-phi1/phi2}
    For all $\theta\in(\theta_1/\theta_2,1]$, there exists a finite constant $D_\theta>0$ such that, for all probability measure $\mu$
    on $E$ such that $\mu(\varphi_1)/\mu(\varphi_2)<\infty$, for all $T\in\ZZ_+$ and all $x\in E$,
    \begin{equation}
    \label{eq:lem-borne-p_n-phi1/phi2}
    \frac{\mu P_T\varphi_1}{\mu P_T\varphi_2}\leq\left(\theta^T\frac{\mu(\varphi_1)}{\mu(\varphi_2)}\right)\vee D_\theta.    
    \end{equation}
\end{lem}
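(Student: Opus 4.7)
The plan is to reduce the statement to a scalar recursion for $r_T:=\mu P_T\varphi_1/\mu P_T\varphi_2$ and then close it by induction on $T$. Note first that (E2) gives $\mu P_T\varphi_2\geq\theta_2^T\mu(\varphi_2)>0$ whenever $\mu(\varphi_2)>0$, so $r_T$ is well-defined throughout.

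First I would apply the two Lyapunov inequalities of (E2) under $\mu P_T$ to obtain
\[
\mu P_{T+1}\varphi_1\leq\theta_1\,\mu P_T\varphi_1+c_2\,\mu P_T\11_K,\qquad \mu P_{T+1}\varphi_2\geq\theta_2\,\mu P_T\varphi_2.
\]
The key observation is that $\inf_K\varphi_2>0$, so that $\11_K\leq\varphi_2/\inf_K\varphi_2$ on $E$, which upgrades the first bound to $c_2\,\mu P_T\11_K\leq(c_2/\inf_K\varphi_2)\mu P_T\varphi_2$. Dividing the first display by the second then yields the scalar recursion
\[
r_{T+1}\leq s\,r_T+c,\qquad s:=\theta_1/\theta_2\in(0,1),\quad c:=c_2/(\theta_2\inf_K\varphi_2).
\]

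Given $\theta\in(s,1]$, I would set $D_\theta:=c/(\theta-s)$ and prove by induction on $T$ that $r_T\leq(\theta^T r_0)\vee D_\theta$, where $r_0=\mu(\varphi_1)/\mu(\varphi_2)$. The base case $T=0$ is immediate. For the inductive step, I split on whether $\theta^T r_0\geq D_\theta$. If yes, then by induction $r_T\leq\theta^T r_0$, and since $\theta^T r_0\geq c/(\theta-s)$ rearranges to $c\leq(\theta-s)\theta^T r_0$, one gets $r_{T+1}\leq s\theta^T r_0+c\leq\theta^{T+1}r_0$. If no, then by induction $r_T\leq D_\theta$, and $r_{T+1}\leq sD_\theta+c\leq D_\theta$, the last inequality holding because $\theta\leq 1$ forces $\theta-s\leq 1-s$ and hence $c=(\theta-s)D_\theta\leq(1-s)D_\theta$. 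In both cases $r_{T+1}\leq\theta^{T+1}r_0\vee D_\theta$, closing the induction.

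There is no serious obstacle: the main subtlety is the choice of threshold $D_\theta=c/(\theta-s)$, which is precisely calibrated so that the case-one comparison $c\leq(\theta-s)\theta^T r_0$ holds free of charge from the case assumption, while the hypothesis $\theta\leq 1$ is exactly what makes case two absorbing. Note that $D_\theta$ blows up as $\theta\downarrow\theta_1/\theta_2$, which is the expected behavior since the recursion $r_{T+1}\leq sr_T+c$ cannot contract faster than at rate $s=\theta_1/\theta_2$.
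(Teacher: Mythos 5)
Your proof is correct and follows essentially the same route as the paper: both derive the recursion $r_{T+1}\leq(\theta_1/\theta_2)r_T+c_2/(\theta_2\inf_K\varphi_2)$ from (E2) together with the bound $\11_K\leq\varphi_2/\inf_K\varphi_2$, and the paper then simply asserts that this arithmetico-geometric inequality yields the claim. Your explicit induction with $D_\theta=c/(\theta-s)$ is a clean and correct way of filling in that last step.
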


\begin{proof}[Proof of Lemma~\ref{lem:borne-P_n-phi1/phi2}]
    It follows from (E2) that
    $$
    \mu P_{T+1}\varphi_1\leq\theta_1 \mu P_T\varphi_1+C\mu P_T\11_K
    $$
    and
    $$
    \mu P_{T+1}\varphi_2\geq\theta_2\mu P_T\varphi_2.
    $$
    Hence
    \begin{align*}
    \frac{\mu P_{T+1}\varphi_1}{\mu P_{T+1}\varphi_2}
    &\leq \frac{\theta_1 \mu P_{T}\varphi_1+C\mu P_T\11_K(x)}{\theta_2 \mu P_{T}\varphi_2}\\
    &\leq \frac{\theta_1}{\theta_2}\, \frac{\mu P_T\varphi_1}{\mu P_{T}\varphi_2}+\frac{C}{\theta_2\inf_{y\in K}\varphi_2(y)}.
    \end{align*}
    Since $\theta_1/\theta_2<\theta$, these arithmetico-geometric inequalities entail~\eqref{eq:lem-borne-p_n-phi1/phi2}.
\end{proof}

We now give an irreducibility inequality.

\begin{lem}
    \label{lem:E'4}
    For all $C\geq 1$, there exists a time $n_5(C)\in\N$ such that
    \begin{align}
    \label{eq:E'4-repl}
    a_5(C):=\inf_{\mu\in\mathcal{M}_1(E)\text{ s.t.\ }\mu(\varphi_1)\leq C\mu(\varphi_2)}\P_\mu(X_{n_5(C)}\in K)>0.
    \end{align}
\end{lem}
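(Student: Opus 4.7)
The plan is to combine the Lyapunov bounds from~(E2) with the local Dobrushin inequality~(E1) and the aperiodicity~(E4) to control $\PP_\mu(X_{n_5}\in K)$ from below. I will first estimate the probability of hitting $K$ before a deterministic time $N=N(C)$, then transfer this into a lower bound at a fixed later time $n_5(C)$ via a uniform return-to-$K$ estimate on $K$.

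From Lemma~\ref{lem:basic-inequalities}, for any $N\in\NN$ one has $\PP_\mu(N<\tau_\d)\geq \theta_2^N\mu(\varphi_2)$ and $\PP_\mu(N<T_K\wedge\tau_\d)\leq \theta_1^N\mu(\varphi_1)$. Since $\varphi_1\geq 1$, the hypothesis $\mu(\varphi_1)\leq C\mu(\varphi_2)$ forces $\mu(\varphi_2)\geq 1/C$. Choosing $N=N(C)$ so that $C(\theta_1/\theta_2)^N\leq 1/2$ and subtracting yields
\[
\PP_\mu(T_K\leq N,\,N<\tau_\d)\geq \theta_2^N\mu(\varphi_2)-\theta_1^N\mu(\varphi_1)\geq \frac{\theta_2^N}{2C}>0.
\]

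The main obstacle is then to convert this hitting-time bound into a bound at a fixed deterministic time. I would establish that $\alpha:=\inf_{m\in[M_0,M_0+N]}\nu P_m\11_K>0$ for some finite $M_0\in\NN$. By~(E4), for each $y\in K$ there exists $n_4(y)$ with $\PP_y(X_n\in K)>0$ for all $n\geq n_4(y)$, so the measurable sets $E_k:=\bigcap_{n\geq k}\{y:P_n\11_K(y)>0\}$ form a non-decreasing sequence with $\bigcup_k E_k=K$; by continuity of measure $\nu(E_{M_0})>0$ for some $M_0$. For every $m\geq M_0$ the integrand in $\nu P_m\11_K=\int_K \PP_y(X_m\in K)\,\nu(dy)$ is strictly positive on the set $E_{M_0}$ of positive $\nu$-mass, whence $\nu P_m\11_K>0$; since only finitely many indices $m$ are considered, their infimum $\alpha$ is strictly positive. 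Applying~(E1) one then obtains, uniformly in $y\in K$ and $m\in[M_0,M_0+N]$,
\[
\PP_y(X_{n_1+m}\in K)\geq c_1\,\nu P_m\11_K\geq c_1\alpha.
\]

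Finally, I set $n_5(C):=n_1+M_0+N$, so that $n_5(C)-k\in[n_1+M_0,n_1+M_0+N]$ for every $k\in\{0,\ldots,N\}$. Using the strong Markov property at $T_K$ (and $X_{T_K}\in K$ on $\{T_K<\infty\}$),
\[
\PP_\mu(X_{n_5(C)}\in K)\geq\sum_{k=0}^{N}\EE_\mu\!\left[\11_{T_K=k}\,\PP_{X_k}(X_{n_5(C)-k}\in K)\right]\geq c_1\alpha\,\PP_\mu(T_K\leq N)\geq \frac{c_1\alpha\,\theta_2^N}{2C},
\]
which gives the desired positive lower bound $a_5(C)\geq c_1\alpha\theta_2^N/(2C)$.
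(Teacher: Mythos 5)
Your proof is correct and follows essentially the same route as the paper's: a Lyapunov comparison of $\theta_2^N\mu(\varphi_2)$ against $\theta_1^N\mu(\varphi_1)$ to force $T_K\leq N(C)$ with probability at least $\theta_2^N/(2C)$, followed by a uniform lower bound on $\P_y(X_{n}\in K)$ for $y\in K$ and $n$ in a finite window, obtained from (E1) and (E4), and glued together by the Markov property at $T_K$. The only (welcome) difference is that you make explicit, via the measurable sets $E_k$, the measure-theoretic step ensuring $\nu P_m\11_K>0$ for $m$ large, which the paper leaves implicit.
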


\begin{proof}[Proof of Lemma~\ref{lem:E'4}]
    It follows from (E4) that there exists a time $n_\nu\in\N $ such that, for all $n\geq n_\nu$, 
    $\P_\nu(X_{n}\in K)>0$, and, using~(E1), that for all $n\geq n_\nu+n_1$,
    \[
    \inf_{x\in K}\P_x(X_n\in K)\geq c_1\P_\nu(X_{n-n_1}\in K)>0.
    \]
    Let $C\geq 1$ and $\mu$ be such that $\mu(\varphi_1)\leq C\mu(\varphi_2)$. It follows from Lemma~\ref{lem:basic-inequalities} that,
    for all $n\geq 1$,
    \begin{align*}
    \P_\mu( T_K\wedge\tau_\d> n)\leq\E_\mu\left[\varphi_1(X_n)\11_{ T_K\wedge\tau_\d> n}\right]\leq\theta_1^{n}\mu(\varphi_1)\leq C\theta_1^n\mu(\varphi_2).
    \end{align*}
    and
    \begin{align*}
    \PP_\mu(n<\tau_\d)\geq\EE_\mu[\varphi_2(X_n)]\geq\theta_2^n\mu(\varphi_2).
    \end{align*}
    Therefore,
    \begin{align*}
    \P_\mu( T_K\leq n<\tau_\d)\geq \left(\theta_2^n- C\theta_1^n\right)\mu(\varphi_2).
    \end{align*}
    Choosing $n(C)=\lceil 2C/\log(\theta_2/\theta_1)\rceil$, we deduce that
    \begin{align*}
    \P_\mu( T_K\leq n(C)<\tau_\d)\geq \frac{\theta_2^{n(C)}}{2}\mu(\varphi_2)\geq \frac{\theta_2^{n(C)}}{2C}.
    \end{align*}
    Therefore,
    \begin{align*}
    \P_\mu(X_{n(C)+n_\nu+n_1}\in K) & \geq\E_\mu\left[\11_{ T_K\leq n(C)}\restriction{\P_{X_{ T_K}}(X_{n(C)+n_\nu+n_1-k}\in
        K)}{k= T_K}\right] \\ & \geq\min_{n_\nu+n_1\leq k\leq n_\nu+n_1+n(C)}\,\inf_{x\in K}\P_x(X_{k}\in K)\,\frac{\theta_2^{n(C)}}{2C}.
    \end{align*}
    Hence we have proved Lemma~\ref{lem:E'4} with $n_5(C)=n_\nu+n_1+n(C)$.
\end{proof}

The next lemma shows that conditional distributions with initial conditions in $K$ give to $K$ a mass uniformly bounded from below.

\begin{lem}
    \label{lem:to-be-in-K} There exists a time $n_6\in\N$ such that
    \begin{align*}
    \inf_{T\geq n_6}\inf_{x\in K}\P_x(X_T\in K\mid T<\tau_\d)>0.
    \end{align*}
\end{lem}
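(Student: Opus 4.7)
The strategy is to invoke Lemma~\ref{lem:E'4} at time $n_5$ with the initial distribution $\mu_T^x := \PP_x(X_T \in \cdot \mid T < \tau_\d)$ (well-defined for $x \in K$ and $T \geq 0$ under the working assumption that $\PP_x(T<\tau_\d) > 0$). The output of Lemma~\ref{lem:E'4} will then provide the desired lower bound on the conditional probability at the shifted time $T + n_5$, and taking $n_6 = n_5$ will cover all sufficiently large times.

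For this plan to work, I first need a uniform-in-$T$ upper bound on the ratio $\mu_T^x(\varphi_1)/\mu_T^x(\varphi_2) = P_T \varphi_1(x)/P_T \varphi_2(x)$. Fixing any $\theta \in (\theta_1/\theta_2, 1)$, Condition~(E2) yields $\varphi_1(x)/\varphi_2(x) \leq C_K := (\sup_K \varphi_1)/(\inf_K \varphi_2) < \infty$ for $x \in K$, and Lemma~\ref{lem:borne-P_n-phi1/phi2} applied to $\mu = \delta_x$ upgrades this to $P_T \varphi_1(x)/P_T \varphi_2(x) \leq C' := C_K \vee D_\theta$ for every $T \geq 0$ and every $x \in K$. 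Dividing numerator and denominator by $P_T\11_E(x)$ rewrites this as $\mu_T^x(\varphi_1) \leq C' \mu_T^x(\varphi_2)$, which is precisely the hypothesis of Lemma~\ref{lem:E'4}.

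Applying Lemma~\ref{lem:E'4} with the constant $C = C'$ then yields an integer $n_5 := n_5(C')$ and a constant $a := a_5(C') > 0$ for which $\PP_{\mu_T^x}(X_{n_5} \in K) \geq a$. Using the Markov property at time $T$, together with the inclusion $\{X_{T+n_5} \in K\} \subset \{T+n_5<\tau_\d\} \subset \{T<\tau_\d\}$ (valid since $K \subset E$ and $\d$ is absorbing), this left-hand side is exactly $\PP_x(X_{T+n_5} \in K)/\PP_x(T<\tau_\d)$; the trivial monotonicity $\PP_x(T<\tau_\d) \geq \PP_x(T+n_5<\tau_\d)$ then transforms the estimate into $\PP_x(X_{T+n_5} \in K \mid T+n_5<\tau_\d) \geq a$. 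Setting $n_6 := n_5$ and letting $T' = T + n_5$ range over all integers $\geq n_6$ delivers the claim. I do not anticipate any genuine obstacle: once the ratio $\mu_T^x(\varphi_1)/\mu_T^x(\varphi_2)$ is uniformly bounded along the semigroup, the quantitative irreducibility of Lemma~\ref{lem:E'4} plugs in at a single extra time step.
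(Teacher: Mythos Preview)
Your proof is correct and follows essentially the same approach as the paper: both combine Lemma~\ref{lem:borne-P_n-phi1/phi2} (to bound $\mu_T^x(\varphi_1)/\mu_T^x(\varphi_2)$ uniformly) with Lemma~\ref{lem:E'4} (to obtain the lower bound on reaching $K$). The only cosmetic difference is that the paper applies Lemma~\ref{lem:E'4} to the distribution at time $T-n_5(C)$ and propagates forward to time $T$, whereas you apply it at time $T$ and propagate to $T+n_5$; your extra monotonicity step $\PP_x(T<\tau_\d)\geq\PP_x(T+n_5<\tau_\d)$ plays the same role as the paper's use of $\mu P_{n_5(C)}\11_E\leq 1$.
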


\begin{proof}[Proof of Lemma~\ref{lem:to-be-in-K}]
    Since $\varphi_1/\varphi_2$ is bounded over $K$, we deduce from Lemma~\ref{lem:borne-P_n-phi1/phi2} that, setting $C:=D_1+\sup_{x\in
        K}\frac{\varphi_1(x)}{\varphi_2(x)}$, we have for all $x\in K$ and all $T\geq n_5(C)$,
    \begin{align}
    \label{eq:pf-to-be-in-K-a}
    \frac{P_{T-n_5(C)}\varphi_1(x)}{P_{T-n_5(C)}\varphi_2(x)}
    &\leq C.
    \end{align}
    Using Lemma~\ref{lem:E'4} applied to $\mu=\frac{\delta_x P_{T-n_5(C)}}{\delta_x P_{T-n_5(C)}\11_E}$, we deduce that, for all $x\in K$ and
    $T\geq n_5(C)$,
    \begin{equation*}
    \P_x(X_T\in K\mid T<\tau_\d)= \frac{\mu P_{n_5(C)}\11_K}{\mu P_{n_5(C)}\11_E}\geq \mu P_{n_5(C)}\11_K \geq a_5(C). \qedhere
    \end{equation*}
\end{proof}

The next lemma shows that survival probabilities are controlled by the function $\varphi_1$.

\begin{lem}
    \label{lem:comp-surv}
    There exists a constant $C>0$ such that, for all $p\in[1,\log\theta_1/\log\theta_2)$, $x\in E$ and $n\geq 1$,
    \begin{align}
    \label{eq:comp-surv-2}
    \P_x(n<\tau_\d)\leq C\, \frac{\varphi_1(x)^{1/p}}{1-\theta_1^{1/p}/\theta_2}\,\inf_{y\in K}\P_y(n<\tau_\d).
    \end{align}
\end{lem}

\begin{proof}[Proof of Lemma~\ref{lem:comp-surv}]
    It follows from Lemma~\ref{lem:basic-inequalities} that, for all $p\geq 1$, $x\in E\setminus K$ and $n\geq 1$,
    \begin{align}
    \label{eq:comp-surv-1}
    \P_x(n< T_K\wedge\tau_\d)\leq \theta_1^{n/p}\varphi_1(x)^{1/p}.
    \end{align}
    Note that this inequality is trivial for $x\in K$. In particular, for $p\geq 1$ such that $\theta_1^{1/p}<\theta_2$, for all $x\in K$,
    \begin{align}
    \label{eq:pf-comp-surv-a}
    \E_x(\theta_2^{- T_K\wedge\tau_\d})\leq\frac{\varphi_1(x)^{1/p}}{1-\theta_1^{1/p}/\theta_2}.
    \end{align}
    
    Fix $p\in[1,\log\theta_1/\log\theta_2)$. Using~\eqref{eq:comp-surv-1}, the second inequality
    of Lemma~\ref{lem:basic-inequalities} and~(E3), we have for all $x\in E$
    \begin{align}
    \P_x(n<\tau_\d)&= \P_x(n< T_K\wedge \tau_\d)+\P_x( T_K\wedge\tau_\d\leq n <\tau_\d) \notag \\
    &\leq \theta_2^n\varphi_1(x)^{1/p} 
    +\sum_{k=0}^n \P_x( T_K\wedge\tau_\d=k)\sup_{y\in K} \P_y(n-k<\tau_\d) \notag \\
    &\leq \frac{\inf_{z\in K} \P_z(n<\tau_\d)}{\inf_{z\in K}\varphi_2(z)}\,\varphi_1(x)^{1/p}
    +c_3\sum_{k=0}^n\P_x( T_K\wedge\tau_\d=k)\inf_{y\in K} \P_y(n-k<\tau_\d)  \notag   \\
    &\leq C \inf_{z\in K} \P_z(n<\tau_\d)\,\varphi_1(x)^{1/p} +C\inf_{z\in
        K}\P_z(n<\tau_\d)\sum_{k=0}^n \P_x( T_K\wedge\tau_\d=k) \theta_2^{-k}, \label{eq:pf-comp-survv}
    \end{align}
    where we used the fact that, for some constant $C>0$, for all $n\geq k\geq 0$ and all $z\in K$,  
    \begin{align}
    \label{eq:lem:comp-surv-1}
    \P_z(n<\tau_\d)\geq  C\theta_2^k\inf_{y\in K}\P_y(n-k<\tau_\d).
    \end{align}
    This is proved using the three following equations. For all $n\geq k\geq n_6$ and all $z\in K$, by Lemmata~\ref{lem:to-be-in-K}
    and~\ref{lem:basic-inequalities},
    \begin{align*}
    \P_z(n<\tau_\d)&\geq \P_z(X_k\in K\mid k<\tau_\d)\P_z(k<\tau_\d) \inf_{y\in K}\P_y(n-k<\tau_\d)\\
    &\geq C\theta_2^k\varphi_2(z)\inf_{y\in K}\P_y(n-k<\tau_\d)\\
    &\geq C\theta_2^k\inf_{y\in K}\P_y(n-k<\tau_\d),
    \end{align*}
    since $\inf_{z\in K}\varphi_2(z)>0$. Also, for all $n\geq n_6\geq k$, using the last inequality,
    \begin{align*}
    \P_z(n<\tau_\d)&\geq C\theta_2^{n_6}\inf_{y\in K}\P_y(n-n_6<\tau_\d)\\
    &\geq C\theta_2^{n_6}\, \inf_{y\in K}\P_y(n-k<\tau_\d) \\
    &\geq (C\theta_2^{n_6})\, \theta_2^k\, \inf_{y\in K}\P_y(n-k<\tau_\d).
    \end{align*}
    Finally, for all $k\leq n< n_6$,
    \begin{align*}
    \P_z(n<\tau_\d)&\geq \P_z(n_6<\tau_\d)\geq  C\theta_2^{n_6}\geq (C\theta_2^{n_6})\, \theta_2^k\, \inf_{y\in K}\P_y(n-k<\tau_\d),
    \end{align*}
    so~\eqref{eq:lem:comp-surv-1} is proved.
    
    Combining~\eqref{eq:pf-comp-surv-a} and~\eqref{eq:pf-comp-survv} ends the proof of Lemma~\ref{lem:comp-surv}.
\end{proof}

\subsection{Proof of Proposition~\ref{prop:lyap-conditioned}}
Markov's property implies that, for all $x\in E\setminus K$ and $T,m\geq 1$,
\begin{align}
\label{eq:pf-prop-lyap-conditioned-1}
S^{T}_{0,1}\psi_{T-1}(x) & =S^{T+m}_{m,m+1}\psi_{T-1}(x)=\theta \psi_{T}(x).
\end{align}
Indeed,
\begin{align*}
\theta\psi_T(x)
& =\frac{\E_x(\theta^{1- T_K\wedge T}\11_{T<\tau_\d})}{\P_x(T<\tau_\d)} \\
& =\frac{\E_x\left[\11_{1<\tau_\d}\E_{X_1}(\theta^{- T_K\wedge(T-1)}\mid
    T-1<\tau_\d)\P_{X_1}(T-1<\tau_\d)\right]}{\P_x(T<\tau_\d)}=S^{T}_{0,1}\psi_{T-1}(x).
\end{align*}
Similarly, for all $x\in $K,
\begin{align}
\label{eq:pf-prop-lyap-conditioned-2}
S^{T}_{0,1}\psi_{T-1}(x) & =S^{T+m}_{m,m+1}\psi_{T-1}(x)=\theta\EE^{S^T_{0,\cdot}}_x(\theta^{-\sigma_K\wedge T}),
\end{align}
where 
\begin{align*}
\sigma_K:=\min\{n\geq 1,\ X_n\in K\}
\end{align*}
is the first return time in $K$. Setting
\begin{align*}
C & :=\sup_{T\geq 0}\sup_{x\in K}\E^{S_{0,\cdot}^T}_x(\theta^{-\sigma_K\wedge T}),
\end{align*}
which is finite (see Lemma~\ref{lem:expo-moment-x-in-K}), we can apply recursively~\eqref{eq:pf-prop-lyap-conditioned-1}
and~\eqref{eq:pf-prop-lyap-conditioned-2} to obtain
\begin{align*}
S_{m,m+k}^T\psi_{T-(m+k)} & = S_{m,m+k-1}^T\left(\11_{E\setminus K}S_{m+k-1,m+k}^T(\psi_{T-(m+k)})\right) \\ & \qquad
+S_{m,m+k-1}^T\left(\11_{K}S_{m+k-1,m+k}^T(\psi_{T-(m+k)})\right) \\ &
\leq\theta S_{m,m+k-1}^T\psi_{T-(m+k-1)}+C\theta \\ & \leq\ldots\leq
\theta^k \psi_{T-m}(x)+C\sum_{\ell=1}^k \theta^{\ell}.
\end{align*}
Hence Proposition~\ref{prop:lyap-conditioned} follows from the next lemma.

\begin{lem}
    \label{lem:expo-moment-x-in-K}
    For all $\theta\in(\theta_1/\theta_2,1)$, 
    \begin{align*}
    \sup_{T\geq 0}\sup_{x\in K}\E^{S_{0,\cdot}^T}_x(\theta^{-\sigma_K\wedge T})<\infty.
    \end{align*}
\end{lem}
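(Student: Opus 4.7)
\textbf{Proof proposal for Lemma~\ref{lem:expo-moment-x-in-K}.} The plan is to expand the conditional expectation
\[
\E^{S^T_{0,\cdot}}_x(\theta^{-\sigma_K\wedge T}) = \frac{1}{\P_x(T<\tau_\d)}\left(\sum_{k=1}^{T}\theta^{-k}\E_x[\11_{\sigma_K=k}\P_{X_k}(T-k<\tau_\d)] + \theta^{-T}\P_x(\sigma_K>T,\ T<\tau_\d)\right),
\]
obtained by applying the strong Markov property at $\sigma_K$ on the event $\{\sigma_K\leq T\}$, and then to bound the two terms separately, uniformly in $T\geq 1$ and $x\in K$.

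For the sum (i.e.\ the $\{\sigma_K\leq T\}$ part), since $X_{\sigma_K}\in K$, Assumption~(E3) yields $\P_{X_{\sigma_K}}(T-\sigma_K<\tau_\d)\leq c_3\inf_{y\in K}\P_y(T-\sigma_K<\tau_\d)$, and then the inequality~\eqref{eq:lem:comp-surv-1} established in the proof of Lemma~\ref{lem:comp-surv}, namely $\inf_{y\in K}\P_y(T-k<\tau_\d)\leq \frac{1}{C\theta_2^k}\P_x(T<\tau_\d)$ for $x\in K$, gives
\[
\sum_{k=1}^{T}\theta^{-k}\E_x[\11_{\sigma_K=k}\P_{X_k}(T-k<\tau_\d)] \leq \frac{c_3}{C}\P_x(T<\tau_\d)\,\E_x\!\left[(\theta\theta_2)^{-\sigma_K\wedge\tau_\d}\right].
\]
Since $\theta>\theta_1/\theta_2$, we have $\gamma:=\theta\theta_2>\theta_1$; applying the Markov property at time $1$ together with~(E2) and the bound $\E_y[\gamma^{-T_K\wedge\tau_\d}]\leq \varphi_1(y)/(1-\theta_1/\gamma)$ from~\eqref{eq:pf-comp-surv-a}, one checks that $\sup_{x\in K}\E_x[\gamma^{-\sigma_K\wedge\tau_\d}]<\infty$. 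Dividing by $\P_x(T<\tau_\d)$ then produces a uniform bound for the first part.

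For the $\{\sigma_K>T\}$ part, conditioning on $X_1$ and applying Lemma~\ref{lem:basic-inequalities} to the shifted process (which at time $1$ lies outside $K$ on this event) gives
\[
\P_x(\sigma_K>T,\ T<\tau_\d) \leq \E_x\!\left[\11_{X_1\notin K,\,1<\tau_\d}\,\theta_1^{T-1}\varphi_1(X_1)\right] \leq \theta_1^{T-1}(\theta_1\sup_K\varphi_1+c_2),
\]
by~(E2). Combined with the lower bound $\P_x(T<\tau_\d)\geq \theta_2^T\inf_K\varphi_2$ from Lemma~\ref{lem:basic-inequalities}, the contribution of this part is dominated by a constant multiple of $(\theta_1/(\theta\theta_2))^T$, which is uniformly bounded since $\theta\theta_2>\theta_1$. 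Summing the two uniform bounds yields the lemma.

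The main subtlety will be the first part: one must chain together~(E3) with the Harnack-type comparison~\eqref{eq:lem:comp-surv-1} to convert $\P_{X_{\sigma_K}}(T-\sigma_K<\tau_\d)$ into $\theta_2^{-\sigma_K}\P_x(T<\tau_\d)$, so that after dividing by the denominator the remaining expectation is exactly $\E_x[(\theta\theta_2)^{-\sigma_K\wedge\tau_\d}]$ with $\theta\theta_2>\theta_1$; this is the point where the hypothesis $\theta>\theta_1/\theta_2$ is used and the Lyapunov bound from~(E2) finally closes the estimate.
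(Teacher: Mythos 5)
Your proof is correct and rests on essentially the same two estimates as the paper's: the Lyapunov decay $\theta_1^{n}$ for the excursion outside $K$ (via (E2) and \eqref{eq:pf-comp-surv-a}) and the survival lower bound $\P_x(T<\tau_\d)\geq C\theta_2^{k}\inf_{y\in K}\P_y(T-k<\tau_\d)$ of \eqref{eq:lem:comp-surv-1}, combined so that the conditional law of $\sigma_K$ given $\{T<\tau_\d\}$ has geometric tails of rate $\theta_1/\theta_2<\theta$. The only difference is bookkeeping: the paper bounds the tail $\P_x(n<\sigma_K\mid T<\tau_\d)\leq C(\theta_1/\theta_2)^n$ directly by applying Lemma~\ref{lem:comp-surv} at a deterministic time $n$, whereas you decompose on $\{\sigma_K=k\}$ and use (E3) at the hitting position before reducing to $\E_x[(\theta\theta_2)^{-\sigma_K\wedge\tau_\d}]$ — the two are equivalent.
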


\begin{proof}[Proof of Lemma~\ref{lem:expo-moment-x-in-K}]
    Fix $x\in K$. On the one hand, by Lemma~\ref{lem:comp-surv} (with $p=1$), we have for any $1\leq n<T$,
    \begin{align*}
    \P_x(n<\sigma_K\text{ and }T<\tau_\d)& =\E_x(\11_{n<\sigma_K\wedge\tau_\d}\P_{X_n}(T-n<\tau_\d))\\
    &\leq C\inf_{y\in K}\P_y(T-n<\tau_\d)\E_x(\11_{n<\sigma_K\wedge\tau_\d}\varphi_1(X_n)).
    \end{align*}
    Using (E2) and Markov's property as in the proof of Lemma~\ref{lem:basic-inequalities}, we deduce
    \begin{align}
    \label{eq:pf-lem-expo-in-K}
    \P_x(n<\sigma_K\text{ and }T<\tau_\d) 
    &\leq C\inf_{y\in K}\P_y(T-n<\tau_\d)\theta_1^{n-1} P_1\varphi_1(x)\\
    &\leq C\inf_{y\in K}\P_y(T-n<\tau_\d)\theta_1^n.
    \end{align}
    On the other hand, Lemma~\ref{lem:to-be-in-K} implies the existence of a constant $C>0$ such that, for all $x\in K$ and all $n\geq n_6$,
    \begin{align*}
    \P_x(X_n\in K)\geq C\P_x(n<\tau_\d).
    \end{align*}
    We deduce from Markov's property and Lemma~\ref{lem:basic-inequalities} that, for all $T\geq n\geq n_6$,
    \begin{align*}
    \P_x(T<\tau_\d)&\geq \P_x(X_n\in K)\inf_{y\in K} \P_y(T-n<\tau_\d)\\
    &\geq C\P_x(n<\tau_\d)\inf_{y\in K} \P_y(T-n<\tau_\d)\\
    &\geq C\theta_2^n \inf_{y\in K} \P_y(T-n<\tau_\d).
    \end{align*}
    Combining this with~\eqref{eq:pf-lem-expo-in-K}, we finally deduce that there exists a
    constant $C>0$ such that, for all $x\in K$ and all $T\geq n\geq n_6$,
    \begin{align}
    \label{eq:proba-x-in-K-not-in-K}
    \P_x(n<\sigma_K\mid T<\tau_\d)\leq C \left(\frac{\theta_1}{\theta_2}\right)^n.
    \end{align}
    The extension to any $T\geq n$ is trivial, so the conclusion follows.
\end{proof}

\subsection{Proof of Proposition~\ref{prop:dobrushin-conditioned}}

We start by stating a lemma
proved at the end of this subsection.
\begin{lem}
    \label{lem:prop-dobrushin}
    For all $x\in K$ and $n_1+n_6\leq n\leq T$,
    \begin{equation}
    \label{eq:lem-prop-dobrushin}
    \PP_x(X_n\in\cdot\mid T<\tau_\d)\geq c'_1\nu,    
    \end{equation}
    where the measure $\nu$ and the integer $n_1$ are the one of Condition~(E1), the integer $n_6$ is from Lemma~\ref{lem:E'4} and
    $c'_1>0$ is independent of $x,n$ and $T$.
\end{lem}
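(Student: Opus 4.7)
The plan is to show, for $x\in K$, $n_1+n_6\le n\le T$, and any measurable $A\subset E$, that
\[
\PP_x(X_n\in A\mid T<\tau_\d)\ \ge\ c'_1\,\nu(A),
\]
by bounding $\PP_x(X_n\in A,\,T<\tau_\d)$ from below and $\PP_x(T<\tau_\d)$ from above in such a way that a common ``tail'' factor $\beta:=\sup_{w\in K}\PP_w(T-n+n_1<\tau_\d)$ cancels. The key tools are (E1) applied at the last block of $n_1$ steps, (E3) to compare survival probabilities inside $K$, Lemma~\ref{lem:to-be-in-K} to force $X_{n-n_1}\in K$ with conditional probability bounded below, and Lemma~\ref{lem:comp-surv} together with Lemma~\ref{lem:borne-P_n-phi1/phi2} to dominate the denominator.

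For the numerator, I would first use the Markov property at time $n-n_1$ to write
\[
\PP_x(X_n\in A,\,T<\tau_\d)=\EE_x\!\left[\11_{n-n_1<\tau_\d}\,\PP_{X_{n-n_1}}\bigl(X_{n_1}\in A,\,T-n+n_1<\tau_\d\bigr)\right].
\]
On the event $\{X_{n-n_1}\in K\}$, another Markov step together with (E1) gives, for $y\in K$,
\[
\PP_y(X_{n_1}\in A,\,T-n+n_1<\tau_\d)\ \ge\ c_1\!\int_{A\cap K}\!\PP_z(T-n<\tau_\d)\,\nu(dz).
\]
Since $\PP_z(T-n<\tau_\d)\ge \PP_z(T-n+n_1<\tau_\d)\ge c_3^{-1}\beta$ for $z\in K$ by monotonicity and (E3), and since $\nu$ is carried by $K$, the inner probability is at least $c_1c_3^{-1}\beta\,\nu(A)$. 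Lemma~\ref{lem:to-be-in-K} (applicable because $n-n_1\ge n_6$ and $x\in K$) then yields $\PP_x(X_{n-n_1}\in K,\,n-n_1<\tau_\d)\ge c_6\,\PP_x(n-n_1<\tau_\d)$, so that altogether
\[
\PP_x(X_n\in A,\,T<\tau_\d)\ \ge\ c_1c_3^{-1}c_6\,\nu(A)\,\beta\,\PP_x(n-n_1<\tau_\d).
\]

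For the denominator, applying Markov at the same time $n-n_1$ gives
$\PP_x(T<\tau_\d)=\EE_x[\11_{n-n_1<\tau_\d}\,\PP_{X_{n-n_1}}(T-n+n_1<\tau_\d)]$. Lemma~\ref{lem:comp-surv} with $p=1$ yields $\PP_y(T-n+n_1<\tau_\d)\le C\,\varphi_1(y)\inf_K\PP_\cdot(T-n+n_1<\tau_\d)\le C\,\varphi_1(y)\,\beta$, so that
$\PP_x(T<\tau_\d)\le C\,\beta\,P_{n-n_1}\varphi_1(x)$. The factor $\beta$ thus cancels in the ratio, leaving
\[
\PP_x(X_n\in A\mid T<\tau_\d)\ \ge\ \frac{c_1c_3^{-1}c_6}{C}\,\nu(A)\,\frac{\PP_x(n-n_1<\tau_\d)}{P_{n-n_1}\varphi_1(x)}.
\]
Finally, Lemma~\ref{lem:borne-P_n-phi1/phi2} with $\theta=1$ applied to $\mu=\delta_x$, $x\in K$, combined with $\varphi_2\le 1$, gives $P_m\varphi_1(x)\le D_0\,\PP_x(m<\tau_\d)$ uniformly in $m\ge 0$ and $x\in K$, where $D_0:=(\sup_K\varphi_1/\inf_K\varphi_2)\vee D_1$. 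This produces a uniform lower bound $c'_1:=c_1c_3^{-1}c_6/(CD_0)>0$ as required.

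The main obstacle is that the natural ``Markov $+$ (E1)'' inequality reduces the problem to lower bounding $\PP_x(X_{n-n_1}\in K\mid T<\tau_\d)$, whereas Lemma~\ref{lem:to-be-in-K} only controls conditioning on $\{n-n_1<\tau_\d\}$, not on $\{T<\tau_\d\}$. The trick that makes everything go through is to keep the surviving factor explicit as $\beta$ and to organise the upper bound on $\PP_x(T<\tau_\d)$ via the same Markov time $n-n_1$, so that $\beta$ cancels and the residual ratio $\PP_x(n-n_1<\tau_\d)/P_{n-n_1}\varphi_1(x)$ is handled by the Lyapunov-type bound of Lemma~\ref{lem:borne-P_n-phi1/phi2}.
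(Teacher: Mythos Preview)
Your proof is correct and follows essentially the same route as the paper's: apply Markov at time $n-n_1$, use (E1) for the last $n_1$ steps in the numerator, bound the denominator via Lemma~\ref{lem:comp-surv}, and control the residual ratio with Lemma~\ref{lem:borne-P_n-phi1/phi2} and Lemma~\ref{lem:to-be-in-K}. The only cosmetic differences are that the paper tracks $\inf_{y\in K}\PP_y(T-n<\tau_\d)$ and $\PP_x(X_{n-n_1}\in K)$ as the common factors (inserting Lemma~\ref{lem:to-be-in-K} in the denominator chain), whereas you track $\beta=\sup_{w\in K}\PP_w(T-n+n_1<\tau_\d)$ and $\PP_x(n-n_1<\tau_\d)$ (inserting Lemma~\ref{lem:to-be-in-K} in the numerator and invoking (E3) explicitly); these pairs are interchangeable precisely by (E3) and Lemma~\ref{lem:to-be-in-K}.
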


Fix $\theta\in(\theta_1/\theta_2,1)$ and set $k_R=\lceil \log(2R)/\log(1/\theta)\rceil+n_1+n_6$ and fix $T\geq k_R$. For all $x\in E$ such that $\psi_T(x)\leq R$, Markov's inequality
implies that
\begin{align*}
\P_x( T_K>k_R-n_1-n_6\mid T<\tau_\d)=\P^{S_{0,\cdot}^T}_x\left( T_K>k_R-n_1-n_6\right)\leq \frac{R}{\theta^{-k_R+n_1+n_6}}\leq\frac{1}{2}.
\end{align*}
It follows from Lemma~\ref{lem:prop-dobrushin} that, for all measurable $A\subset E$,
\begin{align*}
\P^{S_{0,\cdot}^T}_x\left(X_{k_R}\in A\right) & \geq \frac{\E_x\left[\sum_{k=1}^{k_R-n_1-n_6}\11_{ T_K=k}\P_{X_{k}}(
    X_{k_R-k}\in A,\,T-k<\tau_\d)\right]}{\P_x(T<\tau_\d)} \\
& \geq c'_1\nu(A)\frac{\E_x\left[\sum_{k=1}^{k_R-n_1-n_6}\11_{ T_K=k}\P_{X_k}(T-k<\tau_\d)\right]}{\P_x(T<\tau_\d)} \\
& =c'_1\nu(A)\P_x( T_K\leq k_R-n_1-n_6\mid T<\tau_\d) \\
& \geq\frac{1}{2}c'_1\nu(A).
\end{align*}
This concludes the proof of Proposition~\ref{prop:dobrushin-conditioned} with $\alpha_0=c'_1/2$.

\begin{proof}[Proof of Lemma~\ref{lem:prop-dobrushin}]
    For all measurable set $A\subset K$, we deduce from Markov's property that, for all $x\in K$ and all $T\geq n\geq n_1+n_6$,
    \begin{align}
    \P_x(X_n\in A,\, T<\tau_\d)
    & \geq\E_x\left[\11_{X_{n-n_1}\in K}\,\E_{X_{n-n_1}}\left(\11_{X_{n_1}\in A}\P_{X_{n_1}}(T-n<\tau_\d)\right)\right] \notag \\
    & \geq\E_x\left[\11_{X_{n-n_1}\in K}\,\P_{X_{n-n_1}}(X_{n_1}\in A)\right] \inf_{y\in K}\P_y(T-n<\tau_\d) \notag \\
    & \geq c_1\nu(A)\P_x\left(X_{n-n_1}\in K\right) \inf_{y\in K}\P_y(T-n<\tau_\d), \label{eq:prop-dobrushin-pf-1}
    \end{align}
    where we used (E1). Now, using Lemma~\ref{lem:comp-surv}, we deduce that there exists a constant $c>0$ such that
    \begin{align*}
    \P_x(T<\tau_\d)&\leq \P_x(T-n_1<\tau_\d)= \E_x\left(\11_{n-n_1<\tau_\d}\P_{X_{n-n_1}}(T-n<\tau_\d)\right)\\
    &\leq c\E_x\left(\11_{n-n_1<\tau_\d}\varphi_1(X_{n-n_1})\right)\inf_{y\in K}\P_y(T-n<\tau_\d).
    \end{align*}   
    Since $\varphi_1(x)/\varphi_2(x)$ is uniformly bounded over $x\in K$, Lemma~\ref{lem:borne-P_n-phi1/phi2} implies that there
    exists a constant $c'>0$ such that, for all $x\in K$,
    \begin{align*}
    \E_x\left[\11_{n-n_1<\tau_\d}\varphi_1(X_{n-n_1})\right]\leq c'\E_x\left[\11_{n-n_1<\tau_\d}\varphi_2(X_{n-n_1})\right] \leq
    c'\P_x\left(n-n_1<\tau_\d\right).
    \end{align*}
    But $n-n_1\geq n_6$, hence Lemma~\ref{lem:to-be-in-K} entails that there exists a constant $c''>0$ such that, for all $x\in K$,
    \begin{align*}
    \P_x\left(n-n_1<\tau_\d\right)\leq c'' \P_x(X_{n-n_1}\in K).
    \end{align*}
    Hence we obtain
    \begin{align*}
    \P_x(T<\tau_\d)\leq cc'c''\, \P_x\left(X_{n-n_1}\in K\right)\inf_{y\in K}\P_y(T-n<\tau_\d).
    \end{align*}   
    Combining this with~\eqref{eq:prop-dobrushin-pf-1}, we obtain
    \begin{align*}
    \P_x(X_n\in A\mid T<\tau_\d)
    & \geq \frac{c_1}{cc'c''} \,\nu(A).
    \end{align*}
    This ends the proof of Lemma~\ref{lem:prop-dobrushin}.
\end{proof}

\subsection{Proof of Proposition~\ref{prop:expo-conv-conditioned}}

We transpose the ideas of \cite{Hairer2010} (see also~\cite{HairerMattingly2011}) to the time-inhomogeneous setting. We fix the
constants $R=4\bar{C}/(1-\theta)$ and $\beta=\alpha_0/2\bar{C}$, where $\bar{C}$ is the constant of
Proposition~\ref{prop:lyap-conditioned}. For all $T\geq 0$ and all $\varphi:E\rightarrow\R$, we set
\begin{align*}
\vertiii{\varphi}_T=\sup_{x,y\in E}\frac{|\varphi(x)-\varphi(y)|}{2+\beta \psi_T(x)+\beta\psi_T(y)}.
\end{align*}
Fix $n$ and $T\geq 0$ such that $(n+1)k_R\leq T$ and let $\varphi$ be such that $\vertiii{\varphi}_{T-(n+1)k_R}\leq 1$. Then, replacing
$\varphi$ by $\varphi+c$ for some appropriate constant $c$, one has $|\varphi|\leq 1+\beta\psi_{T-(n+1)k_R}$ (see Lemma~3.8 p.14 in~\cite{Hairer2010}). 

\medskip \noindent If $\psi_{T-nk_R}(x)+\psi_{T-nk_R}(y)>R$, then, using Proposition~\ref{prop:lyap-conditioned},
\begin{multline*}
\left|S_{nk_R,(n+1)k_R}^{T}\varphi(x)-S_{nk_R,(n+1)k_R}^{T}\varphi(y)\right| \\
\begin{aligned}
&\leq 2+\theta\beta\psi_{T-nk_R}(x)+\theta\beta\psi_{T-nk_R}(y)+2\beta \bar{C}\\
&\leq 2+\left(\theta+(1-\theta)/2\right)(\beta\psi_{T-nk_R}(x)+\beta\psi_{T-nk_R}(y))\\
&\quad\quad\quad-(R\beta)(1-\theta)/2+2\beta \bar{C}\\
&\leq (1-\alpha_1) (2+\beta\psi_{T-nk_R}(x)+\beta\psi_{T-nk_R}(x)),
\end{aligned}
\end{multline*}
where $\alpha_1\in(0,1)$ is such that $2+\left(\theta+(1-\theta)/2\right)y\leq(1-\alpha_1)(2+y)$ for all $y\geq \beta R$.

\medskip \noindent If $\psi_{T-nk_R}(x)+\psi_{T-nk_R}(y)\leq R$, then, considering
\begin{align*}
\varphi=\varphi'+\varphi'',
\end{align*}
with $|\varphi'|\leq 1$ and $|\varphi''|\leq \beta\psi_{T-(n+1)k_R}$, Propositions~\ref{prop:lyap-conditioned}
and~\ref{prop:dobrushin-conditioned} entail
\begin{multline*}
\left|S_{nk_R,(n+1)k_R}^{T}\varphi(x)-S_{nk_R,(n+1)k_R}^{T}\varphi(y)\right| \\
\leq 2(1-\alpha_0)+\beta\theta\psi_{T-nk_R}(x)+\beta \theta\psi_{T-nk_R}(y)+2\beta \bar{C}.
\end{multline*}
Our choice $\beta=\alpha_0/2\bar{C}$ implies that
\begin{align*}
\left|S_{nk_R,(n+1)k_R}^{T}\varphi(x)-S_{nk_R,(n+1)k_R}^{T}\varphi(y)\right|\leq (1-\alpha_2)(2+\beta\psi_{T-nk_R}(x)+\beta\psi_{T-nk_R}(y)).
\end{align*}
for the constant $\alpha_2=\frac{\alpha_0}{2}\wedge(1-\theta)>0$.

Hence, we obtained
\begin{align*}
\vertiii{S_{nk_R,(n+1)k_R}^{T}\varphi}_{T-nk_R}\leq (1-\alpha_1\wedge\alpha_2)\vertiii{\varphi}_{T-(n+1)k_R},
\end{align*}
which implies by iteration that
\begin{align*}
\vertiii{S_{0,nk_R}^{nk_R}\varphi}_{nk_R}\leq (1-\alpha_1\wedge\alpha_2)^n \vertiii{\varphi}_{0}\leq
(1-\alpha_1\wedge\alpha_2)^n\|\varphi\|_\infty /(1+\beta).
\end{align*}
This concludes the proof of Proposition~\ref{prop:expo-conv-conditioned}.

\subsection{Proof of Lemma~\ref{lem:expo-moment-mu}}
\label{sec:end-proof}

This lemma in a generalization of Lemma~\ref{lem:expo-moment-x-in-K}. Its proof is based on similar computations. We give the details
for sake of completeness.

For all probability measure $\mu$ on $E$, for any $0\leq n<T$, using Lemma~\ref{lem:comp-surv} for the second inequality and
Lemma~\ref{lem:basic-inequalities} for the third inequality, we have
\begin{align}
\P_\mu(n<  T_K\text{ and }T<\tau_\d)&\leq \E_\mu(\11_{n< T_K}\P_{X_n}(T-n<\tau_\d))\notag \\
&\leq C\inf_{y\in K}\P_y(T-n<\tau_\d)\E_\mu(\11_{n< T_K}\varphi_1(X_n)) \notag \\
&\leq C\inf_{y\in K}\P_y(T-n<\tau_\d)\theta_1^{n} \mu(\varphi_1).
\label{eq:lem-expo-moment-mu-1}
\end{align}

For all integer $n\geq n_\mu$, where
\begin{align*}
n_\mu:=\left\lceil n_5(D_\theta)+\frac{\log\frac{\mu(\varphi_1)}{D_\theta\mu(\varphi_2)}}{\log(1/\theta)}\right\rceil,  
\end{align*}
it follows from Lemma~\ref{lem:borne-P_n-phi1/phi2} that
\begin{align*}
\frac{\mu P_{n-n_5(D_\theta)}\varphi_1}{\mu P_{n-n_5(D_\theta)}\varphi_2}\leq
D_\theta\vee\left(\theta^{n-n_5(D_\theta)}\frac{\mu(\varphi_1)}{\mu(\varphi_2)}\right)\leq D_\theta
\end{align*}
and from Lemma~\ref{lem:E'4} that
\begin{align*}
\frac{\mu P_n\11_K}{\mu P_n \11_E}\geq a_5(D_\theta)>0.
\end{align*}
Therefore, we obtain from the Markov property and Lemma~\ref{lem:basic-inequalities} that
\begin{align*}
\P_\mu(T<\tau_\d)&\geq \P_\mu(X_n\in K)\inf_{y\in K} \P_y(T-n<\tau_\d)\\
&\geq
a_5(D_\theta)\P_\mu(n<\tau_\d)\inf_{y\in K} \P_y(T-n<\tau_\d)\\
&\geq a_5(D_\theta)\theta_2^n \mu(\varphi_2)\inf_{y\in K} \P_y(T-n<\tau_\d).
\end{align*}

Combining this with~\eqref{eq:lem-expo-moment-mu-1}, we obtain that, for all $n\geq n_\mu$,
\begin{align*}
\P_\mu(n< T_K\text{ and }T<\tau_\d)\leq  \frac{C}{a_5(D_\theta)}\left(\frac{\theta_1}{\theta_2}\right)^n\frac{\mu(\varphi_1)}{\mu(\varphi_2)}\P_\mu(T<\tau_\d).
\end{align*}
Hence
\begin{align*}
\E_\mu\left(\theta^{- T_K\wedge T}\11_{ T_K\geq n_\mu,\ T<\tau_\d}\right)\leq C\frac{\mu(\varphi_1)}{\mu(\varphi_2)}
\P_\mu\left(T<\tau_\d\right).
\end{align*}
We deduce that
\begin{align*}
\E_\mu\left(\theta^{- T_K\wedge T}\11_{T<\tau_\d}\right)\leq \left(C\frac{\mu(\varphi_1)}{\mu(\varphi_2)}+\theta^{-n_\mu}\right)
\P_\mu\left(T<\tau_\d\right).
\end{align*}
Since $\theta^{-n_\mu}\leq \frac{\theta^{-(n_5(D_\theta)+1)}\mu(\varphi_1)}{D_\theta\mu(\varphi_2)}$, we have proved
Lemma~\ref{lem:expo-moment-mu}.

\subsection{Conclusion of the proof of~\eqref{eq:thm-main-zxzx} for the norm $\|\cdot\|_{TV(\varphi_1)}$}
\label{sec:pf-cor-main}

For all $n\geq 1$, we introduce the linear operator on $L^\infty(\varphi_1)$, defined for all $h\in L^\infty(\varphi_1)$ as
\begin{align}
\label{eq:def-R_n}
R_n h(x)=\E_x\left(h(X_n)\11_{ T_K\leq n<\tau_\d}\right),\quad\forall x\in E.
\end{align}
Note that this operator is well-defined since $|R_n h(x)|\leq \|h\|_{L^\infty(\varphi_1)}\,P_n\varphi_1(x)<\infty$. We first give some properties of $R_n$,
which can be seen as a bounded approximation of $P_n$ in $L^\infty(\varphi_1)$.

\begin{lem}
    \label{lem:lem-cor-eta}
    We have
    \begin{align*}
    \bar{R}:=\sup_{n\geq 1}\sup_{x\in E} R_n\varphi_1(x)<\infty, 
    \end{align*}
    and for all $n\geq 1$ and $x\in E$,
    \begin{align*}
    0\leq P_n\varphi_1(x)-R_n\varphi_1(x)\leq\theta_1^n\varphi_1(x).
    \end{align*}
\end{lem}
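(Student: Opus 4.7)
The proof splits naturally into the two inequalities, and both follow from elementary manipulations of Condition~(E2) together with the definition of $T_K$.

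For the second claim, I would start from the identity
\[
P_n\varphi_1(x)-R_n\varphi_1(x)=\EE_x\left[\varphi_1(X_n)\11_{n<\tau_\d}\right]-\EE_x\left[\varphi_1(X_n)\11_{T_K\le n<\tau_\d}\right]=\EE_x\left[\varphi_1(X_n)\11_{n<T_K\wedge\tau_\d}\right].
\]
This expression is obviously nonnegative. For $x\in K$ we have $T_K=0$, so the indicator vanishes for $n\ge 1$ and the bound is trivial. For $x\in E\setminus K$, the first inequality in Lemma~\ref{lem:basic-inequalities} gives exactly $\EE_x[\varphi_1(X_n)\11_{n<T_K\wedge\tau_\d}]\le\theta_1^n\varphi_1(x)$, finishing this part.

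For the first claim, the plan is to apply the strong Markov property at the stopping time $T_K$. On the event $\{T_K\le n<\tau_\d\}$ one has $X_{T_K}\in K$, so
\[
R_n\varphi_1(x)=\sum_{k=0}^n\EE_x\Bigl[\11_{T_K=k}\,P_{n-k}\varphi_1(X_k)\Bigr]\le\sup_{0\le m\le n}\sup_{y\in K}P_m\varphi_1(y)\cdot\PP_x(T_K\le n).
\]
Since $\PP_x(T_K\le n)\le 1$, it remains to establish $M_1:=\sup_{m\ge 0}\sup_{y\in K}P_m\varphi_1(y)<\infty$. This is the only non-trivial step, and it is obtained by iterating the Foster--Lyapunov inequality from (E2), $P_1\varphi_1\le\theta_1\varphi_1+c_2\11_K$: a straightforward induction yields
\[
P_m\varphi_1(y)\le\theta_1^m\varphi_1(y)+c_2\sum_{k=0}^{m-1}\theta_1^{m-1-k}P_k\11_K(y).
\]
Bounding $P_k\11_K(y)\le 1$ and using $\sup_K\varphi_1<\infty$ from (E2), one obtains $P_m\varphi_1(y)\le\sup_K\varphi_1+c_2/(1-\theta_1)$ uniformly in $y\in K$ and $m\ge 0$, so $\bar R\le M_1<\infty$.

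The only substantive point is the uniform control of $P_m\varphi_1$ on $K$; everything else is a rearrangement using $T_K$. I do not expect any serious obstacle, since the required ingredients are already present in Condition~(E2) and Lemma~\ref{lem:basic-inequalities}.
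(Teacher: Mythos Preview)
Your proof is correct and follows the same overall structure as the paper: for the second inequality you and the paper both write $P_n\varphi_1-R_n\varphi_1=\EE_\cdot[\varphi_1(X_n)\11_{n<T_K\wedge\tau_\d}]$ and invoke Lemma~\ref{lem:basic-inequalities}; for the first inequality you both apply the strong Markov property at $T_K$ to reduce to bounding $\sup_{m\ge 0}\sup_{y\in K}P_m\varphi_1(y)$.

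The only difference is in how this last supremum is controlled. The paper appeals to Lemma~\ref{lem:borne-P_n-phi1/phi2}, which bounds the ratio $P_m\varphi_1/P_m\varphi_2$ by $D_1\vee\sup_K(\varphi_1/\varphi_2)$ and then uses $P_m\varphi_2\le 1$; this route brings in the function $\varphi_2$. Your argument is more self-contained: you iterate the single Foster--Lyapunov inequality $P_1\varphi_1\le\theta_1\varphi_1+c_2\11_K$ directly, obtaining $P_m\varphi_1(y)\le\theta_1^m\varphi_1(y)+c_2\sum_{k=0}^{m-1}\theta_1^{m-1-k}\le\sup_K\varphi_1+c_2/(1-\theta_1)$. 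This uses only the $\varphi_1$-part of (E2) and the fact that $\theta_1<1$, so it is slightly more elementary. Both approaches give the same conclusion with comparable effort.
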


\begin{proof}
    Using Markov's property,
    \begin{align*}
    R_n\varphi_1(x) 
    & =\sum_{k\leq n}\E_x[\11_{ T_K=k}P_{n-k}\varphi_1(X_{k})]  \\ & \leq\sup_{y\in K,\ k\geq 0}P_k\varphi_1(y)\,\P_x( T_K\leq n) \\
    & \leq\sup_{y\in K,\ k\geq 0}\frac{P_k\varphi_1(y)}{P_k\varphi_2(y)}  \leq D_1\vee\sup_{y\in K}\frac{\varphi_1(y)}{\varphi_2(y)}<+\infty
    \end{align*}
    by Lemma~\ref{lem:borne-P_n-phi1/phi2}. This proves the first inequality. For the second one, we observe that for all $x\in E$,
    \begin{align*}
    P_n\varphi_1(x)-R_n\varphi_1(x)=\E_x(\varphi_1(X_n)\11_{n< T_K})\leq\theta_1^n\varphi_1(x)
    \end{align*}
    by Lemma~\ref{lem:basic-inequalities}.
\end{proof}

We fix $1\leq k\leq n$, $h$ such that $|h|\leq\varphi_1$ and $\mu$ such that $\mu(\varphi_1)/\mu(\varphi_2)\leq D_\theta$, where $\theta=\frac{1+\theta_1/\theta_2}{2}$ and $D_\theta$ is from Lemma~\ref{lem:borne-P_n-phi1/phi2}.
The inequality~\eqref{eq:thm-main-zxzx} with $\|\cdot\|_{TV}$ in place of $\|\cdot\|_{TV(\varphi_1)}$ and Lemma~\ref{lem:lem-cor-eta} entail
\begin{align*}
\left|\frac{\mu P_{n-k}R_k h}{\mu P_{n-k}\11_E}-\nu_{QSD}(R_kh)\right|\leq
C\alpha^{n-k}\frac{\mu(\varphi_1)}{\mu(\varphi_2)}\sup_{x\in E}|R_k h(x)|\leq CD_\theta\bar{R}
\alpha^{n-k}.
\end{align*}
The second inequality of Lemma~\ref{lem:lem-cor-eta} implies
\begin{align*}
|\nu_{QSD}[(P_k-R_k)h]|\leq\theta_1^k\nu_{QSD}(\varphi_1)
\end{align*}
and, by Lemma~\ref{lem:borne-P_n-phi1/phi2},
\begin{align*}
\frac{\mu P_{n-k}(P_k-R_k)h}{\mu P_{n-k}\11_E} & \leq \theta_1^k \frac{\mu P_{n-k}\varphi_1}{\mu P_{n-k}\varphi_2}\leq
\theta_1^k\left(D_\theta\vee\frac{\mu(\varphi_1)}{\mu(\varphi_2)}\right)=\theta_1^k D_\theta.
\end{align*}

Combining the last three inequalities and recalling that $\nu_{QSD}P_k h=\theta_0^k\nu_{QSD}(h)$, we obtain that, for some constant $C>0$,
\begin{align*}
\left|\frac{\mu P_{n}h}{\theta_0^k\mu P_{n-k}\11_E}-\nu_{QSD}(h)\right| &\leq  C\left(\alpha^{n-k}\theta_0^{-k}+(\theta_1/\theta_0)^k\right).
\end{align*}

Applying the last inequality to $h=\11_E$, we obtain
\[
\left|\frac{1}{\theta_0^k\mu P_{n-k}\11_E}-\frac{1}{\mu P_n\11_E}\right|\leq \frac{C(\alpha^{n-k}\theta_0^{-k}+(\theta_1/\theta_0)^k)}{\mu P_n\11_E}
\]
so that, using Lemma~\ref{lem:borne-P_n-phi1/phi2},
\begin{align*}
\left|\frac{\mu P_{n}h}{\theta_0^k\mu P_{n-k}\11_E}-\frac{\mu P_{n}h}{\mu P_{n}\11_E}\right| &
\leq C(\alpha^{n-k}\theta_0^{-k}+(\theta_1/\theta_0)^k)\,\frac{\mu P_{n}\varphi_1}{\mu P_{n}\11_E}
\leq CD_\theta(\alpha^{n-k}\theta_0^{-k}+(\theta_1/\theta_0)^k).
\end{align*}
Hence, for some $\bar\alpha<1$, for all $n\geq 0$,
\begin{align*}
\left|\frac{\mu P_{n}h}{\mu P_{n}\11_E}-\nu_{QSD}(h)\right|\leq C(\alpha^{n-k}\theta_0^{-k}+(\theta_1/\theta_0)^k)\leq C\bar\alpha^n.
\end{align*}

Finally, if $\mu(\varphi_1)/\mu(\varphi_2)>D_\theta$, then let $T=\left\lceil \frac{\ln(D_\theta\mu(\varphi_1)/\mu(\varphi_2))}{-\ln\theta}\right\rceil$, so that $\mu P_T\varphi_1/\mu P_T \varphi_2\leq D_\theta$ according to Lemma~\ref{lem:borne-P_n-phi1/phi2}. We deduce from the previous inequality applied to $\mu P_T/\mu P_T \11_E$ that, for all $n\geq 0$,
\begin{align*}
\left|\frac{\mu P_{T+n}h}{\mu P_{T+n}\11_E}-\nu_{QSD}(h)\right| \leq C\bar\alpha^n\leq C \bar\alpha^n \theta^{T-1}\frac{\mu(\varphi_1)}{\mu(\varphi_2)}
\end{align*}
while, using again Lemma~\ref{lem:borne-P_n-phi1/phi2}, we obtain, for all $n\in\{0,T-1\}$,
\begin{align*}
\left|\frac{\mu P_{n}h}{\mu P_{n}\11_E}-\nu_{QSD}(h)\right| \leq  D_\theta\vee\left(\theta^{n}\frac{\mu(\varphi_1)}{\mu(\varphi_2)}\right)+\nu_{QSD}(\varphi_1)\leq C\theta^{n}\frac{\mu(\varphi_1)}{\mu(\varphi_2)}.
\end{align*}
The last two inequalities conclude the proof of~\eqref{eq:thm-main-zxzx} with $\alpha=\bar{\alpha}\vee \theta$ and hence of Theorem~\ref{thm:main}.

\subsection{The case where $\PP_x(n<\tau_\d)=0$ for some $x\in E$ and $n\geq 1$}
\label{sec:general-case-pf-main}

In this section, we assume that $X$ satisfies assumption~(E), but we do not assume anymore that $\P_x(n<\tau_\d)>0$ for
all $x\in E$ and all $n\geq 1$. We introduce $\bar E=\{x\in E,\ \P_x(n<\tau_\d)>0\ \forall n\geq 0\}$ and $\bar{\bar E}=E\setminus
\bar E$. One immediately deduces from (E2) that, for all $x\in \bar{\bar E}$ and all $n\geq 0$, $\varphi_2(x)=0$ and $\P_x(X_n\in
K)=0$, and hence that $\delta_x P_n\varphi_1\leq \theta_1^n \varphi_1(x)$ by Lemma~\ref{lem:basic-inequalities}. In addition, one easily checks that the semi-group $P$ restricted to $\bar E\cup\{\partial\}$ still satisfies assumption~(E), and in particular~\eqref{eq:thm-main-zxzx} applies.

Let $\mu$ be a probability measure on $E$ such that $\mu(\varphi_2)>0$ and $\mu(\varphi_1)<+\infty$. Then, for all $n\geq 0$ and all $|h|\leq\varphi_1$,
\begin{align*}
\left|\mu P_n h-\nu_{QSD}(h)\mu P_n\11_E\right|
&\leq \left|\mu P_n h-\mu_{\rvert\bar E} P_n h\right|+\left|\mu_{\rvert\bar E} P_n h-\nu_{QSD}(h)\mu_{\rvert\bar E} P_n\11_E\right|\\
&\qquad+\nu_{QSD}(\varphi_1)\left|\mu_{\rvert\bar E} P_n \11_E-\mu P_n\11_E\right|.
\end{align*}
Each term can be bounded as follows:
\begin{align*}
&\left|\mu P_n h-\mu_{\rvert\bar E} P_n h\right|\leq \mu_{\rvert\bar{\bar E}} P_n \varphi_1\leq \theta_1^n \mu\varphi_1,\\
&\left|\mu_{\rvert\bar E} P_n h-\nu_{QSD}(h)\mu_{\rvert\bar E} P_n\11_E\right|\leq C\alpha^n \frac{\mu_{\rvert\bar E}(\varphi_1)}{\mu_{\rvert\bar E}(\varphi_2)}\mu_{\rvert\bar E} P_n\11_E\leq C\alpha^n \frac{\mu(\varphi_1)}{\mu(\varphi_2)}\mu P_n\11_E
,\\
&\nu_{QSD}(\varphi_1)\left|\mu_{\rvert\bar E} P_n \11_E-\mu P_n\11_E\right|\leq \nu_{QSD}(\varphi_1) \mu_{\rvert\bar{\bar E}} P_n \varphi_1\leq  \nu_{QSD}(\varphi_1)  \theta_1^n \mu\varphi_1.
\end{align*}
Since $\mu P_n\11_E\geq \theta_2^n\mu(\varphi_2)$, we deduce that
\begin{align*}
\left|\frac{\mu P_n h}{\mu P_n\11_E}-\nu_{QSD}(h)\right|\leq \left((\theta_1/\theta_2)^n+\nu_{QSD}(\varphi_1)(\theta_1/\theta_2)^n+C\alpha^n\right)\frac{\mu(\varphi_1)}{\mu(\varphi_2)}.
\end{align*}
This concludes the proof of~\eqref{eq:thm-main-zxzx} in the general case.

\section{Proof of the other results of Section~\ref{sec:main-discrete-time}}
\label{sec:pf-other-results}

The previous section ensures the existence of a quasi-stationary distribution $\nu_{QSD}$ such that $\nu_{QSD}(\varphi_1)<+\infty$ and $\nu_{QSD}(K)>0$. Denoting by $\theta_0$ its associated decay parameter, we observe that $\theta_2\leq\theta_0$, since Lemma~\ref{lem:basic-inequalities} entails that, for all $n\geq 1$,
\[
\theta_0^n=\P_{\nu_{QSD}}(n<\tau_\d)\geq\nu_{QSD}(K)\inf_{y \in K}\P_y(n<\tau_\d)\geq\nu_{QSD}(K)\theta_2^n\inf_{y\in K}\varphi_2(y).
\]

We begin to prove Theorem~\ref{thm:eta} in Section~\ref{sec:pf-thm-eta}, except for the exponential convergence in
$L^\infty(\varphi_1)$. We then prove Theorem~\ref{thm:Q-proc}  in Section~\ref{sec:pf-thm-Q-proc}. In Section~\ref{sec:end-eta-main},
we conclude the proof of Theorem~\ref{thm:eta} and prove Corollary~\ref{cor:quasi-comp}. We prove Corollary~\ref{cor:ratios} in Subsection~\ref{sec:proof-cor-2}.

\subsection{Proof of the existence of the eigenfunction $\eta$}
\label{sec:pf-thm-eta}

In this section, we show that the limit~\eqref{eq:conv-eta-general} is well defined pointwise, $\nu_{QSD}(\eta)=1$,
$P_1\eta=\theta_0\eta$, $\eta$ is lower bounded away from $0$ on $K$ and $\eta\in
L^\infty(\varphi_1^{\log(1/\theta_0)/\log(1/\theta_1)})$.

For all $n\geq 0$ and $x\in E\cup\{\d\}$, let us denote
\[
\eta_n(x)=\theta_0^{-n}\P_x(n<\tau_\d)=\frac{\P_x(n<\tau_\d)}{\P_{\nu_{QSD}}(n<\tau_\d)}.
\]
By Lemma~\ref{lem:comp-surv}, for all $x\in E$,
\begin{align}
\eta_n(x) & \leq C \theta_0^{-n}\inf_{y\in K}\P_y(n<\tau_\d)\varphi_1(x) \notag \\
& \leq \frac{C}{\nu_{QSD}(K)}\theta_0^{-n}\P_{\nu_{QSD}}(n<\tau_\d)\varphi_1(x)=\frac{C\varphi_1(x)}{\nu_{QSD}(K)}.
\label{eq:pf-eta-prelim-1}
\end{align}
This implies that the sequence $(\eta_n)_{n\geq 0}$ is uniformly bounded in $L^\infty(\varphi_1)$.

For all probability measure $\mu$ on $E$ and for all $n,m\geq 0$, by Markov's property, 
\[
\mu(\eta_{n+m})=\mu(\eta_n)\E_\mu\left[\theta_0^{-m}\P_{X_n}(m<\tau_\d)\mid n<\tau_\d\right].
\]
Hence, by Theorem~\ref{thm:main}, for all $\mu$ such that $\mu(\varphi_2)>0$ and $\mu(\varphi_1)<+\infty$,
\begin{align*}
|\mu(\eta_{n+m})-\mu(\eta_{n})| & =\mu(\eta_n)\left| \E_{\mu}(\eta_m(X_n)\mid n<\tau_\d)-1 \right|
\\ & =\mu(\eta_n)\left| \E_{\mu}(\eta_m(X_n)\mid n<\tau_\d)-\nu_{QSD}(\eta_m)\right|
\\ & \leq C\mu(\varphi_1)\alpha^n\frac{\mu(\varphi_1)}{\mu(\varphi_2)}.
\end{align*}
For any $x\in E$, applying this result to $\mu=(\delta_x+\nu_{QSD})/2$, we deduce that
\begin{align*}
|\eta_{n+m}(x)-\eta_{n}(x)| &  \leq C\varphi_1(x)^2\alpha^n.
\end{align*}
This shows that $(\eta_{n}(x))_{n\geq 0}$ is a Cauchy sequence and hence that, for all $x\in E$,
\begin{align*}
\eta(x)=\lim_{n\to+\infty} \theta_0^{-n} \P_x(n<\tau_\d)
\end{align*}
and, by~\eqref{eq:pf-eta-prelim-1}, that $\eta\in L^\infty(\varphi_1)$.

Then, since $\eta_n$ is bounded in $L^\infty(\varphi_1)$, we deduce by dominated convergence that $\nu_{QSD}(\eta)=1$ and that, for
all $x\in E$,
\begin{align}
\label{eq:fin-relecture}
\delta_x P_1\eta=\lim_{n\to+\infty} \delta_xP_1\eta_n=\lim_{n\to+\infty} \theta_0\eta_{n+1}(x)=\theta_0\eta(x).
\end{align}
The fact that $\eta$ is lower bounded away from $0$ on $K$ is an immediate consequence of Lemma~\ref{lem:comp-surv}
(integrating~\eqref{eq:comp-surv-2} with respect to $\nu_{QSD}(\mathrm d x)$) and the fact that $\nu_{QSD}(\varphi_1)<+\infty$.

It only remains to prove that $\eta\in
L^\infty\left(\varphi_1^{\log\theta_0/\log\theta_1}\right)$.
To prove this, we use the operator $R_n$ introduced in~\eqref{eq:def-R_n}. By Lemma~\ref{lem:lem-cor-eta} and using the fact that $\eta\in L^\infty(\varphi_1)$, for all $x\in E$,
\begin{align*}
\eta(x) & =\theta_0^{-n} P_n \eta(x)\leq C \theta_0^{-n} \left[R_n\varphi_1(x)+(P_n-R_n)\varphi_1(x))\right]\\
& \leq C\bar{R} \theta_0^{-n}+C\left(\frac{\theta_1}{\theta_0}\right)^n\varphi_1(x).
\end{align*}
Applying this inequality for $n=\lfloor-\log\varphi_1(x)/\log\theta_1\rfloor$, we deduce
\begin{align*}
\eta(x)\leq C\exp\left(\frac{\log\varphi_1(x)}{\log \theta_1}\,\log \theta_0\right)\leq
C\varphi_1(x)^{\log \theta_0/\log\theta_1},
\end{align*}
which concludes the proof.

\subsection{Proof of Theorem~\ref{thm:Q-proc}}
\label{sec:pf-thm-Q-proc}

We start with Point~(i). We introduce $\Gamma_n=\11_{n<\tau_\d}$ and define for all $x\in E'$ and $n\geq 0$ the probability measure
\begin{align*}
Q^{\Gamma,x}_n=\frac{\Gamma_n}{\E_x\left(\Gamma_n\right)}\P_x,
\end{align*}
so that the $Q$-process exists if and only if $Q_n^{\Gamma,x}$ admits a proper limit when $n\rightarrow\infty$. For all $0\leq k\leq
n$, we have by the Markov property
\begin{align*}
\frac{\E_x\left(\Gamma_n\mid{\cal
        F}_k\right)}{\E_x\left(\Gamma_n\right)}=\frac{\11_{k<\tau_\d}\P_{X_k}\left(n-k<\tau_\d\right)}{\P_x\left(n<\tau_\d\right)}.
\end{align*}
By the pointwise convergence in~\eqref{eq:conv-eta-general} (proved in Subsection~\ref{sec:pf-thm-eta}), this converges almost surely
as $n\rightarrow+\infty$ to
\begin{align*}
M_k:=\11_{k<\tau_\d}\theta_0^{-k}\frac{\eta(X_k)}{\eta(x)}=\theta_0^{-k}\frac{\eta(X_k)}{\eta(x)},
\end{align*}
and $\E_x(M_k)=\theta_0^{-k}\frac{P_k\eta(x)}{\eta(x)}=1$. These two properties allow to apply the penalization's theorem of
Roynette, Vallois and Yor \cite[Theorem~2.1]{RoynetteValloisEtAl2006}, which implies that $M$ is a martingale under $\P_x$ and that
$Q_n^{\Gamma,x}(A)$ converges to $\E_x\left(M_k\11_{A}\right)$ for all $A\in{\cal F}_k$ when $n\rightarrow\infty$. This means that
$\Q_x$ is well defined and
\begin{align*}
\restriction{\frac{\mathrm d\Q_x}{\mathrm d\P_x}}{{\cal F}_k}=M_k.
\end{align*}
Note that the fact that $\eta(x)=0$ for all $x\in E\setminus E'$ implies that $(X_n,n\geq 0)$ is $E'$-valued
$\mathbb{Q}_x$-almost surely for all $x\in E'$.
The fact that $X$ is Markov under $(\QQ_x)_{x\in E'}$ and Point~(ii) can be easily deduced from the last formula (see
e.g.~\cite[Section\,6.1]{ChampagnatVillemonais2016b}).

It remains to prove Point~(iii).
We define the function $\psi=\varphi_1/\eta\times\|\eta\|_{L^\infty(\varphi_1)}$ on $E'$. Note that, since $\eta\in L^{\infty}(\varphi_1)$, $\psi$ is uniformly lower
bounded. Moreover, for all $x\in E'$, 
\begin{align*}
\widetilde{P}_1\psi(x)&=\frac{\theta_0^{-1}\|\eta\|_{L^\infty(\varphi_1)}}{\eta(x)}P_1\varphi_1(x)\leq \frac{\theta_1}{\theta_0}\psi(x)+\frac{c_2\|\eta\|_{L^\infty(\varphi_1)}}{\theta_0\eta(x)}\11_K(x)\leq \widetilde\theta \psi(x)+\widetilde{c},
\end{align*}
where $\widetilde\theta=\theta_1/\theta_0$ and
\[
\widetilde{c}=\frac{c_2\|\eta\|_{L^\infty(\varphi_1)}}{\theta_0\inf_K\eta}.
\] 
Hence, for all $x\in E$ and all $n\geq 1$,
\begin{align}
    \label{eq:lyap-Q-proc}
\widetilde{P}_{n}\psi(x)&\leq \widetilde\theta \widetilde{P}_{n-1}\psi(x)+\widetilde c\leq ... \leq {\widetilde\theta}^{n}\psi(x)+\frac{\widetilde c}{1-\widetilde\theta}.
\end{align}
Using Lemma~\ref{lem:basic-inequalities}, we have that, for all $x\in
E'$,
\begin{align}
\label{eq:nouvelle-preuve-eta-bis}
    \Q_x(T_K>n)=\E_x\left(\theta_0^{-n}\frac{\varphi_1(X_n)}{\eta(X_n)}\11_{T_K>n}\11_{X_n\in E'}\right)\leq \widetilde\theta^{n}\psi(x)
\end{align}
Now, choosing $m_K$ large enough so that $\sup_{x\in K}\widetilde\theta^{m_K}[\sup_K \psi+\tilde c/(1-\tilde\theta)]\leq 1/2$, we deduce that, for all $x\in K$ and all $n_0\geq 0$,
\begin{align}
\label{eq:nouvelle-preuve-eta}
    \Q_x(\exists n\in\{n_0,\ldots,n_0+m_K\},\ X_n\in K)\geq 1-\widetilde{\theta}^{m_K}\widetilde{P}_{n_0}\psi(x)\geq 1/2.
\end{align}
Now, let $n_K\geq 1$ be such that $\inf_{x\in K}\P_x(X_n\in K)>0$ for all $n\geq n_K$ (such a $n_K$ exists by (E1) and (E4), see the
proof of Lemma~\ref{lem:E'4}) and let 
\[
a:=\inf_{n\in \{n_K,\ldots,n_K+m_K\}} \inf_{x\in K}\P_x(X_n\in K)>0.
\]
so that, for all $x\in K$, all $n\in\{n_K,\ldots,n_K+m_K\}$ and all $A\subset E$ measurable,
\begin{align*}
\Q_x(X_{n+n_1}\in A)&\geq \Q_x(X_n\in K,X_{n+n_1}\in A)=\frac{\theta_0^{-n-n_1}}{\eta(x)}\E_{x}(\11_{X_n\in
  K}\E_{X_n}(\eta(X_{n_1})\11_{X_{n_1}\in A}))\\
&\geq \frac{\theta_0^{-n-n_1}}{\eta(x)} ac_1\nu(\eta\11_A)=\frac{\theta_0^{-n-n_1}\nu(\eta)}{\eta(x)} ac_1\nu_\eta(A)\geq \frac{ac_1}{c_3} \nu_\eta(A),
\end{align*}
where we used (E1) and (E3) and defined $\nu_\eta(\mathrm d x):=\frac{\eta(x)\nu(\mathrm d x)}{\nu(\eta)}$.
We deduce from the last inequality and~\eqref{eq:nouvelle-preuve-eta} that, for all $n_0\geq 0$,
\[
\Q_x(X_{n_0+n_1+n_K+m_K}\in\cdot)\geq\sum_{n=n_0}^{n_0+m_K}\Q_x\left[\11_{X_n\in K}\Q_{X_n}(X_{n_0+n_1+n_K+m_K-n}\in\cdot)\right]\geq \frac{ac_1}{2c_3}\nu_\eta.
\]
Hence, for all $n\geq n_K+m_K+n_1$,
\begin{align*}
\Q_x(X_{n}\in \cdot)\geq \frac{ac_1}{2c_3}\nu_\eta,\quad \forall x\in K.
\end{align*}

For all $x\in E'$, setting $k_x=  \lceil\frac{\log(2\psi(x))}{-\log\widetilde\theta}\rceil$, it follows from~\eqref{eq:nouvelle-preuve-eta-bis} that
$
\QQ_x( T_K\leq k_x)\geq\frac{1}{2},
$
and hence
\[
\QQ_x( X_{k_x+n_K+m_K+n_1}\in\cdot)\geq \frac{ac_1}{4c_3}\nu_\eta.
\]
In particular, for all $R>0$, setting $k_R=  \lceil\frac{\log(2R)}{-\log\widetilde\theta}\rceil+n_K+m_k+n_1$, we have, for all $x,y\in E'$ such that $\psi(x)+\psi(y)<R$,
\begin{equation}
\label{eq:doeblinproc}
\left\|\delta_x P_{k_R}-\delta_y P_{k_R}\right\|_{TV}\leq 1-\frac{ac_1}{4c_3}.
\end{equation}

By \cite[Thm\,3.9]{Hairer2010}, together with~\eqref{eq:lyap-Q-proc}, the last assertion implies that there exist
constants $C>0$ and $\widetilde{\alpha}_1\in(0,1)$ such that, for all real function $h$ on $E'$ such that $\vertiii{h}<\infty$,
\begin{align}
\label{eq:canbemadeexplicit}
\vertiii{\widetilde{P}_n h}\leq C\widetilde{\alpha}_1^{n}\vertiii{h},
\end{align}
where
\[
\vertiii{h}=\sup_{x,y\in E'}\frac{|h(x)-h(y)|}{2+\psi(x)+\psi(y)}.
\]
This implies~\eqref{eq:Q-proc-11}.
In particular, for all $x\in E'$,
\[
\|\delta_x\widetilde{P}_n-\beta\|_{TV}\xrightarrow[n\rightarrow +\infty]{} 0.
\]
Hence,~\eqref{eq:Q-proc-12} is a consequence of Lebesgue's dominated convergence theorem. This ends the proof of
Theorem~\ref{thm:Q-proc}.

\begin{rem}
    \label{rem:explici-constants}
    As noted in~\cite[Remark~3.10]{Hairer2010}, it is possible to obtain explicit constants $\widetilde{C}$ and
    $\widetilde{\alpha}_1$ in~\eqref{eq:canbemadeexplicit} from the parameters in~\eqref{eq:lyap-Q-proc} and \eqref{eq:doeblinproc}
    (note that a slight modification of the proof of Lemma~\ref{lem:comp-surv} entails that one can actually take
    $\widetilde c=\frac{1/\theta_1}{1-\theta_1/\theta_0}\leq \frac{1/\theta_1}{1-\theta_1/\theta_2}$). More precisely, setting
    $\alpha=\frac{ac_1}{4c_3}$ and $K=\widetilde c/(1-\widetilde\theta)$ and $\gamma=\widetilde{\theta}$, then taking any
    $\alpha_0\in(0,\alpha)$ and $R>\frac{2K}{1-\gamma}$, and setting $b=\frac{2\alpha_0}{\gamma R+2K}$,
    \[
      \alpha_R=(1-\alpha+\alpha_0)\vee \frac{2+b\gamma R+b\gamma K}{2+b R}\in(0,1),
    \]
    and $C_R=\frac{2/b+1+K+K/(1-\gamma)}{\alpha_R}$,
    we obtain that, for all $f\in L^\infty(\varphi_1/\eta)$,
    \[
      \left|\widetilde P_n f-\beta(f)\right|\leq C_R\alpha_R^{n/k_R}\|f\|_{L^\infty(\varphi_1/\eta)}.
    \]
\end{rem}

\subsection{Proof of Corollary~\ref{cor:quasi-comp} and end of the proof of Theorem~\ref{thm:eta}}
\label{sec:end-eta-main}

Let $|g|\leq\varphi_1$ and set $h=g/\eta$. Then~\eqref{eq:Q-proc-11} entails that, for all $x\in E'$,
\begin{align*}
\left|\theta_0^{-n}\E_x(g(X_n)\11_{X_n\in E'})-\eta(x)\nu_{QSD}(g\11_{E'})\right|\leq C\,\bar\alpha^n\varphi_1(x).
\end{align*}
In what follows, we set $\nu'=\nu_{QSD}(\cdot\cap {E'})$ and, for all $k\geq 1$,
\[
g_k(x)=\11_{x\in E'}\EE_x\left(\11_{X_1\notin E'}g(X_k)\11_{k<\tau_\d}\right).
\]
Note that, defining $E'':=E\setminus E'$, $E''\cup\{\d\}$ is an absorbing set. Since $K\subset E'$, we thus have
 \[
 g_k(x)\leq \11_{x\in E'}\,\EE_x\left(\11_{k<T_K\wedge\tau_\d}\varphi_1(X_k)\right)\leq \theta_1^k\varphi_1(x).
 \]
 We also define the measure $\nu''$ on $E''$ by
 \[
 \nu''=\sum_{\ell\geq 1} \theta_0^{-\ell}\EE_{\nu'}\left(\11_{X_1\notin E'}\11_{X_\ell\in\cdot}\right)=\sum_{\ell\geq 1} \theta_0^{-\ell}\nu'(g_\ell).
 \]
Hence we have for all $n\geq 1$ and all $x\in E'$
\begin{align*}
\left|\theta_0^{-n}\E_x(g(X_n)\11_{X_n\in E''})-\eta(x)\nu''(g)\right|
&\leq \sum_{\ell=1}^n \left|\theta_0^{-n}\E_x(g_{\ell}(X_{n-\ell}))-\theta_0^{-\ell}\eta(x)\nu'(g_\ell)\right|\\
&\leq \sum_{\ell=1}^n \theta_0^{-\ell}\left|\theta_0^{-(n-\ell)}\E_x(g_{\ell}(X_{n-\ell}))-\eta(x)\nu'(g_\ell)\right|\\
&\leq \sum_{\ell=1}^n \theta_0^{-\ell}\varphi_1(x)C\bar\alpha^{n-\ell}\|g_\ell\|_{L^\infty(\varphi_1)}\\
&\leq C\varphi_1(x)\sum_{\ell=1}^n \left(\frac{\theta_1}{\theta_0}\right)^{\ell}\bar\alpha^{n-\ell}.
\end{align*}
We thus proved that, setting $\nu_0=\nu'+\nu''$, and up to a change in the constants $C$ and $\bar\alpha$, for all $x\in E'$,
\begin{align*}
\left|\theta_0^{-n}\E_x(g(X_n)\11_{n<\tau_\d})-\eta(x)\nu_0(g)\right|\leq C\,\bar\alpha^n\varphi_1(x).
\end{align*}
Now, by Lemma~\ref{lem:basic-inequalities}, for all $x\in ''$, $|\E_x g(X_n)\11_{n<\tau_\d}|\leq \theta_1^n\varphi_1(x)$.
This, we have proved that, up to a change in $\bar\alpha$, for all $x\in E$,
\begin{align}
\label{eq:importantstep}
\left|\theta_0^{-n}\E_x(g(X_n))-\eta(x)\nu_0(g)\right|\leq C\,\bar\alpha^n\varphi_1(x).
\end{align}
Integrating with respect to $\nu_{QSD}$ shows that $\nu_0=\nu_{QSD}$. 
Thus, we have proved~\eqref{eq:quasi-comp}. 

To conclude, we can now end the proof of Theorem~\ref{thm:eta}. Indeed, taking $g\equiv 1$ immediately entails that the
convergence~\eqref{eq:conv-eta-general} is geometric in $L^\infty(\varphi_1)$.

\subsection{Proof of Corollary~\ref{cor:attraction-domain}}
\label{sec:proof-attraction-domain}

If $\eta(x)>0$, it follows from Corollary~\ref{cor:quasi-comp} and the fact that $\nu_{QSD}(\varphi_2)>0$ that there exists $k\geq 0$
such that $\delta_xP_k\varphi_2>0$. Hence $E'\subset\{x\in E:\exists k\geq 0,\ P_k\varphi_2(x)>0\}$. Conversely, if
$P_k\varphi_2(x)>0$, we apply Theorem~\ref{thm:main} to $\mu=\frac{\delta_x P_k}{\delta_x P_k\mathbbm{1}_E}$. Since
$\nu_{QSD}(\eta)>0$, there exists $n\geq 0$ such that
$0<\frac{\mu P_n\eta}{\mu P_n\11_E}=\frac{\delta_x P_{n+k}\eta}{\delta_x P_{n+k}\11_E}=\frac{\theta_0^{n+k}}{\delta_x
  P_{n+k}\11_E}\eta(x)$. Hence we have proved that $E'=\{x\in E:\exists k\geq 0,\ P_k\varphi_2(x)>0\}$.

The fact that any $\mu$ such that $\mu(E')>0$ and $\mu(\varphi_1^{1/p})<+\infty$ for some $p<\log\theta_1/\log\theta_2$
belongs to the domain of attraction of $\nu_{QSD}$ follows from Remark~\ref{rem:dom-attraction} and Corollary~\ref{cor:quasi-comp}.

In the case where $\varphi_1$ is bounded, the domain of atttraction contains all measures $\mu$ such that $\mu(E')>0$. If
$\mu(\eta)=0$, then $\mu P_k \eta=0$ for all $k\geq 0$, which means that $\mu P_k$ gives no mass to $E'$. Hence the convergence of
conditional distributions to $\nu_{QSD}$ cannot hold true. The uniqueness of the quasi-stationary distribution follows immediately.

\subsection{Proof of Corollary~\ref{cor:ratios}}
\label{sec:proof-cor-2}

Applying~\eqref{eq:Q-proc-12} with $\mu(\eta\cdot)/\mu(\eta)$ instead of $\mu$ and recalling that
  $\mu(E\setminus E')=0$, we obtain
  \[
    \sup_{f:E'\to\mathbb{R},\,\|f\|_\infty\leq 1}\left|\theta_0^{-n}\frac{\mu P_n(\eta f)}{\mu(\eta)}-\beta(f)\right|\xrightarrow[n\rightarrow+\infty]{}0.
  \]
This entails the convergence result~\eqref{eq:statementcor2}.

Assume from now on that $\eta$ is positive on $E$. Let $\nu$ be a quasi-stationary distribution on $E$ such that $\nu(\eta)<+\infty$ and
denote by $\bar\theta_0\in(0,1]$ the associated decay parameter, such that $\P_{\nu}(X_n\in\cdot)=\bar\theta_0^n\nu$ for all $n\geq 0$. Then, according to~\eqref{eq:statementcor2}, we have, for all $g\in L^\infty(\eta)$,
\begin{align*}
\left|\theta_0^{-n}\bar\theta_0^n\nu(g)-\nu(\eta)\nu_{QSD}(g)\right|\xrightarrow[n\to+\infty]{}0.
\end{align*}
This entails that $\bar\theta_0=\theta_0$ and that $\nu$ is proportional to $\nu_{QSD}$. Since they both are probability measures, we deduce that $\nu=\nu_{QSD}$, which concludes the proof of the second claim of Corollary~\ref{cor:ratios}.

Finally, assuming that $\eta$ is lower bounded away from $0$ on $E$, we deduce from~\eqref{eq:statementcor2} with $g\equiv 1$ that, for all probability measure $\mu$ on $E$ such that $\mu(\eta)<+\infty$,
\begin{align*}
\theta_0^{-n} \mu P^n \11_E\xrightarrow[n\to+\infty]{} \mu(\eta)>0.
\end{align*}
This and~\eqref{eq:statementcor2} imply that
  \[
    \sup_{g:E\to\mathbb{R},\,\|g\|_{L^\infty(\eta)}\leq 1}\left|\mathbb{E}_\mu(g(X_n)\mid
      n<\tau_\d)-\nu_{QSD}(g)\right|\xrightarrow[n\to+\infty]{}0,
  \]
  hence~\eqref{eq:statementcor2bis} holds true and the proof of Corollary~\ref{cor:ratios} is completed.

\section{Proof of the results of Section~\ref{sec:equivalent-formulations}}
\label{sec:proof-equiv-form}

In this section are proved Lemma~\ref{lem:varphi2} in Subsection~\ref{sec:proof-varphi2}, Lemma~\ref{lem:varphi1} in
Subsection~\ref{sec:proof-varphi1}, Proposition~\ref{lem:E2thenE} in Subsection~\ref{sec:proof-E2thenE} and
Lemma~\ref{lem:theta-2-et-0} in Subsection~\ref{sec:proof-theta-2-et-0}. Then we prove Theorem~\ref{thm:E-F} in
Subsection~\ref{sec:proof-E-F}, Lemma~\ref{prop:E-G} in Subsection~\ref{sec:proof-E-G} and
Proposition~\ref{prop:A1-A2} in Subsection~\ref{sec:proof-A1-A2}.

\subsection{Proof of Lemma~\ref{lem:varphi2}}
\label{sec:proof-varphi2}

The function $\varphi_2$ defined in the statement satisfies, for all $x\in E$, $\varphi_2(x)\in[0,1]$ and, for all $x\in K$, $\varphi_2(x)\geq \frac{\theta^{-1}_2-1}{\theta_2^{-\ell}-1}>0$. Moreover, we have, for all  $x\in E$,
\begin{align*}
P_1\varphi_2(x)&=\theta_2\varphi_2(x)-\frac{\theta^{-1}_2-1}{\theta_2^{-\ell}-1}\,\left(\theta_2\11_K(x)-\theta_2^{-\ell+1}P_{\ell}\11_K(x)\right)\geq
\theta_2\varphi_2(x)
\end{align*}
since $\ell$ is chosen such that $\theta_2^{-\ell}P_{\ell}\11_K(x)\geq \11_K(x)$ for all $x\in E$.

Our assumption also implies that there exists $n_0$ such that, for all $n\geq n_0$, $\theta_2^{-n}\inf_{x\in K}\P_x(X_n\in K)\geq 1$. Choosing $n_4(x)=n_0$ for all $x\in K$ entails~(E4), which concludes the proof of Lemma~\ref{lem:varphi2}.

\subsection{Proof of Lemma~\ref{lem:varphi1}}
\label{sec:proof-varphi1}

Assume that 
\[
\E_x\left(\theta_1^{- T_K\wedge\tau_\d}\right)<+\infty\,\ \forall x\in E\text{ and } \sup_{y\in K}\,
\E_y\left(\E_{X_1}\left(\theta_1^{- T_K\wedge\tau_\d}\right)\11_{1<\tau_\d}\right)<+\infty
\]
and set $\varphi_1(x)=\E_x\left(\theta_1^{- T_K\wedge\lceil\tau_\d\rceil}\right)$ for all $x\in E$. Then, for all $x\in E\setminus K$, using Markov's property at time $1$,
\begin{align*}
P_1\varphi_1(x)&=\E_x\left(\E_{X_1}\left(\theta_1^{- T_K\wedge\lceil\tau_\d\rceil}\right)\11_{1<\tau_\d}\right)\leq\E_x\left(\theta_1^{-( T_K\wedge\lceil\tau_\d\rceil-1)}\right)=\theta_1\varphi_1(x).
\end{align*}
Moreover, for all $x\in K$, $P_1\varphi_1(x)\leq \theta_1^{-1}\sup_{y\in K} \E_y\left(\E_{X_1}\left(\theta_1^{- T_K\wedge\tau_\d}\right)\11_{1<\tau_\d}\right)$, and hence the first part of the lemma is proved.

Assume now that there exist two constants $C>0$, $\theta_1>0$ and a function $\varphi_1:E\to [1,+\infty)$ such that $\sup_{K}\varphi_1<+\infty$ and $P_1\varphi_1\leq \theta_1\varphi_1+C\11_K$. Then, for all $n\geq 1$ and all $x\in E\setminus K$, 
\[
\E_x\left(\varphi_1(X_n)\11_{n< T_K\wedge\tau_\d}\right)\leq \theta_1^n \varphi_1(x).
\]
Thus we deduce that, for all $x\in E$,
\[
\P_x\left(n< T_K\wedge\tau_\d\right)\leq \theta_1^n \varphi_1(x).
\]
In particular, for all $\theta>\theta_1$ and all $x\in E$,
\begin{align*}
\E_x\left(\theta^{- T_K\wedge\tau_\d}\right)\leq \frac{1}{\theta-\theta_1}\,\varphi_1(x)<+\infty.
\end{align*}
We also deduce that
\begin{align*}
\sup_{x\in K} \E_x\left(\E_{X_1}\left(\theta^{- T_K\wedge \tau_\d}\right)\right)\leq \frac{1}{\theta-\theta_1} \sup_{x\in K} P_1\varphi_1(x)<+\infty.
\end{align*}
This concludes the proof of Lemma~\ref{lem:varphi1}.

\subsection{Proof of Proposition~\ref{lem:E2thenE}}
\label{sec:proof-E2thenE}

Condition~(E4) implies that there exists $x_0\in E$ such that
$\P_{x_0}(X_{n_0}\in K)>0$. We then deduce from our assumption~\eqref{eq:condition-E2thenE} that Condition~(E1) is satisfied with the
probability measure  $\nu$ on $K$ defined by
\[
\nu(\cdot)=\frac{\P_{x_0}(X_{n_0}\in \cdot\cap K)}{\P_{x_0}(X_{n_0}\in K)}
\]
and the constants $c_1=\P_{x_0}(X_{n_0}\in K)/C>0$ and $n_1=m_0$.

Let us now check Condition~(E3) and the last part of Proposition~\ref{lem:E2thenE}. We define $ T_K^{(n_0)}= \inf\{n\geq n_0\text{
    s.t. }X_n\in K\}$. Lemma~\ref{lem:basic-inequalities} (which only makes use of Condition~(E2)) implies that, for all $x\in E$,
$\P_x(n< T_K\wedge\tau_\d)\leq \theta_1^{n}\varphi_1(x)$. Hence, for all $x\in E$ and all $n\geq n_0$,
\begin{align*}
\P_x(n<\tau_\d\wedge T_K^{(n_0)})&= \E_x\left(\11_{n_0<\tau_\d}\P_{X_{n_0}}(n-n_0<\tau_\d\wedge T_K)\right)  \\
&\leq \theta_1^{n-n_0}\,\E_x\left(\11_{n_0<\tau_\d}\varphi_1(X_{n_0})\right)  \\
&\leq (\theta_1+c_2)^{n_0} \theta_1^{n-n_0}\,\varphi_1(x).
\end{align*}
Since $\varphi_1\geq 1$, we also have $\P_x(n<\tau_\d)\leq C\theta_1^n\varphi_1(x)$ for all $n<n_0$. Hence we
proved that, for all $x\in E$ and $n\geq 0$, 
\begin{equation}
  \label{eq:pf-E2thenE-eq}
  \P_x(n<\tau_\d\wedge T_K^{(n_0)})\leq C\theta_1^n\varphi_1(x).
\end{equation}
Therefore, for some constant $C>0$,
\begin{align}
\P_x(n<\tau_\d)&\leq \P_x(n<\tau_\d\wedge T^{(n_0)}_K)+\P_x(T^{(n_0)}_K\leq n <\tau_\d)\nonumber \\
&\leq C\,\varphi_1(x)\theta_1^n+\sum_{k=n_0}^{n}\E_x\left(\11_{T^{(n_0)}_K=k} \P_{X_k}(n-k<\tau_\d)\right)\label{eq:proof-E2thenE-1}.
\end{align}
Now, for all $x\in E$, all $y\in K$ and all $k\in\{n_0,\ldots,n\}$,~\eqref{eq:condition-E2thenE} and~\eqref{eq:pf-E2thenE-eq} entail
\begin{align*}
\E_x\left(\11_{T^{(n_0)}_K=k} \P_{X_k}(n-k<\tau_\d)\right)&\leq \E_x\left(\11_{k-n_0< T_K^{(n_0)}\wedge\tau_\d} \E_{X_{k-n_0}}\left(\11_{X_{n_0}\in K}\,\P_{X_{n_0}}(n-k<\tau_\d)\right)\right)\\
&\leq \E_x\left(\11_{k-n_0<T^{(n_0)}_K\wedge\tau_\d} C\,\P_y(n+m_0-k<\tau_\d)\right)\\
&\leq \theta_1^{k-n_0}\varphi_1(x)\,C\,\P_y(n-k<\tau_\d),
\end{align*}
where the constant $C$ may change from line to line.
Using Lemma~\ref{lem:to-be-in-K}, which only makes use of (E1), (E2) and (E4), there exists $n_6\in\Z_+$
such that, for all $y\in K$ and for all $n,k\in\Z_+$ such that $n-k\geq n_6$,
\begin{align*}
\P_y(n<\tau_\d)& \geq \P_y(X_{n-k}\in K)\inf_{z\in K}\P_z(k< \tau_\d)\\
& \geq \P_y(n-k<\tau_\d)\,\inf_{T\geq n_6}\inf_{z\in K}\P_z(X_T\in K\mid T<\tau_\d)\,\inf_{z\in K} P_{k}\varphi_2(z)\\
& \geq C'' \theta_2^{k}\,\P_y(n-k<\tau_\d),
\end{align*}
where $C''>0$. Hence,
\[
\E_x\left(\11_{T^{(n_0)}_K=k} \P_{X_k}(n-k<\tau_\d)\right)\leq \varphi_1(x)\,\left(\frac{\theta_1}{\theta_2}\right)^k\,\frac{\theta_1^{-n_0}\,C}{C''}\,\P_y(n<\tau_\d).
\]
Now, we deduce from~\eqref{eq:proof-E2thenE-1} and~\eqref{eq:pf-E2thenE-eq} that, for all $x\in E$ and all $y\in K$,
\begin{align*}
\P_x(n<\tau_\d)&\leq C\, \varphi_1(x)\,\left[\theta_1^n+\P_y(n<\tau_\d)\,
\,\sum_{k=1}^{n-n_6}\left(\frac{\theta_1}{\theta_2}\right)^k\right]
+\PP_x( T_K^{(n_0)}\wedge\tau_\d\geq n-n_6)\\
&\leq C\, \varphi_1(x)\,\left[\theta_1^n+\P_y(n<\tau_\d)
+\theta_1^{n-n_6}\right]\\
&\leq C\,\varphi_1(x)\,\P_y(n<\tau_\d)
\end{align*}
since $\P_y(n<\tau_\d)\geq \theta_2^n\,\inf_K\varphi_2$. This implies (E3) since
$\sup_K\varphi_1<\infty$.

\subsection{Proof of Lemma~\ref{lem:theta-2-et-0}}
\label{sec:proof-theta-2-et-0}

Combining Theorem~\ref{thm:eta} and the fact that $\inf_K\eta>0$, we deduce that
\[
\liminf_{n\rightarrow+\infty}\ \inf_{x\in K}\theta_0^{-n}\PP_x(n<\tau_\d)>0.
\]
Let $\theta'_2<\theta_0$. Using Lemma~\ref{lem:to-be-in-K},
\[
\lim_{n\rightarrow+\infty}\, (\theta'_2)^{-n}\,\inf_{x\in K}\PP_x(X_n\in K)=+\infty.
\]
Hence the result follows from Lemma~\ref{lem:varphi2}.

\subsection{Proof of Theorem~\ref{thm:E-F}}
\label{sec:proof-E-F}

We assume that Assumption~(F) is satisfied. In Subsection~\ref{sec:proof-(E)}, we prove that Assumption~(E) holds true for the
sub-Markovian semigroup $(P_n)_{n\geq 0}$ of the absorbed Markov process $(X_{nt_2},n\in\ZZ_+)$. In Subsection~\ref{sec:full-QSD}, we
prove the existence of a quasi-stationary distribution for $(X_t)_{t\in I}$ with the claimed properties and in
Subsection~\ref{sec:full-eta}, we prove the convergence of $e^{\lambda_0 t}\PP_x(t<\tau_\d)$ to $\eta(x)$ for $t\in I$,
$t\rightarrow+\infty$.

\subsubsection{Proof of (E)}
\label{sec:proof-(E)}

We fix $\theta_1\in(\gamma_1^{t_2},\gamma_2^{t_2})$ and set $\theta_2=\gamma_2^{t_2}$. Let us first remark that the last line of
Condition~(F2) implies that $\gamma_2^{-t}\P_\nu(X_{t}\in L)\rightarrow +\infty$ when $t\rightarrow+\infty$. Hence, using
Condition~(F1), we deduce that
\begin{align}
\label{eq:proof-E-F-1}
\gamma_2^{-t}\inf_{x\in L}\P_x(X_{t}\in L)\xrightarrow[t\rightarrow+\infty]{} +\infty.
\end{align}
We consider a number $n_0\in\N^*$ large enough so that $\gamma_2^{-t}\inf_{x\in L}\P_x(X_{t}\in L)\geq
1\vee\frac{c_2}{\theta_1-\gamma_1^{t_2}}$, for all $t\geq (n_0-1)t_2$ and we set
\[
\varphi_1=\psi_1\quad\text{ and }\quad \varphi_2=\frac{\gamma_2^{-t_2}-1}{\gamma_2^{-n_0t_2}-1}\sum_{k=0}^{n_0-1}\gamma_2^{-kt_2}P_k\11_L.
\]

\medskip\noindent\textit{Step 1. Proof of~(E2), (E4) and (E1) for $(P_n)_{n\in\Z_+}$.}

For all $x\in E\setminus L$, it follows from (F0) and the second line of~(F2) that
\begin{align*}
P_{1}\psi_1(x)&=\E_x\left(\psi_1(X_{t_2})\11_{t_2<\tau_L\wedge\tau_\d}\right)+\E_x\left(\11_{\tau_L\leq
    t_2\wedge\tau_\d}\restriction{\E_{X_{\tau_L}}(\11_{t_2-s<\tau_\d}\psi_1(X_{t_2-s}))}{s=\tau_L}\right)\\
&\leq \gamma_1^{t_2}\psi_1(x)+\P_x(\tau_L\leq t_2) c_2.
\end{align*}

We define 
\[
K=\left\{y\in E,\ \P_y(\tau_L\leq t_2)/\psi_1(y)\geq (\theta_1-\gamma_1^{t_2})/c_2\right\}.
\]
The second line of (F2) at time $t=0$ and the fact that $\theta_1-\gamma_1^{t_2}<1$ imply that $L\subset K$. Moreover, we have, for
all $x\notin K$,
\begin{align}
P_1\psi_1(x)&\leq  \theta_1\psi_1(x). \label{eq:pf-E-F-5}
\end{align}
Hence, for all $x\in E$,
\begin{equation}
\label{eq:E2-for-phi_1-in-(F)}
P_1\psi_1(x)\leq\theta_1\psi_1(x)+c_2\11_K(x).
\end{equation}
Note that it immediately follows from the definition of $K$ that $\sup_{x\in K}\psi_1(x)<\infty$. In particular, the first and third
lines of (E2) are proved.

Moreover, using the Markov property provided by (F0) and the definition of $n_0$, we deduce that, for all $t\geq n_0 t_2$,
\begin{align}
\label{eq:borne-psi-sur-K}
\inf_{x\in K}\gamma_2^{-t} \P_x(X_t\in L)&\geq \inf_{x\in K}\P_x(\tau_L\leq t_2)\inf_{s\in[0,t_2]}\inf_{y\in L}\gamma_2^{-t}\P_{y}\left(X_{t-s}\in L\right)\geq 1,
\end{align}
where we used the fact that, for all $x\in K$,
$\P_x(\tau_L\leq t_2)\geq \frac{\theta_1-\gamma_1^{t_2}}{c_2}$. In particular,
\begin{align*}
P_1\varphi_2=\gamma_2^{t_2}\varphi_2+\frac{\gamma_2^{-t_2}-1}{\gamma_2^{-n_0t_2}-1}\left(\gamma_2^{-(n_0-1) t_2} P_{n_0}\11_L -\gamma_2^{t_2}\11_L\right)\geq \gamma_2^{t_2}\varphi_2=\theta_2\varphi_2.
\end{align*}
In addition, for all $x\in K$, 
\[
\varphi_2(x)\geq \frac{\gamma_2^{-t_2}-1}{\gamma_2^{-n_0t_2}-1}
\gamma_2^{-(n_0-1)t_2}\P_x(X_{n_0t_2}\in L)\geq \frac{1-\gamma_2^{t_2}}{\gamma_2^{-n_0t_2}-1}.
\]
Hence (E2) is proved. Moreover, \eqref{eq:borne-psi-sur-K} also entails that (E4) holds true.

Fix $n_1>n_0$ such that
$n_1t_2-t_1\geq n_0t_2$. Condition~(F1) and then~\eqref{eq:borne-psi-sur-K} imply that, for all $x\in K$,
\begin{align*}
\P_x(X_{n_1t_2}\in\cdot\cap K)\geq \P_x(X_{n_1t_2-t_1}\in L)c_1\nu(\cdot\cap L)\geq \gamma_2^{n_1t_2-t_1}c_1\nu(\cdot\cap L).
\end{align*}
Extending $\nu$ as a probability measure on $K$, we obtain (E1).

\medskip\noindent\textit{Step 3. Estimation of the survival probability.}

Our goal here is to prove a version of Lemma~\ref{lem:comp-surv}, where~\eqref{eq:comp-surv-2} is replaced by
\begin{align}
\label{eq:pf-F-E-2}
\P_x(nt_2<\tau_\d)\leq C\frac{\varphi_1(x)}{1-\theta_1/\theta_2}\inf_{y\in L}\P_y(nt_2<\tau_\d),\quad\forall x\in E, \forall n\in\N.
\end{align}
Since the proof is similar, we only highlight the main
differences. First, Lemma~\ref{lem:to-be-in-K} only uses (E1), (E2)
and (E4), so that there exist $n_6\geq 1$ and $\zeta_1>0$ such that, for all $x\in K$ and all $n\geq n_6$,
\[
\delta_x P_n \11_K\geq \zeta_1 \delta_x P_n\11_E.
\]
Hence,  for all $x\in K$ and all $N\geq n_0+n_6$, using~\eqref{eq:borne-psi-sur-K},
\begin{align*}
\delta_x P_N\11_L & \geq \gamma_2^{n_0 t_2}\,\delta_x P_{N-n_0}\11_K 
\geq \zeta_1\gamma_2^{n_0 t_2}\,\delta_x P_{N-n_0}\11_E
\geq \zeta_1\gamma_2^{n_0 t_2}\,\delta_x P_{N}\11_E.
\end{align*}
Hence,
\begin{equation}
\label{eq:pf-E-F-3-ter}
\inf_{N\geq n_0+n_6}\inf_{x\in K}\P_x(X_{N t_2}\in L\mid N t_2<\tau_\d)>0.  
\end{equation}
Third, it follows from (F2) that, for all $x\in E\setminus L$,
\begin{equation}
\label{eq:pf-E-F-3}
\P_x(n t_2<\tau_L\wedge\tau_\d)\leq \gamma_1^{nt_2} \psi_1(x)=\theta_1^n\varphi_1(x).
\end{equation}
and from~(E2) that, for all $x\in E$,
\begin{equation}
\label{eq:pf-E-F-3bis}
\PP_x(nt_2<\tau_\d)\geq\gamma_2^{nt_2}\varphi_2(x).
\end{equation}
Therefore, following the same lines as in~\eqref{eq:pf-comp-survv} (replacing $K$ with $L$), we deduce
from~\eqref{eq:pf-E-F-3} and~\eqref{eq:pf-E-F-3bis} that, for all $x\in E$
\begin{align*}
\PP_x(nt_2<\tau_\d) & \leq \theta_1^n\varphi_1(x)+c_3\int_0^{n t_2}\inf_{y\in
    L}\PP_y\left(\left(n-\lceil s/t_2\rceil\right)t_2<\tau_\d\right)\,\PP_x(\tau_L\wedge \tau_\d \in \mathrm ds) \\ & \leq C\inf_{z\in
    L}\PP_z(nt_2<\tau_\d)\varphi_1(x)+ \frac{c_3\gamma_2^{-t_2}}{c}\inf_{z\in L}\PP_z(n
t_2<\tau_\d)\EE_x\left(\gamma_2^{-\tau_L\wedge\tau_\d}\right),
\end{align*}
which entails~\eqref{eq:pf-F-E-2}, where we used in the second inequality the fact that
\begin{align*}
\PP_x(n t_2<\tau_\d)\geq c\gamma_2^{kt_2}\inf_{y\in L}\PP_y\left((n-k)t_2<\tau_\d\right),\quad\forall x\in L,
\end{align*}
which is deduced from~\eqref{eq:pf-E-F-3-ter} exactly as in Lemma~\ref{lem:comp-surv}.

\medskip\noindent\textit{Step 4. Proof of~(E3).}

Using~\eqref{eq:pf-F-E-2} and the fact that $\sup_{x\in K} \varphi_1(x)<+\infty $, we deduce that there exists a constant $C>0$ such
that, for all $n\in\N$,
\begin{align*}
\sup_{x\in K}\P_x(n t_2 < \tau_\d)\leq C \inf_{y\in L} \P_y(n t_2< \tau_\d).
\end{align*}
Moreover,  using the Markov property at time $n_0t_2$ and ~\eqref{eq:borne-psi-sur-K}, we have that, for all $t\geq 0$,
\begin{align*}
\inf_{x\in K}\P_x(t<\tau_\d)&\geq \inf_{x\in K}\P_x(t+n_0t_2<\tau_\d)\geq \gamma_2^{n_0 t_2} \inf_{y\in L}\P_y(t<\tau_\d).
\end{align*}
These inequalities imply~(E3).

\subsubsection{Existence of a quasi-stationary distribution for $(X_t)_{t\in I}$}
\label{sec:full-QSD}

Subsection~\ref{sec:proof-(E)} and Theorem~\ref{thm:main} imply that there exists a probability measure $\nu_{QSD}$ on $E$
such that
\begin{align*}
\P_{\nu_{QSD}}(X_{nt_2}\in\cdot\mid nt_2<\tau_\d)=\nu_{QSD},\ \forall n\in \Z_+,
\end{align*}
such that $\nu_{QSD}(\varphi_1)<\infty$ and $\nu_{QSD}(\varphi_2)>0$, which is equivalent to $\nu_{QSD}(L)>0$ because of the
quasi-stationarity and the form of $\varphi_2$. For all $t\in[0,t_2]$, let us define the probability measure $ \nu_t$ on $E$ by
\begin{align*}
\nu_t=\P_{\nu_{QSD}}(X_t\in\cdot\mid t<\tau_\d).
\end{align*}
For all $n\in\Z_+$, we have, using the Markov property and the fact that $\nu_{QSD}$ is a quasi-stationary distribution for $(X_{nt_2})_{n\geq 0}$, 
\begin{align*}
\P_{\nu_t}(X_{nt_2}\in\cdot\mid nt_2<\tau_\d)= \E_{\nu_{QSD}}(\P_{X_{nt_2}}(X_t\in \cdot\mid t<\tau_\d)\mid
nt_2<\tau_\d)=\P_{\nu_{QSD}}(X_t\in\cdot\mid t<\tau_\d),
\end{align*}
hence $\nu_t$ is a quasi-stationary distribution for $(P_n)_{n\geq 0}$. Moreover, the third line of~(F2) and the quasi-stationarity
of $\nu_t$ imply that $\nu_t(L)$ is positive.

Fix $\rho_1\in(\theta_1^{1/t_2},\gamma_2)$. It follows from~\eqref{eq:pf-E-F-3} that there exists a constant $C>0$ such that, for all $x\in E$,
\begin{align*}
\varphi'_1(x):= \E_x\left(\rho_1^{- \tau_L\wedge\tau_\d}\right) & \leq C\,\varphi_1(x).
\end{align*}
We also have that, for all $x\in E\setminus L$,
\begin{align}
\E_x\left(\11_{t_2< \tau_L\wedge\tau_\d}\varphi'_1(X_{t_2})\right) & =\rho_1^{t_2}\E_x\left(\11_{t_2< \tau_L\wedge\tau_\d}
\rho_1^{-\tau_L\wedge \tau_\d}\right) \notag \\ & \leq \rho_1^{t_2}\varphi'_1(x) \label{eq:fction-Lyap-tps-cont}
\end{align}
and the inequality is trivial for $x\in L$. In addition, for all $t\in[0,t_2]$ and all $x\in L$,
$\E_x\left(\varphi'_1(X_t)\11_{t<\tau_\d}\right)\leq C \E_x\left(\psi_1(X_t)\11_{t<\tau_\d}\right)\leq C c_2$. Hence Condition~(F) is
satisfied replacing $\gamma_1$ with $\rho_1$ and $\psi_1$ with $\varphi'_1$. Therefore, we can apply Step~1 to prove that (E) is
satisfied with $\varphi'_1$ and $\varphi'_2$ where
\[
\varphi'_2=\frac{\gamma_2^{-t_2}-1}{\gamma_2^{-n'_0t_2}-1}\sum_{k=0}^{n'_0-1}\gamma_2^{-kt_2}P_k\11_L
\]
for an integer $n'_0$ that can be chosen larger than $n_0$. We also deduce as in the beginning of Step~2 that
$\nu_{QSD}$ is the unique quasi-stationary distribution of $(P_n)_{n\geq 0}$ such that $\nu_{QSD}(\varphi'_1)<\infty$ and
$\nu_{QSD}(L)>0$.

Moreover, by Markov's property at time $t$ we have for all $x\in E$ and $t\geq 0$,
\begin{align}
\varphi'_1(x) & = \E_x\left[\11_{t<\tau_L\wedge\tau_\d}{\rho_1}^{-\tau_L\wedge \tau_\d}\right]
+\E_x\left[\11_{t\geq \tau_L\wedge\tau_\d}{\rho_1}^{-\tau_L\wedge \tau_\d}\right] \notag \\
& \leq {\rho_1}^{-t}\E_x\left[\11_{t<\tau_L\wedge\tau_\d}\varphi'_1(X_t)\right]
+{\rho_1}^{-t}\P_x(t\geq \tau_L\wedge\tau_\d) \notag \\
& \leq {\rho_1}^{-t}\left(\E_x[\11_{t<\tau_\d}\varphi'_1(X_t)]+1\right) \label{eq:pf-E-F-4}
\end{align}
so that, for all $t\in[0,t_2]$,
\begin{align*}
\nu_t(\varphi_1')&\leq {\rho_1}^{-(t_2-t)}\left[\E_{\nu_{QSD}}\left(\11_{t_2<\tau_\d}\varphi'_1(X_{t_2})\right)/\P_{\nu_{QSD}}(t<\tau_\d)+1\right]\\
&\leq {\rho_1}^{-(t_2-t)}\left[\E_{\nu_{QSD}}\left(\11_{t_2<\tau_\d}\varphi'_1(X_{t_2})\right)/\P_{\nu_{QSD}}(t_2<\tau_\d)+1\right]\\
&= {\rho_1}^{-(t_2-t)}\left( \nu_{QSD}(\varphi'_1)+1\right)<\infty.
\end{align*}
Since we observed that $\nu_t(L)>0$, we deduce that $\nu_t=\nu_{QSD}$ for all $t\in I\cap[0,t_2]$.

Using the Markov property, we deduce that $\nu_t=\nu_{QSD}$ for all $t\in I$ and hence that $\nu_{QSD}$ is a
quasi-stationary distribution for $(X_t)_{t\in I}$. Since any quasi-statio\-na\-ry distribution for $(X_t)_{t\in I}$ is
also a quasi-stationary distribution for $(P_n)_{n\geq 0}$, we deduce that $\nu_{QSD}$ is the unique quasi-stationary
distribution for $(X_t)_{t\in I}$ such that $\nu_{QSD}(\varphi_1)<+\infty$ and $\nu_{QSD}(L)>0$. 

\medskip

Let $t\geq t_2$ be fixed and define $k\in\NN$ such that $0\leq t-kt_2<t_2$. It follows from the fact that $P_1\varphi'_1\leq
\bar{C}\varphi'_1$ and from~\eqref{eq:pf-E-F-4} that
\begin{align}
\E_x [\11_{t<\tau_\d}\varphi'_1(X_t)]&\leq \bar{C}^k\E_x\left[\11_{t-kt_2<\tau_\d}\varphi'_1(X_{t-kt_2})\right] \notag \\ 
& \leq \bar{C}^k{\rho_1}^{-(k+1)t_2+t}\E_x\left[\11_{t_2<\tau_\d}\varphi'_1(X_{t_2})+\11_{t-kt_2<\tau_\d}\right] \notag \\
& \leq C\bar{C}^k{\rho_1}^{-(k+1)t_2+t} \E_x\left[\11_{t_2<\tau_\d}\varphi_1(X_{t_2})+1\right] \notag \\
& \leq C\bar{C}^k{\rho_1}^{-(k+1)t_2+t}(\theta_1+c_2+1)\varphi_1(x).
\label{eq:last-eq-of-pf-of-E-F}
\end{align}
Note that a similar inequality may not hold true with $\varphi'_1$ replaced by $\varphi_1$ under our assumptions.
  This explains why we need to introduce $\varphi'_1$.

Now, let $\mu$ be a probability measure such that $\mu(\varphi_1)<\infty$ and $\mu(\varphi_2)>0$. Then, for all $t\geq n_0 t_2$, it
follows from~\eqref{eq:borne-psi-sur-K} that, for all $k\geq 0$,
\[
\P_\mu(X_{t+kt_2}\in L) \geq \P_\mu(X_{kt_2}\in L)\inf_{y\in L}\P_y(X_t\in L)\geq \gamma_2^{t}\P_\mu(X_{kt_2}\in L).
\]
Therefore, for all $t\in[n_0t_2,(n_0+1)t_2]$,
\begin{align*}
\E_\mu(\varphi_2(X_t)) & =\frac{\gamma_2^{-t_2}-1}{\gamma_2^{-n_0 t_2}-1}\sum_{k=0}^{n_0-1}\gamma_2^{kt_2}\P_\mu(X_{t+kt_2}\in L)
\\ & \geq \frac{\gamma_2^{-t_2}-1}{\gamma_2^{-n_0 t_2}-1}\gamma_2^{(n_0+1)t_2}\sum_{k=0}^{n_0-1}\gamma_2^{kt_2}\P_\mu(X_{kt_2}\in
L)=\gamma_2^{(n_0+1)t_2} \mu(\varphi_2).
\end{align*}
This and inequality~\eqref{eq:last-eq-of-pf-of-E-F} imply that (using that $n'_0\geq n_0$), for all $t\in[n_0t_2,(n_0+1)t_2]$ and for a
constant $C>0$ that may change from line to line,
\begin{align*}
\frac{\mu_t(\varphi'_1)}{\mu_t(\varphi'_2)}\leq C\frac{\mu_t(\varphi'_1)}{\mu_t(\varphi_2)}\leq C\frac{\mu(\varphi_1)}{\mu(\varphi_2)},
\end{align*}
where $\mu_t:=\P_\mu(X_t\in\cdot\mid t<\tau_\d)$. It then follows the fact that (E) is satisfied by $(P_n,n\geq 0)$ with the
functions $\varphi'_1$ and $\varphi'_2$ that there exist constants $\alpha<1$ and $C>0$ such that, for all
$t\in[n_0t_2,(n_0+1)t_2]$,
\begin{align*}
\left\|\frac{\mu_t P_{n}}{\mu_t P_{n}\11_E} -\nu_{QSD}\right\|_{TV}
&\leq C\alpha^{n}\,\frac{\mu( \varphi_1)}{\mu( \varphi_2)},
\end{align*}
Using Markov property, we deduce that
\begin{align*}
\left\|\P_\mu(X_{nt_2+t}\in\cdot\mid nt_2+t<\tau_\d) -\nu_{QSD}\right\|_{TV}
&\leq C\alpha^{n}\,\frac{\mu( \varphi_1)}{\mu( \varphi_2)}.
\end{align*}
This ends the proof of~\eqref{eq:expo-cv-prop-E-F}.

\subsubsection{Convergence to $\eta$}
\label{sec:full-eta}

To finish the proof of Theorem~\ref{thm:E-F}, it remains to prove that the convergence~\eqref{eq:eta-prop-E-F} is exponential in
$L^\infty(\psi^{1/p}_1)$ and that $P_t\eta=e^{-\lambda_0 t}\eta$. Because of
Remark~\ref{rem:dom-attraction}, it is enough to prove this for $p=1$. Since we proved that (E) holds true for the semigroup
$(P_n)_{n\geq 0}$ and for the functions $\varphi'_1$ and $\varphi'_2$, it follows from Theorem~\ref{thm:eta} that there exist
constants $\lambda_0\in[0,\log(1/\gamma_2)]$, $\alpha\in(0,1)$ and $C>0$ such that, for all $y\in E$,
\begin{align*}
\left|e^{\lambda_0 n t_2}\PP_y(n t_2<\tau_\d)-\eta(y)\right|\leq C \alpha^n\varphi'_1(y).
\end{align*}
For any $t\in[t_2,2t_2]$, integrating this inequality with respect to $\PP_x(X_t\in dy; t<\tau_\d)$, we deduce
from~\eqref{eq:last-eq-of-pf-of-E-F} that
\begin{align*}
\left|e^{\lambda_0 n t_2}\PP_{x}(n t_2+t<\tau_\d)-\E_x(\eta(X_t)\11_{t<\tau_\d})\right|\leq C \alpha^n\varphi_1(x)
\end{align*}
for a constant $C$ independent of $t\in[t_2,2t_2]$. Setting $\eta_t(x)=\EE_x\left[ e^{\lambda_0 t}\eta(X_t)\11_{t<\tau_\d}\right]$,
we obtain for all $t\in[t_2,2t_2]$
\begin{align*}
\left|e^{\lambda_0 (n t_2+t)}\PP_{x}(n t_2+t<\tau_\d)-\eta_t(x)\right|\leq C e^{2\lambda_0 t_2} \alpha^n\varphi_1(x).
\end{align*}
Proceeding as in~\eqref{eq:fin-relecture}, we deduce, letting $n\rightarrow+\infty$, that $P_1 \eta_t=e^{-\lambda_0 t_2}\eta_t$. It
then follows from Corollary~\ref{cor:quasi-comp} that $\eta_t(x)=\eta(x)\nu_{QSD}(\eta_t)$ for all $x\in E$. Since we proved above
that $\nu_{QSD}$ is a quasi-stationary distribution with decay parameter $\lambda_0$, by definition of $\eta_t$,
$\nu_{QSD}(\eta_t)=1$ and thus $P_t\eta=e^{-\lambda_0 t}\eta$. This ends the proof of Theorem~\ref{thm:E-F}.

\subsection{Proof of Lemma~\ref{prop:E-G}}
\label{sec:proof-E-G}

Proceeding as in~\eqref{eq:fction-Lyap-tps-cont} and~\eqref{eq:pf-E-F-4}, we have that, for all $x\in E$ and $t\in I$,
\begin{align*}
\E_x\left(\psi_1(X_{t_2})\11_{t_2<\tau_L\wedge\tau_\d}\right)
\leq \gamma_1^{t_2} \psi_1(x)\quad\text{and}\quad
\psi_1(x) \leq
\gamma_1^{-t}\left(\E_x\left[\11_{t<\tau_\d}\psi_1(X_t)\right]+1\right).
\end{align*}
Therefore, for all $t\leq t_2$ and all $x\in L$,
\begin{align*}
\E_x\left[\11_{t<\tau_\d}\psi_1(X_t)\right] & \leq
\gamma_1^{-(t_2-t)}\E_x\left\{\left[\E_{X_{t}}\left(\11_{t_2-t<\tau_\d}\psi_1(X_{t_2-t})\right)+1\right]\11_{t<\tau_\d}\right\}\\
& \leq\gamma_1^{-(t_2-t)}\left[\E_x\left(\11_{t_2<\tau_\d}\psi_1(X_{t_2})\right)+1\right]\\
& \leq c_2:=\gamma_1^{-t_2}\left[\sup_{y\in L}\E_y\left(\11_{t_2<\tau_\d}\psi_1(X_{t_2})\right)+1\right].
\end{align*}
This concludes the proof of Lemma~\ref{prop:E-G}.

\subsection{Proof of Proposition~\ref{prop:A1-A2}}
\label{sec:proof-A1-A2}

Let us first assume that (E) is satisfied with $\varphi_1$ bounded and~\eqref{eq:unif-2} and prove that~\eqref{eq:unif-1} holds true.
Theorem~\ref{thm:main} and Remark~\ref{rem:dom-attraction} entail that, for all $n\geq n'_4$,
\begin{align*}
\left\|\frac{\mu P_{n}}{\mu P_{n}\11_E} -\nu_{QSD}\right\|_{TV}
&\leq \alpha^{n-n'_4} \frac{\|\varphi_1\|_\infty}{\inf_{x\in K}\varphi_2(x)}\,\frac{\mu P_{n'_4}\11_E}{\mu
    P_{n'_4}\11_K} \\ & \leq \alpha^{n-n'_4} \frac{\|\varphi_1\|_\infty}{\underline{c}\inf_{x\in K}\varphi_2(x)}.
\end{align*}
Hence the convergence is uniform.

Let us now assume that~\eqref{eq:unif-1} holds true. It was proved in \cite{ChampagnatVillemonais2016b} that this is equivalent to the
following condition.

\medskip\noindent\textbf{Condition (A).} There exist positive constants $c_1,c_2$, a positive integer $k_0$ and a probability
measure $\nu$ on $E$ such that
\begin{itemize}
    \item[(A1)] \textit{(Conditional Dobrushin coefficient)} For all $x\in E$, 
    \begin{align*}
    \P_x(X_{k_0}\in\cdot\mid k_0<\tau_\d)\geq c_1 \nu.
    \end{align*}
    \item[(A2)] \textit{(Global Harnack inequality)} We have
    \begin{align*}
    \sup_{k\in\ZZ_+}\frac{\sup_{y\in E} \P_y(k<\tau_\d)}{\P_\nu(k<\tau_\d)}\leq c_2.
    \end{align*}
\end{itemize}

Several consequences of Condition~(A) were deduced in \cite{ChampagnatVillemonais2016b}, among which the fact that the
convergence~\eqref{eq:conv-eta-general} in Theorem~\ref{thm:eta} holds true with respect to the $L^\infty$ norm on $E$ with
$\eta(x)>0$ for all $x\in E$. In particular, $\eta$ is bounded, $P_1\eta=\theta_0\eta$ and there exists a constant $C'$ such that,
for all $n\geq 0$,

\begin{equation}
\label{eq:proof-unif-1}
\sup_{x\in E}\P_x(n<\tau_\d)\leq C'\theta_0^n.    
\end{equation}
We fix $\varepsilon\in(0,1/(4C'))$. Since $\eta$ is positive on $E$, there exists $\delta>0$ such that the set $K:=\{x\in
E:\eta(x)\geq\delta\}$ satisfies $\nu_{QSD}(K)\geq 1-\varepsilon$ and $\nu(K)>0$. Setting $\varphi_2=\eta/\|\eta\|_\infty$, the part
of (E2) dealing about $\varphi_2$ is satisfied with $\theta_2=\theta_0$. Since the convergence in Theorem~\ref{thm:eta} holds true
with respect to the $L^\infty$ norm, we deduce from the choice of $K$ that there exists $k\geq k_0$ such that
\[
c:=\inf_{x\in K}\PP_x(k_0<\tau_\d)\geq \inf_{x\in K}\PP_x(k<\tau_\d)>0.
\]
It follows from~(A1) and~(A2) that, for all $n\geq 0$,
\begin{align*}
\inf_{x\in K}\P_x(n<\tau_\d)\geq\inf_{x\in K}\PP_x(n+k_0<\tau_\d)  \geq c_1c\,\PP_\nu(n<\tau_\d)  & \geq \frac{c_1c}{c_2}\sup_{y\in
    E}\PP_y(n<\tau_\d).
\end{align*}
This implies~(E3) and that $\inf_{x\in K}\PP_x(k_0<\tau_\d)>0$. Hence,~(E1) follows from~(A1) with the probability measure
$\frac{\nu(\cdot\cap K)}{\nu(K)}$. Moreover, for any $n$ large enough to have $C\alpha^n\leq 1/2$ where the constants $C$ and
$\alpha$ are those of~\eqref{eq:unif-1}, we have $\P_x(X_n\in K\mid t<\tau_\d)\geq\nu_{QSD}(K)-C\alpha^n\geq 1/2-\varepsilon>0$ and
hence~(E4) is satisfied. The last computation also entails~\eqref{eq:unif-2} with $n'_4=n$.

It remains to construct a function $\varphi_1$ satisfying (E2) with $\theta_1<\theta_0$. For all $x\in E$,
\begin{align*}
\P_x(X_{n}\in E\setminus K\mid n<\tau_\d) & \leq \nu_{QSD}(E\setminus K)+C\alpha^n 
\leq \varepsilon+C\alpha^n.
\end{align*}
Using~\eqref{eq:proof-unif-1}, we deduce that
\begin{align*}
\P_x(X_{n}\in E\setminus K) & \leq C'(\varepsilon+C\alpha^n)\theta_0^n,
\end{align*}
so that there exists $n_0$ large enough such that
\begin{align*}
\P_x(n_0< T_K\wedge\tau_\d) & \leq \frac{1}{3}\theta_0^{n_0}=\left(\frac{\theta_0}{3^{1/n_0}}\right)^{n_0}.
\end{align*}
From this follows that, for all $k\in\NN$ and all $x\in E$,
\begin{align*}
\P_x(kn_0< T_K\wedge\tau_\d) & \leq \left(\frac{\theta_0}{3^{1/n_0}}\right)^{kn_0}.
\end{align*}
In particular, for $\theta_1:=\theta_0/2^{1/n_0}$,
\begin{align*}
\varphi_1(x):=\EE_x\left(\theta_1^{- T_K\wedge\lceil\tau_\d\rceil}\right),\quad\forall x\in E,
\end{align*}
is a bounded function on $E$ and Lemma~\ref{lem:varphi1} implies that, for all $x\in E$,
\begin{align*}
P_1\varphi_1(x)\leq \theta_1\varphi_1(x)+\|\varphi_1\|_\infty\11_{K}(x).
\end{align*}
Since $\theta_1<\theta_0$, (E2) is proved.

\section{Proof of the results of Section~\ref{sec:general-diffusion}}
\label{sec:pf-general-diffusion}

In order to prove Theorem~\ref{thm:main-diffusion}, we check Condition~(F). The goal of
Subsection~\ref{sec:construction-diff-multidim} is to give the construction of the process $X$ and to check (F0) with $L=K_k$ for any
$k\geq 1$. In Subsection~\ref{sec:harnack-diff}, we explain how (F1) and (F3) can be deduced from general Harnack inequalities.
Finally, Subsection~\ref{sec:proof-diff-multidim} completes the proof of Theorem~\ref{thm:main-diffusion}. The proof of
Corollary~\ref{cor:main-diffusion} is then given in Subsection~\ref{sec:pf-cor-diff}.

\subsection{Construction of the diffusion process $X$ and Markov property}
\label{sec:construction-diff-multidim}

The goal of this section is to construct a weak solution $X$ to the SDE~\eqref{eq:SDE} with absorption out of $D$, and prove that it is
Markov and satisfies a strong Markov property at appropriate stopping times, enough to entail Condition~(F0) for $L=K_k$ for any
$k\geq 1$. We introduce the natural path space for the process $X$ as
\begin{multline*}
\mathcal{D}:=\Biggl\{w:\R_+\rightarrow D\cup\{\d\}:\ \forall k\geq 1,\ w\text{ is continuous on }[0,\tau_k(w)] \\ \left.\text{ and }w(t)=\d,\ \forall
t\geq\sup_{k\geq 1} \tau_k(w)\right\},
\end{multline*}
where $\tau_k(w):=\inf\{t\geq 0: w_t\in D\setminus K_k\}$. Note that $\mathcal{D}$ contains functions which are not c\`adl\`ag since
they may not have a left limit at $\tau_\d-$ and, indeed, it is easy to construct examples where $X$ is not c\`adl\`ag
$\P$-a.s.\footnote{For example, one may consider $D$ the open disc of radius 1 centered at 0 in $\R^2$, $\sigma=\text{Id}$ and
  $b(x)=(-x_2\beta(|x|),x_1\beta(|x|))$ where $x=(x_1,x_2)\in D$. Decomposing the process in polar coordinates
  $(R_t,\theta_t):=(|X_t|,\arctan(X^{(1)}_t/X^{(2)}_t))$, the radius $R_t$ is a 2-dimensional Bessel process, and $X_t$ is sent to
  $\d$ when $R_t$ hits 1 (in a.s.\ finite time). The angle $\theta_t$ is solution to $\mathrm d\theta_t=R_t^{-1}\mathrm
  dW_t-\beta(R_t) \mathrm dt$ before $\tau_\d$, for some Brownian motion $W$. Hence, if $\beta(r)$ converges sufficiently fast to
  $+\infty$ when $r\rightarrow 1$, $\theta_t$ a.s.\ converges to $-\infty$ when $t\rightarrow\tau_\d-$, so $X$ does not admit a left
  limit at time $\tau_\d$.} Note also that this definition means that we are looking for a process $X$ such that
\begin{align*}
\tau_\d:=\sup_{k \geq 1}\tau_{D\setminus K_k},
\end{align*}
which is the natural definition of $\tau_\d$ when the left limit of $X$ at time $\tau_\d$ does not exist.

We endow the path space $\mathcal{D}$ with its natural filtration 
$$
\mathcal{F}_t=\sigma(w_s,s\leq t)=\bigvee_{n\geq 1, 0\leq
    t_1<t_2<\ldots<t_n\leq t}\sigma(w_{t_1},w_{t_2},\ldots, w_{t_n})
$$
and we follow the usual method which consists in constructing for all $x\in D$ a probability measure $\P_x$ on $\mathcal{D}$ and a
stochastic process $(B_t,t\geq 0)$ on $\mathcal{D}\times \mathcal{C}(\R_+,\R^r)$, such that $B$ is a standard $r$-dimensional
Brownian motion under $\PP_x\otimes\mathbb{W}^r$, where $\mathbb{W}^r$ is the $r$-dimensional Wiener measure and such that $w_0=x\ $
$\PP_x\otimes\mathbb{W}^r$-almost surely and the canonical process $(w_t,t\geq 0)$ solves the SDE~\eqref{eq:SDE} for this Brownian
motion $B$ on the time interval $[0,\sup_k \tau_k(w))$\ \footnote{Since $\sigma(x)$ is non-degenerate for all $x\in D$, the space
    $\mathcal{C}(\R_+,\R^r)$ equipped with the Wiener measure $\mathbb{W}^r$ is only used to construct the Brownian path $B_t$ after
    time $\sup_k \tau_k(w)$ and could be omitted for our purpose since we only need to construct the process $B$ up to time $\sup_k
    \tau_k(w)$.}.

For this construction, we use the fact that $b$ and $\sigma$ can be extended out of $K_k$ to $\R^d$ as globally H\"older and bounded
functions $b_k$ and $\sigma_k$ and such that $\sigma_k$ is uniformly elliptic on $\R^d$. Hence (see
e.g.~\cite[Rk.\,5.4.30]{KaratzasShreve1991}) the martingale problem is well-posed for the SDE
\begin{align*}
\mathrm d X^k_t=b_k(X^k_t) \mathrm dt+\sigma_k(X^k_t) \mathrm d B_t.
\end{align*}
Let us denote by $\P^k_x$ the solution to this martingale problem for the initial condition $x\in\R^d$. This is a probability measure
on $\mathcal{C}:=\mathcal{C}(\R_+,\R^d)$, equipped with its canonical filtration $(\mathcal{G}_t)_{t\geq 0}$. 

For all $k\geq 1$, we define $\tau'_k(w)=\inf\{t\geq 0, w_t\not\in\text{int}(K_k)\}$, where $\text{int}(K_k)$ is the interior of
$K_k$. Since the paths $w\in\mathcal{D}$ or $\mathcal{C}$ are continuous at time $\tau'_{k}$ and $\R^d\setminus\text{int}(K_k)$ is
closed, it is standard to prove that $\tau'_{k}$ is a stopping time for the canonical filtration $(\mathcal{F}_t)_{t\geq 0}$ on
$\mathcal{D}$ and for the canonical filtration $(\mathcal{G}_t)_{t\geq 0}$ on $\mathcal{C}$. We define as usual the stopped
$\sigma$-fields $\mathcal{F}_{\tau'_k}$ and $\mathcal{G}_{\tau'_k}$, and we define for all $x\in \text{int}(K_k)$ the restriction of
$\P_x$ to $\mathcal{F}_{\tau'_k}$ as the restriction of $\P^k_x$ to $\mathcal{G}_{\tau'_k}$, where we can identify the events of the
two filtrations since they both concern continuous parts of the paths. This construction is consistent for $k$ and $k+1$ (meaning
that if $x\in K_k$, they give the same probability to events of $\mathcal{F}_{\tau_k}$) by uniqueness of the solutions $\P^k_x$ and
$\P^{k+1}_x$ to the above martingale problems. Hence there exists a unique extension $\P_x$ of the above measures to $\bigvee_{k\geq
    1}\mathcal{F}_{\tau'_k}$. Note that, because of the specific structure of the path space $\mathcal{D}$, we have
\begin{equation}
\label{eq:simple-structure-D}
\bigvee_{k\geq 1}\mathcal{F}_{\tau'_k}=\mathcal{F}_\infty.
\end{equation}
To check this, it suffices to observe that, for all $t\geq 0$ and all measurable $A\subset D\cup\{\d\}$,
\begin{align}
\{w_t\in A\} & =\{t<\tau_\d,\ w_t\in A\cap D\}\cup\{\tau_\d\leq t,\ \d\in A\} \notag \\ & =\left(\bigcup_{k\geq 1}\{t<\tau'_k,\ w_t\in A\cap
D\}\right)\cup\left(\bigcap_{k\geq 1}\{\tau'_k\leq t,\d\in A\}\right), \label{eq:pf-Markov-pty}
\end{align}
hence $\{w_t\in A\}\in \bigvee_{k\geq 1}\mathcal{F}_{\tau'_k}$, and, proceeding similarly, the same property holds for events of the
form $\{w_{t_1}\in A_1,\ldots,w_{t_n}\in A_n\}$.

We recall (see~\cite[Section\,5.4]{KaratzasShreve1991}) that $(\P^k_x)_{x\in\R^d}$ forms a strong Markov family on the canonical
space $\mathcal{C}$. Our goal is now to prove that the family of probability measures $(\P_x)_{x\in D\cup\{\d\}}$, where $\P_\d$ is
defined as the Dirac measure on the constant path equal to $\d$, forms a Markov kernel of probability measures, for which the strong
Markov property applies at well-chosen stopping times.

We first need to prove that $(\P_x)_{x\in D}$ defines a kernel of probability measures, i.e.\ that $x\mapsto \P_x(\Gamma)$ is
measurable for all events $\Gamma$ of $\mathcal{F}_\infty$. We prove it for an event of the form $\{w_t\in A\}$, the extension to
events of the form $\{w_{t_1}\in A_1,\ldots, w_{t_n}\in A_n\}$, and hence to all events of $\mathcal{F}_\infty$, being easy. This
follows from~\eqref{eq:pf-Markov-pty}:
\begin{align*}
\P_x(w_t\in A) & =\lim_{k\rightarrow+\infty}\P_x(t<\tau'_k,\ w_t\in A\cap D)+\11_{\d\in
    A}\lim_{k\rightarrow+\infty}\P_x(\tau'_k\leq t) \\ & =\lim_{k\rightarrow+\infty}\P^{k+1}_x(t<\tau'_k,\ w_t\in A\cap D)+\11_{\d\in
    A}\lim_{k\rightarrow+\infty}\P^{k+1}_x(\tau'_k\leq t).
\end{align*}
Since all the probabilities in the right-hand side are measurable functions of $x$, so is $x\mapsto\P_x(w_t\in A)$.

Now, let us prove that $(X_t,t\geq 0)$ is Markov. It is well-known that this is implied by the following property: for all $n\geq 1$
and $0\leq t_1\leq\ldots\leq t_{n+1}$ and $A_1,\ldots, A_{n+1}$ measurable subsets of $D\cup\{\d\}$,
\begin{align*}
\P_x(w_{t_1}\in A_1,\ldots,w_{t_{n+1}}\in A_{n+1})=\E_x\left[\11_{w_{t_1}\in A_1,\ldots,w_{t_{n}}\in
    A_n}\P_{w_{t_n}}(w_{t_{n+1}-t_n}\in A_{n+1})\right].
\end{align*}
We prove this property only for $n=1$. It is easy to extend the proof to all values of $n\geq 1$. We have
\begin{multline*}
\P_x(w_{t_1}\in A_1,w_{t_2}\in A_2) =\P_x(w_{t_1}\in A_1,w_{t_2}\in A_2,\tau_\d>t_2) \\ +\P_x(w_{t_1}\in A_1,t_1<\tau_\d\leq
t_2)\11_{\d\in A_2}+\P_x(\tau_\d\leq t_1)\11_{\d\in A_1\cap A_2}.
\end{multline*}
Now, using that $(\P^k_x)_{x\in\RR^d}$ is a Markov family for all $k\geq 1$,
\begin{multline*}
\P_x(w_{t_1}\in A_1,w_{t_2}\in A_2,\tau_\d>t_2)  \\
\begin{aligned}
& =\lim_{k\rightarrow\infty}\P_x(w_{t_1}\in A_1,w_{t_2}\in A_2,\tau_k>t_2) \\
& =\lim_{k\rightarrow\infty}\P^k_x(w_{t_1}\in A_1,w_{t_2}\in A_2,\tau_k>t_2) \\
& =\lim_{k\rightarrow\infty}\E^k_x\left[\11_{w_{t_1}\in A_1,t_1<\tau_k}\P^k_{w_{t_1}}(w_{t_2-t_1}\in
A_2,\tau_k>t_2-t_1)\right] \\
& =\lim_{k\rightarrow\infty}\E_x\left[\11_{w_{t_1}\in A_1,t_1<\tau_k}\P_{w_{t_1}}(w_{t_2-t_1}\in
A_2,\tau_k>t_2-t_1)\right] \\
& =\E_x\left[\11_{w_{t_1}\in A_1,t_1<\tau_\d}\P_{w_{t_1}}(w_{t_2-t_1}\in
A_2,\tau_\d>t_2-t_1)\right]
\end{aligned}
\end{multline*}
and similarly
\begin{align*}
\P_x(w_{t_1}\in A_1,t_1<\tau_\d\leq t_2) \11_{\d\in A_2} & =\E_x\left[\11_{w_{t_1}\in A_1,t_1<\tau_\d}\P_{w_{t_1}}(\tau_\d\leq
t_2-t_1)\right]\11_{\d\in A_2} \\ & =\E_x\left[\11_{w_{t_1}\in A_1,t_1<\tau_\d}\P_{w_{t_1}}(\tau_\d\leq
t_2-t_1,w_{t_2-t_1}\in A_2)\right].
\end{align*}
Since
\begin{align*}
\P_x(\tau_\d\leq t_1)\11_{\d\in A_1\cap A_2} & =\E_x\left[\11_{w_{t_1}\in A_1,\tau_\d\leq t_1}\P_{w_{t_1}}(w_{t_2-t_1}\in A_2)\right],
\end{align*}
we have proved that $\P_x(w_{t_1}\in A_1,w_{t_2}\in A_2)=\E_x\left[\11_{w_{t_1}\in A_1}\P_{w_{t_1}}(w_{t_{2}-t_1}\in A_{2})\right]$.
This ends the proof of the Markov property.

To conclude this subsection, let us prove that the strong Markov property holds for all stopping times $\tau_{F}$ where $F\subset D$ is
closed in $D$. Note that $\tau_F$ is indeed a stopping time for the filtration $\mathcal{F}_t$ since
$\tau_F=\sup_{k}\tau_F\wedge\tau'_k=\sup_k \tau_{(F\cup D^c)\cup \text{int}(K_k)^c}$, where the complement is understood in $\RR^d$, $(F\cup
D^c)\cup \text{int}(K_k)^c$ is a closed subset of $\RR^d$ and all $w\in\mathcal{D}$ is continuous at time $\tau_{(F\cup D^c)\cup
    \text{int}(K_k)^c}$. Let $x\in D$, $t_1,t_2,s\geq 0$ and $A,B\subset D$ be measurable sets. We proceed as above: first, observe
that
\begin{multline*}
\{w_{t_1}\in A,\ t_1<\tau_{F}\leq t_2,\ w_{\tau_{F}+s}\in B\} \\ 
\begin{aligned}
& =\bigcup_{\ell\geq 1}\{w_{t_1}\in A,\ t_1<\tau_{F}\leq t_2,\
w_{\tau_{F}+s}\in B,\ w_r\in K_\ell\ \forall r\in [0,\tau_{F}+s]\} \\ & =\bigcup_{\ell\geq 1}\{w_{t_1}\in A,\ t_1<\tau_{F}\wedge\tau'_\ell\leq t_2,\
w_{\tau_{F}\wedge\tau'_\ell+s}\in B,\ \tau'_\ell>\tau_F+s\}.
\end{aligned}
\end{multline*}
Since $\tau_F\wedge\tau'_\ell$ is a $\mathcal{G}_t$-stopping time on $\mathcal{C}(\RR_+,\RR^d)$ and using the strong Markov property
under $\PP^\ell$, we deduce that
\begin{multline*}
\P_x(w_{t_1}\in A,\ t_1<\tau_{F}\leq t_2,\ w_{\tau_{F}+s}\in B) \\
\begin{aligned}
& =\lim_{\ell\rightarrow+\infty}\P^\ell_x(w_{t_1}\in A,\ t_1<\tau_{F}\wedge\tau'_\ell\leq t_2,\ w_{\tau_{F}\wedge\tau'_\ell+s}\in
B,\ \tau'_\ell>\tau_F+s) \\ 
& =\lim_{\ell\rightarrow+\infty}\E^\ell_x\left[\11_{w_{t_1}\in A,\
    t_1<\tau_{F}\wedge\tau'_\ell\leq t_2}\P^\ell_{w_{\tau_{F}\wedge\tau'_\ell}}(w_{s}\in B,\ s<\tau'_\ell)\right] \\     &
=\lim_{\ell\rightarrow+\infty}\E^\ell_x\left[\11_{w_{t_1}\in A,\
    t_1<\tau_{F}\leq\tau'_\ell\wedge t_2}\P^\ell_{w_{\tau_{F}}}(w_{s}\in B,\ s<\tau'_\ell)\right] \\ & =\E_x\left[\11_{w_{t_1}\in A,\
    t_1<\tau_{F}\leq\tau_\d\wedge t_2}\P_{w_{\tau_{F}}}(w_{s}\in B,\ s<\tau_\d)\right].
\end{aligned}
\end{multline*}
Similarly,
\begin{align*}
\P_x(w_{t_1}\in A,\ t_1<\tau_{F}\leq t_2,\ &w_{\tau_{F}+s}=\d)\\
&=\lim_{\ell\rightarrow+\infty}\P^\ell_x(w_{t_1}\in A,\
t_1<\tau_{F}\leq t_2\wedge\tau'_\ell,\ \tau'_\ell\leq \tau_F+ s) \\ & =\E_x\left[\11_{w_{t_1}\in A,\
    t_1<\tau_{F}\leq t_2\wedge\tau_\d}\P_{w_{\tau_{F}}}(w_s=\d)\right]
\end{align*}
and thus 
$$
\P_x(w_{t_1}\in A,\ t_1<\tau_{F}\leq t_2,\ w_{\tau_{F}+s}\in B)=\E_x\left[\11_{w_{t_1}\in A,\
    t_1<\tau_{F}\leq t_2\wedge\tau_\d}\P_{w_{\tau_{F}}}(w_{s}\in B)\right]
$$
for all $A,B\subset D\cup\{\d\}$ measurable. The previous computation extends without difficulty to prove
\begin{multline}
\P_x\left(w_{t_1}\in A_1,\ldots, w_{t_n}\in A_n,\ t_n<\tau_F\leq t_{n+1},\ w_{\tau_{F}+s_1}\in B_1,\ldots,w_{\tau_F+s_m}\in
B_m\right) \\ =\E_x\left[\11_{w_{t_1}\in A_1,\ldots, w_{t_n}\in A_n,\ t_n<\tau_{F}\leq t_{n+1}}
\P_{w_{\tau_{F}}}(w_{s_1}\in B_1,\ldots,w_{s_m}\in B_m)\right] \label{eq:strong-Markov-diff}
\end{multline}
for all $n,m\geq 1$, $0\leq t_1\leq\ldots\leq t_{n+1}$, $0\leq s_1\leq\ldots\leq s_m$ and $A_1,\ldots,A_n,B_1,\ldots,B_m\subset
D\cup\{\d\}$ measurable. This implies the strong Markov property at time $\tau_{F}$, in the sense that, for all $k\geq 1$, all
$x\in E$ and all $\Gamma\in\mathcal{F}_\infty$,
\begin{align*}
\PP_x\left(w^{\tau_{F}}\in\Gamma\mid \mathcal{H}_{\tau_{F}}\right)=\PP_{w_{\tau_{F}}}(\Gamma),\quad\PP_x\text{-almost surely},
\end{align*}
where $w^{\tau_{F}}=(w_{\tau_{F}+s},s\geq 0)$ and
\begin{multline*}
\mathcal{H}_{\tau_F}=\sigma\Big(\left\{w_{t_1}\in A_1,\ldots,w_{t_n}\in A_n, t_n<\tau_F\leq t_{n+1}\right\},\, n\in\N, \\ 0\leq t_1\leq\ldots\leq
t_{n+1},\ A_1,\ldots,A_n\in D\text{ measurable}\Big).
\end{multline*}
This form of strong Markov property at time $\tau_{F}$ is enough for our purpose, since it entails~(F0) for $L=K_k$ for all $k\geq
1$. 

\subsection{Harnack inequalities}
\label{sec:harnack-diff}

Our goal here is to check Conditions (F1) and (F3) for the diffusion process constructed above. We will make use of
general Harnack inequalities of Krylov and Safonov~\cite{KrylovSafonov1980}.

\begin{prop}
    \label{prop:harnack}
    There exist a probability measure $\nu$ on $D$ and a constant $t_\nu>0$ such that, for all $k\geq 1$, there exists a constant
    $b_k>0$ such that
    \begin{align}
    \label{eq:harnack1}
    \P_x(X_{t_\nu}\in\cdot)\geq b_k\nu(\cdot),\ \forall x\in K_k.
    \end{align}
    Moreover, for all $k\geq 1$ such that $K_k$ is non-empty,
    \begin{align}
    \label{eq:harnack2}
    \inf_{t\geq 0}\frac{\inf_{x\in K_k} \P_x(t<\tau_\d)}{\sup_{x\in K_k}\P_x(t<\tau_\d)}>0.
    \end{align}
\end{prop}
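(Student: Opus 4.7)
The plan is to deduce both inequalities from the parabolic Harnack inequality of Krylov and Safonov~\cite{KrylovSafonov1980}. For any measurable $A \subset D$, the function $u_A(s,x) = \PP_x(X_s \in A,\, s < \tau_\d)$ is the monotone pointwise limit, as $m \to \infty$, of $u_A^m(s,x) = \PP_x(X_s \in A,\, s < \tau_{K_m^c})$. On the interior of each $K_m$, $u_A^m$ is a classical solution of $\d_s u = \cL u$ by standard Schauder theory applied to the diffusion $X^m$ of Subsection~\ref{sec:construction-diff-multidim}, built from the globally H\"older and uniformly elliptic extensions $b_m, \sigma_m$. The Krylov--Safonov estimate then applies on any cylinder $(s_0, s_0 + 4\rho^2) \times B(z, 2\rho)$ compactly contained in $D$, with a constant $C_H$ depending only on the ellipticity of $\sigma$ and on the sup-norms of $b, \sigma$ on the cylinder; the resulting comparisons pass to the monotone limit $u_A$, and likewise for $u(s,x) = \PP_x(s<\tau_\d)$.

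The backbone of the argument is a parabolic Harnack chain. Fix $y_0 \in K_{k_0}$ for some $k_0 \geq 1$ with $K_{k_0}$ non-empty. For each $k \geq k_0$, since $K_k$ has distance $1/k$ to $\d D$ and diameter at most $2k$, one can find a chain $y_0 = z_0, z_1, \ldots, z_{N_k} = x$ of balls of radius $\rho_k = 1/(4k)$ whose doubled versions lie in $D$, with $N_k \leq C k^2$, linking $y_0$ to any given $x \in K_k$. Iterating Krylov--Safonov along this chain and appending further localised steps around $x$ yields, for any non-negative solution $u$ of $\d_s u = \cL u$ and any $s \geq 0$,
\begin{align*}
u(s, y_0) \leq \widetilde C_k\, u(s + t_\nu,\, x),
\end{align*}
where $t_\nu$ is any fixed constant larger than a bound on $2 N_k \rho_k^2$ (which is $O(1)$ uniformly in $k$), and $\widetilde C_k$ depends only on $k$. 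Applying this to $u_A$ at time $s = t_0 \in (0, t_\nu)$ gives $\PP_{y_0}(X_{t_0} \in A, t_0 < \tau_\d) \leq \widetilde C_k\, \PP_x(X_{t_\nu} \in A, t_\nu < \tau_\d)$ uniformly in $A$ and in $x \in K_k$, proving~\eqref{eq:harnack1} with $\nu(\cdot) = \PP_{y_0}(X_{t_0} \in \cdot)/\PP_{y_0}(t_0 < \tau_\d)$ and $b_k = \PP_{y_0}(t_0 < \tau_\d)/\widetilde C_k$. For~\eqref{eq:harnack2}, linking any two points $x, y \in K_k$ through $y_0$ produces a chain of length $\sim k^2$, and combining the resulting estimate $u(t - t_\nu, x) \leq C'_k\, u(t, y)$ with the monotonicity of $s \mapsto \PP_x(s < \tau_\d)$ yields $\PP_x(t < \tau_\d) \leq C'_k\, \PP_y(t < \tau_\d)$ for $t \geq t_\nu$. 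For $t \in [0, t_\nu]$, the ratio in~\eqref{eq:harnack2} is bounded below by $\inf_{x \in K_k} \PP_x(t_\nu < \tau_\d) > 0$, the positivity following from compactness of $K_k$, its positive distance $1/k$ to $\d D$, and the continuity of paths of $X$ before absorption.

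The main subtlety is obtaining a single $t_\nu$ valid for all $k$ at once. The spatial extent of $K_k$ grows like $k$ while its distance to $\d D$ shrinks like $1/k$, forcing Harnack cylinders of radius $\sim 1/k$ and chains of $\sim k^2$ steps. The scaling $N_k \rho_k^2 = O(1)$ just barely produces a $k$-independent total chain time, at the cost of a Harnack constant $\widetilde C_k \sim \exp(C k^2)$. This is acceptable precisely because $\nu$ and $t_\nu$ must be independent of $k$ while $b_k$ and $c'_k$ are allowed to depend on $k$ and, indeed, will decay very rapidly.
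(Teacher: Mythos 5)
There is a genuine gap, and it sits exactly at the point you flag as ``the main subtlety''. Your construction of a $k$-independent $t_\nu$ rests on the claim that any $x\in K_k$ can be joined to the fixed point $y_0$ by a Harnack chain of $N_k\leq Ck^2$ balls of radius $1/(4k)$ whose doubled versions lie in $D$, so that the total parabolic time $\sim N_k\rho_k^2$ stays $O(1)$. This is not true for a general connected open domain $D$, which is precisely the setting here (no boundary regularity is assumed). Two points of $K_k$ need not be joinable by a path of length $O(k)$ staying at distance $\gtrsim 1/k$ from $D^c$: the connecting path may be forced through channels of width $\epsilon\ll 1/k$ and of large intrinsic length $L$, which forces ball radii $\lesssim\epsilon$ and hence at least $L/\epsilon$ steps, each costing time $\gtrsim\epsilon^2$ in the parabolic Harnack comparison (one cannot compare $u(s,x)$ and $u(s+\tau,y)$ with $\tau\ll|x-y|^2$). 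The resulting minimal chain time $\gtrsim L\epsilon$ is finite for each fixed pair but, e.g.\ in dimension $d\geq 3$, can be made to grow without bound along a sequence of points of $K_k$ as $k\to\infty$ (chambers reached through ever thinner and longer spiralling corridors of bounded volume). So no single $t_\nu$ comes out of a pure Harnack-chain argument, and~\eqref{eq:harnack1} as stated is not reached. (Your treatment of~\eqref{eq:harnack2} is less affected, since there a $k$-dependent time shift combined with the monotonicity of $t\mapsto\P_x(t<\tau_\d)$ is enough; and your small-time bound for~\eqref{eq:harnack2} is fine.)

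The paper avoids the problem by decoupling the time from the constant. It fixes $x_1\in\text{int}(K_{k_1})$ once and for all, applies the local Harnack inequality~\eqref{eq:Harnack-u} a single time in a neighbourhood of $x_1$ to produce $\nu=\P_{x_1}(X_{\delta_{k_1}+\delta_{k_1}^2}\in\cdot\mid\delta_{k_1}+\delta_{k_1}^2<\tau_\d)$, and then transports mass from an arbitrary $x\in K_k$ into the fixed ball $B(x_1,\delta_{k_1}\wedge d(x_1,D\setminus K_{k_1}))$ in the \emph{fixed} time $1$, using only irreducibility of the locally elliptic diffusion in the connected domain: $\inf_{x\in K_k}\P_x(X_1\in B(x_1,\cdot))\geq d_k>0$. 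The traversal of thin channels is then not constrained by parabolic scaling — the process can cross them within time $1$, merely with small probability — and all the $k$-dependence is pushed into the constant $b_k$ rather than into the time. To repair your proof you would need to replace the global Harnack chain by such a fixed-time irreducibility step (or otherwise prove a uniform bound on the parabolic chain time, which requires geometric hypotheses on $D$ that the theorem does not make).
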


\begin{proof}
    Consider a bounded measurable function $f:D\rightarrow \R_+$ with $\|f\|_\infty\leq 1$ and define the application
    $u:(t,x)\in\R_+\times E\mapsto \E_x[\11_{t<\tau_\d}f(X_t)]$. It is proved in~\cite{ChampagnatVillemonais2017}
    using~\cite{KrylovSafonov1980} that, for all $k\geq 1$, there exist two constants $N_k>0$ and $\delta_k>0$, which do not depend on
    $f$ (provided $\|f\|_\infty\leq 1$), such that
    \begin{align}
    \label{eq:Harnack-u}
    u(\delta_k^2,x)\leq N_k u(2\delta_k^2,y),\text{ for all $x,y\in K_k$ such that }|x-y|\leq \delta_k/2.
    \end{align}
    Note that the proof given in~\cite{ChampagnatVillemonais2017} makes use of the following strong Markov property: for all open ball $B$
    such that $B\subset K_k$ for some $k\geq 1$, all $x\in B$, $t\geq 0$ and all measurable $f:D\cup\{\d\}\rightarrow\RR_+$,
    $$
    \EE_x\left[f(X_t)\11_{\tau_{D\setminus B}\leq t<\tau_\d}\right]=\EE_x\left[\11_{\tau_{D\setminus B}\leq
        t}\restriction{\EE_{X_{\tau_{D\setminus B}}}\left[f(X_{t-u})\11_{t-u<\tau_\d}\right]}{u=\tau_{D\setminus B}}\right].
    $$
    This property follows from~\eqref{eq:strong-Markov-diff}.
    
    \medskip\noindent\textit{Step 1 : Proof of~\eqref{eq:harnack1}} 
    
    Fix $x_1\in D$ and $k_1\geq 1$ such that $x_1\in \text{int}(K_{k_1})$. Let $\nu$ denote the conditional law
    $\P_{x_1}(X_{\delta_{k_1}^2}\in \cdot\mid \delta_{k_1}^2<\tau_\d)$. Then, for all measurable $A\subset
    D\cup\{\d\}$, Harnack's inequality~\eqref{eq:Harnack-u} with $f=\11_A$ entails that, for all $x\in D$ such that $|x-x_1|<
    \frac{\delta_{k_1}}{2}\wedge d(x_1,D\setminus K_{k_1})$,
    \begin{align*}
    \P_x(2\delta_{k_1}^2\in A)\geq \frac{\P_{x_1}(\delta_{k_1}^2<\tau_\d)}{N_{k_1}}\,\nu(A).
    \end{align*}
    Since the diffusion is locally elliptic and $D$ is connected, for all $k\geq 1$, there exists a constant $d_k>0$ such that
    \begin{align*}
    \inf_{x\in K_k} \P_x(X_1\in B(x_1,(\delta_{k_1}/2)\wedge d(x_1,D\setminus K_{k_1}))\geq d_k.
    \end{align*}
    This and Markov's property entail that, for all $x\in K_k$,
    \begin{align*}
    \P_x(X_{1+2\delta_{k_1}^2}\in \cdot)\geq d_k\frac{\P_{x_1}(\delta_{k_1}^2<\tau_\d)}{N_{k_1}}\,\nu.
    \end{align*}
    This implies the first part of Proposition~\ref{prop:harnack}.
    
    \medskip\noindent\textit{Step 2 : Proof of~\eqref{eq:harnack2}} 
    
    Fix $k\geq 1$ such that $K_k$ is non-empty and consider $\ell>k$ such that $K_k$ is included in one connected component of
    $\text{int}(K_\ell)$. For all $t\geq 2\delta_{\ell}^2$, the inequality~\eqref{eq:Harnack-u} applied to
    $f(x)=\P_x(t-2\delta_\ell^2<\tau_\d)$ and the Markov property entail that
    \begin{align*}
    \P_x(t-\delta_\ell^2<\tau_\d)\leq N_\ell \P_y(t<\tau_\d),\text{ for all $x,y\in K_\ell$ such that }|x-y|\leq \delta_\ell/2.
    \end{align*}
    Since $s\mapsto \P_x(s<\tau_\d)$ is non-increasing, we deduce that
    \begin{align*}
    \P_x(t<\tau_\d)\leq N_\ell \P_y(t<\tau_\d),\text{ for all $x,y\in K_\ell$ such that }|x-y|\leq \delta_\ell/2.
    \end{align*}
    Since $K_k$ has a finite diameter and is included in a connected component of $K_\ell$, we deduce that there exists $N'_k$ equal to
    some power of $N_\ell$ such that,
    for all $t\geq 2\delta_\ell^2$,
    \begin{align*}
    \P_x(t<\tau_\d)\leq N'_k \P_y(t<\tau_\d),\text{ for all $x,y\in K_k$.}
    \end{align*}
    Now, for $t\leq 2\delta_\ell^2$, we simply use the fact that, for all $x\in K_k$,
    $\P_x(2\delta_\ell^2<\tau_\d)\geq\P_x(2\delta_\ell^2<\tau_B)$ where $B=(x,1/2k)$ and hence $x\mapsto \P_x(2\delta_\ell^2<\tau_\d)$
    is uniformly bounded from below on $K_k$ by a constant $1/N''_k>0$. In particular,
    \begin{align*}
    \P_x(t<\tau_\d)\leq 1\leq N''_k \P_y(2\delta_\ell^2<\tau_\d)\leq N''_k \P_y(t<\tau_\d),\text{ for all $x,y\in K_k$.}
    \end{align*}
    This concludes the proof of Proposition~\ref{prop:harnack}.
\end{proof}

\subsection{Proof of Theorem~\ref{thm:main-diffusion}}
\label{sec:proof-diff-multidim}

Our aim is to prove that Condition~(F) holds true with $L=K_k$ for some $k\geq 1$ large enough. We have already proved (F0), (F1) and (F3) with
$L=K_k$ for any $k\geq 1$. Hence we only have to check (F2). Fix $\rho_1\in(\lambda_0,\lambda_1)$, $\rho_2\in (\lambda_0,\rho_1)$ and
$p\in(1,\lambda_1/\rho_1)$ and define
\begin{equation}
\label{eq:psi_1-diffusion}
\psi_1(x)=\varphi(x)^{1/p},\ \forall x\in D.  
\end{equation}
Fix $\rho'_1\in(\rho_1,\lambda_1/p)$ and 
\[
t_2\geq \frac{2s_1 (C+\lambda_1)}{\lambda_1-p\rho'_1}\,\vee\,\frac{\log 2}{\rho'_1-\rho_1},
\]
where the constant $C$ comes from~\eqref{eq:lyapunov-diffusions}. Set $L=K_{k_0}$ with $k_0$ large enough so that $\nu(K_{k_0})>0$
and, using~\eqref{eq:conv-to-0},
\[
\PP_x(s_1< \tau_{K_{k_0}}\wedge\tau_\d)\leq e^{-(\rho'_1+C/p)t_2}
\]
for all $x\in D_0$.

From the definition of $\lambda_0$ and applying the same argument as in Step 2 of the proof of Proposition~\ref{prop:harnack} with
$f(x)=\P_x(X_{t-2\delta_\ell^2}\in L)$ with $\ell$ large enough to have $K_{k_0}$ included in one connected component of
$K_\ell$, we deduce that
\begin{align*}
\liminf_{t\rightarrow+\infty} e^{\rho_2 t}\inf_{x\in L} \P_x(X_t\in L)=+\infty,
\end{align*}
and hence the last line of~(F2) is proved with $\gamma_2=e^{-\rho_2}$.

Let us now check that the first line of Assumption~(F2) holds true for all $x\in D_0$ and then for all $x\in D\setminus D_0$. For all
$x\in D_0$, we have $\psi_1(x)\leq \sup_{x\in D_0}\varphi^{1/p}(x)<+\infty$, and hence, for all $t\in [s_1,t_2]$, using H\"older's inequality
and the definition of $k_0$,
\begin{align}
\E_x\left(\psi_1(X_t)\11_{t<\tau_L\wedge \tau_\d}\right)&\leq
\E_x\left(\11_{t<\tau_\d}\varphi(X_t)\right)^{1/p}\P_x(t<\tau_L\wedge\tau_\d)^{\frac{p-1}{p}}\nonumber\\
&\leq \varphi(x)^{1/p}e^{C t_2/p}\P_x(s_1<\tau_L\wedge \tau_\d)^{\frac{p-1}{p}}\label{eq:diff-E-2-ct1-step1}\\
&\leq e^{-\rho'_1 t_2}\leq e^{-\rho_1 t_2}\psi_1(x)\nonumber.
\end{align}
To prove~\eqref{eq:diff-E-2-ct1-step1}, we used the fact that $\mathcal{L}\varphi\leq C\leq C\varphi$ and It\^o's formula
to obtain $P_t\varphi\leq e^{Ct}\varphi$. Since this argument is used repeatedly in the sequel, we give it in details for sake of
completeness. It follows from It\^o's formula that, for all $k\geq 1$, $\PP_x$-almost surely,
\begin{align*}
e^{-C\left(t\wedge \tau_{K_k^c}\right)}\varphi\left(X_{t\wedge \tau_{K_k^c}}\right)=\varphi(x) & +\int_0^t \11_{s\leq
    \tau_{K_k^c}}e^{-Cs}\left(\mathcal{L}\varphi(X_s)-C\varphi(X_s)\right) \mathrm ds \\ & +\int_0^t \11_{s\leq  \tau_{K_k^c}}e^{-Cs}\nabla\varphi(X_s)^*\sigma(X_s) \mathrm dB_s.
\end{align*}
Since $\nabla\varphi(x)$ and $\sigma(x)$ are uniformly bounded on $K_k$, the last term has zero expectation, and thus
\begin{align*}
\EE_x\left[e^{-C\left(t\wedge  \tau_{K_k^c}\right)}\varphi\left(X_{t\wedge \tau_{K_k^c}}\right)\right] & \leq \varphi(x).
\end{align*}
Letting $k\rightarrow+\infty$, we deduce form Fatou's lemma that
\begin{align}
\EE_x\left[e^{-Ct}\11_{t<\tau_\d}\varphi(X_{t})\right] & \leq \varphi(x) \label{eq:Ito-Lyap}
\end{align}
as claimed.

This proves the second line of (F2) for all $x\in D_0$ and $\gamma_1=e^{-\rho_1}$.

Now, for all $x\in D\setminus D_0$, since $D_0$ is closed in $D$, it follows from the strong Markov
property~\eqref{eq:strong-Markov-diff} at time $ \tau_{D_0}$ that
\begin{multline}
\label{eq:diff-E-2-ct1-step2}
\E_x\left(\psi_1(X_{t_2})\11_{t_2<\tau_L\wedge \tau_\d}\right)=\E_x\left(\11_{t_2-s_1<\tau_L\wedge\tau_\d\wedge
    \tau_{D_0}}\E_{X_{t_2-s_1}}\left(\psi_1(X_{s_1})\11_{s_1<\tau_L\wedge\tau_\d}\right)\right)\\
+\E_x\left(\11_{ \tau_{D_0}\leq t_2-s_1}\restriction{\E_{X_{ \tau_{D_0}}}\left(\psi_1(X_{t_2-u})\11_{t_2-u<\tau_\d\wedge \tau_L}\right)}{u= \tau_{D_0}}\right).
\end{multline}
Using H\"older's inequality and~\eqref{eq:Ito-Lyap}, we deduce that, for all $y\in D$,
\begin{align*}
\E_{y}\left(\psi_1(X_{s_1})\11_{s_1<\tau_L\wedge\tau_\d}\right)\leq
\E_{y}\left(\varphi(X_{s_1})\11_{s_1<\tau_\d}\right)^{1/p}\leq e^{\frac{s_1 C}{p}}\,\varphi(y)^{1/p}= e^{\frac{s_1
        C}{p}}\,\psi_1(y).
\end{align*}
Hence, the first term in the right-hand side of~\eqref{eq:diff-E-2-ct1-step2} satisfies 
\begin{align*}
\E_x\left(\11_{t_2-s_1<\tau_L\wedge\tau_\d\wedge  \tau_{D_0}}\E_{X_{t_2-s_1}}\left(\psi_1(X_{s_1})\11_{s_1<\tau_L\wedge\tau_\d}\right)\right)\leq
e^{\frac{s_1 C}{p}}\E_x\left(\11_{t_2-s_1<\tau_L\wedge\tau_\d\wedge  \tau_{D_0}}\psi_1(X_{t_2-s_1})\right).
\end{align*} 
As a consequence, using again H\"older's inequality and applying as above It\^o's formula using that
$\mathcal{L}\varphi(x)\leq -\lambda_1\varphi(x)$ for all $x\notin D_0$, one has
\begin{align*}
\E_x\left(\11_{t_2-s_1< \tau_L\wedge\tau_\d\wedge
    \tau_{D_0}}\E_{X_{t_2-s_1}}\left(\psi_1(X_{s_1})\11_{s_1<\tau_L\wedge\tau_\d}\right)\right)&\leq e^{-\lambda_1 \frac{t_2-s_1}{p}}
e^{\frac{s_1 C}{p}}\,\varphi(x)^{1/p}\\
&\leq e^{-t_2\frac{\rho'_1+\lambda_1/p}{2}}\psi_1(x),
\end{align*}
where we used in the last inequality that $t_2\geq \frac{2s_1 (C+\lambda_1)}{\lambda_1-p\rho'_1}$. Moreover,
using~\eqref{eq:diff-E-2-ct1-step1}, we obtain that the second term in the right-hand side of~\eqref{eq:diff-E-2-ct1-step2} satisfies
\begin{multline*}
\E_x\left(\11_{ \tau_{D_0}\leq t_2-s_1}\restriction{\E_{X_{ \tau_{D_0}}}\left(\psi_1(X_{t_2-u})\11_{t_2-u<\tau_\d\wedge \tau_L}\right)}{u= \tau_{D_0}}\right)\\
\leq e^{-\rho'_1 t_2}\P_x(\tau_{D_0}\leq t_2-s_1)\leq e^{-\rho'_1 t_2}\psi_1(x).
\end{multline*}
We finally deduce from~\eqref{eq:diff-E-2-ct1-step2} 
that, for all $x\in D\setminus D_0$,
\begin{align*}
\E_x\left(\psi_1(X_{t_2})\11_{t_2<\tau_L\wedge\tau_\d}\right)\leq 2 e^{-\rho'_1 t_2}\psi_1(x)\leq e^{-\rho_1 t_2}\psi_1(x),
\end{align*}
where we used that $t_2\geq \log 2/(\rho'_1-\rho_1)$. This concludes the proof that the first line of~(F2) holds true with
$\gamma_1=e^{-\rho_1}$.

Since $\varphi$ is locally bounded, $\sup_L \varphi<\infty$, and hence, using again~\eqref{eq:Ito-Lyap}, we deduce that, for all
$t\geq 0$,
\begin{align*}
\sup_{x\in L}\ \E_x(\psi_1(X_t)\11_{t<\tau_\d})\leq \sup_{x\in L}\ \E_x(\varphi(X_t)\11_{t<\tau_\d})\leq e^{Ct}\,\sup_{x\in L}\varphi(x)<\infty,
\end{align*}
which implies the scond line of Assumption~(F2).

In addition, because of the local uniform ellipticity of the diffusion $X$, for all $ n_0\geq 1$, $\psi_2:=\sum_{k=0}^{n_0} P_k\11_L$ is
uniformly bounded away from zero on all compact subsets of $D$. This and Theorem~\ref{thm:E-F} concludes the proof of
Theorem~\ref{thm:main-diffusion}.

\subsection{Proof of Corollary~\ref{cor:main-diffusion}}
\label{sec:pf-cor-diff}

Using Theorem~\ref{thm:E-F}, there exists $\lambda'_0$ such that, for all $x\in D$,
\[
\eta(x)=\lim_{t\rightarrow+\infty}e^{\lambda'_0 t}\PP_x(t<\tau_\d).
\]
We choose in the definition of $\lambda_0$ a ball $B$ such that $\nu_{QSD}(B)>0$ (recall that $\lambda_0$ is independent of the choice
of $B$). Given $x\in D$ such that $\eta(x)>0$,
\[
\lim_{t\rightarrow+\infty}e^{\lambda'_0 t}\PP_x(X_t\in B)=\eta(x)\nu_{QSD}(B)\in(0,+\infty).
\]
Hence, $\lambda_0=\lambda'_0$ and the infimum in the definition of $\lambda_0$ is a minimum. The facts that
$\P_{\nu_{QSD}}(t<\tau_\d)=e^{-\lambda_0 t}$ and $P_t\eta=e^{-\lambda_0 t}\eta$ are then direct consequences of Theorem~\ref{thm:E-F}.

Let us now prove that $\eta$ is $\mathcal{C}^2$. First, it follows from~\cite[Theorem 7.2.4]{StroockVaradhan2006} that $x\mapsto
e^{\lambda_0 t}\PP_x(t<\tau_\d)$ is continuous for all $t\geq 0$ (see e.g.~\cite{ChampagnatVillemonais2017} for a detailed proof). Hence
the uniform convergence in Theorem~\ref{thm:eta} implies that $\eta$ is continuous on $D$.

Now, let $B$ be any non-empty open ball such that $\overline{B}\subset D$. We consider the following initial-boundary value problem
(in the terminology of~\cite{Friedman1964}) associated to the differential operator $\mathcal{L}$ defined in~\eqref{eq:def-gene-diff}
\[
\begin{cases}
\partial_t u(t,x)-\mathcal{L}u(t,x)-\lambda_0 u(t,x)=0 & \text{for all }(t,x)\in(0,T]\times B, \\
u(0,x)=\eta(x) & \text{for all }x\in B, \\
u(t,x)=\eta(x) & \text{for all }(t,x)\in(0,T]\times\partial B.
\end{cases}
\]
Since the coefficients of $\mathcal{L}$ are H\"older and uniformly elliptic in $\overline{B}$ and since $\eta$ is continuous, we can
apply Corollary~1 of Chapter 3 of~\cite{Friedman1964} to obtain the existence and uniqueness of a solution $u$ to the above problem,
continuous on $[0,T]\times\overline{B}$ and $\mathcal{C}^{1,2}((0,T]\times B)$. Now, we can apply It\^o's formula to $e^{\lambda_0
    s}u(T-s, X_s)$: for all $s<\tau_{B^c}\wedge T$ and all $x\in B$, $\PP_x$-almost surely,
\begin{align*}
e^{\lambda_0 s}u(T-s,X_s) & =u(T,x)+\int_0^s e^{\lambda_0 r}\left(-\frac{\partial u}{\partial t}+\mathcal{L}u+\lambda_0
u\right)(T-r,X_r)\,\mathrm dr \\ & +\int_0^s e^{\lambda_0 r}\nabla u(T-r,X_r) \sigma(X_r)\,\mathrm dB_r.
\end{align*}
Since $u$ is bounded and continuous on $[0,T]\times\overline{B}$ and $\nabla u(t,x)$ is locally bounded in $(0,T]\times B$, it
follows from standard localization arguments that
\begin{align*}
u(T,x) & =\EE_x\left[e^{\lambda_0 (T\wedge\tau_{B^c})}u(T-(T\wedge\tau_{B^c}),X_{T\wedge\tau_{B^c}})\right] \\
& =\EE_x\left[e^{\lambda_0 (T\wedge\tau_{B^c})}\eta(X_{T\wedge\tau_{B^c}})\right].
\end{align*}
Now, the Markov property and the fact that $P_t\eta=e^{-\lambda_0 t}\eta$ entail that $e^{\lambda_0 t}\eta(X_t)$ is a martingale on
$(\mathcal{D},(\mathcal{F}_t)_{t\geq 0},\PP_x)$, hence
\[
\eta(x) =\EE_x\left[e^{\lambda_0 (T\wedge\tau_{B^c})}\eta(X_{T\wedge\tau_{B^c}})\right]= u(T,x).
\]
Therefore, $\eta\in\mathcal{C}^2(D)$ and $\mathcal{L}\eta(x)=-\lambda_0\eta(x)$ for all $x\in D$.

\bibliographystyle{abbrv}
\bibliography{biblio-bio,biblio-denis,biblio-math}

\end{document}